\documentclass[12pt]{amsart}
\usepackage{amssymb}
\usepackage[shortlabels]{enumitem}
\usepackage[all]{xy}
\usepackage{xcolor}
\usepackage{mathrsfs}
\usepackage{tikz}
\usetikzlibrary{arrows}
\usepackage[margin=1in]{geometry} 
\usepackage{hyperref}
\usepackage{eucal}
\usepackage{contour}
\usepackage[normalem]{ulem}
\usepackage{dsfont}
\hypersetup{bookmarksdepth=2}
\hypersetup{colorlinks=true}
\hypersetup{linkcolor=blue}
\hypersetup{citecolor=blue}
\hypersetup{urlcolor=blue}

\makeatletter
\@addtoreset{equation}{section}
\makeatother

\numberwithin{equation}{section}
\newtheorem{theorem}[equation]{Theorem}

\newtheorem{proposition}[equation]{Proposition}
\newtheorem{lemma}[equation]{Lemma}
\newtheorem{corollary}[equation]{Corollary}
\newtheorem{conjecture}[equation]{Conjecture}

\theoremstyle{definition}
\newtheorem{remark}[equation]{Remark}
\newtheorem{example}[equation]{Example}
\newtheorem{definition}[equation]{Definition}

\newtheorem{question}[equation]{Question}

\newcommand{\cA}{\mathcal{A}}
\newcommand{\fA}{\mathfrak{A}}

\newcommand{\cB}{\mathcal{B}}

\newcommand{\bC}{\mathbf{C}}
\newcommand{\cC}{\mathcal{C}}
\newcommand{\fC}{\mathfrak{C}}

\newcommand{\cD}{\mathcal{D}}

\newcommand{\bF}{\mathbf{F}}
\newcommand{\cF}{\mathcal{F}}

\newcommand{\sF}{\mathscr{F}}

\newcommand{\rK}{\mathrm{K}}

\newcommand{\cM}{\mathcal{M}}

\newcommand{\bN}{\mathbf{N}}

\newcommand{\bO}{\mathbf{O}}
\newcommand{\cO}{\mathcal{O}}

\newcommand{\bP}{\mathbf{P}}
\newcommand{\cP}{\mathcal{P}}

\newcommand{\bQ}{\mathbf{Q}}

\newcommand{\bR}{\mathbf{R}}
\newcommand{\cR}{\mathcal{R}}

\newcommand{\bS}{\mathbf{S}}
\newcommand{\cS}{\mathcal{S}}
\newcommand{\fS}{\mathfrak{S}}

\newcommand{\bT}{\mathbf{T}}
\newcommand{\cT}{\mathcal{T}}

\newcommand{\bV}{\mathbf{V}}

\newcommand{\bW}{\mathbf{W}}

\newcommand{\fX}{\mathfrak{X}}

\newcommand{\cY}{\mathcal{Y}}

\newcommand{\bZ}{\mathbf{Z}}
\newcommand{\cZ}{\mathcal{Z}}

\newcommand{\fm}{\mathfrak{m}}

\newcommand{\arxiv}[1]{\href{http://arxiv.org/abs/#1}{{\tiny\tt arXiv:#1}}}

\newcommand{\DOI}[1]{\href{http://doi.org/#1}{\color{purple}{\tiny\tt DOI:#1}}}

\contourlength{1pt}

\contourlength{0.8pt}
\newcommand{\myuline}[1]{%
  \uline{\phantom{#1}}%
  \llap{\contour{white}{#1}}%
}
\DeclareMathOperator{\uRep}{\text{\myuline{\rm Rep}}}
\DeclareMathOperator{\uPerm}{\ul{Perm}}

\let\ul\underline
\let\ol\overline
\let\wh\widehat
\renewcommand{\phi}{\varphi}
\DeclareMathOperator{\uEnd}{\ul{End}}
\DeclareMathOperator{\uAut}{\ul{Aut}}

\DeclareMathOperator{\Ind}{Ind}
\DeclareMathOperator{\lev}{lev}
\DeclareMathOperator{\mlev}{mlev}
\DeclareMathOperator{\End}{End}
\DeclareMathOperator{\Aut}{Aut}
\DeclareMathOperator{\Hom}{Hom}

\DeclareMathOperator{\Rep}{Rep}

\DeclareMathOperator{\dcl}{dcl}
\DeclareMathOperator{\acl}{acl}

\DeclareMathOperator{\im}{im}

\DeclareMathOperator{\dom}{dom}
\DeclareMathOperator{\Spec}{Spec}
\DeclareMathOperator*{\Coend}{Coend}
\DeclareMathOperator{\rank}{rk}

\newcommand{\SL}{\mathbf{SL}}
\newcommand{\SO}{\mathbf{SO}}
\newcommand{\id}{\mathrm{id}}
\renewcommand{\Vec}{\mathrm{Vec}}
\newcommand{\Ver}{\mathrm{Ver}}
\newcommand{\GL}{\mathbf{GL}}
\newcommand{\bbone}{\mathds{1}}
\newcommand{\defn}[1]{\emph{#1}}

\newcommand{\bone}{\mathbf{1}}
\newcommand{\Mod}{\mathrm{Mod}}
\newcommand{\Comod}{\mathrm{Comod}}

\newcommand{\bb}{{\bullet}}
\newcommand{\ww}{{\circ}}
\newcommand{\fin}{\mathrm{fin}}
\newcommand{\sm}{\mathrm{sm}}

\title[The Drinfeld center of an oligomorphic tensor category]{The Drinfeld center of \\ an oligomorphic tensor category}

\author{Pavel Etingof}
\address{Department of Mathematics, MIT, Cambridge, MA, USA}
\email{\href{mailto:etingof@math.mit.edu}{etingof@math.mit.edu}}
\urladdr{\url{https://math.mit.edu/~etingof/}}
\thanks{PE was partially supported by NSF grants DMS-2001318 and DMS-2502467}

\author{Andrew Snowden}
\address{Department of Mathematics, University of Michigan, Ann Arbor, MI, USA}
\email{\href{mailto:asnowden@umich.edu}{asnowden@umich.edu}}
\urladdr{\url{http://www-personal.umich.edu/~asnowden/}}
\thanks{AS was supported by NSF grant DMS-2301871.}

\date{\today}

\begin{document}

\begin{abstract}
Recently, Harman and the second author introduced a new construction of pre-Tannakian tensor categories based on oligomorphic groups. We develop tools for analyzing the Drinfeld centers of these categories, and compute the center explicitly in a number of cases. In particular, we find several finitely tensor-generated pre-Tannakian categories (including the Delannoy category) that are identified with their own center via the canonical functor; prior to this work, we knew no such examples besides the category of vector spaces.
\end{abstract}

\maketitle
\tableofcontents

\section{Introduction}

Recently, Harman and the second author introduced a new construction of pre-Tannakian tensor categories based on oligomorphic groups \cite{repst}. Some of these categories, such as the Delannoy category \cite{line}, are quite different from previously known pre-Tannakian categories. A general direction of research is the investigation of the structure and properties of these new oligomorphic tensor categories. In this paper, we study their Drinfeld centers. We develop methods for studying the center, and use this theory to determine the centers of several categories.

\subsection{Descriptions of some centers}

We begin by stating some of our explicit results. The simplest ones to state are in the following theorem. We write $\cZ(\cT)$ for the Drinfeld center of a monoidal category (see \S \ref{ss:center}).

\begin{theorem} \label{mainthm1}
Let $\cT$ be one of the following pre-Tannakian categories:
\begin{enumerate}
\item the Delannoy category (\S \ref{ss:delannoy})
\item the circular Delannoy category (\S \ref{ss:delannoy})
\item an arboreal tensor category (\S \ref{ss:arboreal})
\item a Jacobi tensor category (\S \ref{ss:cantor}, \S \ref{ss:jacobi})
\item the second Delannoy category (\S \ref{ss:second}).
\end{enumerate}
Then the natural functor $\cT \to \cZ(\cT)$ is an equivalence.
\end{theorem}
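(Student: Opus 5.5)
This theorem is stated in the introduction, so its proof will rest on the general machinery developed later in the paper. I would organize the argument as follows.

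\emph{Step 1: describe the center in terms of the group.} For an oligomorphic group $G$ with measure $\mu$, the category $\cT=\uRep(G,\mu)$ is generated under tensor products and subquotients by the permutation modules $\cC(X)$, where $X$ ranges over transitive $G$-sets. A half-braiding on an object $M$ is determined by its components against the generators $\cC(X)$. I would therefore aim to show that an object of $\cZ(\cT)$ amounts to the following data:
\begin{itemize}
\item a smooth object $M$;
\item for each (or for a cofinal family of) open subgroups $U$, a compatible ``conjugation'' structure;
\item concretely, a module over a suitable Schwartz space $\cC(\cG)$, where $\cG$ is a completed conjugation groupoid of $G$.
\end{itemize}
The expected outcome of this step is an equivalence between $\cZ(\cT)$ and a category of equivariant objects on the ``inertia'' of $G$, meaning a suitable category of sheaves on the space of pairs $(x,g)$ with $g$ fixing $x$. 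Under this equivalence, $\cT\to\cZ(\cT)$ is the inclusion of objects supported on the identity section $g=1$.

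\emph{Step 2: show that nothing lives off the identity section.} The key input is a property of the specific groups $G$ in the list:
\begin{itemize}
\item the order-automorphisms of $\bQ$ (Delannoy);
\item the orientation-preserving homeomorphisms of the circle, acting on a dense set (circular Delannoy);
\item the automorphism groups of the relevant dense trees (arboreal);
\item the automorphism groups of the Cantor-type structures (Jacobi);
\item the corresponding group for the second Delannoy category.
\end{itemize}
I would try to prove the following criterion: if for every open subgroup $U$ and every non-identity $g$ normalizing $U$ (or, more generally, every conjugacy class of $g$ meeting the relevant stabilizers), the conjugation action of $U$ on $gU$ has measure-theoretically ``degenerate'' orbits, then every half-braiding is trivial on non-identity components. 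In the Delannoy case, $g\neq 1$ moves some point. One can then find a point $x$, fixed by $U$-conjugates, such that the orbit integral defining the half-braiding component vanishes, because $\mu$ is regular and certain values are forced to be $-1$ or $0$ (for example $\mu(\text{interval})=-1$). A cleaner route is to show that the support of any central object lies in the subset of $g$ that centralize an open subgroup. For these highly homogeneous structures, that subset is $\{1\}$, since a non-identity automorphism cannot commute with the pointwise stabilizer of a finite set.

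\emph{Step 3: conclude.} Once the support is shown to be trivial, the half-braiding of every object coincides with the symmetric braiding of $\cT$, so the forgetful functor $\cZ(\cT)\to\cT$ is inverse to the canonical one. Fullness and faithfulness of $\cT\to\cZ(\cT)$ are automatic, since $\cT$ is symmetric and $\End(\bbone)=k$; the real content is essential surjectivity.

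\emph{Main obstacle.} I expect the main obstacle to be Step 1: making the groupoid or sheaf description rigorous in a non-semisimple, measure-dependent setting. Here the half-braidings need not come from honest group elements, only from elements of a completion, and the measure's values enter the structure constants. Step 2 will be a case-by-case check of the centralizer property together with homogeneity, done case by case using each structure's amalgamation property.
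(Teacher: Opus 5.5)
Your overall skeleton matches the paper: describe half-braidings concretely, attach a support to each central object, show the support is trivial, and conclude as in Theorem~\ref{thm:Ztriv}. Step~3 is fine. The gap is Step~2, which carries essentially all of the content and for which you give no mechanism.

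Your ``cleaner route'' simply asserts the conclusion, namely that the support of a central object lies among the c-smooth elements. This is not a general principle. In Remark~\ref{rmk:cmooth-not-isolated} the Yetter--Drinfeld object $\cC(C)$ has support the closure $\ol{C}$, which contains non-c-smooth elements. For the parabolic group $G^*=\GL(\bV_0)$ of \S\ref{ss:parabolic}, the only c-smooth elements are scalars, yet $\cZ(\cT)$ is much larger than what scalar-supported objects give. Moreover, $\cO(\pi)$ is an object of $\cZ(\Ind{\cT})$ with support all of $G$, so any argument must use that $M$ has finite length, and yours never does. The measure-value heuristic (intervals of measure $-1$) is a red herring: in the quasi-regular cases the paper's argument uses only orbit counts, never the values of $\mu$.

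The missing idea is as follows.
\begin{itemize}
\item Take $R=\uEnd(M)$ and $\fA_R(X)=\{(x,y) : \beta^X_{x,y}\neq 0\}$. For fixed $x$ the elements $\beta^X_{x,y}$ are orthogonal idempotents, so $\delta_{x,y}\mapsto\delta_x\otimes\beta^X_{x,y}$ gives an injection $\cC(\fA_R(X))\hookrightarrow\cC(X)\otimes R\hookrightarrow\cC(X\times Y)$ with $Y$ independent of $X$ (Proposition~\ref{prop:supp-Tsmall}). This is exactly where finite length enters.
\item Comparing dimensions of $U$-invariants, i.e.\ orbit counts, through property~(S) converts this into $\lev\fA_R(X)\le\lev X+\lev Y$ (Proposition~\ref{prop:small-Tsmall}).
\item Theorem~\ref{thm:cset}, a back-and-forth argument using that $G$ is closed and $\Omega$ countable, writes $\fA_R=\fA_D$ for a closed conjugacy set $D$ of honest elements of $G$. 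This is what settles your worry about ``elements of a completion''.
\item The linear level bound then forces each $g\in D$ to be finitary (Lemma~\ref{lem:small-conj}), and these groups have no non-trivial finitary elements.
\end{itemize}
For the Jacobi categories the level is multiplicative, so the argument has to be redone. One needs $\mlev\fA(X(n))\le sn$, an orbit-counting analogue of (S) (Lemma~\ref{lem:cantor-5}), and the direct argument of Lemma~\ref{lem:cantor-3}, which shows that any $g\neq 1$ produces orbits of level $n^2$.

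Step~1 also misplaces the difficulty and misses a case. Identifying $\cZ(\cT)$ with $\cO(\pi)$-modules, equivalently $\cO(\pi)$-structures on $\uEnd(M)$, is a formal coend computation. A literal sheaf description on a conjugation space is unavailable because conjugation is far from a smooth action, which is precisely why c-functors are introduced.

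More seriously, the second Delannoy category comes from the measure $\mu_2$, which is not quasi-regular. So there is no completed group algebra and no description of objects by elements at all. There the paper does the following:
\begin{itemize}
\item it works with element-free central structures $\beta_X\colon\cC(X\times X)\to R$;
\item it proves injectivity of $\rho_X$ on $\cC(\fA_R(X))$ from the facts that $\rho_X$ is an algebra homomorphism and that $\cC(X\times X)=\bigoplus_W\cC(W)$ is a sum of simple \'etale algebras (Proposition~\ref{prop:rho-inj});
\item it replaces property~(S) by the length combinatorics of simple and tilting objects.
\end{itemize}
Even then this yields only the centralizer $\cZ_{\cT}(\cP)$ (Theorem~\ref{thm:Ztriv2}). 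Passing to $\cZ(\cT)$ requires the equivalence between the bounded derived category of $\cT$ and the bounded homotopy category of $\cP$. None of this is anticipated in your plan.
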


Prior to this theorem, we knew no example of a finitely tensor-generated pre-Tannakian category $\cT$ for which $\cT \to \cZ(\cT)$ is an equivalence, besides the case $\cT=\Vec$. However, there are non-finitely generated examples coming from profinite groups (see Remark~\ref{rmk:profinite}).

Of course, not all oligomorphic tensor categories have this property. To describe the center in general, we introduce a class of objects in $\cZ(\cT)$ called \defn{Yetter--Drinfeld modules}. These are close analogs of Yetter--Drinfeld modules for finite groups (reviewed in \S \ref{ss:finite-center}), and can be described explicitly: the category of Yetter--Drinfeld modules is a direct sum of oligomorphic categories for open subgroups of the initial group. These modules figure into our next set of calculations:

\begin{theorem} \label{mainthm2}
Let $\cT$ be one of the following pre-Tannakian categories:
\begin{enumerate}
\item Deligne's interpolation category of the symmetric groups (\S \ref{ss:deligne})
\item an interpolation category for a family of finite classical groups (\S \ref{s:classical})
\item Kriz's quantum Delannoy category (\S \ref{ss:kriz}).
\end{enumerate}
Then $\cZ(\cT)$ coincides with the category of Yetter--Drinfeld modules.
\end{theorem}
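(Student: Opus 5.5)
The plan is to describe $\cZ(\cT)$ in terms of the oligomorphic group $G$ underlying $\cT=\uRep(G)$. The main tool will be an analogue of the finite group statement $\cZ(\Rep(G))\simeq$ Yetter--Drinfeld modules. I expect to first establish, in the general theory developed later in the paper, the following. Given a half-braiding $(X,c)$ on $X\in\cT$, evaluate $c$ on the permutation modules $\cC(G/U)$ for open subgroups $U$. This produces a compatible system of operators, which amounts to a ``conjugation-equivariant grading'' of $X$ by the $G$-set of elements of $G$, understood via the measure and its open conjugacy-type pieces. The upshot should be a fully faithful embedding of the Yetter--Drinfeld category (a direct sum, over suitable conjugacy classes $[g]$, of $\uRep(Z_G(g))$-type categories for open centralizers) into $\cZ(\cT)$. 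The image of $\cT$ itself is the summand for $g=1$.

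To show that $\cZ(\cT)$ \emph{equals} the Yetter--Drinfeld category in each of the three cases, I would proceed in two steps. First, I will show that every simple object of $\cZ(\cT)$ is a constituent of the induction $I(\cC(G/U))$, where $I$ is the adjoint to the forgetful functor $\cZ(\cT)\to\cT$. Using the coend formula $I(Y)=\int^{Z} Z^*\otimes Y\otimes Z$, computed on permutation modules (which generate $\cT$ up to summands and quotients), $I(\cC(G/U))$ becomes a ``sum over orbits'' of $G$ on $G/U\times\Omega^n$. Second, I will decompose this coend and check that each piece is a Yetter--Drinfeld module. Concretely, an element $g\in G$ contributes only if its conjugacy class is suitably open/small, meaning $g$ has open centralizer, or more precisely $g$ stabilizes enough structure that the relevant integrals converge under the measure.

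For the three families the verification is case by case. For Deligne's category, $G=S_\infty$ and the relevant elements are the finitary permutations; their centralizers $S_{\infty-n}\times Z_{S_n}(\sigma)$ are open, and the Yetter--Drinfeld summands should match the known description via interpolation of $\cZ(\Rep S_n)$. For the classical groups, $G$ is an infinite-dimensional classical group over $\bF_q$, and the elements of finite-codimension fixed space play the analogous role. For Kriz's quantum Delannoy category, I would use its oligomorphic model and note that the candidate group elements are constrained similarly. In each case I must show that the coend receives no contribution beyond these elements. Equivalently, the ``non-finitary'' parts of the integral vanish or are killed by the measure.

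The main obstacle is precisely this completeness step: ruling out half-braidings not coming from group elements, i.e.\ showing that the natural map from Yetter--Drinfeld modules to $\cZ(\cT)$ is essentially surjective. My first attempt will be a counting/dimension argument. I would compute $\Hom_{\cZ(\cT)}(I(\bone),I(\bone))=\Hom_\cT(\bone,I(\bone))$, and more generally $\Hom(I(\cC(G/U)),I(\cC(G/V)))$, via the coend. I would then compare it to the same Hom computed inside the Yetter--Drinfeld category. Since the Yetter--Drinfeld category is semisimple and embeds fully faithfully, equality of these dimensions forces $I(Y)$ to lie in it, which gives the result. If the dimension comparison proves unwieldy for Kriz's category, which is not of the form $\uRep(G)$ in the simplest sense, I will fall back on a direct analysis of half-braidings on a tensor generator.
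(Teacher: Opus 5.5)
There is a genuine gap in your completeness step. The induction $I(Y)=\int^{Z}Z^*\otimes Y\otimes Z$ is not an object of $\cZ(\cT)$. Under $\cZ(\Ind{\cT})\simeq\Mod_{\cO(\pi)}$ it is the free module $\cO(\pi)\otimes Y$, which has infinite length, and it is genuinely \emph{not} a Yetter--Drinfeld module. Already $I(\bone)=\cO(\pi)$ has support all of $G$ (Example~\ref{ex:support}(c)), and $\cZ(\Ind{\cT})$ contains many objects that are not ind-Yetter--Drinfeld (Remark~\ref{rmk:IndZ}).

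So the non-finitary contributions to the coend do not vanish and are not killed by the measure. The restriction to c-smooth elements appears only after you impose finite length, and your plan contains no mechanism that uses finite length.

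The fallback dimension count fails for the same reason, and for several further ones:
\begin{itemize}
\item $\Hom_{\cT}(\bone,\cO(\pi))$ is infinite dimensional; when $\cT$ is semisimple it has a basis indexed by the simple objects.
\item $I(\bone)$ has no counterpart in $\cY$ to compare with.
\item Neither $\cT$ nor $\cY$ need be semisimple, e.g.\ Deligne's category at $t\in\bN$, or the classical categories at $t$ a power of $q$.
\item Agreement of Hom dimensions between objects already in the image cannot by itself yield essential surjectivity.
\end{itemize}
Even full faithfulness of $\cY\to\cZ(\cT)$ is not automatic: it needs c-smooth classes to be discrete (Corollary~\ref{cor:YD-full}).

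The missing idea is to read off a conjugacy set directly from a finite-length central object and bound its size. For $M\in\cZ(\cT)$, the half-braiding gives an $\cO(\pi)$-structure $\beta$ on $\uEnd(M)$. Its support $X\mapsto\{(x,y):\beta^X_{x,y}\neq0\}$ is a c-functor, hence equals $\fA_D$ for a closed conjugacy set $D$ by Theorem~\ref{thm:cset}. The map $\delta_{x,y}\mapsto\delta_x\otimes\beta^X_{x,y}$ embeds $\cC(\fA_D(X))$ into $\cC(X)\otimes\uEnd(M)$, so $D$ is $\cT$-small. This is where finite length enters.

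An orbit-counting comparison then turns $\cT$-smallness into the growth bound $\lev{\fA_D(X)}\le\lev{X}+s$. This uses property (S) via Proposition~\ref{prop:Sprime} for Deligne's category, and Lemma~\ref{lem:gl-S} in the linear case. The bound forces every element of $D$ to be finitary of bounded rank, or in the linear case a scalar plus a bounded-rank matrix (Lemma~\ref{lem:glsmall-2}). Elements of maximal rank are isolated. At an isolated point, with conjugacy class $C$, the idempotent $\int_C\beta_g\,dg$ splits off a nonzero Yetter--Drinfeld summand, and induction on length finishes the argument (Theorem~\ref{thm:YD}).

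Finally, the statement depends on the choice of group, which your plan leaves unspecified. For the classical and Kriz cases you must use the ``Levi'' groups: row-and-column-finite matrices, and $K=HU$. For $\GL(\bV_0)$ the only c-smooth elements are scalars, and for $K^*$ only the identity, so the resulting Yetter--Drinfeld category is too small (\S\ref{ss:parabolic}). Also, the c-smooth elements in the linear case are $\bF^*\cdot G_n$. They are not just the elements with finite-codimensional fixed space, which would omit the scalar twists.
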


We note that the center of Deligne's category had previously been determined in \cite{FL, FHL}. All other cases in these theorems are new, to the best of our knowledge.

As we explain below, the general principle underlying these results is that the center comes from ``small'' conjugacy classes in the oligomorphic group. The relevant groups in Theorem~\ref{mainthm1} have no small classes, except for the identity. The case of profinite groups serves as a useful toy example where the principle can be more easily observed; see Remark~\ref{rmk:profinite}. (We note that many of our results apply to pro-oligomorphic groups, a class which includes profinite groups.)

\subsection{Overview of method} \label{ss:overview}

We now describe our general method for analyzing the center of an oligomorphic tensor category. Let $G$ be an oligomorphic group with a measure $\mu$ valued in a field $k$ (see \S \ref{s:oligo} for background). The main construction of \cite{repst} produces an additive $k$-linear symmetric monoidal category $\uPerm(G, \mu)$ of ``permutation modules.'' Suppose that $\mu$ is quasi-regular and that nilpotent endomorphisms in $\uPerm(G, \mu)$ have trace zero. Then $\uPerm(G, \mu)$ has an abelian envelope $\cT=\uRep(G, \mu)$, which is a pre-Tannakian category. We note that all categories appearing in Theorems~\ref{mainthm1} and~\ref{mainthm2} arise in this way, except for the second Delannoy category.

The general theory from \cite{repst} gives an explicit description of $\cT$ as the category of finite length smooth modules over the completed group algebra of $G$. In particular, this realizes $\cT$ as a concrete category: its objects are vector spaces with extra structure. In \S \ref{ss:Opi}, we give a corresponding concrete description of the Drinfeld center, as follows. Suppose $R$ is an algebra in $\cT$. An \defn{$\cO(\pi)$-structure} on $R$ is a rule $\beta$ that assigns to each (finitary smooth) $G$-set $X$ and elements $x,y \in X$ an element $\beta^X_{x,y}$ in $R$, such that certain axioms hold (see Definition~\ref{defn:pi-struct}). If $M$ is an object of $\cT$ then giving a half-braiding on $M$ (i.e., realizing $M$ as an object of $\cZ(\cT)$) is equivalent to giving an $\cO(\pi)$-structure on $\uEnd(M)$. The elements $\beta^X_{x,y}$ essentially specify the half-braiding on Schwartz spaces.

The central idea in our approach is to associate to an object of $\cZ(\cT)$ a conjugation-stable subset of $G$, called its support. The category $\cT$ is constructed using finitary smooth $G$-sets, while the conjugation action of $G$ on itself is typically very far from smooth. We therefore need a bridge between the two worlds. This is provided by the notion of \defn{c-functor}: such is a rule $\fA$ that assigns to each transitive smooth $G$-set $X$ a $G$-subset $\fA(X)$ of $X \times X$ such that certain axioms hold (see Definition~\ref{defn:cset}). If $D \subset G$ is a conjugacy set (that is, a subset of $G$ stable by conjugation) then we obtain a c-functor $\fA_D$ by taking $\fA_D(X)$ to be the set of pairs $(x, gx)$ with $x \in X$ and $g \in D$. We show (Theorem~\ref{thm:cset}) that, under mild technical assumptions, $D \mapsto \fA_D$ is a bijection from closed conjugacy sets to c-functors.

Now, let $M$ be an object of $\cZ(\cT)$ and let $\beta$ be the $\cO(\pi)$-structure on $\uEnd(M)$. We then obtain a c-functor $\fA_M$ by taking $\fA_M(X)$ to be the set of pairs $(x,y)$ such that $\beta^X_{x,y} \ne 0$. The support of $M$ is defined to be this c-functor, or the corresponding conjugacy set. We note that if $G$ is a finite group then $\cZ(\cT)$ is equivalent to the category of $G$-equivariant sheaves on $G$ (with respect to the conjugation action), and our notion support is the usual support of the sheaf. The support of $M$ satisfies a certain representation theoretic smallness condition that we call \defn{$\cT$-small}; this is the source of all our leverage for understanding $\cZ(\cT)$. If the only $\cT$-small conjugacy sets are $\emptyset$ and $\{1\}$ then the functor $\cT \to \cZ(\cT)$ is an equivalence (Theorem~\ref{thm:Ztriv}); this is what happens in Theorem~\ref{mainthm1}. More generally, if every $\cT$-small conjugacy set contains an isolated point then the Yetter--Drinfeld modules account for all of $\cZ(\cT)$ (Theorem~\ref{thm:YD}); this is what happens in Theorem~\ref{mainthm2}.

To apply these results, we need to control $\cT$-small conjugacy sets. Our approach to this is to first show that $\cT$-small is equivalent to a combinatorially defined smallness condition, and then to analyze this condition directly using the combinatorics of the oligomorphic group. In \S \ref{s:strong}, we give a uniform treatment of this process when $G$ comes from a Fra\"iss\'e class satisfying the strong approximation property. This covers the Delannoy category, its circular variant, the arboreal categories, and Deligne's category. The remaining cases are treated using similar ideas, but the details become more ad hoc.

In \S \ref{s:nonqr}, we explain how to handle the case when the measure $\mu$ is not quasi-regular. We apply this to determine the Drinfeld center of the second Delannoy category.

\subsection{Tensor category terminology}

We fix a field $k$ throughout the paper. All tensor categories are essentially small and $k$-linear. We write $\bone$ for the tensor unit. A \defn{pre-Tannakian category} is a $k$-linear abelian category equipped with a $k$-bilinear symmetric monoidal structure such that all objects have finite length, all objects are rigid (i.e., have duals), and $\End(\bone)=k$; these conditions imply that $\Hom$ spaces are finite dimensional.

\subsection{Notation}

We list some important notation:
\begin{description}[align=right,labelwidth=2.5cm,leftmargin=!]
\item[ $k$ ] the coefficient field
\item[ $\bone$ ] the tensor unit
\item[ $\bbone$ ] a one-point set
\item[ $\Ind{\cT}$ ] the ind-category of $\cT$
\item[ $\cZ(-)$ ] the Drinfeld center of a monoidal category (\S \ref{ss:center})
\item[ $\cC(X)$ ] the Schwartz space on a $G$-set $X$ (\S \ref{ss:measure})
\item[ $\cY$ ] the category of Yetter--Drinfeld modules (\S \ref{ss:yd})
\end{description}

\section{Background on Drinfeld centers}

In this section, we review the Drinfeld center and its connection to the fundamental group of a pre-Tannakian category. We also recall the explicit description of the Drinfeld center for $\Rep{G}$ when $G$ is finite.

\subsection{The fundamental group} \label{ss:fund}

Let $\cC$ be a $k$-linear rigid symmetric monoidal category, let $\cT$ be a pre-Tannakian category, and let $F \colon \cC \to \cT$ be a $k$-linear symmetric monoidal functor. Let $\cC_*$ be a set of isomorphism class representatives in $\cC$. We define a commutative Hopf algebra $H=\Coend(F)$ in $\Ind{\cT}$ as the quotient of $\bigoplus_{X\in \cC_*} F(X)\otimes F(X)^*$ by the sum of images of morphisms 
\begin{displaymath}
F(f)\otimes 1-1\otimes F(f)^* \colon F(X) \otimes F(Y)^* \to F(Y) \otimes F(Y)^* \oplus F(X) \otimes F(X)^*
\end{displaymath}
over all $X,Y\in \cC_*$ and morphisms $f \colon X \to Y$. The coproduct in $H$ is induced by the standard coproduct on $F(X)\otimes F(X)^*$, the product by the tensor product in $\cC$, and the antipode by duality in $\cC$. There is a natural coaction of $H$ on every object of $\cC$, and $F$ factors through the symmetric monoidal functor $\cC \to \Comod^{\fin}_H$, where the target is the category of $H$-comodules in $\cT$ (not $\Ind{\cT})$. The affine group scheme\footnote{Recall that, by definition, the category of affine schemes in a pre-Tannakian category $\cT$ is anti-equivalent to the category of commutative algebras in $\Ind{\cT}$ via the functor $S \mapsto \cO(S)$ with inverse $R \mapsto \Spec{R}$, and similarly for group schemes and commutative Hopf algebras.} $\Spec(H)$ is naturally identified with $\uAut(F)$, and so $H = \cO(\uAut(F))$.

An important special case is when $\cC = \cT$ and $F$ is the identity functor. The affine group scheme $\Spec(H)$ is then called the \defn{(Deligne) fundamental group} of $\cT$ and denoted $\pi(\cT)$. We refer to \cite[\S 8]{Deligne1} and \cite[\S 2.4]{Etingof2} for additional background on the fundamental group.

\begin{remark}
The above discussion can be partially extended to the non-symmetric case. Suppose $\cC$ is a $k$-linear rigid monoidal category, $\cT$ is a locally finite $k$-linear rigid braided monoidal abelian category with $\End(\bone)=k$, and $F$ is a $k$-linear monoidal functor. Then $H=\Coend(F)$ is a Hopf algebra, and $F$ factors via the monoidal functor $\cC \to \Comod^{\fin}_H$.

Now suppose $\cC$ and $F$ are braided. Then $H$ is braided commutative. There is a notion of dyslectic (or local) $H$-comodule. The category $\cD'$ of such comodules is braided, and $F$ factors through the braided monoidal functor $\cC \to \cD'$. See \cite{Pareigis1, Pareigis2} for details.
\end{remark}

\subsection{Drinfeld center} \label{ss:center}

Let $\cC$ be a monoidal category. A \defn{half-braiding} on an object $Z$ of $\cC$ is a functorial isomorphism\footnote{In some references, the map goes in the opposite direction. We follow the convention of \cite[\S 7.13]{EGNO}.}
\begin{displaymath}
\gamma_X \colon X \otimes Z \to Z \otimes X, \qquad X \in \cC
\end{displaymath}
such that
\begin{displaymath}
\gamma_{X \otimes Y} = (\gamma_X \otimes 1_Y) \circ (1_X \otimes \gamma_Y),
\end{displaymath}
where we have dropped the associativity isomorphisms. The \defn{Drinfeld center} of $\cC$, denoted $\cZ(\cC)$ is the category of pairs $(Z, \gamma)$ where $Z$ is an object of $\cC$ and $\gamma$ is a half-braiding on $Z$. It is well-known that $\cZ(\cC)$ is a braided monoidal category with a canonical forgetful functor $\cZ(\cC) \to \cC$. Moreover, a braiding on $\cC$ is equivalent to a monoidal splitting of this functor. In particular, if $\cC$ is braided then there is a natural functor $\cC \to \cZ(\cC)$. See \cite[\S 7.13]{EGNO} and \cite[\S 8.5]{EGNO} for these statements and additional background.

Suppose now that $\cT$ is a pre-Tannakian category, let $\pi=\pi(\cT)$ be the fundamental group, and let $H=\cO(\pi)$. The category $\Mod_H$ of $H$-modules in $\Ind{\cT}$ is braided monoidal: the braiding $C_{X,Y}$ is given by composing the braiding $c_{X,Y}$ in $\cT$ with the R-matrix morphism 
\begin{displaymath}
R_{X,Y} = (\mu_X\otimes 1_Y) \circ (1_X\otimes \Delta_Y) \colon X\otimes Y\to X\otimes Y,
\end{displaymath}
where $\mu_X \colon X \otimes H \to X$ is the action of $H$ on $X$ and $\Delta_Y \colon Y\to H \otimes Y$ is the canonical coaction of $H$ on $Y$; i.e., $C_{X,Y} = c_{X,Y} \circ R_{X,Y}$. Moreover, note that $C_{X,Y}$ makes sense if $X$ is just an object of $\cT$. This gives rise to a functor
\begin{displaymath}
\Phi \colon \Mod_H \to \cZ(\Ind{\cT})
\end{displaymath}
given by $\Phi(Y)=(Y, \gamma)$, where $\gamma_X=C_{X,Y}$. It is not hard to check that $\Phi$ is naturally a monoidal functor, and moreover it preserves the braiding. We also have the functor
\begin{displaymath}
\Psi \colon \cZ(\Ind{\cT}) \to \Mod_H
\end{displaymath}
given by $\Psi(Y,\gamma) = (Y,\mu)$, where $\mu \colon Y \otimes H \to Y$ is given by 
\begin{displaymath}
\mu=(1_Y \otimes \varepsilon) \circ \gamma_H,
\end{displaymath}
where we view $H$ as an $H$-comodule using the coproduct, and $\varepsilon$ is the counit. We have the following result about these constructions; see, e.g., \cite{LZ}.

\begin{proposition} \label{prop:Opi-center}
The functors $\Phi$ and $\Psi$ are mutually inverse equivalences of braided monoidal categories between $\Mod_{\cO(\pi)}$ and $\cZ(\Ind{\cT})$.
\end{proposition}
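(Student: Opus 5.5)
We check four things in turn: $\Phi$ is well defined, $\Psi$ is well defined, they are mutually inverse, and they are compatible with the monoidal structures and braidings.

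\emph{$\Phi$ is well defined.} Fix an $H$-module $Y$ in $\Ind\cT$. The map $C_{X,Y}=c_{X,Y}\circ R_{X,Y}$ is defined for every $X\in\cT$, because every object of $\cT$ carries the canonical coaction $\Delta_X \colon X \to H\otimes X$. This coaction is natural in $X$, since $H$ is the coend of the identity functor; hence $C_{X,Y}$ is natural in $X$. It is invertible: the candidate inverse is $R^{-1}_{X,Y}\circ c_{Y,X}$, where $R^{-1}$ is built from the antipode $S$ in the standard way. The hexagon identity $\gamma_{X\otimes X'}=(\gamma_X\otimes 1)(1\otimes\gamma_{X'})$ reduces to two facts:
\begin{itemize}
\item the coaction on $X\otimes X'$ is the product of the coactions on $X$ and $X'$, by the definition of multiplication on $H$;
\item $Y$ is an $H$-module, so $\mu_Y\circ(\mu_Y\otimes 1)=\mu_Y\circ(1\otimes m)$.
\end{itemize}
The only extra input is that $H$ is commutative, so the order in which the two coaction factors multiply does not matter. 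A morphism of $H$-modules visibly commutes with $C$, so $\Phi$ is a functor.

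\emph{$\Psi$ is well defined.} Given $(Y,\gamma)$ with $\gamma$ a half-braiding, set $\mu=(1\otimes\varepsilon)\gamma_H$. Associativity of $\mu$ follows from three facts:
\begin{itemize}
\item the multiplication $m \colon H\otimes H\to H$ is a morphism of $H$-comodules in $\Ind\cT$;
\item naturality of $\gamma$, applied to $m$ in $\Ind\cT$ (after extending $\gamma$ from $\cT$ to $\Ind\cT$ by colimits);
\item the tensor property of $\gamma$, together with $\varepsilon\circ m=\varepsilon\otimes\varepsilon$.
\end{itemize}
The unit axiom comes from naturality of $\gamma$ along the unit $\bone\to H$ together with $\gamma_\bone=\id$.

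\emph{$\Psi\Phi\cong\id$.} This is the computation $(1\otimes\varepsilon)\,c\,(\mu\otimes 1)(1\otimes\Delta_H)=\mu$, which holds because $(1\otimes \varepsilon)\Delta=\id$.

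\emph{$\Phi\Psi\cong\id$.} This is the main obstacle: we must show that an arbitrary half-braiding $\gamma$ is recovered from its restriction to $H$. The plan is to use the coaction $\Delta_X \colon X\to H\otimes X$, which is a morphism in $\Ind\cT$ from $X$ to the cofree comodule on $X$, where the cofree comodule $H\otimes X$ has the trivial coaction on the factor $X$. First apply naturality of $\gamma$ to $\Delta_X$. Then apply the tensor property to split $\gamma_{H\otimes X}$ as $(\gamma_H\otimes 1)(1\otimes\gamma_X)$ on the factor $H\otimes X$. Finally use counitality $(\varepsilon\otimes 1)\Delta_X=\id$. Together these should express $\gamma_X$ as $c_{X,Y}\circ(\mu\otimes 1)\circ(1\otimes\Delta_X)$, up to the symmetry of $\cT$.

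The delicate point is the claim that, on the factor $X$ of the cofree comodule $H\otimes X$, the half-braiding $\gamma$ reduces to the symmetric braiding $c$. For this I would argue that there $\gamma$ acts through the trivial part, by comparing with $\gamma_{\bone}\otimes X$ via naturality. This is exactly where we use that $H$ is the coend of the identity functor of $\cT$, so that every $X$ is a comodule and every morphism is a comodule map. I would carry out this step with string diagrams, following the argument in \cite{LZ}.

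\emph{Monoidal and braided structure.} It remains to check that $\Phi$ is monoidal. The tensor product of $H$-modules uses the coproduct of $H$, and the half-braiding on $Y\otimes Y'$ is $(1\otimes\gamma')(\gamma\otimes 1)$. These match because $\Delta$ is multiplicative and coassociative. By construction, the braiding $C$ on $\Mod_H$ is sent to the center braiding $\gamma_X$ evaluated at $X=Y'$, so $\Phi$ is braided.
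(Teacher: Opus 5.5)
The step that fails is $\Psi\Phi\cong\id$. The cause is the formula for $\Psi$ that you inherited from the statement: read literally, $\mu=(1_Y\otimes\varepsilon)\circ\gamma_H$ is always the trivial action. Indeed, $\varepsilon\colon H\to\bone$ is a morphism of $\Ind\cT$, so naturality of any half-braiding gives $(1_Y\otimes\varepsilon)\circ\gamma_H=\gamma_{\bone}\circ(\varepsilon\otimes 1_Y)=\varepsilon\otimes 1_Y$.

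In your computation you treat $\Delta_H$ as the coproduct and use $(1\otimes\varepsilon)\Delta=\id$. But the half-braiding of $\Phi(Y)$ at $X=H$ is built from the canonical coaction of $H$ as an object of $\Ind\cT$, which is the adjoint (conjugation) coaction. By naturality of the canonical coaction, $(1\otimes\varepsilon)\circ\Delta_H=\Delta_{\bone}\circ\varepsilon=\eta\circ\varepsilon$. Concretely, for $G$ finite and a Yetter--Drinfeld module $V$, one has $\gamma_H(f\otimes v)=v\otimes(g\cdot f)$ for $v\in V_g$, and $(g\cdot f)(1)=f(1)$. So you recover $f(1)v$ rather than $f(g)v$.

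Saying ``view $H$ as a comodule via the coproduct'' does not repair this. A half-braiding on $\Ind\cT$ is natural only with respect to morphisms of $\Ind\cT$ and cannot see an auxiliary comodule structure on $H$. The same confusion undermines your sketch of $\Phi\Psi\cong\id$. The ``cofree'' structure on $H\otimes X$ is invisible to $\gamma$, which on that object is $(\gamma_H\otimes 1)(1\otimes\gamma_X)$, so nothing makes it reduce to $c$ on the factor $X$. If you actually run your three steps (naturality along $\Delta_X$, the tensor property, counitality), the identity $(1\otimes\varepsilon)\gamma_H=\varepsilon\otimes 1$ collapses them to the tautology $\gamma_X=\gamma_X$.

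The missing idea is to extract the action through the universal property of the coend, braiding only the factor $X$ of $X\otimes X^*$. Braiding both factors yields conjugation, which the counit kills. For $X\in\cT$, let $\mu_X\colon X\otimes X^*\otimes Y\to Y$ move $X^*$ aside using the symmetry, apply $\gamma_X$, and pair $X$ with $X^*$.
\begin{itemize}
\item Naturality of $\gamma$ is precisely dinaturality of the $\mu_X$, so they descend to $\mu\colon H\otimes Y\to Y$.
\item The module axioms follow from the tensor property of $\gamma$ (the product of $H$ is induced by $\otimes$) together with $\gamma_{\bone}=\id$.
\item $\Phi\Psi\cong\id$ is then a zig-zag identity, since $\Delta_X$ is the image of $\mathrm{coev}_X$. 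It suffices to check this on $\cT$, as $\gamma$ commutes with filtered colimits.
\item $\Psi\Phi\cong\id$ holds because composing the coaction of $X$ with evaluation against $X^*$ recovers the canonical map $\iota_X\colon X\otimes X^*\to H$. So both actions agree on the images of the $\iota_X$, which generate $H$.
\end{itemize}
This is also the form in which the paper later uses the result, with the half-braiding on $\cC(X)$ given by $\delta_x\otimes m\mapsto\int_X\beta^X_{x,y}(m)\otimes\delta_y\,dy$. The paper itself offers no argument beyond citing \cite{LZ}. Your treatment of $\Phi$ and of the monoidal and braided compatibility is fine.
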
 

The functors $\Phi$ and $\Psi$ do not affect the underlying object of $\Ind{\cT}$. Hence, in the above proposition, $\cZ(\cT)$ is equivalent to the category $\Mod_{\cO(\pi)}^{\fin}$ of $\cO(\pi)$-modules in $\cT$.

\begin{remark}
Proposition~\ref{prop:Opi-center} remains valid in the braided case, as shown in \cite{LZ}. Precisely, suppose $\cT$ is a locally finite $k$-linear braided rigid monoidal abelian category with $\End(\bone)=k$, and let $H=\Coend(\id_{\cT})$. Then $\cZ(\Ind{\cT})$ and the category $\Mod_H$ of right $H$-modules are equivalent as braided monoidal categories.
\end{remark}

\begin{remark} \label{rmk:IndZ}
In general $\cZ(\Ind{\cT})$ is significantly larger than $\Ind{\cZ(\cT)}$. For example, suppose $\cT=\Rep(G)$ where $G$ is an adjoint simple complex algebraic group. Then $\Ind{\cZ(\cT)}$ is the category of locally finite $G$-equivariant $\cO(G)$-modules, whose only simple objects are simples $V \in \cT$ with trivial action of $\cO(G)$. (We note, however, that there are non-trivial extensions among simples in $\cZ(\cT)$, unlike in $\cT$.) On the other hand, $\cZ(\Ind{\cT})$ is the category of all $G$-equivariant $\cO(G)$-modules, which for instance contains the simple object $\cO(C)$ where $C$ is any semisimple conjugacy class of $G$. 
 
Similarly, consider Deligne's interpolation category $\cT=\uRep(\fS_t)$ over the complex numbers \cite{Deligne3}, with $t \not\in \bN$. It is shown in \cite{FHL} that the category $\cZ(\cT)$ is semisimple. However, the category $\cZ(\Ind{\cT})$ is not semisimple and has much richer structure. We give an example of an interesting object. The invariant ring $\cO(\pi)^{\pi}$ is the Grothendieck ring of $\cT$, which is isomorphic to $\bC[X_1, X_2, \ldots]$. For a sequence $x=(x_1, x_2, \ldots)$, with $x_i \in \bC$, let $\fm_x$ be the maximal ideal in $\bC[X_1, X_2, \ldots]$ generated by $X_i-x_i$ for $i\ge 1$. We can then consider the $\cO(\pi)$-module $\cO(\pi)/\fm_x \cO(\pi) \in \cZ(\Ind{\cT})$, and its simple quotients (which exist by Zorn's lemma, since it is finitely generated). For sufficiently general $x$, the simple quotients are infinite dimensional (meaning that underlying object in $\cT$ has infinite length); in fact, we expect that $\cO(\pi)/\fm_x \cO(\pi)$ is itself simple for sufficiently generic $x$.
\end{remark} 

\subsection{Finite groups} \label{ss:finite-center}

Let $G$ be a finite group, and consider the category $\cT=\Rep(G)$ of finite dimensional representations of $G$ over the field $k$. We now recall the description of the Drinfeld center of $\cT$ in terms of Yetter--Drinfeld modules. Our analysis of the center of oligomorphic tensor categories is inspired by this case.

A \defn{Yetter--Drinfeld module} for $G$ is a $k[G]$-module $V$ equipped with a decomposition
\begin{displaymath}
V = \bigoplus_{g \in G} V_g
\end{displaymath}
such that $hV_g = V_{hgh^{-1}}$ for all $g,h \in G$. Such a module $V$ defines an object of $\cZ(\Ind{\cT})$: if $W$ is a representation of $G$ then the half-braiding is given by
\begin{displaymath}
W \otimes V \to V \otimes W, \qquad w \otimes v \mapsto v \otimes gw
\end{displaymath}
for $v \in V_g$. In fact, $\cZ(\Ind{\cT})$ is equivalent to the category of Yetter--Drinfeld modules. It is not difficult to prove this directly, but it also follows from the general description of the center in terms of the fundamental group. Indeed, $\cO(\pi)$ is the algebra of $k$-valued functions on $G$, with the action of $G$ induced by conjugation. As an algebra, we have $\cO(\pi) = \prod_{g \in G} k$, and so an $\cO(\pi)$-module in $\cT$ is a $G$-graded representation of $G$, i.e., a Yetter--Drinfeld module. This discussion yields a very explicit description of the center: letting $g_1, \ldots, g_r$ be representatives for the conjugacy classes of $G$, we have
\begin{displaymath}
\cZ(\cT) \cong \bigoplus_{i=1}^r \Rep(Z(g_i)),
\end{displaymath}
where $Z(g_i)$ is the centralizer of $g_i$ in $G$.

\begin{remark} \label{rmk:profinite}
Suppose that $G = \varprojlim G_i$ is a profinite group and $\cT=\Rep{G}$. In this case, a Yetter--Drinfeld module is a discrete $k[G]$-module equipped with a $G_i$-grading (compatible with conjugation) for each $i$, such that for $i>j$ the $G_i$-grading refines the $G_j$-grading. Such modules again describe the center of $\Ind{\cT}$. Let $G_{\sm} \subset G$ be the elements with open centralizer, or, equivalently, finite conjugacy class; we call such elements \defn{c-smooth} (see \S \ref{ss:csmooth}). One can show that if $V$ is a finite dimensional Yetter--Drinfeld module then $V$ is naturally $G_{\sm}$-graded, and this grading defines the braiding. In particular, if $G_{\sm}$ is trivial then all finite dimensional Yetter--Drinfeld modules are trivial (i.e., supported at the identity), and the functor $\cT \to \cZ(\cT)$ is an equivalence. This happens for $G=\SL_2(\bZ_p)/\{\pm 1\}$, for instance.
\end{remark}

\section{Oligomorphic groups and tensor categories} \label{s:oligo}

In this section, we recall background on oligomorphic groups and their associated tensor categories, following \cite{repst}. In \S \ref{ss:cset}, we establish the correspondence between c-functors and conjugacy sets, and in \S \ref{ss:csmooth} we briefly discuss c-smooth elements. These two subsections contain the only new material in this section.

\subsection{Oligomorphic groups} \label{ss:oligo}

A \defn{pro-oligomorphic group} is a topological group $G$ such that (a) $G$ is Hausdorff; (b) $G$ is non-archimedean (open subgroups form a neighborhood basis of the identity); and (c) $G$ is Roelcke precompact ($U \backslash G/V$ is a finite set for all open subgroups $U$ and $V$). An \defn{oligomorphic permutation group} is a permutation group $(G, \Omega)$ such that $G$ has finitely many orbits on $\Omega^n$ for all $n$.

The two notions are closely linked. Suppose $(G, \Omega)$ is an oligomorphic permutation group. Given a finite subset $A$ of $\Omega$, let $G(A)$ be the subgroup of $G$ consisting of elements that fix each element of $A$. These subgroups form a neighborhood basis of the identity for a pro-oligomorphic topology on $G$ \cite[\S 2.2]{repst}. Conversely, suppose $G$ is pro-oligomorphic. Then for any open subgroup $U$ of $G$, the action of $G$ on $G/U$ is oligomorphic \cite[\S 2.3]{repst}. Thus, if there exists $U$ such that this action is faithful, then $(G, G/U)$ is an oligomorphic permutation group. In general, $G$ is a dense subgroup of the inverse limit of oligomorphic permutation groups. We will ultimately be interested in oligomorphic permutation groups, however many constructions depend only on the topology and not $\Omega$, so it is convenient to work in the pro-oligomorphic setting.

Let $G$ be a pro-oligomorphic group. An action of $G$ on a set $X$ is \defn{smooth} if the stabilizer of each element of $X$ is an open subgroup of $G$, and \defn{finitary} if $G$ has finitely many orbits on $X$. We use the term ``$G$-set'' to mean ``set equipped with a finitary and smooth action of $G$.'' A key property is that the product of two $G$-sets is again a $G$-set \cite[Proposition~2.8]{repst}. A \defn{$\wh{G}$-set} is a set equipped with a finitary and smooth action of some open subgroup of $G$; shrinking the open subgroup does not change the $\wh{G}$-set. See \cite[\S 2.5]{repst} for details.

Suppose $(G, \Omega)$ is oligomorphic. We say that a $G$-set is \defn{$\Omega$-smooth} if each orbit on it is isomorphic to an orbit on some $\Omega^n$. The class of $\Omega$-smooth $G$-sets is closed under disjoint union, products, and passing to $G$-subsets. Every $G$-set is a quotient of an $\Omega$-smooth one. Indeed, a transitive $G$-set $G/U$ is a quotient of the $\Omega$-smooth $G$-set $G/G(A)$, where $A \subset \Omega$ is a finite set with $G(A) \subset U$. Typically, $\Omega$-smooth $G$-sets are easier to understand than all $G$-sets, which is why the notion is useful.

\begin{example}
Let $\fS$ be the group of all permutations of $\Omega=\{1,2,\ldots\}$, i.e., the infinite symmetric group. This is the first example of an oligomorphic permutation group. Let $\fS(n)$ be the subgroup of $\fS$ fixing each of $1, \ldots, n$. A subgroup of $\fS$ is open if and only if it contains $\fS(n)$ for some $n$. By \cite[Proposition~14.1]{repst}, every open subgroup of $\fS$ is conjugate to one of the form $H \times \fS(n)$, where $H$ is a subgroup of the finite symmetric group $\fS_n$.

Let $\Omega^{[n]}$ denote the subset of $\Omega^n$ where the coordinates are distinct. It is not difficult to see that the $\Omega^{[n]}$ are exactly the $\Omega$-smooth transitive $\fS$-sets. It follows from \cite[Proposition~14.1]{repst} that every transitive $\fS$-set is isomorphic to $\Omega^{[n]}/H$ for some subgroup $H$ of $\fS_n$ (which acts on $\Omega^{[n]}$ by permuting coordinates). An $\wh{\fS}$-subset of $\Omega$ is a finite or cofinite subset. See \cite[Figure~1]{repst} for more interesting examples of $\wh{\fS}$-sets.
\end{example}

\subsection{Measures and integration} \label{ss:measure}

Fix a pro-oligomorphic group $G$ and a field $k$. The following is a key definition from \cite[\S 3.2]{repst}.

\begin{definition}
A \defn{measure} for $G$ valued in $k$ is a rule $\mu$ that assigns to each $\wh{G}$-set $X$ a quantity $\mu(X)$ in $k$ such that the following conditions hold (in which $X$ and $Y$ are $\wh{G}$-sets):
\begin{enumerate}
\item If $X$ and $Y$ are isomorphic then $\mu(X)=\mu(Y)$.
\item We have $\mu(\bbone)=1$ where $\bbone$ is the one-point set.
\item We have $\mu(X \amalg Y)=\mu(X)+\mu(Y)$.
\item For $g \in G$ we have $\mu(X)=\mu(X^g)$, where $X^g$ is the conjugate of $X$ by $g$.
\item If $X$ and $Y$ are transitive $U$-sets, for some open subgroup $U$, and $Y \to X$ is a map of $U$-sets with fiber $F$ (over some point), then $\mu(Y)=\mu(F) \cdot \mu(X)$.
\end{enumerate}
\end{definition}

Fix a measure $\mu$. Let $X$ be a $\wh{G}$-set. We say that $\phi \colon X \to k$ is a \defn{Schwartz function} if it is invariant under some open subgroup of $G$. In this case, $\phi$ takes on finitely many values $a_1, \ldots, a_n$. Let $Y_i=\phi^{-1}(a_i)$. We define the \defn{integral} of $\phi$ by
\begin{displaymath}
\int_X \phi(x) \, dx = \sum_{i=1}^n a_i \mu(Y_i).
\end{displaymath}
We refer to \cite[\S 4.3]{repst} for the basic properties of this construction.

We write $\cC(X)$ for the space of Schwartz functions on $X$. Integration defines a linear map $\cC(X) \to k$. More generally, if $f \colon Y \to X$ is a map of $\wh{G}$-sets that is equivariant for some open subgroup then integration over the fibers defines a push-forward map $f_* \colon \cC(Y) \to \cC(X)$. There is also a pull-back map $f^* \colon \cC(X) \to \cC(Y)$, which does not depend on the measure. These maps have many of the expected properties; see \cite[\S 4.4]{repst} for details.

Let $X$ and $Y$ be $G$-sets. A \defn{$Y \times X$ matrix} is a Schwartz function $A \colon Y \times X \to k$. A matrix defines a linear map $A \colon \cC(X) \to \cC(Y)$ by
\begin{displaymath}
(A \phi)(y) = \int_X A(y,x) \phi(x) \, dx.
\end{displaymath}
Suppose $Z$ is another $G$-set and $B$ is a $Z \times Y$ matrix. We define $BA$ to be the $Z \times X$ matrix given by
\begin{displaymath}
(BA)(z,x) = \int_Y B(z,y) A(y,x) \, dy.
\end{displaymath}
This has the expected properties of matrix multiplication; see \cite[\S 7]{repst}. In particular, the linear map $BA \colon \cC(X) \to \cC(Z)$ is the composition of the linear maps $A$ and $B$.

\subsection{Permutation modules} \label{ss:perm}

Let $G$ be a pro-oligomorphic group and let $\mu$ be a measure valued in the field $k$. Following \cite[\S 8]{repst}, we define a category $\uPerm(G, \mu)$ of ``permutation modules'' as follows. The objects are $\cC(X)$, where $X$ is a $G$-set. A morphism $\cC(X) \to \cC(Y)$ is a $G$-invariant $Y \times X$ matrix. Composition is given by matrix multiplication. This category is additive and has a symmetric monoidal structure. On objects, we have
\begin{displaymath}
\cC(X) \oplus \cC(Y) = \cC(X \amalg Y), \qquad \cC(X) \otimes \cC(Y) = \cC(X \times Y).
\end{displaymath}
On morphisms, direct sum and tensor product are given by the familiar block matrix and Kronecker product constructions. The category $\uPerm(G, \mu)$ is rigid, and each object is self-dual. The dual of a matrix is given by transpose. The categorical dimension of $\cC(X)$ is equal to $\mu(X)$. See \cite[\S 8]{repst} for proofs and additional details.

Suppose $f \colon Y \to X$ is a map of $G$-sets. The indicator function of the graph of $f$ can be regarded as a $Y \times X$ matrix. We write $f^* \colon \cC(X) \to \cC(Y)$ for the morphism in $\uPerm(G, \mu)$ corresponding to this matrix, and $f_* \colon \cC(Y) \to \cC(X)$ for the morphism corresponding to the transpose matrix. The linear transformations induced by these matrices are the operations $f^*$ and $f_*$ and so the notation is consistent. See \cite[\S 7.7]{repst} for details.

Let $X$ be a $G$-set. Let $\Delta \colon X \to X \times X$ be the diagonal and let $f \colon X \to \bbone$ be the unique map to the one-point $G$-set. Note that $\cC(\bbone)=\bone$ is the tensor unit. We have maps
\begin{align*}
\Delta_* &\colon \cC(X) \otimes \cC(X) \to \cC(X), &
f_* &\colon \cC(X) \to \bone, \\
\Delta^* &\colon \cC(X) \to \cC(X) \otimes \cC(X), &
f^* &\colon \bone \to \cC(X).
\end{align*}
These maps give $\cC(X)$ the structure of a special commutative Frobenius algebra \cite[\S 9.6]{repst}, or, equivalently, an \'etale algebra. See \cite[\S 4,5]{discrete} for background on \'etale algebras.

\subsection{The completed group algebra} \label{ss:cga}

Let $G$ and $\mu$ be as in \S \ref{ss:perm}. We say that $\mu$ is \defn{regular} if $\mu(X) \ne 0$ for all transitive $G$-sets $X$, and \defn{quasi-regular} if $\mu$ restricts to a regular measure on some open subgroup of $G$. Suppose that $\mu$ is quasi-regular and that nilpotent morphisms in $\uPerm(G, \mu)$ have trace~0. Then $\uPerm(G, \mu)$ has an abelian envelope, i.e., there is a pre-Tannakian category $\uRep(G, \mu)$ and a fully faithful tensor functor
\begin{displaymath}
\uPerm(G, \mu) \to \uRep(G, \mu)
\end{displaymath}
satisfying a universal property; see \cite[\S 13.3]{repst}. We now recall the explicit construction of $\cT=\uRep(G, \mu)$ from \cite[Part~III]{repst}. We assume in what follows that $G$ is first countable.

Let $A=A(G, \mu)$ be the inverse limit of the Schwartz spaces $\cC(G/U)$, over all open subgroups $U$ of $G$. The space $A$ is naturally an associative algebra, where the multiplication is essentially given by convolution of functions; see \cite[\S 10.3]{repst}. We note that convolution is defined using integrals, and thus depends on the measure. We call $A$ the \defn{completed group algebra} of $G$. We note that there is a natural inclusion of the ordinary group algebra $k[G]$ into $A$ \cite[\S 10.4]{repst}, and so any $A$-module is also a $k[G]$-module.

We say that an $A$-module $M$ is \defn{smooth} if for every $x \in M$ the action of $A$ on $x$ factors through some $\cC(G/U)$. The category of smooth $A$-modules is a Grothendieck abelian category \cite[Proposition~10.7]{repst}, and any smooth $A$-module is the union of its finite length submodules \cite[Theorem~13.2]{repst}. We note that the theory of integration can be extended to functions valued in a smooth module \cite[\S 11.9]{repst}. Let $\cT$ be the category of finite length smooth $A$-modules, and identify $\Ind{\cT}$ with the category of all smooth $A$-modules. If $\mu$ is regular then $\cT$ is semi-simple \cite[Theorem~13.2]{repst}.

If $X$ is a $G$-set then the Schwartz space $\cC(X)$ is naturally an $A$-module \cite[\S 11.4]{repst}. In fact, we have a fully faithful $k$-linear functor
\begin{displaymath}
\Phi \colon \uPerm(G, \mu) \to \cT
\end{displaymath}
sending each Schwartz space to itself \cite[\S 11.5]{repst}. The $A$-module $\cC(X)$ is generated by the point masses $\delta_x$ for $x \in X$; in fact, one only needs one point mass from each $G$-orbit \cite[Proposition~11.8]{repst}. Thus $\cC(X)$ is a finitely generated $A$-module, and therefore of finite length. The Schwartz spaces have a natural mapping property: if $M$ is any smooth $A$-module then
\begin{displaymath}
\Hom_A(\cC(X), M) = \Hom_G(X, M),
\end{displaymath}
where on the right side we are simply taking the space of $G$-equivariant functions \cite[Corollary~11.18]{repst}. In particular, giving a map of $A$-modules $\cC(G/U) \to M$ is equivalent to giving an element of the invariant space $M^U$. Every object of $\cT$ is a quotient of some $\cC(X)$ \cite[Proposition~11.11]{repst}.

The category $\Ind{\cT}$ carries a tensor product $\otimes$ \cite[\S 12]{repst}. If $M$ and $N$ are smooth $A$-modules then $M \otimes N$ is a smooth $A$-module equipped with a ``strongly bilinear map'' $q \colon M \times N \to M \otimes N$ \cite[Definition~12.1]{repst}. Essentially, this means that $q$ is $G$-equivariant, and commutes with integration in each variable. There is a natural injective map $M \otimes_k N \to M \otimes N$, and the image generates $M \otimes N$ as an $A$-module \cite[Proposition~12.4]{repst}. We have a natural isomorphism $\cC(X) \otimes \cC(Y) = \cC(X \times Y)$ \cite[Proposition~12.6]{repst}, and, in fact, $\Phi$ is a symmetric monoidal functor. Since every module is a quotient of a Schwartz space, one can use presentations by Schwartz spaces to compute the tensor product. The tensor product preserves $\cT$, and gives it the structure of a pre-Tannakian category \cite[Theorem~13.2]{repst}. The abelian envelope property is proved in \cite[Theorem~13.13]{repst}.

\subsection{Fra\"iss\'e limits} \label{ss:fraisse}

The Fra\"iss\'e limit construction is an important source of oligomorphic groups. We will use this perspective in some of the examples we discuss, so we briefly review the key definitions. We refer to \cite[\S 6.2]{repst} for additional details.

Let $\sF$ be a class of finite relational structures. For example, $\sF$ could be the class of finite totally ordered sets, or finite graphs. Given embeddings $i \colon A \to B$ and $j \colon A \to A'$ of structures in $\sF$, an \defn{amalgamation} of $B$ and $A'$ over $A$ is a triple $(B', i', j')$ where $B'$ is a structure in $\sF$ and $i' \colon A' \to B'$ and $j' \colon B \to B'$ are embeddings such that $j'i=i'j$ and $B'=\im(i') \cup \im(j')$. Thus an amalgamation is a way of gluing $B$ and $A'$ together which identifies the copy of $A$ in each. Note that an amalgamation is allowed to identify more than $A$. For example, if $i' \colon A' \to B$ is an embedding then $(B, i', \id_B)$ is an amalgamation of $A'$ and $B$ over $A=\emptyset$.

We say that $\sF$ is a \defn{Fra\"iss\'e class} if it satisfies the following conditions:
\begin{enumerate}
\item If $B$ belongs to $\sF$ and $A \to B$ is an embedding of structures then $A$ belongs to $\sF$. In particular, $\sF$ contains the empty structure.
\item Given embeddings $i \colon A \to B$ and $j \colon A \to A'$ there exists at least one amalgamation $(B', i', j')$ in $\sF$. (This is called the \defn{amalgamation property}.)
\item Up to isomorphism, $\sF$ has only finitely many members of a given cardinality.
\end{enumerate}
We recall two more definitions. A structure $\Omega$ is \defn{homogeneous} if any two embeddings of a finite structure into $\Omega$ differ by an automorphism of $\Omega$. The \defn{age} of a structure is the class of finite structures that embed into it.

Let $\sF$ be a Fra\"iss\'e class. Then there is a countable homogeneous structure $\Omega$ with age $\sF$; moreover, any other such structure is isomorphic to $\Omega$. The structure $\Omega$ is called the \defn{Fra\"iss\'e limit} of $\sF$. The automorphism group $G$ of $\Omega$ acts oligomorphically on $\Omega$. For a finite structure $A$ in $\sF$, we let $\Omega^{[A]}$ denote the set of embeddings $A \to \Omega$. This is a transitive $\Omega$-smooth $G$-set, and every transitive $\Omega$-smooth $G$-set is of this form \cite[\S 6.5]{repst}. Given embeddings $A \to B$ and $A \to A'$ in $\sF$, the $G$-orbits on the fiber product
\begin{displaymath}
\Omega^{[B]} \times_{\Omega^{[A]}} \Omega^{[A']}
\end{displaymath}
are naturally in bijection with the amalgamations of $A'$ and $B$ over $A$, up to isomorphism \cite[Lemma~6.11]{repst}.

\begin{example}
Let $\sF$ be the class of finite totally ordered sets. The Fra\"iss\'e limit is the set $\bQ$ of rational numbers with its standard order. It is clear that the age of $\bQ$ is $\sF$, and it is not difficult to verify directly that $\bQ$ is homogeneous. The group $\Aut(\bQ, <)$ of automorphisms of the ordered set $(\bQ, <)$ is an important example of an oligomorphic group. It gives rise to the Delannoy category; see \S \ref{ss:delannoy}.
\end{example}

\subsection{Conjugacy sets} \label{ss:cset}

Let $G$ be a pro-oligomorphic group. A \defn{conjugacy set} in $G$ is a subset of $G$ that is stable under conjugation. As explained in \S \ref{ss:overview}, we will want to be able to understand conjugacy sets in $G$ terms of (finitary smooth) $G$-sets. The following is the key definition that enables this.

\begin{definition} \label{defn:cset}
A \defn{c-functor} is a rule $\fA$ assigning to each transitive $G$-set $X$ a $G$-stable subset $\fA(X)$ of $X \times X$ such that for any map $Y \to X$ of transitive $G$-sets we have the following:
\begin{enumerate}
\item $\fA(Y)$ maps to $\fA(X)$.
\item Given $(x_1,x_2) \in \fA(X)$ and $y_1$ lifting $x_1$, there exists $y_2$ lifting $x_2$ with $(y_1,y_2) \in \fA(Y)$.
\item Given $(x_1,x_2) \in \fA(X)$ and $y_2$ lifting $x_2$, there exists $y_1$ lifting $x_1$ with $(y_1,y_2) \in \fA(Y)$.
\end{enumerate}
Note that these conditions imply that $\fA(Y)$ maps onto $\fA(X)$.
\end{definition}

Suppose $D \subset G$ is a conjugacy set. Define $\fA_D(X)$ to be the set of pairs $(x,gx)$ where $x \in X$ and $g \in D$. Then $\fA_D$ is easily seen to be a c-functor. We now show that this construction is (essentially) bijective in nice situations. We say that an oligomorphic permutation group $(G, \Omega)$ is \defn{closed} if $G$ is a topologically closed subgroup of the symmetric group on $\Omega$. Concretely, this means that if $g \colon \Omega \to \Omega$ is a bijection such that for every finite subset $A$ of $\Omega$ there is some element $h_A \in G$ such that $gx=h_Ax$ for all $x \in A$ then $g \in G$. Similarly, we say that a conjugacy set is \defn{closed} if it is topologically closed in $G$.

\begin{theorem} \label{thm:cset}
Let $(G, \Omega)$ be a closed oligomorphic permutation group with $\Omega$ countable. Then $D \mapsto \fA_D$ defines a bijection
\begin{displaymath}
\{ \text{closed conjugacy sets} \} \to
\{ \text{c-functors} \}.
\end{displaymath}
\end{theorem}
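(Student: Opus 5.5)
Write $\Omega^{[n]}$ for the $G$-set of $n$-tuples of distinct elements of $\Omega$, and fix an enumeration $\omega_1,\omega_2,\ldots$ of $\Omega$. We need to show injectivity on closed conjugacy sets and surjectivity onto c-functors. In both directions the key point is that a c-functor is determined by its values on the $\Omega$-smooth transitive $G$-sets, in particular on the orbits in $\Omega^{[n]}$. Indeed, every transitive $X$ is a quotient of some $\Omega$-smooth $Y$, and condition (a) together with the surjectivity remark after Definition~\ref{defn:cset} shows that $\fA(X)$ is the image of $\fA(Y)$.

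\textbf{Injectivity.} For a conjugacy set $D$, let $\ol{D}$ be its closure. First I would check that $\fA_D=\fA_{\ol{D}}$. Fix $x\in X$. If $g\in\ol{D}$, then $g\in hG_x$ for some $h\in D$, because $gG_x$ is an open neighborhood of $g$. Hence $gx=hx$.

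Next I would show that a closed $D$ can be recovered as follows: $g\in D$ if and only if for every $n$ we have $((\omega_1,\ldots,\omega_n),(g\omega_1,\ldots,g\omega_n))\in\fA_D(\Omega^{[n]})$, where we apply $\fA_D$ to the relevant orbit. The forward direction is immediate. For the converse, the condition gives elements $h_n\in D$ agreeing with $g$ on $\omega_1,\ldots,\omega_n$. Thus $h_n\to g$ in the topology given by the subgroups $G(A)$, and closedness of $D$ gives $g\in D$. It follows that $\fA_D$ determines a closed $D$.

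\textbf{Surjectivity.} Let $\fA$ be a c-functor, and let $D$ be the set of $g\in G$ with $(a,ga)\in\fA(\Omega^{[n]})$ for every $n$, where $a=(\omega_1,\ldots,\omega_n)$. Note that $\fA(\Omega^{[n]})$ makes sense on each orbit. The steps are as follows.

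\begin{enumerate}
\item \emph{$D$ is closed.} Membership in $D$ is tested on finite tuples, which are open conditions in the pointwise topology.
\item \emph{$D$ is conjugation stable.} Since $\fA$ is $G$-stable and compatible with the coordinate-restriction maps, the condition is equivalent to $(b,gb)\in\fA$ for all finite tuples $b$ of distinct elements. This reformulated condition is visibly conjugation invariant.
\item \emph{$\fA_D\subset\fA$.} This follows from the same reformulation, passing to quotients by condition (a).
\item \emph{$\fA\subset\fA_D$.} This is the main step. By the reduction above, it suffices to take $(a,a')\in\fA(\Omega^{[n]})$ with $a=(\omega_1,\ldots,\omega_n)$. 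I would build by induction tuples $a_m$ extending $a_{m-1}$ by one coordinate, with $a_m$ ultimately exhausting $\Omega$ because it contains $\omega_1,\ldots,\omega_m$, together with partners $a'_m$ such that $(a_m,a'_m)\in\fA(\Omega^{[m]})$.
 \begin{itemize}
 \item Forward steps use condition (b) for the projection $\Omega^{[m+1]}\to\Omega^{[m]}$: given the lift $a_{m+1}$ of $a_m$, it produces $a'_{m+1}$ lifting $a'_m$.
 \item Back-and-forth steps use condition (c) to ensure that the $a'_m$ also exhaust $\Omega$: we first extend $a'_m$ by the next $\omega_j$, then obtain a compatible extension of $a_m$.
 \end{itemize}
 In the limit, the maps $a_m\mapsto a'_m$ define a bijection $g\colon\Omega\to\Omega$. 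At each finite stage, transitivity of the orbit provides an $h\in G$ with $ha_m=a'_m$, since $a_m$ and $a'_m$ lie in the same orbit of $\Omega^{[m]}$. Closedness of $G$ then gives $g\in G$. By construction $(b,gb)\in\fA$ for all finite tuples $b$, so $g\in D$ and $ga=a'$.
\end{enumerate}

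I expect step~4 to be the main obstacle. The delicate points are the bookkeeping of orbits in the back-and-forth construction, and making sure that the resulting $g$ satisfies the condition for \emph{all} $n$. This works because every finite tuple appears in some $a_m$, up to restriction.
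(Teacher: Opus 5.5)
Your proof is correct and follows essentially the same back-and-forth argument as the paper, phrased with tuples in orbits of $\Omega^{[n]}$ instead of cosets in $G/G(A)$; this lets you apply condition (c) directly in the back step, and get conjugation-invariance from $G$-stability alone, without the paper's right-multiplication isomorphisms such as $a\mapsto ag^{-1}$. One small wording fix: in step 1 what you need is that each condition is \emph{closed}. It is, being a union of cosets of the open subgroup fixing $\omega_1,\ldots,\omega_n$ (hence clopen), whereas ``open'' alone would not make the intersection closed.
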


\begin{proof}
Suppose $D$ and $D'$ are closed conjugacy sets with $\fA_D = \fA_{D'}$. We have $(1,g) \in \fA_D(G/U)$ if and only if $g \in D U$, and so $D U = D' U$ for all $U$. The closure of any $X \subset G$ is the intersection of $XU$ over all open subgroups $U$. Thus $D=D'$, since both are closed. Therefore the map is injective. To complete the proof, we must show that it is surjective. We break the argument into a number of steps. Fix a c-functor $\fA$ throughout. (We point the reader to Remark~\ref{rmk:cset-finite} for the argument when $G$ is finite, which is much simpler.)

\textit{(a) Constructing the conjugacy set.} Define $D$ to be the set of elements $g \in G$ such that $(1, g) \in \fA(G/U)$ for all open subgroups $U$. We will eventually show that $\fA=\fA_D$. At the moment, we simply show that $D$ is a closed conjugacy set. Closed is clear: if $g$ belongs to the closure of $D$ then for each open subgroup $U$ we have $g \in D U$, and so $(1,g) \in \fA(G/U)$, and so $g \in D$.

We now show that $D$ is stable under conjugation. Thus let $g \in D$ and $h \in G$ be given. We show that $hgh^{-1} \in D$. For this, we must show that $(1, hgh^{-1}) \in \fA(G/U)$ for all open subgroups $U$. Thus  let $U$ be given. We have $(1,g) \in \fA(G/h^{-1}Uh)$ by assumption, and so $(h, hg) \in \fA(G/h^{-1}Uh)$. We now apply the isomorphism of $G$-sets
\begin{displaymath}
G/h^{-1}Uh \to G/U, \qquad a \mapsto ah^{-1}.
\end{displaymath}
Note that $\fA$ respects isomorphisms by Definition~\ref{defn:cset}(a,b). We thus see that $(1, hgh^{-1}) \in \fA(G/U)$, as required.

\textit{(b) Partially defined maps.} Let $\cF$ be the set of all injections $\sigma \colon A \to \Omega$, with $A$ a finite subset of $\Omega$, for which there exists $g \in G$ such that $(1, g) \in \fA(G/G(A))$ and $\sigma = g \vert_A$. We call $g$ a \defn{witness} for $\sigma$, and call $A$ the \defn{domain} of $\sigma$, denoted $\dom{\sigma}$. Given $\sigma, \tau \in \cF$, we say that $\tau$ \defn{extends} $\sigma$ if $\dom{\sigma} \subset \dom{\tau}$ and $\tau \vert_{\dom{\sigma}} = \sigma$. We think of elements of $\cF$ as finite approximations to elements of $D$.

\textit{(c) Enlarging the domain.} Given $\sigma \in \cF$ and any finite subset $B$ of $\Omega$, we claim that there exists $\tau \in \cF$ extending $\sigma$ with $B \subset \dom{\tau}$. Let $A=\dom{\sigma}$ and let $g$ be a witness to $\sigma$. Let $h \in G/G(A \cup B)$ be a lift of $g \in G/G(A)$ such that $(1,h) \in \fA(G/G(A \cup B))$, which exists by Definition~\ref{defn:cset}(b). We can then take $\tau$ to be the restriction of $h$ to $A \cup B$.

\textit{(d) Enlarging the image.} Given $\sigma \in \cF$ and any finite subset $B$ of $\Omega$, we claim that there exists $\tau \in \cF$ extending $\sigma$ with $B \subset \im{\tau}$. Once again, let $A=\dom{\sigma}$ and let $g \in G$ witness $\sigma$. We have an isomorphism of $G$-sets
\begin{displaymath}
G/G(A) \to G/G(gA), \qquad a \mapsto ag^{-1}.
\end{displaymath}
Since $(1,g)$ belongs to $\fA(G/G(gA))$, it follows that $(g^{-1}, 1)$ belongs to $\fA(G/G(gA))$. Let $h^{-1} \in G/G(gA \cup B)$ be a lift of $g^{-1} \in G/G(gA)$ such that $(h^{-1}, 1)$ belongs to $\fA(G/G(gA \cup B))$. This exists by Definition~\ref{defn:cset}(c). Note that $h^{-1}$ agrees with $g^{-1}$ on $gA$, and so $h^{-1}gA=A$, and $h$ agrees with $g$ on $A$. Applying the isomorphism
\begin{displaymath}
G/G(gA \cup B) \to G/G(A \cup h^{-1}B), \qquad a \mapsto ah,
\end{displaymath}
we see that $(1,h)$ belongs to $\fA(G/G(A \cup h^{-1}B))$. We can now take $\tau$ to be the restriction of $h$ to $A \cup h^{-1}B$.

\textit{(e) Constructing elements of $D$.} Given $\sigma \in \cF$, we claim there is an element $g \in D$ such that $g \vert_{\dom{\sigma}} = \sigma$. This is a standard ``back and forth'' argument. Let $A=A_1 \subset A_2 \subset \cdots$ be a chain of finite subsets of $\Omega$ that union to $\Omega$; this is where we use the countability hypothesis. We inductively define $\sigma_i \in \cF$, for $i \ge 0$, as follows. First $\sigma_0=\sigma$. Now, having defined $\sigma_{i-1}$, we let $\sigma_i$ be any extension of $\sigma_{i-1}$ such that $A_i \subset \dom{\sigma_i}$ and $A_i \subset \im{\sigma_i}$; such a $\sigma_i$ exists by steps (c) and (d).

We now define $g \colon \Omega \to \Omega$ by $gx=\sigma_i(x)$ for $x \in \dom{\sigma_i}$. This is well-defined since the $\sigma_i$'s are compatible. Since the $\sigma_i$'s are injective, it follows that $g$ is injective. Since the image of $\sigma_i$ contains $A_i$, it follows that the image of $g$ contains $A_i$ for all $i$, and so $g$ is surjective. Thus $g$ is bijective. If $g_i \in G$ is a witness for $\sigma_i$ then $g \vert_{A_i} = g_i \vert_{A_i}$, and so $g \in G$ since $G$ is closed. Since $(1, g_i) \in \fA(G/G(A_i))$, it follows that $(1, g) \in \fA(G/G(A_i))$ for all $i$. Since the $G(A_i)$'s are a cofinal system of open subgroups, it follows that $(1, g) \in \fA(G/U)$ for all open subgroups $U$; this uses Definition~\ref{defn:cset}(a). Thus $g \in D$. This establishes the claim.

\textit{(f) Completing the proof.} We now show that $\fA=\fA_D$. It suffices to show $\fA(G/U) = \fA_D(G/U)$ for all open subgroups $U$; in fact, it suffices to treat a cofinal family by Definition~\ref{defn:cset}(a). Thus fix $U=G(A)$, where $A$ is a finite subset of $\Omega$. Since $\fA(G/U)$ and $\fA_D(G/U)$ are both $G$-sets, it suffices to show that they contain the same elements of the form $(1,g)$. Now, if $(1,g) \in \fA_D(G/U)$ then we have $g=hu$ for some $h \in D$ and $u \in U$. Since $h \in D$, we have $(1,h) \in \fA(G/U)$, and so $(1,g) \in \fA(G/U)$, as required. Conversely, suppose $(1,g) \in \fA(G/U)$. By step~(e) above, there exists $h \in D$ such that $h \vert_A=g \vert_A$. Since $(1,h)$ belongs to $\fA_D(G/U)$, so does $(1,g)$.
\end{proof}

\begin{remark}
A topological group is called \textit{Ra\u{\i}kov complete} if it is complete with respect to its natural two-sided uniform structure\footnote{The two-sided uniformity is the join of the left and right uniformities. The completion of a group in the two-sided uniformity is a group, while the completion in the right uniformity is only a monoid in general. The monoid $G^*$ appearing in the proof of Theorem~\ref{thm:ZS} is the completion of $G$ with respect to its right uniformity.}; see \cite[\S 3.6]{AT}. Theorem~\ref{thm:cset} holds more generally for a first-countable Ra\u{\i}kov complete pro-oligomorphic group. We note that an oligomorphic permutation group is Ra\u{\i}kov complete if and only if it is closed.
\end{remark}

\begin{remark} \label{rmk:cset-finite}
Let $G$ be a finite group and let $\fA$ be a c-functor. We give a direct proof in this case that $\fA=\fA_D$ for some conjugacy set $D$. Let $D$ be the set of all elements $g \in G$ such that $(1,g)$ belongs to $\fA(G)$. We show that $D$ is a conjugacy set. Suppose $g \in D$ and $h \in G$. Since $(1,g) \in \fA(G)$, so is $(h, hg)$. Now, right multiplication by $h^{-1}$ gives an isomorphism of left $G$-sets $G \to G$, and so $(1,hgh^{-1})$ belongs to $\fA(G)$. Thus $hgh^{-1} \in D$, and so $D$ is a conjugacy set. It is clear that $\fA(G) = \fA_D(G)$. Since every transitive $G$-set is a quotient of $G$, it follows that $\fA=\fA_D$.
\end{remark}

\subsection{Smooth conjugacy classes} \label{ss:csmooth}

Let $G$ be a pro-oligomorphic group. We say that $g \in G$ is \defn{c-smooth} if its centralizer $Z(g)$ is an open subgroup; this means that $g$ is a smooth element for the action of $G$ on itself by conjugation. The set $G_{\sm}$ of c-smooth elements of $G$ forms a normal subgroup of $G$. The c-smooth elements of $G$ are used to define our Yetter--Drinfeld modules in \S \ref{ss:yd}.

If $X$ is a subset of $G$ then a point $g \in X$ is \defn{isolated} if there is an open subgroup $U$ of $G$ such that $gU \cap X=\{g\}$. Isolated points are a source of c-smooth elements:

\begin{proposition} \label{prop:isolated-smooth}
Let $D \subset G$ be a conjugacy set, and let $g \in D$ be an isolated point. Then $g$ is c-smooth.
\end{proposition}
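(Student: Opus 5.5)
We must show that the centralizer $Z(g)$ is open. The natural approach is to show that $Z(g)$ contains an open subgroup. Since $g$ is isolated, we may choose an open subgroup $U$ with $gU \cap D = \{g\}$.

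First, shrink $U$ so that it is also adapted to left translates. Note that $gU = (gUg^{-1}) g$, and $V = U \cap g^{-1}Ug$ is again open. Then $gV \subset gU$, so $gV \cap D = \{g\}$ as well. We use $V$ in what follows.

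Next, take $u \in V$ and consider the conjugate $u g u^{-1}$. It lies in $D$ because $D$ is a conjugacy set. We would like to show that it also lies in $gU$, for then $u g u^{-1} = g$ and $u \in Z(g)$. Write
\begin{displaymath}
u g u^{-1} = g \cdot (g^{-1} u g) \cdot u^{-1}.
\end{displaymath}
Since $u \in V \subset g^{-1}Ug$, we have $g^{-1} u g \in g^{-1}(g^{-1}Ug)g$. This is not obviously inside $U$, so we choose $V$ more carefully instead: take $V = U \cap gUg^{-1}$. Then for $u \in V$ we have $g^{-1} u g \in U$. Also $u^{-1} \in U$, because $u \in U$ and $U$ is a subgroup. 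Hence
\begin{displaymath}
u g u^{-1} = g \cdot (g^{-1} u g) \cdot u^{-1} \in g U U = gU.
\end{displaymath}
Combining the two facts, $u g u^{-1} \in gU \cap D = \{g\}$, so $u$ commutes with $g$.

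We conclude that $V = U \cap gUg^{-1}$ is an open subgroup contained in $Z(g)$. A subgroup containing an open subgroup is open, so $Z(g)$ is open and $g$ is c-smooth. The only point needing care is the choice of $V$: intersecting with $gUg^{-1}$ is exactly what guarantees $g^{-1} V g \subset U$. Everything else is immediate from the definitions.
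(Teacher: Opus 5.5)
Your proof is correct and is essentially the paper's argument: the paper also takes $V = U \cap gUg^{-1}$ and writes $hgh^{-1} = g(g^{-1}hg)h^{-1} \in gU \cap D = \{g\}$ for $h \in V$. In a final write-up, delete the abandoned first attempt with $U \cap g^{-1}Ug$; the argument only needs $gU \cap D = \{g\}$ and $g^{-1}Vg \subset U$.
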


\begin{proof}
Let $U$ be an open subgroup such that $gU \cap D=\{g\}$, and let $V=U \cap gUg^{-1}$. We claim $V$ centralizes $g$, which will complete the proof. Let $h \in V$ be given. Certainly $hgh^{-1}$ belongs to $D$. Writing $hgh^{-1} = g(g^{-1}hgh^{-1})$, and using the fact that $g^{-1}hg$ and $h^{-1}$ both belong to $U$, we see that $hgh^{-1} \in gU$. Thus $hgh^{-1} \in gU \cap D=\{g\}$, and so $h$ centralizes $g$.
\end{proof}

\begin{remark} \label{rmk:cmooth-not-isolated}
We note that, in general, a c-smooth conjugacy class need not contain an isolated point\footnote{The basic idea in this remark was suggested to us by Gemini Pro.}. Indeed, let $H=\Aut(\bQ,<)$ be the automorphism group of the rational numbers as a total order, let $\Gamma$ be the space of all functions $\bQ \to \bZ/2\bZ$, and let $G=H \ltimes \Gamma$; this is (one version of) the wreath product of $H$ with $\bZ/2\bZ$. The group $G$ acts on the set $\Omega=\bQ \times \bZ/2\bZ$, where $H$ acts naturally on $\bQ$ and fixes $\bZ/2\bZ$ and $\gamma \in \Gamma$ acts by $\gamma \cdot (x, y) = (x, \gamma(x)+y)$, and $(G, \Omega)$ is an oligomorphic permutation group.

For $x \in \bR$, let $\alpha_x \in \Gamma$ be the indicator function of the interval $[x, \infty)$, i.e., $\alpha_x(y)$ is~0 if $y<x$ and~1 otherwise. Similarly define $\beta_x$, but using the interval $(x, \infty)$. Note that $\alpha_x=\beta_x$ if and only if $x$ is irrational. If $x$ is rational then $\alpha_x$ and $\beta_x$ are fixed by the open subgroup $H(x)$ of $H$, and thus they are c-smooth elements of $G$. One easily sees that $C=\{\alpha_x\}_{x \in \bQ}$ is a conjugacy class in $G$. If $(x_n)_{n \ge 1}$ is an increasing sequence of rational numbers converging to the rational number $x$ (in the usual topology) then $\alpha_{x_n}$ converges to $\alpha_x$. This shows that no element of $C$ is isolated in $C$, and thus establishes the first sentence of this remark. Note that if the sequence $(x_n)$ is instead decreasing then $\alpha_{x_n}$ converges to $\beta_x$.

Let $D$ be the closure of $C$ in $G$. One can show that $D$ consists of the elements $\alpha_x$ and $\beta_x$, with $x \in \bR$, together with the elements $\alpha_{-\infty}$ and $\alpha_{\infty}$, which, by convention, are the constant functions in $\Gamma$ taking values~1 and~0. Thus, as a set, $D$ consists of the real line together with an additional copy of $\bQ$, as well as the two extra points $\pm \infty$. As a space, $D$ is compact, totally disconnected, perfect (i.e., has no isolated points) and metrizable, and is thus homeomorphic to the Cantor set.
\end{remark}

\begin{remark} \label{rmk:isolated}
It is a consequence of the Baire category theorem that, in a complete metric space, any non-empty countable closed set has an isolated point. If $(G, \Omega)$ is a closed oligomorphic group with $\Omega$ countable then $G$ is a complete metric space, and so this statement can be applied. In particular, a closed conjugacy set that is a finite union of c-smooth conjugacy classes has an isolated point. Indeed, any (finitary smooth) $G$-set is countable, and so any c-smooth conjugacy class is countable.
\end{remark}

\begin{example} \label{ex:sym-conj}
Let $\fS$ be the infinite symmetric group, i.e., the group of all permutations of the set $\Omega = \{1,2,\ldots\}$. An element of $\fS$ is c-smooth if and only if it is finitary, i.e., fixes all but finitely many elements of $\Omega$. Thus the group of c-smooth elements is $\bigcup_{n \ge 1} \fS_n$, the ``small'' infinite symmetric group.

It is not difficult to describe all conjugacy classes in $\fS$ and the closure relations between them. Given $g \in \fS$, let $N_g(i)$ be the number of $i$-cycles in $g$ (allowing $i=\infty$). This defines a function
\begin{displaymath}
N_g \colon \{1,2,3,\ldots,\infty\} \to \{0,1,2,\ldots, \infty\}.
\end{displaymath}
This is a complete conjugacy invariant, i.e., $g$ and $h$ are conjugate if and only if $N_g=N_h$. Moreover, a function $N$ occurs as $N_g$ for some $g \in \fS$ if and only if $N(\infty)>0$ or $\sum_{i \ge 1} N(i)=\infty$. Given a function $N$ as above, let $C(N)$ be the corresponding conjugacy class, i.e., the set of $g$'s with $N_g=N$. Let $M$ be a second such function. One can show that $C(M)$ belongs to the closure of $C(N)$ if and only if $M(i) \le N(i)$ for all finite $i$, and if $M(\infty)>0$ then either $N(\infty)>0$ or $N(i)>0$ for infinitely many $i$.

For example, let $C \subset \fS$ be the set of transpositions. This is a c-smooth conjugacy class. We have $C=C(N)$ where $N(1)=\infty$, $N(2)=1$ and $N(i)=0$ for all $i \ge 3$. The closure $D$ of $C$ is the set $C \cup \{1\}$. The elements of $C$ are isolated in $D$, while~1 is not; this is a special case of Proposition~\ref{prop:finitary-isolated}.
\end{example}

\begin{remark}
If $(G, \Omega)$ is oligomorphic then $G_{\sm}$ is locally finite, in the sense that any finitely generated subgroup is finite. Indeed, if $g_1, \ldots, g_n \in G_{\sm}$ then there is an open subgroup $U$ centralizing each $g_i$, and so the $g_i$'s are contained in the group $\Aut_U(\Omega)$ of automorphisms of $\Omega$ as a $U$-set, which is finite \cite[Proposition~2.8]{repst}.  If $G$ is pro-oligomorphic, then $G_{\sm}$ is locally profinite.
\end{remark}

\section{Drinfeld centers: the quasi-regular case} \label{s:qr}

This section is the heart of the paper: we study the Drinfeld center of an oligomorphic tensor category associated to a quasi-regular measure. We give an explicit description of objects of the center in terms of $\cO(\pi)$-structures. Using this, we define the class of Yetter--Drinfeld modules, and a notion of support for an object in the center. We then prove two important theorems (Theorem~\ref{thm:Ztriv} and~\ref{thm:YD}), which essentially assert that if one has tight enough control on support then Yetter--Drinfeld modules account for all of the center.

\subsection{Set-up}

Let $G$ be a pro-oligomorphic group. Let $\mu$ be a quasi-regular measure for $G$ valued in the field $k$ such that nilpotent endomorphisms in $\uPerm(G, \mu)$ have trace~0. Let $\cT = \uRep(G, \mu)$ be the abelian envelope of $\uPerm(G, \mu)$. We identify $\cT$ with the category of finite length smooth modules over the completed group algebra $A(G)=A(G, \mu)$, and we identify $\Ind{\cT}$ with the category of all smooth $A(G)$-modules; see \S \ref{ss:cga}. We let $\pi$ be the fundamental group of $\cT$, and write $\cO(\pi)$ for its coordinate ring (\S \ref{ss:fund}).

\subsection{$\cO(\pi)$-structures} \label{ss:Opi}

We begin by giving a concrete description of $\cO(\pi)$, based on the following concept:

\begin{definition} \label{defn:pi-struct}
Let $R$ be an associative unital algebra in $\Ind{\cT}$. A \defn{$\cO(\pi)$-structure} on $R$ is a rule $\beta$ that assigns to every $G$-set $X$ and pair of elements $x,y \in X$ an element $\beta^X_{x,y} \in R$, such that the following conditions hold:
\begin{enumerate}
\item We have $g \beta^X_{x,y}=\beta^X_{gx,gy}$ for $g \in G$.
\item We have $\beta^X_{\ast,\ast}=1$ if $X=\bbone$ is a one-point $G$-set.
\item We have $\beta^{X \times Y}_{(x_1,y_1), (x_2,y_2)}=\beta^X_{x_1,x_2} \cdot \beta^Y_{y_1,y_2}$.
\item If $f \colon Y \to X$ is a map of $G$-sets, $x \in X$, and $y \in Y$, then
\begin{displaymath}
\beta^X_{f(y),x} = \int_{f^{-1}(x)} \beta^Y_{y,z} \, dz, \qquad
\beta^X_{x,f(y)} = \int_{f^{-1}(x)} \beta^Y_{z,y} \, dz.
\end{displaymath}
\end{enumerate}
\end{definition}

Note that $\cO(\pi)$-structures are functorial: if $\beta$ is an $\cO(\pi)$-structure on $R$ and $\phi \colon R \to S$ is an algebra homomorphism then $\phi(\beta)$ is an $\cO(\pi)$-structure on $S$. The following proposition explains the significance of the above definition.

\begin{proposition} \label{prop:Opi-algebra}
The algebra $\cO(\pi)$ carries a tautological $\cO(\pi)$-structure $\alpha$ that is universal, in the following sense: if $R$ is an algebra in $\Ind{\cT}$ with an $\cO(\pi)$-structure $\beta$ then there is a unique algebra homomorphism $\phi \colon \cO(\pi) \to R$ such that $\beta=\phi(\alpha)$.
\end{proposition}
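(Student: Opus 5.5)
The plan is to reduce to the coend description of $\cO(\pi)$ from \S\ref{ss:fund}, applied with $\cC=\uPerm(G,\mu)$ and $F$ the embedding $\Phi$ into $\cT$, and then compare with $\Coend(\id_\cT)$. First I would check that restricting to the full subcategory of Schwartz spaces does not change the coend. Every object of $\cT$ is a quotient of some $\cC(X)$ (\S\ref{ss:cga}), and dually a subobject of some $\cC(Y)$ by self-duality. Hence every object $M$ is the image of a map $\cC(X)\to\cC(Y)$. A standard argument (the coend over an abelian category is determined by a generating family closed under $\otimes$ and duals, with $M\otimes M^*$ expressed through the maps $\cC(X)\otimes\cC(Y)^*\to \cC(Y)\otimes \cC(Y)^* \oplus \cC(X)\otimes\cC(X)^*$) then gives $\cO(\pi)=\Coend(\Phi)$. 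So $\cO(\pi)$ is the quotient of $\bigoplus_X \cC(X)\otimes\cC(X)^* = \bigoplus_X \cC(X\times X)$ by the relations coming from $G$-invariant matrices.

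Next, I would define $\alpha^X_{x,y}$ as the image of the point mass $\delta_{(x,y)}\in\cC(X\times X)$ in $\cO(\pi)$. Since $\cC(X\times X)$ is generated by point masses and the mapping property $\Hom_A(\cC(Z),N)=\Hom_G(Z,N)$ holds, an algebra map $\phi\colon\cO(\pi)\to R$ is determined by the $G$-equivariant assignment $(x,y)\mapsto\phi(\alpha^X_{x,y})$. This gives uniqueness, and it also gives existence once the relations are verified. Axiom (a) is equivariance. Axiom (b) is the unit, since $\cC(\bbone)\otimes\cC(\bbone)^*$ maps to $1$. Axiom (c) is the product, defined via $\cC(X)\otimes\cC(Y)=\cC(X\times Y)$. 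Conversely, a $G$-equivariant family $\beta$ defines maps $\cC(X\times X)\to R$, and these are automatically algebra-compatible by (b),(c).

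The main step is to identify the coend relations with axiom (d). The morphisms of $\uPerm$ are spanned by the elementary ones $f^*$ and $f_*$ for maps $f\colon Y\to X$ of $G$-sets. This holds because a $G$-invariant $Y\times X$ matrix is a combination of indicator functions of orbits $O\subset Y\times X$, and the orbit indicator factors as $(p_1)_*\circ p_2^*$ through $O$. Moreover, the dinaturality relation for a composite follows from the relations for its factors. So it suffices to impose the relation for $f^*$ and for $f_*$. Unwinding $F(f)\otimes 1 - 1\otimes F(f)^*$ on point masses, using that the dual of $f^*$ is $f_*$ (transpose) and that $f_*$ integrates over fibers, the two relations become exactly the two integral identities in (d). Here I need to be careful with which of the first or second coordinate the fiber integral runs over. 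The main obstacle is this bookkeeping, together with making the reduction to elementary morphisms and the reduction from $\id_\cT$ to $\Phi$ rigorous. For the latter I would lean on the abelian envelope property of $\cT$ (\cite[Theorem~13.13]{repst}) if the direct argument gets messy. Finally, the coalgebra and Hopf structure play no role, since only the algebra structure is asserted.
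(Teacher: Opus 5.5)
Your proposal is correct and follows essentially the same route as the paper. Both arguments define $\alpha^X_{x,y}$ as the image of $\delta_{x,y}$, identify $\cO(\pi)$ with the coend over $\uPerm(G,\mu)$ using that every object of $\cT$ is a quotient of a Schwartz space, reduce dinaturality to the relations for $f_*$ and $f^*$ (morphisms being spanned by $f_*g^*$) and read these off as axiom (d), and get uniqueness from generation by point masses. Your coordinate bookkeeping also works out: the relation for $f_*$ gives the first identity in (d), and the relation for $f^*$ gives the second.
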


\begin{proof}
Define $\alpha^X_{x,y}$ to be the image of $\delta_{x,y}$ under the canonical map $\cC(X \times X) \to \cO(\pi)$. This clearly satisfies Definition~\ref{defn:pi-struct}(a). It satisfies Definition~\ref{defn:pi-struct}(b,c) by definition of the algebra structure on $\cO(\pi)$. Finally, suppose $f \colon Y \to X$ is a map of $G$-sets, $y \in Y$, and $x \in X$. Then the two elements
\begin{displaymath}
f_*(\delta_y) \otimes \delta_x = \delta_{f(y),x}, \qquad
\delta_y \otimes f^*(\delta_x) = \int_{f^{-1}(x)} \delta_{y,z} \, dz
\end{displaymath}
have the same image in $\cO(\pi)$, by the definition of coend. This gives the first identity in Definition~\ref{defn:pi-struct}(d), and the second is similar. Thus $\alpha$ is an $\cO(\pi)$-structure. Note that since every object of $\cT$ is a quotient of some permutation module, the maps $\cC(X \times X) \to \cO(\pi)$ are jointly surjective. Since $\cC(X \times X)$ is generated by point masses, it follows that the $\alpha^X_{x,y}$ generate $\cO(\pi)$ as an object of $\Ind{\cT}$, i.e., as an $A(G)$-module.

Now, let $R$ and $\beta$ be given. For each $G$-set $X$, we have a map in $\cT$
\begin{displaymath}
\phi_X \colon \cC(X \times X) \to R, \qquad \delta_{x,y} \mapsto \beta^X_{x,y}.
\end{displaymath}
This uses the mapping property for Schwartz space (\S \ref{ss:cga}) and Definition~\ref{defn:pi-struct}(a). Define
\begin{displaymath}
\tilde{\phi} \colon \bigoplus_X \cC(X \times X) \to R
\end{displaymath}
to be the sum of these maps, where $X$ varies over isomorphism class representatives of $G$-sets. Suppose $f \colon Y \to X$ is a map of $G$-sets, $a \in \cC(Y)$, and $b \in \cC(X)$. Then we have
\begin{displaymath}
\tilde{\phi}(f_*a \otimes b) = \tilde{\phi}(a \otimes f^*b), \qquad
\tilde{\phi}(f^*b \otimes a) = \tilde{\phi}(b \otimes f_*a)
\end{displaymath}
by Definition~\ref{defn:pi-struct}(d). Suppose $g \colon Y \to Z$ is another map of $G$-sets, and $c \in \cC(Z)$. Then, by the above, we have
\begin{displaymath}
\tilde{\phi}(f_* g^*c \otimes a) = \tilde{\phi}(c \otimes g_*f^* a).
\end{displaymath}
Note that $g_* f^*$ is the dual of $f_* g^*$. Now, every map $\cC(Z) \to \cC(X)$ is a linear combination of maps of the form $f_* g^*$. We thus see that for any map $A \colon \cC(Z) \to \cC(X)$ in $\cT$ we have
\begin{displaymath}
\tilde{\phi}(Ac \otimes a) = \tilde{\phi}(c \otimes A^* a),
\end{displaymath}
where $A^* \colon \cC(X) \to \cC(Z)$ is the dual of $A$. It follows that $\tilde{\beta}$ defines a map
\begin{displaymath}
\phi \colon \Coend(\cP \to \cT) \to R,
\end{displaymath}
where $\cP=\uPerm(G, \mu)$, and $\cP \to \cT$ is the inclusion. The map $\phi$ is a unital algebra homomorphism by Definition~\ref{defn:pi-struct}(b,c). Since every object of $\cT$ is a quotient of an object of $\cP$, the above coend is isomorphic to $\cO(\pi)$. It follows from the construction that $\phi(\alpha)=\beta$. Uniqueness of $\phi$ follows since the $\alpha$'s generate $\cO(\pi)$.
\end{proof}

We now record some simple properties of $\cO(\pi)$-structures.

\begin{proposition} \label{prop:pi-basic}
Let $R$ be an algebra in $\Ind{\cT}$ with an $\cO(\pi)$-structure $\beta$.
\begin{enumerate}
\item Let $f \colon Y \to X$ be an isomorphism of $G$-sets. Then $\beta^Y_{y_1,y_2} = \beta^X_{f(y_1), f(y_2)}$.
\item Let $X$ be a $G$-set and let $x,y \in X$. If $x$ and $y$ belong to the same orbit $W$ then $\beta^X_{x,y}=\beta^W_{x,y}$. Otherwise, $\beta^X_{x,y}=0$.
\item Let $X$ be a transitive $G$-set, and let $x$, $y$, and $z$ be elements of $X$ with $y \ne z$. Then $\beta^X_{x,y}$ is idempotent, and $\beta^X_{x,y} \beta^X_{x,z}=0$.
\end{enumerate}
\end{proposition}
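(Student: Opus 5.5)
Each part follows from the axioms of Definition~\ref{defn:pi-struct}, chiefly the pushforward axiom (d) and the multiplicativity axiom (c), applied to well-chosen maps of $G$-sets.

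\emph{Part (a).} Apply axiom (d) to the isomorphism $f$. The fiber $f^{-1}(x)$ over $x=f(y_2)$ is the single point $\{y_2\}$, which has measure~$1$. The first identity in (d) then gives
\[
\beta^X_{f(y_1),f(y_2)} = \int_{f^{-1}(f(y_2))} \beta^Y_{y_1,z}\,dz = \beta^Y_{y_1,y_2}.
\]

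\emph{Part (b).} Apply (d) to the inclusion $i \colon W \to X$ of the orbit containing~$x$, taking $y=x\in W$ and target point $y\in X$.
\begin{itemize}
\item If $y \in W$, the fiber $i^{-1}(y)$ is the point $\{y\}$, so (d) gives $\beta^X_{x,y}=\beta^W_{x,y}$.
\item If $y\notin W$, the fiber is empty, so the integral is $0$ and $\beta^X_{x,y}=0$.
\end{itemize}

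\emph{Part (c).} Use the diagonal $\Delta\colon X\to X\times X$, together with (c) and (d).
\begin{enumerate}
\item Axiom (c) gives
\[
\beta^{X\times X}_{(x,x),(y,z)}=\beta^X_{x,y}\beta^X_{x,z}.
\]
\item Take the source point $x\in X$ and the target point $(y,z)\in X\times X$ in the first identity of (d). Since $\Delta(x)=(x,x)$, this gives
\[
\beta^{X\times X}_{(x,x),(y,z)}=\int_{\Delta^{-1}(y,z)}\beta^X_{x,w}\,dw.
\]
\item The fiber $\Delta^{-1}(y,z)$ is $\{y\}$ if $y=z$ and empty otherwise. Hence $\beta^X_{x,y}\beta^X_{x,y}=\beta^X_{x,y}$, and $\beta^X_{x,y}\beta^X_{x,z}=0$ for $y\ne z$.
\end{enumerate}
Transitivity of $X$ is not actually needed for this computation. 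Note that $X \times X$ need not be transitive, but this is harmless: axioms (c) and (d) apply to arbitrary $G$-sets.

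The only care needed is bookkeeping. One must use the correct half of (d) in each part, and observe that integrating over a one-point set equals evaluation at that point, because $\mu(\bbone)=1$. I do not expect any genuine obstacle.
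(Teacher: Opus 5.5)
Your proof is correct and matches the paper's. Parts (a) and (b) are exactly the paper's applications of axiom (d) to the isomorphism and to the orbit inclusion. Part (c) likewise uses axiom (c) to rewrite the product as $\beta^{X \times X}_{(x,x),(y,z)}$.

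The only difference is in how this term is evaluated. The paper uses parts (b) and (a) on the orbit $\Delta(X) \cong X$, which is where it needs transitivity. You apply (d) directly to the diagonal map $\Delta \colon X \to X \times X$, which is equally valid and shows transitivity is unnecessary.
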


\begin{proof}
(a) This follows directly from Definition~\ref{defn:pi-struct}(d).

(b) Let $W$ be the orbit containing $x$ and let $f \colon W \to X$ be the inclusion. Definition~\ref{defn:pi-struct}(d) gives the stated result.

(c) Let $\Delta(X)$ be the diagonal in $X \times X$. We have
\begin{displaymath}
\beta^X_{x,y} \beta^X_{x,y} = \beta^{X \times X}_{(x,x),(y,y)} = \beta^{\Delta(X)}_{(x,x), (y,y)} = \beta^X_{x,y},
\end{displaymath}
where in the first step we used Definition~\ref{defn:pi-struct}(c), in the second the above statement (b), and in the third the above (a). Similarly, we have
\begin{displaymath}
\beta^X_{x,y} \beta^X_{x,z} = \beta^{X \times X}_{(x,x),(y,z)} = 0,
\end{displaymath}
since $(x,x)$ and $(y,z)$ belong to different orbits on $X \times X$.
\end{proof}

\subsection{Tensor products of $\cO(\pi)$-structures} \label{ss:pi-tensor}

Recall that $\cO(\pi)$ is a Hopf algebra, and that the canonical map $\cC(X) \otimes \cC(X)^* \to \cO(\pi)$ is compatible with coproducts. The coproduct on the former is the map
\begin{displaymath}
\cC(X \times X) \to \cC(X \times X \times X \times X), \qquad \delta_{x,y} \mapsto \int_X \delta_{x,u,u,y} \, du.
\end{displaymath}
It follows that the coproduct on $\cO(\pi)$ satisfies
\begin{displaymath}
\alpha^X_{x,y} \mapsto \int_X \alpha^X_{x,u} \otimes \alpha^X_{u,y} \, du.
\end{displaymath}
Now, let $R$ and $S$ be algebras in $\Ind{\cT}$ with $\cO(\pi)$-structures $\beta$ and $\gamma$. The coproduct on $\cO(\pi)$ defines an $\cO(\pi)$-structure $\epsilon$ on $R \otimes S$. From the above formula for the coproduct, we see that $\epsilon$ is given explicitly by
\begin{displaymath}
\epsilon^X_{x,y} = \int_X \beta^X_{x,u} \otimes \gamma^X_{u,y} \, du.
\end{displaymath}
For the sake of completeness, we give a direct proof that $\epsilon$ is indeed an $\cO(\pi)$-structure.

\begin{proposition}
The rule $\epsilon$ defines an $\cO(\pi)$-structure on $R \otimes S$.
\end{proposition}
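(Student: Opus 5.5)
We check the four axioms of Definition~\ref{defn:pi-struct} for $\epsilon$ directly, using only the corresponding axioms for $\beta$ and $\gamma$ and standard properties of integration of module-valued Schwartz functions (linearity, Fubini, change of variables along maps of $G$-sets, and compatibility of the strongly bilinear map $R \times S \to R \otimes S$ with integration in each variable, \cite[\S 11.9, \S 12]{repst}). Throughout, the integrand $u \mapsto \beta^X_{x,u} \otimes \gamma^X_{u,y}$ is Schwartz: by axiom (a) it is invariant under the open subgroup stabilizing $x$ and $y$. Hence every integral below is defined.

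For axiom (a), we apply $g$ to the integral and use $G$-equivariance of the tensor product together with invariance of the measure under translation by $g$. Substituting $u \mapsto gu$ gives $g\epsilon^X_{x,y} = \epsilon^X_{gx,gy}$. Axiom (b) is immediate, since on a point the integral is $1 \otimes 1$. For axiom (c), we write $\epsilon^X_{x_1,x_2}\epsilon^Y_{y_1,y_2}$ as a double integral over $X \times Y$. Multiplication in $R \otimes S$ is componentwise, with $\beta$ and $\gamma$ factors commuting past each other via the symmetry, so the product becomes $\int_{X\times Y} \beta^X_{x_1,u}\beta^Y_{y_1,v} \otimes \gamma^X_{u,x_2}\gamma^Y_{v,y_2}\,d(u,v)$. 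Axiom (c) for $\beta$ and $\gamma$ then converts this into $\epsilon^{X\times Y}$. The needed Fubini statement, that an integral over $X \times Y$ equals an iterated integral, is the product case of the fiber-integration formula from \cite[\S 4]{repst}.

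Axiom (d) is the main step. Take $f \colon Y \to X$, $y \in Y$ and $x \in X$, and treat the first identity. We start from the right side $\int_{f^{-1}(x)} \int_Y \beta^Y_{y,u} \otimes \gamma^Y_{u,z}\,du\,dz$ and exchange the order of integration. The inner integral over $z \in f^{-1}(x)$ of $\gamma^Y_{u,z}$ equals $\gamma^X_{f(u),x}$ by axiom (d) for $\gamma$, used in its first form with $f(u)$ in the first slot. This leaves $\int_Y \beta^Y_{y,u} \otimes \gamma^X_{f(u),x}\,du$. We then integrate along the fibers of $f$, writing $\int_Y = \int_X \int_{f^{-1}(w)}$. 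For fixed $w$ the second factor is constant, and axiom (d) for $\beta$ gives $\int_{f^{-1}(w)} \beta^Y_{y,u}\,du = \beta^X_{f(y),w}$. The result is $\int_X \beta^X_{f(y),w}\otimes\gamma^X_{w,x}\,dw = \epsilon^X_{f(y),x}$. The second identity in (d) follows by the mirror-image argument, with the roles of the first and second slots swapped.

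The only delicate point is justifying these manipulations for module-valued integrands: Fubini on $f^{-1}(x) \times Y$, the fibration formula $\int_Y = \int_X \int_{f^{-1}(w)}$, and pulling a factor constant in $u$ out of a tensor-product-valued integral. Where $f^{-1}(x)$ is only a $\wh{G}$-set, we restrict to a suitable open subgroup stabilizing $x$ and $y$. I expect all of this to follow from the corresponding results in \cite[\S 4, \S 11.9, \S 12]{repst}, but it is where care is required.

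An alternative that avoids these checks is to note that $\epsilon$ is the image of $\alpha$ under the algebra map $(\phi_\beta \otimes \phi_\gamma)\circ\Delta \colon \cO(\pi) \to R \otimes S$, and to use functoriality of $\cO(\pi)$-structures. However, this relies on the coproduct formula stated above, so the direct verification is the more complete route.
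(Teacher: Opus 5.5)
Your proof is correct and follows the paper's argument. For axiom (d) the paper uses exactly the same chain of equalities (axiom (d) for $\beta$, fiber integration along $f$, axiom (d) for $\gamma$, then exchanging the order of integration), only read from $\epsilon^X_{f(y),x}$ toward $\int_{f^{-1}(x)}\epsilon^Y_{y,w}\,dw$ rather than in reverse; the paper leaves axioms (a)--(c) to the reader, and your verifications of them are sound.
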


\begin{proof}
The first three conditions of Definition~\ref{defn:pi-struct} are simple verifications left to the reader. We explain the fourth. Thus let $f \colon Y \to X$ be a map of $G$-sets, let $y \in Y$, and let $x \in X$. We have
\begin{align*}
\epsilon^X_{f(y),x}
&= \int_X \beta^X_{f(y),u} \otimes \gamma^X_{u,x} \, du
= \int_X \int_{f^{-1}(u)} \beta^Y_{y,z} \otimes \gamma^X_{u,x} \, dz \, du
= \int_Y \beta^Y_{y,z} \otimes \gamma^X_{f(z),x} \, dz \\
&= \int_Y \int_{f^{-1}(x)} \beta^Y_{y,z} \otimes \gamma^Y_{z,w} \, dw \, dz
= \int_{f^{-1}(x)} \epsilon^Y_{y,w} \, dw,
\end{align*}
which verifies the first identity in (d). The second is similar.
\end{proof}

\subsection{$\cO(\pi)$-modules and the Drinfeld center}

We have seen, very generally, that $\cZ(\Ind{\cT})$ is identified with the category of $\cO(\pi)$-modules (Proposition~\ref{prop:Opi-center}). We now explain how this works concretely in the present case.

Let $M$ be an object of $\Ind{\cT}$, and let
\begin{displaymath}
\uEnd(M) = \bigcup_{U \subset G} \End_{A(U)}(M),
\end{displaymath}
where $U$ varies over open subgroups of $G$ and $A(U) \subset A(G)$ is the completed group algebra of $U$. This is naturally an object of $\Ind{\cT}$ by \cite[Proposition~12.13]{repst}, and easily seen to carry the structure of an algebra in this category. If $M$ has finite length then we have a natural identification of $\uEnd(M)$ with $M^* \otimes M$ \cite[Corollary~12.17]{repst}. If $R$ is any algebra in $\Ind{\cT}$ then giving an $R$-module structure on $M$ internal to $\Ind{\cT}$ is equivalent to giving an algebra homomorphism $R \to \uEnd(M)$ internal to $\Ind{\cT}$.

From the above discussion, we see that giving an $\cO(\pi)$-module structure on $M$ is equivalent to giving an $\cO(\pi)$-structure on $\uEnd(M)$. Concretely, this means that for each $x,y \in X$ we have a map $\beta^X_{x,y} \colon M \to M$ that is equivariant for $A(U)$, for some $U$ depending on $X$, $x$ and $y$, and these elements satisfy the conditions of Definition~\ref{defn:pi-struct}. The half-braiding on $M$ is given on Schwartz spaces by
\begin{displaymath}
\cC(X) \otimes M \to M \otimes \cC(X), \qquad
\delta_x \otimes m \mapsto \int_X \beta^X_{x,y}(m) \otimes \delta_y \, dy.
\end{displaymath}
It is not difficult to verify directly from the definition of $\cO(\pi)$-structure that this does indeed satisfy the axioms of a half-braiding, at least relative to permutation modules. The formula for the half-braiding with respect to a general module $N$ is given by
\begin{displaymath}
N \otimes M \to M \otimes N, \qquad
n \otimes m \mapsto \int_{G/U} \beta^{G/U}_{1,g}(m) \otimes gn \, dg,
\end{displaymath}
where $U$ is any open subgroup stabilizing $n$. To see this, use the naturality of the half-braiding and the map $\cC(G/U) \to N$ satisfying $\delta_1 \mapsto n$.

\subsection{Yetter--Drinfeld modules} \label{ss:yd}

Let $G_{\sm}$ be the subgroup of c-smooth elements in $G$ (\S \ref{ss:csmooth}). The group $G$ acts smoothly on $G_{\sm}$ by conjugation. Suppose $C \subset G_{\sm}$ is a finitary $G$-subset, i.e., a finite union of c-smooth conjugacy classes. For a $G$-set $X$ and elements $x,y \in X$, define an element $\beta^X_{x,y}$ of $\cC(C)$ by
\begin{displaymath}
\beta^X_{x,y} = \int_{y=gx} \delta_g \, dg,
\end{displaymath}
where the integral is over those elements $g \in C$ satisfying the stated equation. In other words, $\beta^X_{x,y}$ is the Schwartz function on $C$ that maps~$g$ to~1 if $y=gx$, and~0 otherwise.

\begin{proposition}
The above elements define an $\cO(\pi)$-structure on $\cC(C)$.
\end{proposition}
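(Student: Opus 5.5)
We verify the four axioms of Definition~\ref{defn:pi-struct} directly, viewing $\beta^X_{x,y}$ as the indicator function $1_{S(x,y)}$ of $S(x,y)=\{g \in C : gx=y\}$. Here $\cC(C)$ is an algebra in $\Ind{\cT}$ under pointwise multiplication, since $C$ is a $G$-set and $\cC(C)$ is an \'etale algebra by \S\ref{ss:perm}, with unit the constant function~$1$.

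First we check that $\beta^X_{x,y}$ really is a Schwartz function. Since $C$ is finitary and smooth, there is an open subgroup $U$ fixing $x$ and $y$. For $u \in U$ we have $ugu^{-1}x=ugx$, so $g \in S(x,y)$ iff $ugu^{-1}\in S(x,y)$. Thus $S(x,y)$ is $U$-stable under conjugation, and its indicator is a Schwartz function on $C$.

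Axioms (a)--(c) are then immediate:
\begin{itemize}
\item[(a)] $g\,S(x,y)\,g^{-1}=S(gx,gy)$, and the action of $g$ on $\cC(C)$ is translation by conjugation.
\item[(b)] $S(\ast,\ast)=C$, so $\beta^{\bbone}_{\ast,\ast}=1$.
\item[(c)] $S((x_1,y_1),(x_2,y_2))=S(x_1,x_2)\cap S(y_1,y_2)$, and indicators multiply pointwise.
\end{itemize}

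The substantive step is axiom~(d). Let $f\colon Y\to X$, $y\in Y$, $x\in X$. We must show
\[
1_{S(f(y),x)}=\int_{f^{-1}(x)} 1_{S(y,z)}\,dz .
\]
Evaluate both sides at $g\in C$; evaluation at a point commutes with the integral of Schwartz-function-valued integrands. The right side becomes $\int_{f^{-1}(x)} [gy=z]\,dz$. The integrand is the point mass at $z=gy$, which lies in $f^{-1}(x)$ exactly when $g f(y)=x$. Since the measure of a one-point set is~$1$, the integral equals $1$ if $gf(y)=x$ and $0$ otherwise, which matches the left side. The second identity is analogous: the integrand is $[gz=y]$, supported at the single point $z=g^{-1}y$, which lies in the fiber iff $f(g^{-1}y)=x$, i.e.\ iff $gx=f(y)$.

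The main obstacle is justifying the pointwise evaluation, that is, that integration of $\cC(C)$-valued Schwartz functions commutes with evaluation at $g$. The plan is to cite the theory of integration valued in smooth modules (\cite[\S 11.9]{repst}). Evaluation at $g$ is not $G$-equivariant, so instead we argue on a finite level. Choose an open $V$ such that the integrand $z\mapsto 1_{S(y,z)}$ is $V$-invariant, and decompose $f^{-1}(x)$ into finitely many $V$-orbits. The integral is then a finite sum $\sum_i \mu(O_i)\,1_{S(y,z_i)}$, where $O_i$ are the orbits and $z_i$ representatives. Now evaluate at $g$, noting that for fixed $g$ the $V$-orbit $O_i$ contains the point $gy$ in at most one orbit.

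This last step is delicate: the integrand at a fixed $g$ is not $V$-invariant. So it is cleaner to integrate over $C\times f^{-1}(x)$ and use Fubini-type pushforward identities for the correspondence $\{(g,z): gy=z\}\to C$. This map is a bijection onto $\{g : gf(y)=x\}$, and the pushforward of an indicator along an injective map has fibers of measure~$1$, which gives exactly the indicator of $S(f(y),x)$.
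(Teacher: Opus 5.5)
Your argument is correct and is essentially the paper's proof: the paper writes $\beta^X_{x,y}=\int_{y=gx}\delta_g\,dg$, which is your indicator $1_{S(x,y)}$, checks (a)--(c) the same way, and in (d) makes the same Fubini/pushforward interchange $\int_{gy\in F}\delta_g\,dg=\int_F\int_{z=gy}\delta_g\,dg\,dz$ along the injective correspondence $\{(g,z): gy=z\}\to C$. The one slip is the finite-sum formula $\sum_i\mu(O_i)1_{S(y,z_i)}$ you tried and then dropped, which is invalid because $z\mapsto 1_{S(y,z)}$ is only $V$-equivariant, not $V$-invariant; the pointwise evaluation you want does hold, because the integral is the image of $1_F$ under the $A(V)$-linear map $\cC(F)\to\cC(C)$, $\delta_z\mapsto 1_{S(y,z)}$, and this map is given by the matrix $K(g,z)=[gy=z]$, whose action is defined pointwise.
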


\begin{proof}
We verify the conditions of Definition~\ref{defn:pi-struct}.

(a) For $h \in G$, we have
\begin{displaymath}
h \beta^X_{x,y} = \int_{y=gx} \delta_{hgh^{-1}} \, dg
= \int_{y=h^{-1}ghx} \delta_g \, dg = \int_{hy=ghx} \delta_g \, dg = \beta^X_{hx, hy},
\end{displaymath}
as required. Note that $G$ acts on $C$ by conjugation, which explains the integrand in the second step.

(b) If $X$ is a one-point $G$-set then $\beta^X_{x,x} = \int_C \delta_g \, dg$, which is the identity element of $\cC(C)$.

(c) We have
\begin{displaymath}
\beta^X_{x_1,x_2} \beta^Y_{y_1,y_2} = \int_{x_2=gx_1} \int_{y_2=hy_1} \delta_g \delta_h \, dg \, dh
= \int_{(x_2,y_2)=g(x_1,y_1)} \delta_g \, dg = \beta^{X \times Y}_{(x_1,y_1),(x_2,y_2)}.
\end{displaymath}
In the second step, we used the fact that $\delta_g \delta_h$ is $\delta_g$ if $g=h$, and vanishes otherwise.

(d) Let $f \colon Y \to X$ be a map of $G$-sets, let $x \in X$, and let $y \in Y$. Let $F=f^{-1}(x)$. Then
\begin{displaymath}
\beta^X_{f(y),x} = \int_{x=gf(y)} \delta_g \, dg = \int_{gy \in F} \delta_g \, dg = \int_F \int_{z=gy} \delta_g \, dg \, dz  = \int_F \beta^Y_{y,z} \, dz,
\end{displaymath}
as required. The other condition is similar.
\end{proof}

We thus see that if $M$ is a $\cC(C)$-module then $M$ is naturally an object of $\cZ(\Ind{\cT})$. The half-braiding is given on Schwartz spaces by
\begin{displaymath}
N \otimes M \to M \otimes N, \qquad n \otimes m \mapsto \int_C \delta_g m \otimes gn \, dg.
\end{displaymath}
We can restate this in a slightly nicer way. Put $M_g=\delta_g M$. Then $M$ is the direct integral of the $M_g$'s, in the sense that for any $m \in M$ we have $m=\int_C \delta_g m \, dg$ and $\delta_g m \in M_g$. It thus suffices to specify the half-braiding for $m \in M_g$. In this case, it is given by
\begin{displaymath}
n \otimes m \mapsto m \otimes gn.
\end{displaymath}
This is directly analogous to the case of finite groups (\S \ref{ss:finite-center}).

\begin{definition} \label{defn:yd}
A \defn{Yetter--Drinfeld} module is a $\cC(C)$-module in $\cT$, for some finitary $G$-subset $C \subset G_{\sm}$. The \defn{category of Yetter--Drinfeld modules} is the 2-colimit
\begin{displaymath}
\cY = \cY(G, \mu) = \varinjlim \Mod^{\fin}_{\cC(C)}
\end{displaymath}
over $C \subset G_{\sm}$. Recall that the $\fin$ superscript means we consider modules in $\cT$.
\end{definition}

\begin{remark} \label{rmk:yd}
We make a number of remarks on the above definition.

(a) If $C \subset C'$ are finitary $G$-subsets of $G_{\sm}$ then the pull-back map $\cC(C') \to \cC(C)$ is an algebra homomorphism, and easily seen to be compatible with the $\cO(\pi)$-structures. This is how the transition maps in the 2-colimit are defined.

(b) Let $\cC(G_{\sm})$ be the Schwartz space on $G_{\sm}$; by definition, this consists of smooth functions with finitary support. Explicitly, if $\{C_i\}_{i \in I}$ are the $G$-orbits on $G_{\sm}$ then
\begin{displaymath}
\cC(G_{\sm}) = \bigoplus_{i \in I} \cC(C_i).
\end{displaymath}
This is an object of $\Ind{\cT}$. Pointwise multiplication defines a non-unital algebra structure on $\cC(G_{\sm})$. This multiplication has approximate units\footnote{The Hecke algebra of a $p$-adic group has a similar structure; see \cite[\S 3.4]{Bump}.}: for any finite subset $J$ of $I$, there is an element $1_J$ of $\cC(G_{\sm})$ that acts as the identity on these factors. By a $\cC(G_{\sm})$-module, we mean one that respects the approximate identities, i.e., for any element $m$ we have $1_J \cdot m=m$ for some $J$. We have $\cY = \Mod^{\fin}_{\cC(G_{\sm})}$ and $\Ind{\cY} = \Mod_{\cC(G_{\sm})}$.

(c) The category $\cY$ is equivalent to the direct sum of the categories of $\cC(C_i)$-modules over $i \in I$. In general, for an open subgroup $U$ of $G$, the category of $\cC(G/U)$-modules in $\cT$ is equivalent to $\uRep(U, \mu)$. We thus have
\begin{displaymath}
\cY = \bigoplus_{i \in I} \uRep(Z(g_i), \mu),
\end{displaymath}
where $g_i$ is any element of $C_i$. Note the similarity to the case of finite groups (see \S \ref{ss:finite-center}).


(d) The category $\cY$ is not intrinsic to $\cT$: it depends on the choice of $G$. It is possible for two oligomorphic groups to give rise to the same pre-Tannakian category $\cT$, but for the categories of Yetter--Drinfeld modules to be inequivalent; we will see such an example in \S \ref{s:classical}. There is always a ``best'' choice of pro-oligomorphic group for a given $\cT$, namely, the oligomorphic fundamental group $\pi^{\rm olig}(\cT)$ constructed in \cite{discrete}. Thus one can canonically associate to $\cT$ the category of Yetter--Drinfeld modules for $\pi^{\rm olig}(\cT)$. However, it can be difficult to compute $\pi^{\rm olig}(\cT)$, and so this approach seems to be of limited use for computing $\cZ(\cT)$.
\end{remark}

As we have seen, every Yetter--Drinfeld module carries a natural half-braiding. Thus there is a natural functor
\begin{displaymath}
\Ind{\cY} \to \cZ(\Ind{\cT}).
\end{displaymath}
This functor is obviously faithful. We now investigate fullness. Recall that if $X$ is a subset of $G$ then a point $g \in X$ is \defn{isolated} if there exists an open subgroup $U$ such that $gU \cap X =\{g\}$.

\begin{proposition}
Let $C \subset G_{\sm}$ be a finitary $G$-subset. Suppose that every non-empty $G$-subset of $C$ contains an isolated point. Then the natural map $\cO(\pi) \to \cC(C)$ is surjective.
\end{proposition}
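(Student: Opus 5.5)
The natural map $\phi \colon \cO(\pi) \to \cC(C)$ is the one supplied by Proposition~\ref{prop:Opi-algebra}, sending $\alpha^X_{x,y}$ to $\beta^X_{x,y}$. Its image $I$ is a subalgebra of $\cC(C)$ and an $A(G)$-submodule, hence $G$-stable. The plan is to show that $I$ contains the indicator function $1_{C'}$ of every $G$-orbit $C' \subset C$. Once this is done, for each $g \in C'$ the indicator $\delta_g$ is obtained by multiplying $1_{C'}$ by the image of a suitable $\beta^X_{x,y}$ that isolates $g$ inside $C'$ (see below). Since $\cC(C)$ is spanned by such products and $I$ is closed under multiplication and the $G$-action, this gives $I=\cC(C)$.

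The key step is to separate a single point. Let $D \subset C$ be a non-empty $G$-subset, and let $g \in D$ be an isolated point, witnessed by an open subgroup $U$ with $gU \cap D = \{g\}$. Take $X = G/U$, $x = 1$, $y = gU$. Then $\beta^X_{x,y}$ is the indicator of $\{h \in C : hU = gU\} = C \cap gU$. Multiplying by the indicator $1_D$ (if we already know $1_D \in I$) gives $1_{D\cap gU}=\delta_g$, up to normalization. The $G$-translates of $\delta_g$ are the $\delta_{hgh^{-1}}$. Integrating over the orbit, $\int_{G\cdot g}\delta_{g'}\,dg'$ is the indicator $1_{G\cdot g}$ of the conjugacy class of $g$. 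Concretely, I expect this to come from applying the integration/push-forward structure of $I$: the map $\cC(G/Z(g)) \to I$ determined by the $Z(g)$-invariant element $\delta_g$, composed with integration. Here I use that $Z(g)$ is open by Proposition~\ref{prop:isolated-smooth}, so $\delta_g$ is smooth.

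This suggests a descending induction on the finitely many orbits of $C$. Start with $D = C$, for which $1_C = \beta^{\bbone}_{*,*} \in I$. Pick an isolated point $g \in D$ and produce $1_{G g} \in I$ as above. Then replace $D$ by $D \setminus Gg$, noting that $1_{D\setminus Gg} = 1_D - 1_{Gg} \in I$, and repeat. The hypothesis guarantees an isolated point at every stage. After finitely many steps every orbit indicator lies in $I$, and together with the $\delta_g$'s these span $\cC(C)$.

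The main obstacle is justifying that $I$ is closed under the integration used to pass from $\delta_g$ to $1_{Gg}$. My first attempt avoids integration altogether: take the $G$-set $X=G/U\times$ (its orbit structure) and consider $\sum$ over the finitely many $U$-double-coset representatives, i.e. use $\beta^{X}_{x,y}$ summed over $y$ in the finite set $\{hU : h \in Gg\cap \text{fixed coset data}\}$. Alternatively, and more cleanly, I would argue that $I$ is a subobject of $\cC(C')$ in $\Ind{\cT}$ and a $G$-stable ideal-like subalgebra. A smooth $A(G)$-submodule of $\cC(Gg)\cong\cC(G/Z(g))$ containing $\delta_g$ must be everything, since $\cC(G/Z(g))$ is generated by $\delta_g$ (\S\ref{ss:cga}). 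This yields $1_{Gg} \in I$ directly.
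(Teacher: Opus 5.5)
Your proof is correct and follows essentially the paper's route: an isolated point $g$ turns $\beta^{G/U}_{1,g}$ into $\delta_g$ (in your version after multiplying by $1_D$; in the paper $g$ is chosen isolated in $C$ itself, and the remaining orbits $C_2$ are handled by induction applied to the map $\cO(\pi) \to \cC(C_2)$), the image is an $A(G)$-submodule in which $\delta_g$ generates all of $\cC(Gg)$, and one inducts on the number of orbits. The step that does the real work is the $A(G)$-generation argument in your last paragraph; drop the claim in your first paragraph that $\cC(C)$ is spanned by point masses using only products and the $G$-action, since it is false for infinite orbits.
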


\begin{proof}
Let $g \in C$ be an isolated point, let $C_1$ be the conjugacy class of $g$, and write $C=C_1 \sqcup C_2$. Let $U$ be an open subgroup such that $gU \cap C = \{g\}$. We have
\begin{displaymath}
\beta^{G/U}_{1,g} = \int \delta_h \, dh = \delta_g,
\end{displaymath}
since the integral is taken over elements $h$ in $gU \cap C =\{g\}$. We thus see that the image of $\cO(\pi)$ in $\cC(C)$ contains the element $\delta_g$, and so it contains all of $\cC(C_1)$. Since $\cO(\pi)$ surjects onto $\cC(C_2)$ (by induction on the number of orbits), the result follows.
\end{proof}

\begin{corollary} \label{cor:YD-full}
Suppose every finitary $G$-subset of $G_{\sm}$ contains an isolated point. Then the natural functor $\Ind{\cY} \to \cZ(\Ind{\cT})$ is fully faithful.
\end{corollary}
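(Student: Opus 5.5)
We reduce Corollary~\ref{cor:YD-full} to the preceding proposition via the identification of $\cZ(\Ind{\cT})$ with $\cO(\pi)$-modules (Proposition~\ref{prop:Opi-center}).

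\textbf{Step 1: faithfulness.} The functor $\Ind{\cY} \to \cZ(\Ind{\cT})$ does not change the underlying object of $\Ind{\cT}$, and morphisms on both sides are morphisms in $\Ind{\cT}$ subject to extra conditions. So faithfulness is clear, and the content is fullness.

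\textbf{Step 2: reduction to a fixed $C$.} Let $M, N \in \Ind{\cY} = \Mod_{\cC(G_{\sm})}$ (Remark~\ref{rmk:yd}(b)), and let $f \colon M \to N$ be a morphism in $\cZ(\Ind{\cT})$. By Proposition~\ref{prop:Opi-center}, $f$ is exactly a morphism of $\cO(\pi)$-modules. Here the $\cO(\pi)$-action on a $\cC(G_{\sm})$-module is obtained through the maps $\cO(\pi) \to \cC(C)$ for finitary $C$.

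Every element $m \in M$ satisfies $1_J m = m$ for some finite $J$. Thus $M$ is the union (indeed the direct sum over orbits) of the submodules $M_C = 1_C M$, each of which is a $\cC(C)$-module for a finitary $G$-subset $C \subset G_{\sm}$. The same holds for $N$. We want to show $f$ is $\cC(G_{\sm})$-linear.

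\textbf{Step 3: the idempotents $1_C$ lie in the image of $\cO(\pi)$.} Fix finitary $C$. Enlarge $C$ so that it contains the relevant orbits of both $m$ and $f(m)$; since $f(m)$ lies in a finite sum of orbit components of $N$, this is possible. Then both $M_C$ and $N_C$ are $\cC(C)$-modules, via the projection $\cC(G_{\sm}) \to \cC(C)$.

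The key point is that the element $1_C \in \cC(G_{\sm})$ acts on every $\cC(G_{\sm})$-module as the image of an element of $\cO(\pi)$, uniformly, because $\cO(\pi) \to \cC(C)$ is surjective by the preceding proposition. That proposition applies because every non-empty $G$-subset of $C$ is itself a finitary $G$-subset of $G_{\sm}$ and so contains an isolated point by hypothesis.

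One subtlety needs care: the $\cO(\pi)$-action on a $\cC(G_{\sm})$-module $M$ is given by an algebra map to $\uEnd(M)$, obtained by compatibly combining the maps $\cO(\pi) \to \cC(C')$. So take $a \in \cO(\pi)$ mapping to $1_C$ in $\cC(C)$. Its image in $\cC(C')$ for larger $C'$ need not be $1_C$. To fix this, choose $C'$ large enough to contain the supports relevant to $m$, $f(m)$ and $N$'s component, and work directly with $\cC(C')$ and surjectivity onto $\cC(C')$.

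\textbf{Step 4: conclusion.} For any $b \in \cC(C')$, pick $a \in \cO(\pi)$ with image $b$. Then $f(bm) = f(am) = a f(m) = b f(m)$, since $f$ is $\cO(\pi)$-linear, $bm$ computed in $M_{C'}$ agrees with the $\cC(G_{\sm})$-action, and likewise $bf(m)$ in $N_{C'}$. Hence $f$ is $\cC(G_{\sm})$-linear, which proves fullness.

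The main obstacle is the bookkeeping in Step 3: making sure a single finitary $C'$ captures both $m$ and $f(m)$, and that the $\cO(\pi)$-action on the sub-objects $M_{C'}$ and $N_{C'}$ really factors through the surjection $\cO(\pi) \to \cC(C')$ of the preceding proposition. Everything else is formal.
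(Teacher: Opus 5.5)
Your proof is correct and follows the paper's route: the corollary comes straight from the surjectivity of $\cO(\pi) \to \cC(C)$ for finitary $C \subset G_{\sm}$, which makes any $\cO(\pi)$-linear map between Yetter--Drinfeld modules $\cC(C)$-linear once $C$ is enlarged to cover the relevant components of source and target. The opening claim of your Step~3 (a single element of $\cO(\pi)$ acting as $1_C$ on every module) is unjustified and should be dropped, but Step~4, which works with a finitary $C'$ containing the components of $m$ and $f(m)$, is the correct version of the argument.
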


\begin{remark}
Every finitary $G$-subset of $G_{\sm}$ contains an isolated point if and only if every c-smooth conjugacy class is discrete. Indeed, the former clearly implies the latter, while the reverse follows from the fact that, in any topological space, a finite union of discrete sets contains an isolated point.
\end{remark}

When the conclusion of the corollary holds, we can regard ``Yetter--Drinfeld'' as a condition on objects of $\cZ(\Ind{\cT})$, as opposed to requiring extra data. For all examples considered in this paper, and, indeed, all examples we know, the conclusion of the corollary does hold.

We now examine the tensor product of Yetter--Drinfeld modules. If $C_1$ and $C_2$ are finite unions of smooth conjugacy classes in $G$ then so is the set $C_1 C_2$ of all products $gh$ with $g \in C_1$ and $h \in C_2$. Moreover, the multiplication map $C_1 \times C_2 \to C_1 C_2$ is a map of $G$-sets (for the conjugation action).

\begin{proposition}
Let $C_1$ and $C_2$ be finite unions of smooth conjugacy classes in $G$, let $C=C_2 C_1$, and let $f \colon C_1 \times C_2 \to C$ be the multiplication map, i.e., $f(h_1,h_2)=h_2 h_1$. Then
\begin{displaymath}
f^* \colon \cC(C) \to \cC(C_1) \otimes \cC(C_2)
\end{displaymath}
is a homomorphism of $\cO(\pi)$-algebras.
\end{proposition}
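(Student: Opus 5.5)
The plan is to check the two parts of the claim separately. First, $f^*$ is an algebra homomorphism. Second, it carries the $\cO(\pi)$-structure $\beta$ on $\cC(C)$ from \S\ref{ss:yd} to the tensor product structure $\epsilon$ on $\cC(C_1)\otimes\cC(C_2)$ from \S\ref{ss:pi-tensor}. By the universal property in Proposition~\ref{prop:Opi-algebra}, this second point is exactly what ``homomorphism of $\cO(\pi)$-algebras'' means. Throughout, we identify $\cC(C_1)\otimes\cC(C_2)$ with $\cC(C_1\times C_2)$ via \cite[Proposition~12.6]{repst}. Under this identification the product is pointwise multiplication on $C_1\times C_2$, and $\delta_g\otimes\delta_h=\delta_{(g,h)}$.

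The first part is formal. The map $f$ is a map of $G$-sets for the conjugation action, so $f^*$ is a morphism in $\uPerm(G,\mu)$ and hence in $\Ind{\cT}$. Pull-back of functions commutes with pointwise multiplication and sends the constant function $1$ to $1$. Thus $f^*$ is a unital algebra map.

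For the second part, fix a $G$-set $X$ and $x,y\in X$. By definition, $\beta^X_{x,y}\in\cC(C)$ is the indicator function of $\{g\in C : y=gx\}$. Hence $f^*\beta^X_{x,y}$ is the indicator function of
\begin{displaymath}
\{(h_1,h_2)\in C_1\times C_2 : y=h_2h_1x\}.
\end{displaymath}
On the other side, we have
\begin{displaymath}
\epsilon^X_{x,y}=\int_X \beta^X_{x,u}\otimes\gamma^X_{u,y}\,du,
\end{displaymath}
where $\beta$ and $\gamma$ now denote the Yetter--Drinfeld structures on $\cC(C_1)$ and $\cC(C_2)$. The integrand is the indicator function of $\{(h_1,h_2): u=h_1x,\ y=h_2u\}$. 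So $\epsilon^X_{x,y}$ is the push-forward, along the projection $C_1\times C_2\times X\to C_1\times C_2$, of the indicator function of
\begin{displaymath}
S=\{(h_1,h_2,u): u=h_1x,\ y=h_2u\}.
\end{displaymath}
For fixed $(h_1,h_2)$, the fiber of $S$ is either empty or the single point $u=h_1x$, and it is non-empty exactly when $y=h_2h_1x$. Since a point has measure~$1$, the push-forward is the indicator function of $\{y=h_2h_1x\}$, which agrees with $f^*\beta^X_{x,y}$. Note that the order $f(h_1,h_2)=h_2h_1$ is exactly what matches the order of the factors in the formula for $\epsilon$.

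The main obstacle is justifying the step where the $\cC(C_1\times C_2)$-valued integral over $X$ is identified with the fiberwise push-forward, so that it may be evaluated pointwise. I would handle this by choosing an open subgroup $U$ fixing $x$ and $y$. Then everything is a $U$-invariant Schwartz function on the $\wh{G}$-set $C_1\times C_2\times X$. I would then invoke the compatibility of vector-valued integration with push-forward from \cite[\S 4.4, \S 11.9]{repst}, together with the fact that point evaluation on Schwartz functions commutes with these integrals. The remaining verifications are routine.
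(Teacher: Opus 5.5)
Your proposal is correct and is essentially the paper's proof. The paper also notes that pull-back is an algebra map, and then shows that both $\int_X \beta^X_{x,u}\otimes\gamma^X_{u,y}\,du$ and the pull-back of the structure on $\cC(C)$ equal the integral of $\delta_{h_1,h_2}$ over $\{(h_1,h_2) : y=h_2h_1x\}$. The difference is only bookkeeping: the paper manipulates iterated integrals of point masses formally, whereas you evaluate pointwise via a push-forward with one-point fibers, which makes the Fubini-type step slightly more explicit.
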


\begin{proof}
The pullback map on Schwartz spaces is always an algebra homomorphism. We verify that it is compatible with the $\cO(\pi)$-structures. Let $\beta$, $\gamma$, and $\epsilon$ denote the $\cO(\pi)$-structures on $C_1$, $C_2$, and $C$, and let $\xi$ be the $\cO(\pi)$-structure on $\cC(C_1) \otimes \cC(C_2)$. We must verify $\xi=f^*(\epsilon)$. We have
\begin{displaymath}
\xi^X_{x,y} = \int_X \beta^X_{x,u} \otimes \gamma^X_{u,y} \, du
= \int_X \int_{u=h_1x} \int_{y=h_2u} \delta_{h_1,h_2} \, dh_2 \, dh_1 \, du
= \int_{x=h_2h_1y} \delta_{h_1,h_2} \, d(h_1, h_2)
\end{displaymath}
where $h_1 \in C_1$ and $h_2 \in C_2$. In the first step, we used the explicit formula for $\xi$ from \S \ref{ss:pi-tensor}, and in the second the definitions of $\beta$ and $\gamma$. The final integral is taken over all $(h_1,h_2) \in C_1 \times C_2$ satisfying the stated equation. We have
\begin{displaymath}
f^*(\epsilon^X_{x,y}) = \int_{y=gx} f^*(\delta_g) \, dg
= \int_{y=gx} \int_{g=h_2h_1} \delta_{h_1,h_2} \, d(h_1, h_2) \, dg
= \int_{x=h_2h_1y} \delta_{h_1,h_2} \, d(h_1, d_2)
\end{displaymath}
This agrees with $\xi^X_{x,y}$, which completes the proof.
\end{proof}

The proposition implies that the category $\Ind{\cY}$ of Yetter--Drinfeld modules admits a monoidal structure. This monoidal structure is compatible with the one on $\cZ(\Ind{\cT})$. Note that when the functor $\Ind{\cY} \to \cZ(\Ind{\cT})$ is fully faithful, these statements simply mean that the class of Yetter--Drinfeld modules in $\cZ(\Ind{\cT})$ is closed under tensor product.

\subsection{Support} \label{ss:support}

Let $R$ be an $\cO(\pi)$-algebra. For a transitive $G$-set $X$, define $\fA_R(X)$ to be the set of pairs $(x,y) \in X \times X$ such that $\beta^X_{x,y} \ne 0$. Recall from Definition~\ref{defn:cset} the notion of c-functor.

\begin{proposition}
The rule $\fA_R$ is a c-functor.
\end{proposition}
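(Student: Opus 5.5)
We need to check that $\fA_R(X)$ is $G$-stable and satisfies conditions (a), (b), (c) of Definition~\ref{defn:cset}. $G$-stability is immediate from Definition~\ref{defn:pi-struct}(a): $g\beta^X_{x,y}=\beta^X_{gx,gy}$, and $g$ acts invertibly on $R$, so $\beta^X_{x,y}\neq 0$ iff $\beta^X_{gx,gy}\ne 0$.

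For the remaining conditions, fix a map $f\colon Y\to X$ of transitive $G$-sets. Everything will come from the integral identities in Definition~\ref{defn:pi-struct}(d).

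For condition (b), take $(x_1,x_2)\in\fA_R(X)$ and $y_1$ with $f(y_1)=x_1$. The first identity of (d) gives
\begin{displaymath}
0\ne \beta^X_{x_1,x_2}=\int_{f^{-1}(x_2)}\beta^Y_{y_1,z}\,dz .
\end{displaymath}
If $\beta^Y_{y_1,z}=0$ for every $z\in f^{-1}(x_2)$, the integrand would be identically zero and so would the integral, a contradiction. Hence some $y_2\in f^{-1}(x_2)$ has $(y_1,y_2)\in\fA_R(Y)$. Here we use only that integration of a Schwartz function valued in a smooth module is a finite linear combination of its values weighted by measures of level sets, so a zero function integrates to zero. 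Condition (c) follows symmetrically from the second identity in (d), fixing $y_2$ over $x_2$.

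Condition (a) is the main point: we must show that if $\beta^Y_{y_1,y_2}\ne 0$ then $\beta^X_{f(y_1),f(y_2)}\ne 0$. Integrating cannot help here, since a nonzero integrand can integrate to zero. Instead, we use idempotents. Put $x_i=f(y_i)$ and use the graph of $f$: the map $Y\to X\times Y$, $y\mapsto (f(y),y)$, is an isomorphism onto a $G$-orbit of $X\times Y$. Definition~\ref{defn:pi-struct}(c) together with Proposition~\ref{prop:pi-basic}(a,b) then gives
\begin{displaymath}
\beta^Y_{y_1,y_2}=\beta^{X\times Y}_{(x_1,y_1),(x_2,y_2)}=\beta^X_{x_1,x_2}\,\beta^Y_{y_1,y_2}.
\end{displaymath}
So if $\beta^X_{x_1,x_2}=0$, then $\beta^Y_{y_1,y_2}=0$, which proves (a). This product trick is the step I regard as the crux. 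The other steps reduce to the elementary fact that an integral of zero vanishes.
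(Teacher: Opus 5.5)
Your proof is correct. The $G$-stability check and conditions (b) and (c) match the paper's argument: a nonzero fiber integral forces a nonzero integrand.

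For (a), you and the paper prove the same identity, $\beta^X_{f(y_1),f(y_2)}\,\beta^Y_{y_1,y_2}=\beta^Y_{y_1,y_2}$ (this is \eqref{eq:beta1}), but you get it differently.
\begin{itemize}
\item The paper starts from the fiber-integral formula of Definition~\ref{defn:pi-struct}(d), $\beta^X_{f(y_1),f(y_2)}=\int_{f^{-1}(f(y_2))}\beta^Y_{y_1,u}\,du$. It multiplies on the right by $\beta^Y_{y_1,y_2}$ and collapses the integral to the single point $u=y_2$, using the orthogonal idempotents of Proposition~\ref{prop:pi-basic}(c).
\item You instead use multiplicativity, Definition~\ref{defn:pi-struct}(c), together with the identification of $Y$ with the graph orbit in $X\times Y$.
\end{itemize}

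So your remark that integration ``cannot help'' for (a) is too strong: it works once combined with the idempotent relations. Still, your route is arguably cleaner. It needs no evaluation of a point-supported integral and no appeal to Proposition~\ref{prop:pi-basic}(c). In fact, taking $f=\id$ in your argument (so the graph is the diagonal) recovers the idempotence half of that proposition.

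Your version of (a) is also essentially the argument the paper itself uses, diagrammatically, in the non-quasi-regular setting of \S\ref{ss:support2}. There, without elements, the identity is derived from Definition~\ref{defn:central}(b) and the fact that $f\times\id$ identifies $Y$ with an orbit of $X\times Y$.
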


\begin{proof}
Let $f \colon Y \to X$ be a map of $G$-sets, and suppose $(y,z)$ belongs to $\fA_R(Y)$, meaning $\beta^Y_{y,z}$ is non-zero. By Definition~\ref{defn:pi-struct}(d), we have
\begin{displaymath}
\beta^X_{f(y), f(z)} = \int_{f^{-1}(f(z))} \beta^Y_{y,u} \, du
\end{displaymath}
Multiplying by $\beta^Y_{y,z}$ and using Proposition~\ref{prop:pi-basic}, we find
\begin{equation} \label{eq:beta1}
\beta^X_{f(y), f(z)} \cdot \beta^Y_{y,z} = \beta^Y_{y,z}.
\end{equation}
Thus $\beta^X_{f(y), f(z)}$ is non-zero. We have therefore shown that $f$ maps $\fA_R(Y)$ into $\fA_R(X)$, which verifies Definition~\ref{defn:cset}(a).

Now, let $(x_1, x_2)$ be an element of $\fA(X)$, and let $y_1$ be a lift of $x_1$ to $Y$. By Definition~\ref{defn:pi-struct}(d), we have
\begin{displaymath}
\beta^X_{x_1,x_2} = \int_{f^{-1}(x_2)} \beta^Y_{y_1,y_2} \, dy_2.
\end{displaymath}
Since the left side is non-zero, the integrand is not identically zero. Thus there exists a lift $y_2$ of $x_2$ such that $\beta^Y_{y_1,y_2}$ is non-zero, meaning $(y_1,y_2) \in \fA_R(Y)$. This verifies Definition~\ref{defn:cset}(b), and condition (c) is similar. Thus $\fA_R$ is a c-functor.
\end{proof}

\begin{definition}
The \defn{support} of an $\cO(\pi)$-algebra $R$ is $\fA_R$. The \defn{support} of an $\cO(\pi)$-module is the support of its endomorphism algebra.
\end{definition}

Recall that in nice situations, c-functors correspond to closed conjugacy sets in $G$ (Theorem~\ref{thm:cset}); we can then think of the support of $R$ as such an object. This informs our intuition about support.

\begin{example} \label{ex:support}
We give a few examples of support.

(a) Consider $R=\bone$ as an $\cO(\pi)$-algebra via the augmentation map. Then $\beta^X_{x,y}$ is~1 if $x=y$ and~0 otherwise. Thus $\fA_R(X)$ is the diagonal in $X$. Note that $\fA_R=\fA_D$ with $D=\{1\}$.

(b) Suppose $C$ is a finite union of c-smooth conjugacy classes and $R=\cC(C)$ as in \S \ref{ss:yd}. Then $\fA_R=\fA_C$, that is, $\fA_R(X)$ consists of pairs $(x,gx)$ with $x \in X$ and $g \in C$. This follows directly from the definition of $\beta$. This specializes to the previous example if $C=\{1\}$.

(c) Consider $\cO(\pi)$ as an algebra over itself. Assuming $\cT$ is semi-simple\footnote{This should not be necessary, but it makes the justification easier.}, $\alpha^X_{x,y} \ne 0$ for all transitive $G$-sets $X$ and $x,y \in X$. Thus $\fA_R(X)=X \times X$ for all $X$, and so $\fA_R=\fA_D$ with $D=G$. To see this, let $X$ be a transitive $G$-set. Then we have $\cC(X) = \bone \oplus M$, where $\bone$ does not occur in $M$. The kernel of the natural map
\begin{displaymath}
(M \otimes M^*) \oplus (M \otimes \bone) \oplus (\bone \otimes M^*) \oplus (\bone \otimes \bone) = \cC(X) \otimes \cC(X)^* \to \cO(\pi)
\end{displaymath}
is contained within the first three summands on the left. The projection map $\cC(X \times X) \to \bone$ is the augmentation map. Since $\delta_{x,y}$ has non-zero image under this map, the result follows.
\end{example}

We say that an $\cO(\pi)$-algebra $R$ is \defn{trivial} if the structure map factors through the augmentation $\cO(\pi) \to \bone$. Concretely, this means $\beta^X_{x,y}$ is~1 if $x=y$ and~0 otherwise, for all $G$-sets $X$ and elements $x,y \in X$. Similarly, we say that an $\cO(\pi)$-module $M$ is \defn{trivial} if its endomorphism algebra is. Note that $M$ is trivial if and only if the half-braiding on $M$ is induced from the symmetry, i.e., $M$ belongs to the essential image of $\Ind{\cT} \to \cZ(\Ind{\cT})$. The following proposition shows that triviality can be detected by support.

\begin{proposition} \label{prop:triv-supp}
Let $R$ be an $\cO(\pi)$-algebra. Then $R$ is trivial if and only if its support is contained in the diagonal, i.e., $\fA_R(X)$ is contained in the diagonal of $X \times X$ for all $X$.
\end{proposition}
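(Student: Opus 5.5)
The forward direction is immediate. If $R$ is trivial then $\beta^X_{x,y}=0$ whenever $x \ne y$, so $\fA_R(X)$ lies in the diagonal of $X \times X$.

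For the converse, suppose $\fA_R(X)$ is contained in the diagonal for every transitive $G$-set $X$. By Proposition~\ref{prop:pi-basic}(b), the same then holds for every $G$-set $X$. For $x,y$ in different orbits we have $\beta^X_{x,y}=0$ outright, and otherwise $\beta^X_{x,y}=\beta^W_{x,y}$ for the orbit $W$ containing both. So $\beta^X_{x,y}=0$ whenever $x\neq y$, and it remains to show $\beta^X_{x,x}=1$.

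To get this, apply Definition~\ref{defn:pi-struct}(d) to the map $f \colon X \to \bbone$, with $y=x$:
\begin{displaymath}
1 = \beta^{\bbone}_{\ast,\ast} = \int_{X} \beta^X_{x,z} \, dz .
\end{displaymath}
Here the first equality is Definition~\ref{defn:pi-struct}(b). The integrand vanishes for $z \ne x$, so it is supported on the single point $\{x\}$, and the integral equals $\beta^X_{x,x}\,\mu(\{x\})$. The singleton is a $\wh{G}$-set, namely a one-point set for the stabilizer of $x$, so its measure is $1$. Hence $\beta^X_{x,x}=1$.

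Thus $\beta$ coincides with the $\cO(\pi)$-structure on $R$ obtained from the augmentation $\cO(\pi)\to\bone\to R$, which is described in Example~\ref{ex:support}(a). By the uniqueness in Proposition~\ref{prop:Opi-algebra}, the structure map $\cO(\pi)\to R$ factors through the augmentation, so $R$ is trivial.

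The one step needing care is the integral evaluation. The integrand $z \mapsto \beta^X_{x,z}$ is $R$-valued, so I rely on integration of functions valued in a smooth module (\S\ref{ss:cga}). The integrand is invariant under the stabilizer $U$ of $x$ and is supported on the $U$-orbit $\{x\}$, so its integral is its value at $x$ times $\mu(\{x\})=1$, as used above.
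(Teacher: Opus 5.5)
Your proof is correct and follows the paper's argument: apply Definition~\ref{defn:pi-struct}(d) to the map $X \to \bbone$, use (b) on the left side and the off-diagonal vanishing on the right side to conclude $\beta^X_{x,x}=1$. Your reduction from arbitrary $X$ to transitive $X$ via Proposition~\ref{prop:pi-basic}(b), and your explicit evaluation of the $R$-valued integral over a fixed point of the stabilizer, spell out details the paper leaves implicit.
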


\begin{proof}
Throughout the proof we regard $\bone$ as an $\cO(\pi)$-algebra via the augmentation map. It is clear that if $\bone \to R$ is a map of $\cO(\pi)$-algebras then $\fA_R$ is contained in the diagonal. Conversely, suppose that $\fA_R$ is contained in the diagonal, i.e., $\beta^X_{x,y}$ is non-zero only for $x=y$. Let $W=\{w\}$ be a one-point $G$-set. We have
\begin{displaymath}
1 = \beta^W_{f(x),w} = \int_X \beta^X_{x,y} \, dy = \beta^X_{x,x}.
\end{displaymath}
The first equation comes from Definition~\ref{defn:pi-struct}(b), the second from Definition~\ref{defn:pi-struct}(d) applied to the map $X \to W$, and the third from the fact that $\beta^X_{x,y}$ vanishes if $x \ne y$. It follows that $\bone \to R$ is a map of $\cO(\pi)$-algebras, as required.
\end{proof}

We now explain that objects in $\cZ(\cT)$ have ``small'' support. This is what will eventually provide us leverage for understanding $\cZ(\cT)$. The following is the precise notion of ``small'' that is relevant.

\begin{definition} \label{defn:Tsmall}
A c-functor $\fA$ is \defn{$\cT$-small} if there exists a $G$-set $Y$ such that for every $G$-set $X$ there is an injection $\cC(\fA(X)) \to \cC(X \times Y)$ in $\cT$.
\end{definition}

\begin{example} \label{ex:csmooth-Tsmall}
Let $C$ be a c-smooth conjugacy class. Then we have a surjection of $G$-sets $f \colon X \times C \to \fA_C(X)$ given by $(x, g) \mapsto (x, gx)$. We thus have an injection $f^* \colon \cC(\fA_C(X)) \to \cC(X \times C)$ in $\cT$, and so $\fA_C$ is $\cT$-small. More generally, if $D$ is a finite union of c-smooth conjugacy classes then $\fA_D$ is $\cT$-small.
\end{example}

By definition, $\cC(\fA(X))$ is a $G$-subset of $\cC(X \times X)$. The latter is typically much larger than $\cC(X \times Y)$, if $Y$ is fixed and $X$ is allowed to vary; for instance, $X \times X$ will typically have many more orbits than $X \times Y$. Thus $\cT$-small is potentially a very restrictive condition.

\begin{proposition} \label{prop:supp-Tsmall}
Let $R$ be an $\cO(\pi)$-algebra in $\Ind{\cT}$. Then for any transitive $G$-set $X$, the map
\begin{displaymath}
\cC(\fA_R(X)) \to \cC(X) \otimes R, \qquad \delta_{x,y} \mapsto \delta_x \otimes \beta^X_{x,y}
\end{displaymath}
is injective. In particular, if $R$ is in $\cT$ then the support of $R$ is $\cT$-small.
\end{proposition}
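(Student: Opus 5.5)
The map is defined via the mapping property of Schwartz spaces: $(x,y)\mapsto \delta_x\otimes\beta^X_{x,y}$ is $G$-equivariant by Definition~\ref{defn:pi-struct}(a). Since $\fA_R(X)$ is a finitary smooth $G$-set, this gives a morphism $\cC(\fA_R(X))\to\cC(X)\otimes R$. Call it $\Theta$.

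To show injectivity, I will construct a left inverse, or at least a map that detects every nonzero element. The natural candidate uses the multiplication of $R$ and the idempotence relations of Proposition~\ref{prop:pi-basic}(c). Consider
\begin{displaymath}
\Xi\colon \cC(X)\otimes R\to \cC(X\times X)\otimes R, \qquad \delta_x\otimes r\mapsto \int_X \delta_{x,z}\otimes r\,\beta^X_{x,z}\,dz ,
\end{displaymath}
which is built from $\Delta^*$-type maps, the $\cO(\pi)$-structure (as the morphism $\cC(X\times X)\to R$ from the proof of Proposition~\ref{prop:Opi-algebra}), and multiplication in $R$, so it is a morphism in $\Ind{\cT}$. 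Applying $\Xi$ to $\Theta(\delta_{x,y})$ gives
\begin{displaymath}
\int_X \delta_{x,z}\otimes \beta^X_{x,y}\beta^X_{x,z}\,dz.
\end{displaymath}
By Proposition~\ref{prop:pi-basic}(c), the product vanishes for $z\ne y$ and equals $\beta^X_{x,y}$ for $z=y$. Hence $\Xi\Theta(\delta_{x,y})$ is ``$\delta_{x,y}\otimes\beta^X_{x,y}$''. The integral over the single point $z=y$ is measure-theoretically subtle, so I would instead phrase the composite as the function on $X\times X$ with values in $R$ given by $(x',z)\mapsto \phi(x')\,\beta^X_{x',z}$ for $\phi\in\cC(\fA_R(X))$. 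Here I view elements of $\cC(X)\otimes R$ as $R$-valued Schwartz functions on $X$ (valid by \cite[\S 12]{repst}, because $\cC(X)\otimes R$ is identified with smooth $R$-valued functions on $X$).

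With this identification the argument becomes: $\Theta(\phi)$ is the $R$-valued function $x\mapsto \int_X \phi(x,y)\beta^X_{x,y}\,dy$. Fix $x$. The $\beta^X_{x,y}$ for distinct $y$ are orthogonal idempotents, so multiplying by $\beta^X_{x,y_0}$ extracts $\phi(x,y_0)\beta^X_{x,y_0}$. This step uses that $\phi(x,\cdot)$ is locally constant, so the integral splits into finitely many orbits of a stabilizer $U$ of $x$. Rewriting $\int_{Uy_0}\beta^X_{x,y}\,dy$ via Definition~\ref{defn:pi-struct}(d) applied to $X\to X/U$-type quotients keeps the orthogonality, using the lift structure. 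Thus $\Theta(\phi)=0$ forces $\phi(x,y_0)\beta^X_{x,y_0}=0$ for all $(x,y_0)$. Since $\beta^X_{x,y_0}\ne0$ on $\fA_R(X)$, we get $\phi=0$.

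The main obstacle is making the extraction step rigorous. Integrals over $U$-orbits in $y$ must be expressed as $\beta$'s of a quotient set, and their product with $\beta^X_{x,y_0}$ must be computed. I would do this with \eqref{eq:beta1}-type identities and Proposition~\ref{prop:pi-basic}(b,c), working in $\cC(X\times X/U)$, and use quasi-regularity where a nonzero measure is needed to divide.

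For the final claim: if $R\in\cT$, take $Y$ to be a $G$-set with a surjection $\cC(Y)\to R^*$, dually an injection $R\to\cC(Y)$; then $\cC(X)\otimes R\hookrightarrow\cC(X\times Y)$ by exactness of $\otimes$. For non-transitive $X$, use Proposition~\ref{prop:pi-basic}(b) to reduce to orbits, giving $\cT$-smallness.
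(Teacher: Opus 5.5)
This is essentially the paper's proof. You view $\Theta(\phi)$ as the $R$-valued function $x\mapsto\int_X\phi(x,y)\beta^X_{x,y}\,dy$, fix $x$, multiply by $\beta^X_{x,y_0}$, and use Proposition~\ref{prop:pi-basic}(c) to get $\phi(x,y_0)\beta^X_{x,y_0}=0$; your deduction of $\cT$-smallness from an embedding $R\to\cC(Y)$ is also the paper's.

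However, the step you call the main obstacle is not an obstacle, and you should drop the repair you sketch. Left multiplication by $\beta^X_{x,y_0}$ passes inside the integral, because multiplication in $R$ is a morphism in $\Ind{\cT}$ and $\beta^X_{x,y_0}$ is invariant under an open subgroup $V$ fixing $x$, $y_0$ and $\phi$. The resulting integrand is $V$-equivariant and vanishes off $\{y_0\}$. That point is a $V$-orbit, i.e.\ a one-point $\wh{G}$-set of measure $\mu(\bbone)=1$, so the integral is just $\phi(x,y_0)\beta^X_{x,y_0}$.

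No quotient $X\to X/U$ is needed, and as phrased that step would not work: Definition~\ref{defn:pi-struct}(d) applies only to maps of $G$-sets, and $X\to X/U$ is not one. Quasi-regularity plays no role here. For the same reason, your computation $\Xi\Theta(\delta_{x,y})=\delta_{x,y}\otimes\beta^X_{x,y}$ was already correct as it stood.
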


\begin{proof}
Let $\phi \in \cC(\fA_R(X))$ belong to the kernel. We treat $\phi$ as a Schwartz function on $X \times X$ that is supported on $\fA_R(X)$. We thus have
\begin{displaymath}
0 = \int_{X \times X} \phi(x,y) (\delta_x \otimes \beta^X_{x,y}) \, d(x,y).
\end{displaymath}
It follows that for any $x \in X$ we have
\begin{displaymath}
0 = \int_X \phi(x,y) \beta^X_{x,y} \, dy
\end{displaymath}
Multiplying this equation by $\beta^X_{x,y}$ and using Proposition~\ref{prop:pi-basic}, we find
\begin{displaymath}
\phi(x,y) \beta^X_{x,y} = 0.
\end{displaymath}
This holds for all $(x,y) \in X$. If $(x,y) \in \fA_R(X)$ then $\beta^X_{x,y} \ne 0$, and so $\phi(x,y)=0$. We have thus shown that the map is injective. If $R$ belongs to $\cT$, there exists a $G$-set $Y$ and an injection $R \to \cC(Y)$ in $\cT$, which shows that $\fA_R$ is $\cT$-small.
\end{proof}

Putting the above discussion together, we obtain the following criterion for the Drinfeld center of $\cT$ to come entirely from $\cT$.

\begin{theorem} \label{thm:Ztriv}
Suppose that every $\cT$-small c-functor is contained in the diagonal. Then the natural functor $\cT \to \cZ(\cT)$ is an equivalence.
\end{theorem}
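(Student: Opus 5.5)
The plan is to show that the forgetful-type functor $\cT \to \cZ(\cT)$ is fully faithful and essentially surjective, using the identification of $\cZ(\cT)$ with $\Mod^{\fin}_{\cO(\pi)}$ from Proposition~\ref{prop:Opi-center}. Under this identification, $\cT \to \cZ(\cT)$ corresponds to regarding an object of $\cT$ as an $\cO(\pi)$-module via the augmentation $\cO(\pi) \to \bone$, i.e., as a trivial $\cO(\pi)$-module.

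\emph{Full faithfulness.} For $M, N \in \cT$ with trivial module structures, any morphism in $\cT$ is automatically $\cO(\pi)$-linear, because the action of $\cO(\pi)$ on both factors through the counit. Equivalently, the half-braidings are the symmetry, and every morphism is natural with respect to the symmetry. Since morphisms in $\cZ(\cT)$ are exactly the morphisms in $\cT$ compatible with the half-braidings, the functor is fully faithful.

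\emph{Essential surjectivity.} This is the main step. Let $M \in \cZ(\cT)$, viewed as an $\cO(\pi)$-module with $M \in \cT$, and let $R=\uEnd(M) \cong M^* \otimes M$. This is an algebra in $\cT$ (finite length), carrying the $\cO(\pi)$-structure induced by the action map $\cO(\pi) \to \uEnd(M)$. By Proposition~\ref{prop:supp-Tsmall}, the support $\fA_R$ is $\cT$-small, since $R$ lies in $\cT$. The hypothesis then says that $\fA_R$ is contained in the diagonal. Proposition~\ref{prop:triv-supp} now gives that $R$ is trivial, i.e., the map $\cO(\pi) \to \uEnd(M)$ factors through the augmentation. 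So $M$ is a trivial $\cO(\pi)$-module, and as noted in the text, its half-braiding is induced from the symmetry. Hence $M$ lies in the essential image.

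The only points needing care are bookkeeping. First, the support in the hypothesis is the support of the endomorphism algebra, which matches the definition of the support of a module. Second, ``trivial $\cO(\pi)$-algebra structure on $\uEnd(M)$'' must mean exactly that the module structure on $M$ is the augmentation one. This holds because the module structure is the algebra map $\cO(\pi) \to \uEnd(M)$, and triviality of $R$ says this map is the counit followed by the unit of $R$.

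Beyond that, the proof is a direct combination of Propositions~\ref{prop:Opi-center}, \ref{prop:supp-Tsmall} and~\ref{prop:triv-supp}. The step doing the real work is applying the $\cT$-smallness of the support, and that is supplied by the earlier proposition.
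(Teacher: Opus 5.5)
Your proof is correct and matches the paper's argument: the support of $\uEnd(M)$ is $\cT$-small by Proposition~\ref{prop:supp-Tsmall}, so it is diagonal by hypothesis, so the $\cO(\pi)$-structure is trivial by Proposition~\ref{prop:triv-supp}, and hence $M$ lies in the essential image of $\cT$. Your explicit check of full faithfulness, via naturality of the symmetry, is a step the paper leaves implicit, and it is correct.
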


\begin{proof}
Let $M$ be an object of $\cZ(\cT)$. The support of $M$ is $\cT$-small (Proposition~\ref{prop:supp-Tsmall}), and therefore, by our assumption, contained in the diagonal. Thus the $\cO(\pi)$-module structure on $M$ factors through the augmentation (Proposition~\ref{prop:triv-supp}), which means that $M$ belongs to the essential image of $\cT$ in $\cZ(\cT)$. The result thus follows.
\end{proof}

\begin{remark}
If $C$ is a c-smooth conjugacy class in $G$ then $\fA_C$ is a $\cT$-small c-functor. Thus if every $\cT$-small c-functor is contained in the diagonal then there are no c-smooth conjugacy classes, other than the identity, and so there are no interesting Yetter--Drinfeld modules, i.e., we have $\cY = \cT$. It is therefore not altogether surprising that $\cZ(\cT)$ coincides with $\cT$ in this case.
\end{remark}

\subsection{The main theorem} \label{ss:Zmain}

We have just given a criterion for the center of $\cT$ to come entirely from $\cT$ (Theorem~\ref{thm:Ztriv}). We now prove a more flexible theorem, which will tell us that the center comes from Yetter--Drinfeld modules under certain hypotheses. Throughout \S \ref{ss:Zmain}, we assume that $G$ is a closed oligomorphic permutation group acting on a countable set $\Omega$. In this case, every c-functor $\fA$ has the form $\fA_D$ for a closed conjugacy set $D \subset G$ (Theorem~\ref{thm:cset}).

The following is the key result. In essence, it says that if an $\cO(\pi)$-algebra $R$ has an isolated point in its support, then $R$ admits a non-zero direct factor that is an algebra over one of the $\cO(\pi)$-algebras $\cC(C)$ from \S \ref{ss:yd}. We recall that an isolated point in a conjugacy set is necessarily c-smooth (Proposition~\ref{prop:isolated-smooth}).

\begin{proposition} \label{prop:isolated}
Let $R$ be a commutative algebra in $\Ind{\cT}$ with an $\cO(\pi)$-structure $\beta$, let $D \subset G$ be the closed conjugacy set with $\fA_R=\fA_D$, and suppose $g_0$ is an isolated point of $D$. Let $C \subset G$ be the conjugacy class of $g_0$.
\begin{enumerate}
\item Let $g \in C$ and let $U$ be an open subgroup. Then $\beta^{G/U}_{1,g}$ is independent of $U$ provided $U$ isolates $g$, meaning $gU \cap D=\{g\}$. Denote the common value by $\beta_g$.
\item The element $\beta_g$ is a non-zero idempotent, and satisfies $h\beta_g = \beta_{hgh^{-1}}$ for $h \in G$.
\item If $g,h \in C$ are distinct then $\beta_g \beta_h=0$.
\item The element $e=\int_C \beta_g dg$ is a non-zero $G$-invariant idempotent of $R$.
\item Let $X$ be a $G$-set, and let $x,y \in X$, and let $g \in C$. Then $\beta_g \beta^X_{x,y}$ is equal to $\beta_g$ if $gx=y$, and~0 otherwise.
\item The natural map $\cC(C) \to eR$ given by $\delta_g \mapsto \beta_g$ is a homomorphism of $\cO(\pi)$-algebras, for the $\cO(\pi)$-structure on $\cC(C)$ defined in \S \ref{ss:yd}.
\end{enumerate}
\end{proposition}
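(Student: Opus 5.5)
The whole argument rests on one identity, \eqref{eq:beta1}, from the proof that $\fA_R$ is a c-functor: if $f\colon Y\to X$ is a map of $G$-sets then $\beta^X_{f(y),f(z)}\beta^Y_{y,z}=\beta^Y_{y,z}$. I will combine it with the integral formula in Definition~\ref{defn:pi-struct}(d) and the orthogonality relations of Proposition~\ref{prop:pi-basic}(c). The other key input is $\fA_R=\fA_D$. In particular $\beta^{G/U}_{1,h}\neq 0$ if and only if $h\in DU$.

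For (a), take $V\subset U$, both isolating $g$, and apply Definition~\ref{defn:pi-struct}(d) to $G/V\to G/U$:
\[
\beta^{G/U}_{1,g}=\int_{gU/V}\beta^{G/V}_{1,h}\,dh .
\]
A coset $hV\subset gU$ contributes a nonzero term only if it meets $D$. Since $gU\cap D=\{g\}$, only the coset $gV$ survives, and it has measure one because point masses integrate to themselves. Hence $\beta^{G/U}_{1,g}=\beta^{G/V}_{1,g}$. Two arbitrary isolating subgroups are compared through their intersection, which also isolates $g$.

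For (b), idempotence is Proposition~\ref{prop:pi-basic}(c), and nonvanishing holds because $g\in D$. Equivariance follows from Definition~\ref{defn:pi-struct}(a) together with the isomorphism $G/U\to G/hUh^{-1}$ given by $a\mapsto ah^{-1}$, exactly as in step (a) of Theorem~\ref{thm:cset}; note that $hUh^{-1}$ isolates $hgh^{-1}$.

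For (c), choose $U$ isolating both $g$ and $h$ and small enough that $gU\neq hU$. Both elements then live on $G/U$ with the same first point $1$, so Proposition~\ref{prop:pi-basic}(c) gives $\beta_g\beta_h=0$.

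For (d), $g\mapsto\beta_g$ is a $G$-equivariant map from the smooth $G$-set $C$ to $R$. It is smooth because $g_0$ is c-smooth (Proposition~\ref{prop:isolated-smooth}), so the integral $e=\int_C\beta_g\,dg$ makes sense and is $G$-invariant. To see that $e$ is idempotent, expand $e^2$ as a double integral and use (c): only the diagonal survives, giving $e^2=\int\beta_g\,dg=e$. To see that $e\neq 0$, observe that $\beta_g e=\beta_g\neq 0$.

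Part (e) is the main step. Take $U$ isolating $g$ and fixing $x$, and let $f\colon G/U\to X$ be $a\mapsto ax$. Applying \eqref{eq:beta1} to the pair $(1,g)$ gives $\beta^X_{x,gx}\beta_g=\beta_g$. If $y\neq gx$, then Proposition~\ref{prop:pi-basic}(c) gives $\beta^X_{x,y}\beta^X_{x,gx}=0$, hence $\beta_g\beta^X_{x,y}=0$. Commutativity of $R$ is used here to multiply on either side. If $x$ and $y$ lie in different orbits, Proposition~\ref{prop:pi-basic}(b) handles it, and by the same result we may assume $X$ is transitive.

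For (f), write $\phi(\delta_g)=\beta_g$. The map $\phi$ is an equivariant algebra map $\cC(C)\to eR$: multiplicativity follows from (c) and idempotence, and the unit $\int_C\delta_g$ goes to $e$. It remains to check compatibility with the $\cO(\pi)$-structures, where the structure on $eR$ is $e\beta^X_{x,y}$. Using (e),
\[
e\beta^X_{x,y}=\int_C\beta_g\beta^X_{x,y}\,dg=\int_{y=gx}\beta_g\,dg=\phi\bigl(\beta^X_{x,y}\bigr),
\]
which is the required identity.

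I expect the main obstacle to be analytic bookkeeping rather than any conceptual step. In (a) one must justify that the fiber integral localizes to a single point mass. Throughout, one must confirm that integrals of $R$-valued smooth functions over $C$ commute with multiplication by fixed elements; this uses the integration theory for smooth modules from \cite[\S 11.9]{repst}.
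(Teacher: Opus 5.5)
Your proposal is correct and follows essentially the same route as the paper: (a) localizes the fiber integral over $gU/V$ to the single coset $gV$; (b)--(d) come from Proposition~\ref{prop:pi-basic} and conjugation of isolating subgroups; and (f) follows by integrating (e) over $C$. The only variation is in (e), where you use the identity $\beta^X_{x,gx}\beta_g=\beta_g$ plus orthogonality, which needs a reduction to transitive $X$ and (as you wrote it) commutativity. The paper instead writes $\beta^X_{x,y}=\int_{hx=y}\beta^{G/U}_{1,h}\,dh$ and multiplies by $\beta_g$ under the integral, which needs neither.
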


\begin{proof}
(a) Let $U$ and $V$ be two open subgroups isolating $g$; we note that at least one exists since $g$ is isolated in $D$. Replacing $U$ with $U \cap V$, we assume $U \subset V$. By Definition~\ref{defn:pi-struct}(d), we have
\begin{displaymath}
\beta^{G/V}_{1,g} = \int_{gV/U} \beta^{G/U}_{1,h} dh.
\end{displaymath}
Now, suppose $\beta^{G/U}_{1,h}$ is non-zero for some $h \in gV$. Then $(1,h)$ belongs to $\fA_R(G/U)$, which means there is $a \in S$ and $x \in G/U$ such that $(1,h)=(x, ax)$, which means there is $u \in U$ such that $h=au$. We also have $h=gv$ for some $v \in V$. We thus see $a=gvu^{-1}$ belongs to $gV \cap S$, and is therefore equal to $g$. Hence $h=g$ in $G/U$. The above equation thus reduces to $\beta^{G/V}_{1,g}=\beta^{G/U}_{1,g}$, which completes the proof of this statement.

(b) Let $V$ be an open subgroup isolating $g$. We have already seen that $\beta_g=\beta^{G/V}_{1,g}$ is idempotent (Proposition~\ref{prop:pi-basic}), and it is non-zero since $g \in D$, and therefore $(1,g) \in \fA_R(G/V)$. Now, let $h \in G$, and put $U=V \cap h^{-1}Vh$. Then
\begin{displaymath}
h \beta_g = h \beta^{G/U}_{1,g} = \beta^{G/U}_{h,hg} = \beta^{G/hUh^{-1}}_{1,hgh^{-1}} = \beta_{hgh^{-1}}.
\end{displaymath}
In the third step, we used the isomorphism of $G$-sets $G/U \to G/hUh^{-1}$, and the invariance of $\beta$ under isomorphisms (Proposition~\ref{prop:pi-basic}).

(c) Let $U$ be an open subgroup isolating both $g$ and $h$. Then $\beta_g=\beta^{G/U}_{1,g}$ and $\beta_h=\beta^{G/U}_{1,h}$ and $gU \ne hU$. Thus $\beta_g\beta_h=0$ by Proposition~\ref{prop:pi-basic}.

(d) This follows immediately from (b) and (c).

(e) Let $x,y \in X$ be given. Let $U$ be an open subgroup stabilizing $x$ and isolating $g$. Let $f \colon G/U \to X$ be the map $f(h)=hx$. By Definition~\ref{defn:pi-struct}(d), we have
\begin{displaymath}
\beta^X_{x,y} = \int_{hx=y} \beta^{G/U}_{1,h} \, dh,
\end{displaymath}
where the integral is over elements $h \in G/U$ satisfying the stated equation. Now multiply by $\beta_g=\beta^{G/U}_{1,g}$. By Proposition~\ref{prop:pi-basic}, $\beta^{G/U}_{1,g} \beta^{G/U}_{1,h}$ is non-zero only if $gU=hU$, in which case it is $\beta^{G/U}_{1,g}$. The result follows.

(f) Let $f \colon \cC(C) \to eR$ be the map in question, which exists by the mapping property for $\cC(C)$ (see \S \ref{ss:cga}) and statement (b) above. Since the $\beta_g$'s are orthogonal idempotents of $eR$ that integrate to the identity, it follows that $f$ is an algebra homomorphism. We now establish compatibility with the $\cO(\pi)$-structure. Let $\gamma$ be the $\cO(\pi)$-structure on $\cC(C)$; note that the $\cO(\pi)$-structure on $eR$ is simply $e \beta$. Let $X$ be a $G$-set and let $x,y \in X$ be given. We have
\begin{displaymath}
f(\gamma^X_{x,y}) = \int_{y=gx} f(\delta_g) \, dg = \int_{y=gx} \beta_g \, dg.
\end{displaymath}
On the other hand, we have
\begin{displaymath}
e \beta^X_{x,y} = \int_C \beta_g \beta^X_{x,y} \, dg = \int_{y=gx} \beta_g \, dg,
\end{displaymath}
where in the final step we used (e). Thus $f(\gamma^X_{x,y})=e \beta^X_{x,y}$, as required.
\end{proof}

\begin{corollary} \label{cor:isolated}
Let $M$ be an object of $\cZ(\Ind{\cT})$. Let $D \subset G$ be the closed conjugacy set such that $\fA_D$ is the support of $M$, and suppose that $D$ contains an isolated point $g_0$. Then, as an object of $\cZ(\Ind{\cT})$, $M$ decomposes as $M_1 \oplus M_2$ where $M_1$ is a non-zero object of $\Ind{\cY}$.
\end{corollary}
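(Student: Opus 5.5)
The plan is to apply Proposition~\ref{prop:isolated} to the endomorphism algebra. The one obstacle is that $\uEnd(M)$ need not be commutative, while Proposition~\ref{prop:isolated} is stated for commutative $R$. So I would instead work with the image $R$ of the structure map $\cO(\pi) \to \uEnd(M)$. This image is a quotient of the commutative algebra $\cO(\pi)$, so it is a commutative algebra in $\Ind{\cT}$. It carries the induced $\cO(\pi)$-structure $\beta$, the image of the tautological structure $\alpha$ (Proposition~\ref{prop:Opi-algebra}).

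First I check that the support is unchanged. Since $R \subset \uEnd(M)$ is a subalgebra, $\beta^X_{x,y}$ is zero in $R$ if and only if it is zero in $\uEnd(M)$. Hence $\fA_R$ equals the support of $M$, which is $\fA_D$. Applying Proposition~\ref{prop:isolated} to $R$, $D$ and $g_0$ then gives three things: the conjugacy class $C$ of $g_0$, a non-zero $G$-invariant idempotent $e = \int_C \beta_g\,dg \in R$, and a homomorphism of $\cO(\pi)$-algebras $\cC(C) \to eR$. Here $C$ is c-smooth by Proposition~\ref{prop:isolated-smooth}, and it is a single orbit, so it is a finitary $G$-subset of $G_{\sm}$.

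Next I split $M$ using $e$. Because $e$ is $G$-invariant, it lies in $\End_{A(G)}(M)$, i.e.\ it is a morphism in $\Ind{\cT}$. Because $e$ lies in the image of $\cO(\pi)$, which is commutative, $e$ commutes with the whole $\cO(\pi)$-action. So $e$ is an idempotent endomorphism of $M$ as an $\cO(\pi)$-module, that is, an idempotent in $\End_{\cZ(\Ind{\cT})}(M)$ by Proposition~\ref{prop:Opi-center}. Setting $M_1 = eM$ and $M_2 = (1-e)M$ gives the decomposition $M = M_1 \oplus M_2$ in $\cZ(\Ind{\cT})$. Since $e \ne 0$ as an endomorphism of $M$, we have $M_1 \ne 0$.

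Finally I show $M_1$ is a Yetter--Drinfeld module. The $\cO(\pi)$-action on $M_1$ factors through $eR$, since $eM$ is a module over $eR$ with unit $e$. The action of $\cO(\pi)$ on $M_1$ is $a \mapsto e\,a$, i.e.\ through the $\cO(\pi)$-structure $e\beta$ on $eR$. Composing with $\cC(C) \to eR$ makes $M_1$ a $\cC(C)$-module in $\Ind{\cT}$. The compatibility in Proposition~\ref{prop:isolated}(f) says that the $\cO(\pi)$-module structure obtained through $\cO(\pi) \to \cC(C) \to eR$ agrees with the original one. Hence the half-braiding on $M_1$ is the Yetter--Drinfeld half-braiding, and $M_1$ lies in $\Ind{\cY}$.

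The real work is in Proposition~\ref{prop:isolated}; given it, the only subtle point is the commutativity reduction in the first step.
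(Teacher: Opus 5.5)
Your proposal is correct and follows essentially the paper's own proof. Both arguments pass to the image $R$ of $\cO(\pi)$ in $\uEnd(M)$, apply Proposition~\ref{prop:isolated} to obtain the idempotent $e$, split $M$ as $eM \oplus (1-e)M$, and use the $\cO(\pi)$-algebra map $\cC(C) \to eR$ to make $M_1$ a Yetter--Drinfeld module. You additionally spell out details the paper leaves implicit, and these are correct: passing to $R$ is what provides commutativity, the support is unchanged, and $G$-invariance together with commutativity makes $e$ an idempotent in $\End_{\cZ(\Ind{\cT})}(M)$.
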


\begin{proof}
Let $R \subset \uEnd(M)$ be the image of $\cO(\pi)$. Let $e \in R$ be the idempotent produced by Proposition~\ref{prop:isolated}. We then have decompositions $R=R_1 \oplus R_2$ and $M=M_1 \oplus M_2$, where $R_1$ and $M_1$ are the image of $e$, and $R_2$ and $M_2$ are the image of $1-e$. Since $e$ is non-zero and $M$ is a faithful $R$-module, $M_1$ is non-zero. Let $C$ be the conjugacy class of $g_0$. By Proposition~\ref{prop:isolated}, we have an $\cO(\pi)$-algebra homomorphism $\cC(C) \to R_1$, and so $M_1$ is naturally a Yetter--Drinfeld module, as required.
\end{proof}

We say that a closed conjugacy set $D \subset G$ is \defn{$\cT$-small} if $\fA_D$ is $\cT$-small. We now come to our main result on $\cZ(\cT)$.

\begin{theorem} \label{thm:YD}
Suppose that every $\cT$-small closed conjugacy set in $G$ contains an isolated point. Then the natural functor $\cY \to \cZ(\cT)$ is an equivalence.
\end{theorem}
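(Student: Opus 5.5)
The plan is to show the functor is fully faithful and essentially surjective.

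\emph{Full faithfulness.} I would first check that the hypothesis of Corollary~\ref{cor:YD-full} holds. By Example~\ref{ex:csmooth-Tsmall}, if $C \subset G_{\sm}$ is a finitary $G$-subset, then $\fA_C$ is $\cT$-small. The closure $\ol{C}$ is a closed conjugacy set with $\fA_{\ol{C}}=\fA_C$, since $\fA_D$ only sees $DU$ for open $U$. So $\ol{C}$ is $\cT$-small and, by hypothesis, contains an isolated point. The same holds for every non-empty $G$-subset of $C$. This gives an isolated point of $\ol{C}$, but I need one lying in $C$ and isolated in $C$. I would handle this by noting that an isolated point $g$ of $\ol{C}$ has an open neighbourhood $gU$ meeting $\ol{C}$ only in $g$. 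That neighbourhood must meet $C$, so $g \in C$, and $g$ is then isolated in $C$. Corollary~\ref{cor:YD-full} now gives that $\Ind{\cY} \to \cZ(\Ind{\cT})$ is fully faithful, and this restricts to $\cY \to \cZ(\cT)$.

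\emph{Essential surjectivity.} Let $M \in \cZ(\cT)$, so $M$ has finite length in $\cT$. I would argue by induction on the length of $M$. If $M=0$ there is nothing to prove. Otherwise, let $R \subset \uEnd(M)$ be the image of $\cO(\pi)$. This is a commutative algebra, being a quotient of $\cO(\pi)$, and it lies in $\cT$, since $\uEnd(M)=M^*\otimes M$ does. By Proposition~\ref{prop:supp-Tsmall}, the support of $R$, which is the support of $M$, is $\cT$-small. Write it as $\fA_D$ with $D$ closed (Theorem~\ref{thm:cset}); then $D$ is a $\cT$-small closed conjugacy set. The set $D$ is non-empty, because $\beta^X_{x,x}$ summed over $y$ gives $1 \neq 0$ when $M\neq 0$, by Definition~\ref{defn:pi-struct}(b,d). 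By hypothesis $D$ has an isolated point. Corollary~\ref{cor:isolated} then gives $M = M_1 \oplus M_2$ in $\cZ(\Ind{\cT})$ with $M_1$ a non-zero object of $\Ind{\cY}$.

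Both summands are direct summands of $M$, so they lie in $\cT$, and $M_2$ has strictly smaller length. By induction $M_2$ is in the essential image of $\cY$. The summand $M_1$ is a $\cC(C)$-module lying in $\cT$, hence in $\cY$. Therefore $M \cong M_1 \oplus M_2$ is in the essential image.

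\emph{Main obstacle.} The delicate point is bookkeeping rather than new mathematics. I need the support of the summand $M_2$, a module over $(1-e)R$, to again be a $\cT$-small closed conjugacy set. This is automatic because $M_2$ is an object of $\cZ(\cT)$, so Proposition~\ref{prop:supp-Tsmall} applies to it directly. I also need the Yetter--Drinfeld structure on $M_1$ to be an honest finitary $\cC(C)$-module object lying in $\cT$, which Proposition~\ref{prop:isolated}(f) provides. I also need that $\Ind{\cY}$-objects of finite length lie in $\cY$; this follows from the description $\cY=\Mod^{\fin}_{\cC(G_{\sm})}$ in Remark~\ref{rmk:yd}(b).
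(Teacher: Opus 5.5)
Your proposal is correct and follows the paper's proof essentially step for step. Full faithfulness comes from applying the hypothesis to the closure of a finitary $C \subset G_{\sm}$, which is $\cT$-small by Example~\ref{ex:csmooth-Tsmall} and whose isolated point must lie in $C$, together with Corollary~\ref{cor:YD-full}; essential surjectivity is by induction on length via Proposition~\ref{prop:supp-Tsmall} and Corollary~\ref{cor:isolated}. Your explicit check that the support of a non-zero $M$ is non-empty is a worthwhile detail the paper leaves implicit, since $\emptyset$ is trivially $\cT$-small and has no isolated point.
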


\begin{proof}
First, suppose that $C \subset G_{\sm}$ is a finitary $G$-subset, and let $D$ be the closure of $C$ in $G$. We have $\fA_D=\fA_C$, and this c-functor is $\cT$-small (Example~\ref{ex:csmooth-Tsmall}). Thus, by our assumption, $D$ has an isolated point, and it must belong to $C$. It thus follows from  Corollary~\ref{cor:YD-full} that the functor $\cY \to \cZ(\cT)$ is fully faithful.

We now prove essential surjectivity by induction on length. Let $M$ be a non-zero object of $\cZ(\cT)$. The support of $M$ is a $\cT$-small conjugacy set (Proposition~\ref{prop:supp-Tsmall}), and thus (the corresponding closed conjugacy set) contains an isolated point by assumption. Thus, by Corollary~\ref{cor:isolated}, $M$ decomposes as $M_1 \oplus M_2$ where $M_1$ is a non-zero Yetter--Drinfeld module. Since $M_1$ is non-zero, the length of $M_2$ is less than that of $M$, and so, by induction, $M_2$ is a Yetter--Drinfeld module. This completes the proof.
\end{proof}

\begin{remark}
Cconsider the oligomorphic group $G$ from Remark~\ref{rmk:cmooth-not-isolated}. One can show that $G$ admits a regular measure $\mu$ (over any field of characteristic $\ne 2$), and that there is a corresponding semi-simple pre-Tannakian category $\cT$. Let $C$ be the conjugacy class from the remark, and let $D$ be the closure of $C$. We have seen that $C$ does not contain any isolated point, and so the same is true for $D$. Since $\fA_C=\fA_D$, it follows from Example~\ref{ex:csmooth-Tsmall} that $D$ is $\cT$-small. Thus the hypothesis of Theorem~\ref{thm:YD} does not apply in this case. It would therefore be interesting to look for a more general version of the theorem that accommodates this example.
\end{remark}

\section{Results related to strong amalgamation} \label{s:strong}

In \S \ref{s:qr}, we introduced the notion of $\cT$-small c-functor (Definition~ \ref{defn:Tsmall}), and saw that if one can control these objects then one obtains a description of the center in terms of Yetter--Drinfeld modules. In this section, we develop some tools for controlling $\cT$-small c-functors when the oligomorphic group comes from a Fra\"iss\'e class satisfying the strong amalgamation property.

\subsection{Strong amalgamation}

Let $(G, \Omega)$ be an oligomorphic permutation group. Given a finite subset $A$ of $\Omega$, recall that $G(A)$ is the subgroup of $G$ consisting of permutations that fix each element of $A$. The \defn{definable closure} of $A$, denoted $\dcl{A}$, is the set $\Omega^{G(A)}$ of $G(A)$-fixed points on $\Omega$. The \defn{algebraic closure} of $A$, denoted $\acl{A}$, is the set of elements of $\Omega$ that have finite orbit under $G(A)$. Both $\dcl{A}$ and $\acl{A}$ are finite sets stable under $G(A)$, and we have
\begin{displaymath}
A \subset \dcl{A} \subset \acl{A}.
\end{displaymath}
We say that $A$ is \defn{definably closed} if $A=\dcl{A}$, and \defn{algebraically closed} if $A=\acl{A}$. For any $A$, the set $\dcl{A}$ is definably closed and the set $\acl{A}$ is algebraically closed.

\begin{proposition} \label{prop:dcl}
The following are equivalent:
\begin{enumerate}
\item Every finite subset of $\Omega$ is definably closed.
\item Every finite subset of $\Omega$ is algebraically closed.
\item Given finite subsets $A$, $B$, and $C$ of $\Omega$, with $A \cap C = B \cap C = \emptyset$, there exists $g \in G(C)$ such that $gA \cap B = \emptyset$.
\end{enumerate}
\end{proposition}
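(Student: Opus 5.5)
We prove the cycle (b)$\Rightarrow$(a)$\Rightarrow$(c)$\Rightarrow$(b), but it is more natural to argue (a)$\Leftrightarrow$(b) first and then relate (b) and (c).

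\emph{(b)$\Rightarrow$(a).} This is immediate from $A \subset \dcl{A} \subset \acl{A}$.

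\emph{(a)$\Rightarrow$(b).} Suppose $x \in \acl{A}\setminus A$. Then the $G(A)$-orbit $O$ of $x$ is finite and disjoint from $A$. Since $O$ is finite and stable under $G(A)$, the pointwise stabilizer $G(A \cup O)$ has finite index in $G(A)$ and acts trivially on $O$. We want to contradict (a) applied to a suitable finite set. The classical approach, P.~M.~Neumann's lemma, is to use (a) for the set $A\cup (O\setminus\{x\})$. Since $O$ is $G(A)$-stable, any element of $G(A \cup O\setminus\{x\})$ permutes $O$ and fixes all of it except possibly $x$, so it fixes $x$. Thus $x \in \dcl(A\cup O\setminus\{x\})$, and $x$ is not in that set, contradicting (a). (If $O=\{x\}$, then $x\in\dcl{A}\setminus A$ directly.)

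\emph{(c)$\Rightarrow$(b).} Let $x \in \acl{C}$ with $x \notin C$, and take $A=B=O$, the finite $G(C)$-orbit of $x$. Then $A\cap C=\emptyset$, and any $g \in G(C)$ has $gO=O$, so $gA\cap B\neq\emptyset$. This contradicts (c).

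\emph{(b)$\Rightarrow$(c).} This is the main step: it is Neumann's lemma, which states that if a group $H$ acts on a set with all orbits infinite, then for finite sets $A,B$ there is $h\in H$ with $hA\cap B=\emptyset$. We apply it with $H=G(C)$ acting on $\Omega\setminus C$. By (b), $C=\acl{C}$, so every $G(C)$-orbit on $\Omega\setminus C$ is infinite, and $A,B\subset \Omega\setminus C$. The proof of Neumann's lemma goes by induction on $|A|$ (or via B.~H.~Neumann's covering lemma: a group is not a finite union of cosets of infinite-index subgroups). Concretely, the set of $h$ with $ha = b$ is a coset of the stabilizer $H_a$, which has infinite index because the orbit of $a$ is infinite. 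The bad set $\{h : hA\cap B\neq\emptyset\}$ is a finite union of such cosets, so it cannot be all of $H$ by B.~H.~Neumann's lemma.

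I expect this last step to be the only real obstacle. Either I cite B.~H.~Neumann's lemma, or I include its short inductive proof: if $H=\bigcup_{i=1}^n g_iK_i$ with each $[H:K_i]$ infinite, argue by induction on the number of distinct subgroups among the $K_i$, using that some coset of $K_1$ avoids the finitely many $g_iK_1$ and is covered by cosets of the other subgroups.
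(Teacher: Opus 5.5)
Your proof is correct and follows the paper's route for (a)$\Leftrightarrow$(b): the paper deletes $x$ from all of $\acl{A}$ rather than adjoining $O\setminus\{x\}$ to $A$, but the argument is the same, and (b)$\Rightarrow$(a) is the same triviality. For (b)$\Leftrightarrow$(c) the paper just cites Cameron's book (2.15), and your argument supplies the standard proof behind that citation: the finite orbit $O$ gives (c)$\Rightarrow$(b), and P.~M.~Neumann's lemma, proved via B.~H.~Neumann's coset-covering lemma, gives (b)$\Rightarrow$(c). The only blemish is cosmetic: you call the (a)$\Rightarrow$(b) trick ``Neumann's lemma'', but that name belongs to the statement you use in the last step.
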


\begin{proof}
(a) $\Rightarrow$ (b) Let $A$ be a finite subset of $\Omega$. Suppose $C=\acl{A}$ is strictly larger than $A$, let $x$ be an element of $C$ not belonging to $A$, and put $B=C \setminus \{x\}$. Since $C$ is stable by $G(A)$, it is also stable by the subgroup $G(B)$. Since $G(B)$ fixes all elements of $C$ except $x$, it must also fix $x$. Thus $\dcl{B}=C$, a contradiction. We conclude that $A$ is algebraically closed.

(b) $\Rightarrow$ (a) is obvious.

(b) $\Leftrightarrow$ (c) is \cite[2.15]{CameronBook}.
\end{proof}

Suppose $\Omega$ is the Fra\"iss\'e limit of a Fra\"iss\'e class $\sF$, and $G=\Aut(\Omega)$. The class $\sF$ is said to have the \defn{strong amalgamation property} if the following condition holds: given embeddings $A \to B$ and $A \to C$ in $\sF$, there are embeddings $B \to D$ and $C \to D$ in $\sF$ that agree on $A$ and only on $A$, i.e., the images of $B \setminus A$ and $C \setminus A$ in $D$ are disjoint. One easily sees $\sF$ has the strong amalgamation property if and only if $G$ satisfies Proposition~\ref{prop:dcl}(c). See \cite[\S 2.7]{CameronBook} for additional background on strong amalgamation.

\subsection{Level} \label{ss:level}

Let $(G, \Omega)$ be an oligomorphic permutation group satisfying the equivalent conditions of Proposition~\ref{prop:dcl}. Recall that a transitive $G$-set is \defn{$\Omega$-smooth} if it appears in $\Omega^n$ for some $n$; we then define the \defn{level} of $X$, denoted $\lev{X}$, to be the minimal such $n$. We say that a $G$-set $X$ is \defn{$\Omega$-smooth} if each orbit on $X$ is $\Omega$-smooth, and we define the \defn{level} of $X$ to be the maximum of the level of orbits, with the convention that $\lev{\emptyset}=-\infty$.

\begin{proposition} \label{prop:level}
The level of the orbit of $x \in \Omega^n$ is the number of distinct coordinates of $x$.
\end{proposition}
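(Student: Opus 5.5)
Let $x=(x_1,\ldots,x_n)\in\Omega^n$, let $A=\{x_1,\ldots,x_n\}$ be its set of coordinates, and put $m=|A|$. The orbit $Gx$ is isomorphic, as a $G$-set, to $G/G(A)$, because the stabilizer of $x$ in $G$ is exactly $G(A)$. The proof therefore reduces to showing two things. First, $G/G(A)$ occurs as an orbit on $\Omega^m$. Second, it does not occur as an orbit on $\Omega^r$ for any $r<m$.

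\emph{Upper bound.} List the distinct elements of $A$ as $a_1,\ldots,a_m$ and set $a=(a_1,\ldots,a_m)\in\Omega^m$. Its stabilizer is again $G(A)$, so $Ga\cong G/G(A)\cong Gx$. This shows $\lev(Gx)\le m$.

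\emph{Lower bound.} Suppose $Gx$ is isomorphic to the orbit of some $y=(y_1,\ldots,y_r)\in\Omega^r$. Let $B=\{y_1,\ldots,y_r\}$, so that $|B|\le r$. An isomorphism of transitive $G$-sets $G/G(A)\cong G/G(B)$ means that $G(A)$ and $G(B)$ are conjugate, say $G(B)=hG(A)h^{-1}=G(hA)$. It then suffices to show that $G(A')=G(B)$ forces $A'=B$ for finite subsets $A',B\subset\Omega$. Applying this with $A'=hA$ gives $|B|=|hA|=m$, hence $r\ge m$.

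For that claim we use Proposition~\ref{prop:dcl}(a): every finite subset is definably closed. This gives
\begin{displaymath}
A'=\dcl A'=\Omega^{G(A')}=\Omega^{G(B)}=\dcl B=B.
\end{displaymath}
This is the key step, and it is where the hypothesis on $(G,\Omega)$ is used. Without it, a finite set could have the same pointwise stabilizer as a smaller set, and then the level could drop below the number of distinct coordinates.

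The only point requiring care is the identification of isomorphisms of transitive $G$-sets with conjugacy of stabilizers. This is standard: an isomorphism $G/U\to G/V$ sends the coset $1U$ to some $hV$, and then $U=hVh^{-1}$. Beyond that, the argument consists of the two short steps above, and no real obstacle is expected.
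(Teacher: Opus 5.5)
Your proof is correct and follows essentially the same route as the paper: you pass to a tuple with distinct coordinates for the upper bound, and for the lower bound you use that every finite subset is definably closed, so the pointwise stabilizer determines the coordinate set. The only cosmetic difference is that the paper takes the image $z$ of the distinct-coordinate tuple under the embedding into $\Omega^{\ell}$, which has literally the same stabilizer. This sidesteps your detour through conjugate subgroups $G(hA)$.
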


\begin{proof}
Without loss of generality, suppose that $x_1, \ldots, x_m$ are distinct and $x_{m+1}, \ldots, x_n$ each belong to $\{x_1, \ldots, x_m\}$. Let $X \subset \Omega^n$ be the orbit of $x$, let $y=(x_1, \ldots, x_m)$, and let $Y \subset \Omega^m$ be the orbit of $y$. Then $X$ and $Y$ are isomorphic $G$-sets, so it suffices to show that $Y$ has level $m$. Clearly, $\lev{Y} \le m$. Suppose we have an injection of $G$-sets $i \colon Y \to \Omega^{\ell}$ with $\ell \le m$. Let $z=(z_1, \ldots, z_{\ell})$ be the image of $y$. Since $i$ is an isomorphism onto its image, $z$ and $y$ have the same stabilizer, call it $H$. By our assumption $(\ast)$, we have $\Omega^H=\{x_1, \ldots, x_m\}$ and $\Omega^H=\{z_1, \ldots, z_{\ell}\}$. Thus $\ell=m$, as required.
\end{proof}

\begin{proposition} \label{prop:level-prod}
Let $X$ and $Y$ be $\Omega$-smooth $G$-sets. Then $\lev(X \times Y)=\lev{X}+\lev{Y}$.
\end{proposition}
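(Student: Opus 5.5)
Since level is defined as a maximum over orbits and orbits of $X\times Y$ are contained in products of orbits, we first reduce to the transitive case. If $X=\coprod X_i$ and $Y=\coprod Y_j$, then the orbits of $X \times Y$ are exactly the orbits of the sets $X_i \times Y_j$. So $\lev(X\times Y)=\max_{i,j}\lev(X_i\times Y_j)$, and once the transitive case is known this is $\max_i \lev X_i+\max_j\lev Y_j$. The empty cases are handled by the convention $\lev\emptyset=-\infty$.

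For the transitive case, $X\times Y$ need not be transitive. Write $n=\lev X$ and $m=\lev Y$, and realize $X\subset\Omega^n$ as the orbit of a tuple $x$ with $n$ distinct coordinates and $Y\subset\Omega^m$ as the orbit of a tuple $y$ with $m$ distinct coordinates. This is possible by Proposition~\ref{prop:level}: an orbit in $\Omega^n$ of level $n$ has all coordinates distinct. Then $X\times Y\subset\Omega^{n+m}$, and by Proposition~\ref{prop:level} the orbit of $(x',y')$ has level equal to the number of distinct coordinates of the concatenated tuple $(x',y')$. This is at most $n+m$, so $\lev(X\times Y)\le n+m$.

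For the reverse inequality we must find $(x',y')\in X\times Y$ whose coordinates are pairwise distinct. Fix $x'=x$ and look for $g\in G$ with $gy$ disjoint from $x$, i.e.\ $gB\cap A=\emptyset$, where $A$ and $B$ are the coordinate sets of $x$ and $y$. This is exactly Proposition~\ref{prop:dcl}(c) with the third set $C=\emptyset$. Taking $y'=gy\in Y$, the tuple $(x,y')$ has $n+m$ distinct coordinates, so that orbit has level $n+m$. Hence $\lev(X\times Y)\ge n+m$.

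The only substantive step is producing this disjoint translate, and it is immediate from the standing assumption that the conditions of Proposition~\ref{prop:dcl} hold. The rest is bookkeeping with Proposition~\ref{prop:level}. One point needs care: a $G$-set abstractly isomorphic to an orbit in $\Omega^n$ must be treated via that embedding. Level is an isomorphism invariant, so we may replace $X$ and $Y$ by their images in $\Omega^n$ and $\Omega^m$, and products are compatible with this replacement.
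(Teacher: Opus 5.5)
Your proof is correct and follows the paper's argument. You reduce to transitive $X$ and $Y$, realize them as orbits of tuples with distinct coordinates, use Proposition~\ref{prop:dcl}(c) with $C=\emptyset$ to get $g$ so that $(x,gy)$ has all coordinates distinct, and conclude with Proposition~\ref{prop:level}; the only difference is that you spell out the upper bound via $X\times Y\subset\Omega^{n+m}$, which the paper leaves implicit.
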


\begin{proof}
It suffices to treat the case where $X$ and $Y$ are transitive. Let $r=\lev{X}$ and let $s=\lev{Y}$. Identify $X$ with the orbit of some $x \in \Omega^r$, and identify $y$ with the orbit of some $y \in \Omega^s$. Note that $x$ and $y$ have distinct coordinates by Proposition~\ref{prop:level}. Pick $g \in G$ such that $gy$ has no coordinate in common with $x$, which is possible by Proposition~\ref{prop:dcl}(c). Then $X \times Y$ contains the orbit of $(x, gy) \in \Omega^{r+s}$, which has level $r+s$ by Proposition~\ref{prop:level}.
\end{proof}

\subsection{Finitary elements}

Let $(G, \Omega)$ be as in \S \ref{ss:level}. We say that a permutation $g$ of $\Omega$ is \defn{finitary} if it fixes all but finitely many elements of $\Omega$. We then define the \defn{support} of $g$ to be the finite set of elements $x \in \Omega$ such that $gx \ne x$, and we define the \defn{rank} of $g$ to be the cardinality of its support. If $A$ is the support of $g$ then $gA=A$, and so $g$ induces a permutation of the set $A$; in fact, if $B$ is any subset of $\Omega$ containing $A$ then $gB=B$, and so $g$ induces a permutation of $B$ fixing each element of $B \setminus A$. Note that $g$ has rank~0 if and only if $g$ is the identity.

\begin{proposition} \label{prop:smooth-is-finitary}
An element of $G$ is c-smooth if and only if it is finitary.
\end{proposition}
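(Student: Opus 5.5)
We prove the two implications separately. The setting is that of \S\ref{ss:level}: $(G,\Omega)$ is oligomorphic and satisfies the equivalent conditions of Proposition~\ref{prop:dcl}, so every finite subset of $\Omega$ is definably and algebraically closed.

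\emph{Finitary implies c-smooth.} Suppose $g$ is finitary with support $A$. Then $G(A)$ is an open subgroup, and we claim it centralizes $g$. Let $h\in G(A)$ and $x\in\Omega$. If $x\in A$, then $hx=x$, and $gx\in A$ since $gA=A$; hence $hgh^{-1}x=hgx=gx$. If $x\notin A$, then $h^{-1}x\notin A$ because $h$ fixes $A$ pointwise and is a bijection. So $g$ fixes $h^{-1}x$, giving $hgh^{-1}x=x=gx$. Thus $Z(g)\supset G(A)$ is open. This direction uses no hypothesis on $\Omega$.

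\emph{C-smooth implies finitary.} Suppose $Z(g)$ is open. Then it contains $G(A)$ for some finite set $A\subset\Omega$, which we may enlarge to be $g$-stable? We do not need that; we argue pointwise. Take $x\notin A$ and suppose, for contradiction, that $gx\neq x$. Put $y=gx$. For every $h\in G(A)$ we have $hgh^{-1}=g$, so $g(hx)=hy$. We split into two cases.

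First, suppose $y\notin A$. The set $\{x,y\}$ is disjoint from $A$, so the $G(A)$-orbit of $x$ is infinite, since $A$ is algebraically closed. By Proposition~\ref{prop:dcl}(c), applied with $C=A\cup\{x\}$ (which is disjoint from $\{y\}$), there is $h\in G(A\cup\{x\})$ with $hy\neq y$. Then $g(x)=g(hx)=hy\neq y=gx$, a contradiction.

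Second, suppose $y\in A$. Then $y$ is fixed by $G(A)$, so $g(hx)=y$ for all $h\in G(A)$. Since $g$ is injective, $hx=x$ for all $h\in G(A)$, i.e.\ $x\in\dcl A=A$. This contradicts $x\notin A$.

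Hence $g$ fixes every point outside $A$, so $g$ is finitary.

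The main obstacle is the first case of the second direction, since it is the only place the strong amalgamation hypothesis enters in an essential way. The point is to find an element of $G$ fixing $A\cup\{x\}$ but moving $gx$, and Proposition~\ref{prop:dcl}(c) provides exactly this (equivalently, $\acl(A\cup\{x\})=A\cup\{x\}$ shows the orbit of $y$ under $G(A\cup\{x\})$ is infinite).
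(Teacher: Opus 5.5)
Your proof is correct and is essentially the paper's argument unpacked pointwise: the paper notes that since $G(B)\subset Z(g)$ for every finite $B\supseteq A$, the element $g$ permutes $\Omega^{G(B)}=\dcl B=B$, and applies this to $B=A$ and to $B=A\cup\{x\}$, which is exactly what your two cases establish. As minor remarks, in Case~1 you can simply cite $\dcl(A\cup\{x\})=A\cup\{x\}$ instead of Proposition~\ref{prop:dcl}(c), and Case~2 also uses the hypothesis (through $\dcl A=A$), so your closing claim that Case~1 is the only essential use of strong amalgamation is not quite accurate.
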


\begin{proof}
If $g$ is finitary with support $A$ then $G(A)$ centralizes $g$, and so $g$ is c-smooth. Now suppose $g$ is c-smooth. Let $A$ be a finite subset of $\Omega$ such that $G(A) \subset Z(g)$. If $B$ is any finite subset of $\Omega$ containing $A$ then $G(B) \subset Z(g)$, and so $g$ maps $\Omega^{G(B)}=B$ to itself bijectively. In particular, $gA=A$. Taking $B=A \cup \{x\}$, where $x \in \Omega \setminus A$, we see that $gx=x$. Thus $g$ fixes all elements of $\Omega \setminus A$, and is thus finitary.
\end{proof}

\begin{remark}
Even without assuming the conditions of Proposition~\ref{prop:dcl}, finitary elements are c-smooth. Here is an example of a c-smooth element that is not finitary. Let $\fS$ be the infinite symmetric group acting on $\Omega_0=\{1,2,\ldots\}$, and let $H=\langle a \rangle$ be cyclic of order two. Let $G=H \times \fS$ act on $\Omega=H \times \Omega_0$. Since $a$ is central in $G$, it is c-smooth, but it is clearly not finitary. This does not contradict Proposition~\ref{prop:smooth-is-finitary} since $(G, \Omega)$ does not satisfy the conditions of Proposition~\ref{prop:dcl}: indeed, the definable closure of $\{(1,1)\}$ is $\{(1,1), (a,1)\}$.
\end{remark}

\begin{proposition} \label{prop:finitary-classes}
For each $r$, there are only finitely many conjugacy classes of finitary rank $r$ elements of $G$.
\end{proposition}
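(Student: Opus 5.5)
The plan is to encode a finitary element by its support together with the permutation it induces there, and then reduce to the finiteness of orbits of $G$ on $\Omega^r$. Let $g \in G$ be finitary of rank $r$, with support $A=\{a_1,\ldots,a_r\}$ listed in some order. Then $gA=A$, so there is a unique $\sigma \in \fS_r$ with $g a_i = a_{\sigma(i)}$ for all $i$. To $g$ we attach the pair consisting of the $G$-orbit of the tuple $(a_1,\ldots,a_r) \in \Omega^{[r]}$ and the permutation $\sigma$. Since $G$ is oligomorphic, $\Omega^r$ has finitely many $G$-orbits, so there are only finitely many such pairs. It therefore suffices to show that two rank $r$ finitary elements with the same pair, for some choice of orderings of their supports, are conjugate.

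For this, suppose $g$ and $g'$ are finitary of rank $r$ with supports $A=\{a_i\}$ and $A'=\{a'_i\}$, that the same $\sigma$ describes both, and that $h(a_1,\ldots,a_r)=(a'_1,\ldots,a'_r)$ for some $h \in G$. Put $g''=hgh^{-1}$. Then $g''$ is finitary, and its support is $hA=A'$. On $A'$ we compute $g'' a'_i = hga_i = ha_{\sigma(i)} = a'_{\sigma(i)} = g' a'_i$. Off $A'$, both $g''$ and $g'$ act as the identity. Hence $g''=g'$ as permutations of $\Omega$, and therefore as elements of $G$, since $G$ acts faithfully as a permutation group. So $g$ and $g'$ are conjugate.

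Combining the two steps, the number of conjugacy classes of rank $r$ finitary elements is at most $r!$ times the number of $G$-orbits on $\Omega^{[r]}$. I expect no serious obstacle. The only points needing care are the following.
\begin{enumerate}
\item The support of a conjugate is the translate of the support, i.e.\ $\operatorname{supp}(hgh^{-1}) = h\operatorname{supp}(g)$; this is immediate.
\item Faithfulness is used to identify an element of $G$ with the permutation it induces.
\end{enumerate}
The hypotheses of Proposition~\ref{prop:dcl} are not actually needed; only oligomorphy is used.
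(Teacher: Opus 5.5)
Your proof is correct and is essentially the paper's argument. The paper sends a rank $r$ finitary element $g$ with support $A$ to the triple $(A, gA, g\vert_A)$, which gives an injective $G$-equivariant map from the conjugation $G$-set of such elements into the finitary $G$-set of bijections between $r$-element subsets of $\Omega$. Your ordered-tuple-plus-$\sigma$ encoding, together with your explicit check that equal data forces conjugacy, verifies the same injectivity by hand.
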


\begin{proof}
Let $X$ be the set of triples $(A, B, f)$ where $A$ and $B$ are $r$ element subsets of $\Omega$ and $f \colon A \to B$ is a bijection. This is naturally a (finitary) $G$-set. Let $Y$ be the set of finitary rank $r$ elements in $G$, on which $G$ acts by conjugation. Define a function $\phi \colon Y \to X$ by $\phi(g)=(A, gA, g \vert_A)$, where $A$ is the support of $g$. Clearly, $\phi$ is injective and $G$-equivariant. Thus $Y$ has finitely many orbits.
\end{proof}

The next result gives a criterion for the group of finitary elements to be trivial.

\begin{proposition} \label{prop:no-finitary}
Suppose $\Omega$ is the Fra\"iss\'e limit of a Fra\"iss\'e class $\sF$ with strong amalgamation, and $G=\Aut(\Omega)$. Also suppose the following condition holds:
\begin{itemize}
\item[$(\ast)$] Given a finite structure $A \in \sF$, there is an embedding $A \to B$ with $B \in \sF$ such that any automorphism of $B$ fixing each element of $B \setminus A$ is the identity.
\end{itemize}
Then $G$ has no finitary elements except for the identity.
\end{proposition}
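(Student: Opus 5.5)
Suppose $g \in G$ is finitary and non-trivial, with support $A$. The plan is to produce a finite structure $B \supset A$ whose automorphism induced by $g$ must fix $B\setminus A$ pointwise, and then contradict $(\ast)$.

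Regard $A$ as a finite substructure of $\Omega$. It lies in $\sF$ because the age of $\Omega$ is $\sF$. Apply $(\ast)$ to get an embedding $A \to B$ with $B \in \sF$ such that every automorphism of $B$ fixing $B \setminus A$ pointwise is the identity. The embedding $A \to \Omega$ should then extend to an embedding $j \colon B \to \Omega$ with $j(B \setminus A)$ disjoint from $A$. Since $A$ is already all of $j(B)\cap A$ after identification, disjointness is automatic: $j$ is injective and restricts to the inclusion on $A$. Extensions exist by the standard extension property of Fra\"iss\'e limits, which follows from homogeneity together with $B$ being in the age: embed $B$ somewhere in $\Omega$, then move the image of $A$ back to $A$ by an automorphism. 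Strong amalgamation is not really needed for this step. It is needed only in that the standing hypotheses of this section, namely Proposition~\ref{prop:dcl}, are in force, which is what makes finitary elements the relevant notion.

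Now set $B' = j(B) = A \cup j(B\setminus A)$. Since $g$ fixes every point outside $A$ and satisfies $gA = A$, we get $gB' = B'$. Moreover $g$ restricts to an automorphism of the induced substructure $B'$, because $g$ is an automorphism of $\Omega$ and substructures are induced. This restriction fixes $B' \setminus A$ pointwise. Transporting along $j$ gives an automorphism of $B$ fixing $B \setminus A$ pointwise, so by $(\ast)$ it is the identity. Hence $g$ fixes $A$ pointwise, which contradicts $A$ being the nonempty support of $g$. Therefore $g = 1$.

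The only step that needs care is producing the embedding $B \to \Omega$ extending the given $A \subset \Omega$. I would justify it by homogeneity, as above. The rest is direct bookkeeping.
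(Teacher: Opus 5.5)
Your argument is correct and is the same as the paper's: take the support $A$ of $g$, choose $B$ as in $(\ast)$, embed $B$ into $\Omega$ over $A$ using homogeneity, and note that $g$ restricts to an automorphism of the image that fixes everything outside $A$, so by $(\ast)$ it is trivial and $g=1$. Your justification of the extension step (embed $B$ using that the age is $\sF$, then correct by an automorphism) just spells out what the paper compresses into ``since $\Omega$ is homogeneous.''
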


\begin{proof}
Let $g$ be a finitary automorphism of $\Omega$, and let $A \subset \Omega$ be its support. Let $B$ be as in $(\ast)$, and choose an embedding $B \to \Omega$ that is the identity on $A$, which is possible since $\Omega$ is homogeneous. Since $g$ is finitary and $B$ contains the support of $g$, it follows that $g$ induces an automorphism of $B$, which is necessarily the identity on $B \setminus A$. Thus $g \vert_B$ is the identity by $(\ast)$. Since $B$ contains the support of $g$, it follows that $g$ is the identity.
\end{proof}

\subsection{Small conjugacy sets} \label{ss:strong-conj}

We now suppose that $(G, \Omega)$ is a closed oligomorphic permutation group, with $\Omega$ countable, satisfying the equivalent conditions of Proposition~\ref{prop:dcl}. We say that a c-functor $\fA$ is \defn{small} if there is an integer $s$ such that $\lev(\fA(X)) \le s + \lev{X}$ for all $\Omega$-smooth transitive $G$-sets $X$. We say that a conjugacy set $D \subset G$ is \defn{small} if $\fA_D$ is. Note that this is a different condition than $\cT$-small (Definition~\ref{defn:Tsmall}), but we will see that the two are closely related.

\begin{proposition} \label{prop:small-conj}
Let $D \subset G$ be a conjugacy set. Then $D$ is small if and only if it is a union of finitely many c-smooth conjugacy classes of $G$.
\end{proposition}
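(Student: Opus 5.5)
The plan is to prove both directions by comparing levels of orbits directly, using Proposition~\ref{prop:level} (the level of the orbit of a tuple is its number of distinct coordinates). Throughout, when $X$ is a transitive $\Omega$-smooth $G$-set of level $n$, I identify $X$ with the orbit of some $x \in \Omega^n$ with distinct coordinates. Then $X \times X$ sits inside $\Omega^{2n}$, and the orbits of $\fA_D(X)$ are exactly the orbits of the pairs $(x', gx')$ with $x' \in X$ and $g \in D$. By $G$-stability we may take $x'=x$.

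\emph{If $D$ is a finite union of c-smooth classes, then $D$ is small.} By Proposition~\ref{prop:smooth-is-finitary}, every element of $D$ is finitary. Since there are finitely many classes, the ranks are bounded by some $r$. For $g \in D$ with support $S$, the tuple $gx$ agrees with $x$ in every coordinate $x_i \notin S$. Hence at most $|S| \le r$ coordinates of $gx$ can be new relative to $\{x_1,\ldots,x_n\}$. So $(x,gx)$ has at most $n+r$ distinct coordinates. By Proposition~\ref{prop:level}, every orbit of $\fA_D(X)$ has level at most $n+r$. Thus $\lev \fA_D(X) \le r + \lev X$, and $D$ is small with $s=r$.

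\emph{If $D$ is small with constant $s$, then $D$ is a finite union of c-smooth classes.} I will show that every $g \in D$ is finitary of rank less than $3(s+1)$. Proposition~\ref{prop:finitary-classes} then shows that $D$, being a union of conjugacy classes, is a union of only finitely many classes. Proposition~\ref{prop:smooth-is-finitary} then shows these classes are c-smooth.

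So suppose $g \in D$ moves at least $3(s+1)$ points of $\Omega$; this includes the case where $g$ is not finitary. The key step is to find $m=s+1$ moved points $b_1, \ldots, b_m$ with $gB \cap B=\emptyset$, where $B=\{b_1,\ldots,b_m\}$. I would do this greedily. Pick a moved point $b_1$, and discard $b_1$, $gb_1$ and $g^{-1}b_1$ from the pool of moved points. Then pick $b_2$ from what remains, and so on. Each step removes at most three points, so $3(s+1)$ moved points suffice to make $s+1$ choices.

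The resulting set satisfies $gB \cap B = \emptyset$. Indeed, if $gb_i=b_j$, then $i\neq j$ because each $b_i$ is moved. Also $b_j$ would have been discarded when $b_i$ was chosen (if $i<j$), or $b_i=g^{-1}b_j$ would have been discarded when $b_j$ was chosen (if $j<i$).

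Now let $x=(b_1,\ldots,b_m)$ and let $X$ be its orbit, which has level $m$. The pair $(x,gx) \in \fA_D(X)$ has $2m$ distinct coordinates, because $gx$ also has distinct coordinates and none of them lies in $B$. Hence
\[
\lev \fA_D(X) \ge 2m = 2(s+1) > s + (s+1) = s+\lev X,
\]
which contradicts smallness. The only real care needed is this greedy disjointness construction and checking the count; the rest is bookkeeping with Proposition~\ref{prop:level}.
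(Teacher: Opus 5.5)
Your proof is correct and follows the paper's argument. For the forward direction you bound the number of distinct coordinates of $(x,gx)$ by $n+s$ using Proposition~\ref{prop:level}; for the converse you inline the greedy construction of Lemma~\ref{lem:small-conj} (choosing points outside $C \cup gC \cup g^{-1}C$ so that $gB \cap B=\emptyset$) and finish with Propositions~\ref{prop:finitary-classes} and~\ref{prop:smooth-is-finitary}. The only difference is that you use a threshold of $3(s+1)$ moved points rather than the paper's $3s+1$, giving rank $\le 3s+2$ instead of $\le 3s$, which is immaterial.
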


\begin{proof}
Suppose $D$ is a finite union of c-smooth conjugacy classes. All elements of $D$ are finitary by Proposition~\ref{prop:smooth-is-finitary}. Let $s$ be the maximal rank of an element of $D$. Let $X$ be a transitive $\Omega$-smooth $G$-set of level $n$, and fix an embedding $X \subset \Omega^n$. Then $\fA_D(X)$ consists of points $(x, gx)$ with $x \in X$ and $g \in D$. Since $g$ has rank $\le s$, it follows that $(x, gx)$ has at most $n+s$ distinct coordinates. Thus $\fA_D(X)$ has level $\le n+s$ by Proposition~\ref{prop:level}, and so $D$ is small.

Now suppose $D$ is small. Let $s$ be such that $\lev(\fA_D(X)) \le s + \lev{X}$ for all $\Omega$-smooth transitive $G$-sets $X$. Let $g \in D$. Let $x \in \Omega^n$ have distinct coordinates, and let $X$ be the orbit of $x$, which has level $n$ by Proposition~\ref{prop:level}. Then $\fA_D(X)$ contains the orbit of $(x, gx) \in \Omega^{2n}$, and so $(x, gx)$ has at most $n+s$ distinct coordinates by Proposition~\ref{prop:level}. The following lemma implies that $g$ is finitary of rank at most $3s$. We thus see that $D$ is contained in the rank $\le 3s$ locus in $G$, which is a finite union of c-smooth conjugacy classes by Proposition~\ref{prop:smooth-is-finitary} and~\ref{prop:finitary-classes}.
\end{proof}

\begin{lemma} \label{lem:small-conj}
Let $g$ be a permutation of $\Omega$. Suppose there is some $s$ such that $\#(A \cup gA) \le s+ \# A$ for all finite subsets $A$ of $\Omega$. Then $g$ is finitary of rank $\le 3s$.
\end{lemma}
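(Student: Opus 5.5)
We will argue by contraposition: assuming $g$ moves many points, we build a finite set $A$ with $\#(A \cup gA) - \#A$ large. Note that $\#(A \cup gA) - \#A = \#(gA \setminus A)$, so what must be controlled is how many points of $A$ are sent outside $A$. The plan is to show that if the support of $g$ contains at least $3s+1$ points, then there is a finite $A$ with $\#(gA\setminus A) \ge s+1$. This contradicts the hypothesis, so $g$ must be finitary of rank at most $3s$.

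The key step is a greedy selection. Let $S$ be the (possibly infinite) set of points moved by $g$. We look for a subset $B \subset S$ such that $B \cap gB = \emptyset$ and $\#B \ge \lceil \#S'/3 \rceil$, where $S' \subset S$ is any finite subset of size $3s+1$ (or all of $S$ if finite). Then taking $A = B$ gives $gA \setminus A = gB$, which has size $\#B \ge s+1$, as desired.

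To construct $B$, consider the graph on $S$ with an edge between $x$ and $gx$ whenever $x, gx \in S$; note that $gx \in S$ whenever $x \in S$. Every vertex has degree at most $2$, since its only possible neighbours are $gx$ and $g^{-1}x$, and there are no loops because $gx \ne x$. The condition $B \cap gB = \emptyset$ says exactly that $B$ is an independent set in this graph. Restrict the graph to the finite set $S'$. A graph of maximum degree $2$ is a disjoint union of paths and cycles, and so it is $3$-colourable, with the cycles of odd length needing the third colour. Some colour class therefore has size at least $\#S'/3 > s$. Since that colour class is independent in the induced subgraph on $S'$, and the edges among points of $S'$ are exactly those of the full graph, it is independent in the full graph. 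Hence $B \cap gB = \emptyset$.

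The only real subtlety is checking this independence claim and the bound $\lceil (3s+1)/3 \rceil = s+1$; both are straightforward. Alternatively, one can do an explicit greedy procedure: repeatedly pick $x \in S'$, put it in $B$, and delete $x$, $gx$ and $g^{-1}x$ from $S'$. Each step removes at most $3$ points, giving at least $(3s+1)/3$ picks, hence $s+1$. This also shows that the constant $3$ in the statement arises naturally. We conclude that $\#S \le 3s$, so $g$ is finitary of rank $\le 3s$.
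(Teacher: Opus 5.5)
Your proof is correct and is essentially the paper's argument: the paper runs exactly your greedy procedure, choosing points from a $(3s+1)$-element set of moved points outside $C \cup gC \cup g^{-1}C$ to build an $(s+1)$-element set $A$ with $A \cap gA = \emptyset$. Your $3$-colouring of the maximum-degree-$2$ graph is a repackaging of the same counting.
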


\begin{proof}
Suppose there is a subset $B$ of $\Omega$ of cardinality $3s+1$ such that $g$ does not fix any element of $\Omega$. We define $x_1, \ldots, x_{s+1} \in B$ inductively as follows. Suppose we have defined $x_1, \ldots, x_{i-1}$. Let $C=\{x_1, \ldots, x_{i-1}\}$ and let $D=C \cup gC \cup g^{-1}C$. Note that $D$ has at most $3(i-1)$ elements, and is therefore a proper subset of $B$. We define $x_i$ to be any element of $B \setminus D$. Now let $A=\{x_1, \ldots, x_{s+1}\}$. Then $A$ and $gA$ are disjoint, and so $\#(A \cup gA)=\# A + (s+1)$, a contradiction. The result thus follows.
\end{proof}

\begin{proposition} \label{prop:finitary-isolated}
Let $D \subset G$ be a small conjugacy set, and let $r$ be the maximum rank of an element of $D$. Then any element of $D$ of rank $r$ is isolated.
\end{proposition}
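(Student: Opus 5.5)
Let $g \in D$ have rank $r$, and let $A$ be its support, so $\#A=r$ and $gA=A$. By Proposition~\ref{prop:small-conj}, every element of $D$ is finitary, and by definition of $r$ every element of $D$ has rank at most $r$. I will take $U=G(A)$ and show that $gU \cap D=\{g\}$, which is exactly the statement that $g$ is isolated.

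So suppose $h \in gU \cap D$, say $h=gu$ with $u \in G(A)$. The first step is to show that $h$ agrees with $g$ on $A$. Since $u$ fixes $A$ pointwise, we have $hx=gux=gx$ for every $x \in A$. In particular $h$ moves every element of $A$, because $g$ moves every element of its support. Hence the support of $h$ contains $A$.

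The second step uses maximality of $r$. Since $h \in D$, its rank is at most $r=\#A$. Combined with the first step, this forces the support of $h$ to equal $A$. Then $h$ fixes every point outside $A$, as does $g$, and $h$ agrees with $g$ on $A$. Therefore $h=g$ as permutations of $\Omega$, and hence as elements of $G$.

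This completes the argument. I expect no real obstacle: the only inputs are that elements of a small $D$ are finitary of bounded rank (Proposition~\ref{prop:small-conj}) and that $G(A)$ is open. The one point needing care is the observation that $gu$ still moves each point of $A$; this holds because $u$ fixes $A$ pointwise, so $gu$ and $g$ act identically on $A$.
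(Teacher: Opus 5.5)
Your proposal is correct and follows the paper's proof essentially verbatim. You take $U=G(A)$ with $A$ the support of $g$, and observe that any $h \in gG(A)\cap D$ agrees with $g$ on $A$ and so moves every point of $A$. The rank bound $\le r$ then forces $h$ to fix $\Omega\setminus A$, hence $h=g$.
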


\begin{proof}
Let $g \in D$ have rank $r$, and let $A$ be its support. Suppose $h \in gG(A) \cap D$. Then $hx=gx$ for all $x \in A$, and so $hx \ne x$ for all $x \in A$. Since $h$ is finitary of rank $\le r$, it follows that $h$ must fix all elements of $\Omega \setminus A$. Thus $h=g$. We thus have $gG(A) \cap D=\{g\}$, which shows that $g$ is isolated.
\end{proof}

\begin{remark} \label{rmk:isolated-alt}
If $D$ is a small conjugacy set then so is $\ol{D}$ (since $\fA_D=\fA_{\ol{D}}$), and hence $\ol{D}$ is countable by Proposition~\ref{prop:small-conj}. Hence, the existence of an isolated point in $D$ also follows from Remark~\ref{rmk:isolated}.
\end{remark}

\begin{remark}
The argument from Proposition~\ref{prop:finitary-isolated} can be used to show that if $g$ is a finitary element of rank $r$ then the closure of the conjugacy class $C_g$ of $g$ is a finite union $C_g \cup C_{h_1} \cup \cdots \cup C_{h_s}$, where each $h_i$ is finitary of rank $<r$.
\end{remark}

\subsection{Small versus $\cT$-small} \label{ss:small-Tsmall}

Let $(G, \Omega)$ be as in \S \ref{ss:strong-conj}. Let $\mu$ be a quasi-regular measure for $G$ such that nilpotent endomorphisms in $\uPerm(G, \mu)$ have trace~0, and let $\cT = \uRep(G, \mu)$ be the abelian envelope of $\uPerm(G, \mu)$. Consider the following condition:
\begin{itemize}
\item[(S)] If $X$ and $Y$ are finitary $\Omega$-smooth $G$-sets with $\lev{X}>\lev{Y}$ then there exists a transitive $\Omega$-smooth $G$-set $Z$ such that $X \times Z$ has strictly more orbits than $Y \times Z$.
\end{itemize}
This condition is useful since it is combinatorial in nature, but has representation theoretic consequences, such as the following:

\begin{proposition} \label{prop:schwartz-level}
Suppose (S) holds. Let $X$ and $Y$ be $\Omega$-smooth $G$-sets, and suppose there is an injection $\cC(X) \to \cC(Y)$ in $\cT$. Then $\lev{X} \le \lev{Y}$.
\end{proposition}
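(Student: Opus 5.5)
We argue by contradiction. Suppose $\lev{X} > \lev{Y}$ and there is an injection $i \colon \cC(X) \to \cC(Y)$ in $\cT$. Condition (S) supplies a transitive $\Omega$-smooth $G$-set $Z$ such that $X \times Z$ has strictly more orbits than $Y \times Z$; we aim to show that the injection forces the opposite inequality.

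First, tensor $i$ with $\cC(Z)$. Since $\cT$ is pre-Tannakian, $\otimes$ is exact in each variable, so
\begin{displaymath}
i \otimes 1 \colon \cC(X \times Z) \to \cC(Y \times Z)
\end{displaymath}
is again injective. Next apply the left exact functor $\Hom_{\cT}(\bone, -)$, which preserves injections. By the mapping property of Schwartz spaces (\S\ref{ss:cga}), $\Hom_{\cT}(\bone, \cC(W)) = \Hom_{\cT}(\cC(\bbone), \cC(W))$ is the space of $G$-invariant Schwartz functions on $W$. Its dimension is the number of $G$-orbits on $W$ for any $G$-set $W$ (with finitely many orbits). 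We therefore obtain an injection of $k$-vector spaces from a space of dimension $\#\bigl((X \times Z)/G\bigr)$ into one of dimension $\#\bigl((Y \times Z)/G\bigr)$. This gives $\#\bigl((X \times Z)/G\bigr) \le \#\bigl((Y \times Z)/G\bigr)$, contradicting the choice of $Z$. Hence $\lev{X} \le \lev{Y}$.

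Some routine points need checking. The sets $X \times Z$ and $Y \times Z$ are finitary $G$-sets, so their Schwartz spaces lie in $\cT$ and the orbit count is finite; here we are implicitly using that $X$ and $Y$ are finitary, as (S) requires. The identification $\cC(X) \otimes \cC(Z) = \cC(X \times Z)$ is compatible with $i \otimes 1$ because the functor $\Phi$ is monoidal. If $X$ is empty the statement is trivial, and if $Y$ is empty then the injection forces $\cC(X)=0$, so $X$ is empty as well.

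The only substantive step is the exactness used to keep $i \otimes 1$ injective. It holds because every object of $\cT$ is rigid, so tensoring with $\cC(Z)$ has both adjoints and is therefore exact. I do not expect any real obstacle: the combinatorial work is entirely contained in hypothesis (S).
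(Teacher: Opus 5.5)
Your proof is correct and is essentially the paper's argument: the paper writes $Z=G/U$ and applies $U$-invariants directly to the injection $\cC(X)\to\cC(Y)$, using that the number of $G$-orbits on $X\times Z$ equals $\dim \cC(X)^U$, and this is equivalent to your tensoring with $\cC(Z)$ and taking $G$-invariants, since $\cC(X\times G/U)^G\cong\cC(X)^U$. The paper's version avoids the exactness of $-\otimes\cC(Z)$: it only needs that an injective $U$-equivariant linear map remains injective on $U$-invariants.
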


\begin{proof}
Suppose $\lev{X}>\lev{Y}$. By (S) there is a transitive $\Omega$-smooth $G$-set $Z=G/U$ such that $X \times Z$ has more orbits than $Y \times Z$. Now, the number of $G$-orbits on $X \times Z$ is equal to the number of $U$-orbits on $X$, which is equal to the dimension of the invariants space $\cC(X)^U$, and similarly for $Y$. Thus $\dim \cC(X)^U > \dim \cC(Y)^U$. However, the map $\cC(X)^U \to \cC(Y)^U$ is injective, a contradiction.
\end{proof}

Using this, we can compare the small and $\cT$-small conditions.

\begin{proposition} \label{prop:small-Tsmall}
Suppose (S) holds, and let $D \subset G$ be a closed conjugacy set. Then $D$ is small if and only if it is $\cT$-small.
\end{proposition}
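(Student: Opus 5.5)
We prove the two implications separately; one is essentially Example~\ref{ex:csmooth-Tsmall}, and the other combines Proposition~\ref{prop:schwartz-level} with the definition of small.

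\emph{Small implies $\cT$-small.} If $D$ is small then, by Proposition~\ref{prop:small-conj}, $D$ is a finite union $C$ of c-smooth conjugacy classes. Example~\ref{ex:csmooth-Tsmall} then gives, for every $G$-set $X$, an injection $\cC(\fA_D(X)) \to \cC(X \times C)$ in $\cT$, namely pull-back along the surjection $X \times C \to \fA_D(X)$, $(x,g) \mapsto (x,gx)$. Since $C$ is a finitary $G$-set, we may take $Y=C$, so $\fA_D$ is $\cT$-small. This direction does not use (S) or closedness.

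\emph{$\cT$-small implies small.} Suppose $Y$ is a $G$-set such that for every $G$-set $X$ there is an injection $\cC(\fA_D(X)) \to \cC(X \times Y)$. We want to apply Proposition~\ref{prop:schwartz-level}, which needs both sides to be $\Omega$-smooth. Take $X$ transitive and $\Omega$-smooth; then $X \times X$ is $\Omega$-smooth, hence so is its $G$-subset $\fA_D(X)$. The set $Y$ need not be $\Omega$-smooth. To handle this, choose an $\Omega$-smooth $G$-set $Y'$ with a surjection $Y' \to Y$, which exists since every $G$-set is a quotient of an $\Omega$-smooth one. Pull-back along this surjection is injective on Schwartz spaces (as in Example~\ref{ex:csmooth-Tsmall}), and so is its tensor product with $\cC(X)$. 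Composing, we get an injection $\cC(\fA_D(X)) \to \cC(X \times Y')$ in $\cT$. Proposition~\ref{prop:schwartz-level} and Proposition~\ref{prop:level-prod} then give
\begin{displaymath}
\lev(\fA_D(X)) \le \lev(X \times Y') = \lev{X} + \lev{Y'}.
\end{displaymath}
With $s=\lev{Y'}$, which is finite because $Y'$ is finitary, this is exactly the statement that $D$ is small.

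The only delicate step is replacing $Y$ by an $\Omega$-smooth cover so that levels make sense; the rest is bookkeeping. Closedness of $D$ is not really needed, since $\fA_D=\fA_{\ol{D}}$ and both conditions depend only on $\fA_D$. It is included presumably to match the correspondence of Theorem~\ref{thm:cset}.
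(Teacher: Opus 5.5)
Your proof is correct and follows the paper's argument essentially verbatim. The forward direction goes through Proposition~\ref{prop:small-conj} and Example~\ref{ex:csmooth-Tsmall}, and the converse passes to an $\Omega$-smooth cover of $Y$ and then applies Propositions~\ref{prop:schwartz-level} and~\ref{prop:level-prod}; your side remark that closedness of $D$ is not needed (both conditions depend only on $\fA_D=\fA_{\ol{D}}$) is also correct.
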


\begin{proof}
If $D$ is small then it is a finite union of c-smooth conjugacy classes (Proposition~\ref{prop:small-conj}), and thus $\cT$-small (Example~\ref{ex:csmooth-Tsmall}). Conversely, suppose $D$ is $\cT$-small. By definition, there is a $G$-set $Y$ such that for all transitive $G$-sets $X$ we have an injection
\begin{displaymath}
\cC(\fA_D(X)) \to \cC(X \times Y).
\end{displaymath}
Replacing $Y$ by a cover, we assume $Y$ is $\Omega$-smooth. Propositions~\ref{prop:level-prod} and~\ref{prop:schwartz-level} thus imply
\begin{displaymath}
\lev{\fA_D(X)} \le \lev{X} + \lev{Y}
\end{displaymath}
for all $\Omega$-smooth transitive $G$-sets $X$, and so $D$ is small.
\end{proof}

We next explain how to interpret (S) in the Fra\"iss\'e case. Let $\sF$ be a Fra\"iss\'e class. For $A,B \in \sF$, let $N_A(B)$ be the number of amalgamations of $A$ and $B$; this is symmetrical in $A$ and $B$, but we will mainly be interested in the case where $A$ is fixed and $B$ is allowed to grow. Consider the following condition on $\sF$:
\begin{itemize}
\item[(S')] Let $A$ and $B_1, \ldots, B_r$ be structures in $\sF$ with $\# A>\# B_i$ for each $i$. Then there exists some $C \in \sF$ such that $N_A(C)>\sum_{i=1}^r N_{B_i}(C)$.
\end{itemize}

\begin{proposition} \label{prop:Sprime}
Suppose $\Omega$ is the Fra\"iss\'e limit of $\sF$ and $G=\Aut(\Omega)$. If $\sF$ satisfies (S') then $(G, \Omega)$ satisfies (S).
\end{proposition}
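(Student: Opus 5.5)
Given finitary $\Omega$-smooth $G$-sets $X$ and $Y$ with $\lev{X}>\lev{Y}$, we must produce a transitive $\Omega$-smooth $Z$ with more orbits on $X\times Z$ than on $Y\times Z$. The plan is to translate everything into Fra\"iss\'e language and then apply (S') directly.

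First, reduce $X$ to one orbit. Pick an orbit $X_0 \subset X$ of maximal level $n=\lev{X}$. Since orbits of $X \times Z$ include those of $X_0\times Z$, it suffices to beat $Y$ using $X_0$. By \S\ref{ss:fraisse} and Proposition~\ref{prop:level}, we may identify $X_0=\Omega^{[A]}$ for a structure $A\in\sF$. Strong amalgamation gives the conditions of Proposition~\ref{prop:dcl}, so the level of $\Omega^{[A]}$ is exactly $\#A=n$. Similarly, write $Y=\coprod_{i=1}^r \Omega^{[B_i]}$ with $\#B_i=\lev{\Omega^{[B_i]}}\le \lev{Y}<n$.

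Next, compute orbit counts. Take $Z=\Omega^{[C]}$ for some $C\in\sF$. By \cite[Lemma~6.11]{repst} applied with base the empty structure (so the fiber product over $\Omega^{[\emptyset]}=\bbone$ is the product), the $G$-orbits on $\Omega^{[A]}\times\Omega^{[C]}$ are in bijection with isomorphism classes of amalgamations of $A$ and $C$ over $\emptyset$. There are $N_A(C)$ of these. Likewise, $Y\times Z$ has $\sum_i N_{B_i}(C)$ orbits.

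Finally, apply (S') to $A$ and $B_1,\ldots,B_r$, which is legitimate since $\#A>\#B_i$ for all $i$. This yields $C$ with $N_A(C)>\sum_i N_{B_i}(C)$. Hence $X\times Z \supseteq X_0\times Z$ has strictly more orbits than $Y\times Z$, and $Z=\Omega^{[C]}$ is transitive and $\Omega$-smooth, as required.

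The main point needing care is the identification of a level-$n$ orbit with some $\Omega^{[A]}$ satisfying $\#A=n$. The orbit of a tuple $x$ with distinct coordinates is isomorphic to $\Omega^{[A]}$, where $A$ is the substructure spanned by those coordinates. Proposition~\ref{prop:level} then gives that the level equals the number of distinct coordinates. Beyond this, I expect no real obstacle; the remaining check is that the empty base is allowed in the amalgamation count, which is consistent with the paper's convention for $N_A(B)$.
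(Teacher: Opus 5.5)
Your proof is correct and follows the paper's argument: take a maximal-level orbit $\Omega^{[A]}$ of $X$, write the orbits of $Y$ as $\Omega^{[B_i]}$ with $\#B_i<\#A$, choose $C$ by (S'), and count orbits on $\Omega^{[C]}\times\Omega^{[D]}$ via amalgamations to conclude with $Z=\Omega^{[C]}$. Your appeal to strong amalgamation is harmless: level is only defined under the standing hypothesis that the conditions of Proposition~\ref{prop:dcl} hold, and for a Fra\"iss\'e limit this is equivalent to strong amalgamation.
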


\begin{proof}
Let $X$ and $Y$ be finitary $\Omega$-smooth $G$-sets with $\lev{X}>\lev{Y}$. Let $X'$ be a $G$-orbit on $X$ of maximal level, and let $Y_1, \ldots, Y_r$ be the $G$-orbits on $Y$. We have $X'=\Omega^{[A]}$ and $Y_i=\Omega^{[B_i]}$ for some $A$ and $B_i$ in $\sF$, with $\# A>\# B_i$ for each $i$ (see \S \ref{ss:fraisse}). Let $C$ be such that $N_A(C)>\sum_{i=1}^r N_{B_i}(C)$, which exists by (S'). Note that for any $D$ in $\sF$, the number of $G$-orbits on $\Omega^{[C]} \times \Omega^{[D]}$ is $N_D(C)$ (see \S \ref{ss:fraisse}). Thus, putting $Z=\Omega^{[C]}$, we see that $X' \times Z$ has strictly more orbits than $Y \times Z$. Thus (S) holds.
\end{proof}

\subsection{The main theorem} \label{ss:main}

We now come to our main result.

\begin{theorem} \label{thm:sa}
Let $(G, \Omega)$ be a closed oligomorphic permutation group, with $\Omega$ countable. Suppose that the equivalent conditions of Proposition~\ref{prop:dcl} hold and that condition (S) from \S \ref{ss:small-Tsmall} holds. Let $\cT = \Rep(G, \mu)$ be as in \S \ref{ss:small-Tsmall}, and let $\cY$ be the category of Yetter--Drinfeld modules (Definition~\ref{defn:yd}). Then the functor $\cY \to \cZ(\cT)$ is an equivalence.
\end{theorem}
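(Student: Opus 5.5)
The plan is to reduce directly to Theorem~\ref{thm:YD}. Since $(G, \Omega)$ is closed with $\Omega$ countable, every c-functor comes from a unique closed conjugacy set (Theorem~\ref{thm:cset}). So it suffices to verify the hypothesis of Theorem~\ref{thm:YD}: every $\cT$-small closed conjugacy set $D \subset G$ contains an isolated point. We may assume $D$ is non-empty. If $D$ is empty, then $\fA_D$ is empty, and the only object with empty support is $0$, because $\beta^{\bbone}_{\ast,\ast}=1$ must be non-zero in a non-zero algebra. So non-zero objects of $\cZ(\cT)$ have non-empty support, and the induction in the proof of Theorem~\ref{thm:YD} only ever applies the hypothesis to non-empty $D$.

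Let $D$ be a non-empty $\cT$-small closed conjugacy set. Since (S) holds, Proposition~\ref{prop:small-Tsmall} shows that $D$ is small. By Proposition~\ref{prop:small-conj}, $D$ is then a finite union of c-smooth conjugacy classes. By Proposition~\ref{prop:smooth-is-finitary}, which uses the conditions of Proposition~\ref{prop:dcl}, every element of $D$ is finitary. Being a finite union of conjugacy classes, $D$ has a bounded set of ranks, so we can let $r$ be the maximal rank of an element of $D$. Proposition~\ref{prop:finitary-isolated} then shows that any element of $D$ of rank $r$ is isolated in $D$. Such an element exists because $D$ is non-empty, so $D$ contains an isolated point. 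Alternatively, Remark~\ref{rmk:isolated-alt} gives this via the Baire category argument.

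Theorem~\ref{thm:YD} now yields that $\cY \to \cZ(\cT)$ is an equivalence. The first half of the proof of Theorem~\ref{thm:YD} also confirms full faithfulness: closures of finitary $G$-subsets of $G_{\sm}$ are $\cT$-small, hence have isolated points.

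I expect no serious obstacle, since the substantive work sits in Propositions~\ref{prop:small-Tsmall} and~\ref{prop:finitary-isolated}. The one point needing care is the non-emptiness and closedness bookkeeping. The support is a closed conjugacy set by Theorem~\ref{thm:cset}, and Proposition~\ref{prop:small-Tsmall} is stated for closed $D$, so it applies directly.
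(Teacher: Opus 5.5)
Your proposal is correct and follows the same route as the paper. The paper's proof also concludes via Theorem~\ref{thm:YD}, after showing that a $\cT$-small closed conjugacy set is small (Proposition~\ref{prop:small-Tsmall}) and therefore has an isolated point (Proposition~\ref{prop:finitary-isolated}). Your remark about the empty conjugacy set is correct bookkeeping that the paper leaves implicit: the empty set is $\cT$-small but has no isolated point, and the hypothesis is only ever needed for non-empty supports.
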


\begin{proof}
A $\cT$-small conjugacy set is small (Proposition~\ref{prop:small-Tsmall}), and therefore has an isolated point (Proposition~\ref{prop:finitary-isolated}). Hence the result follows from Theorem~\ref{thm:YD}.
\end{proof}

\begin{corollary} \label{cor:sa}
Maintain the set-up from Theorem~\ref{thm:sa}, and additionally suppose that $G$ has no finitary elements except the identity. Then $\cT \to \cZ(\cT)$ is an equivalence.
\end{corollary}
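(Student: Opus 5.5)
The plan is to combine Theorem~\ref{thm:sa} with the observation that, under the extra hypothesis, the category $\cY$ of Yetter--Drinfeld modules reduces to $\cT$ itself.

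By Proposition~\ref{prop:smooth-is-finitary}, which applies because the conditions of Proposition~\ref{prop:dcl} hold, the c-smooth elements of $G$ are exactly the finitary ones. The hypothesis therefore gives $G_{\sm}=\{1\}$. The only finitary $G$-subsets of $G_{\sm}$ are then $\emptyset$ and $\{1\}$, and the 2-colimit defining $\cY$ is just $\Mod^{\fin}_{\cC(\{1\})}$. Here $\cC(\{1\})=\bone$, and its $\cO(\pi)$-structure is the trivial one: $\beta^X_{x,y}$ equals $1$ if $x=y$ and $0$ otherwise, as in Example~\ref{ex:support}(a). Thus $\cY=\cT$, which also follows from Remark~\ref{rmk:yd}(c), since $Z(1)=G$ gives $\uRep(G,\mu)=\cT$.

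It remains to check that, under this identification, the functor $\cY\to\cZ(\cT)$ is the natural functor $\cT\to\cZ(\cT)$. For $M\in\cT$ viewed as a $\cC(\{1\})$-module, the half-braiding is $n\otimes m\mapsto m\otimes 1\cdot n$, which is the symmetry of $\cT$. So the composite $\cT\cong\cY\to\cZ(\cT)$ is the canonical functor, and Theorem~\ref{thm:sa} shows that it is an equivalence.

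An alternative route avoids $\cY$ and uses Theorem~\ref{thm:Ztriv} directly. By Proposition~\ref{prop:small-Tsmall}, every $\cT$-small closed conjugacy set $D$ is small, so by Proposition~\ref{prop:small-conj} it is a finite union of c-smooth classes. Hence $D\subset\{1\}$, and $\fA_D$ is contained in the diagonal. Neither route has a genuine obstacle; the only point requiring care is the compatibility of half-braidings in the identification $\cY=\cT$.
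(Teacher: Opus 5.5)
Your proposal is correct and matches the paper's proof. Proposition~\ref{prop:smooth-is-finitary} forces $G_{\sm}=\{1\}$, so $\cY=\cT$ with half-braiding given by the symmetry, and Theorem~\ref{thm:sa} then gives the equivalence; your alternative route through Theorem~\ref{thm:Ztriv}, Proposition~\ref{prop:small-Tsmall} and Proposition~\ref{prop:small-conj} is also valid.
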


\begin{proof}
The only c-smooth element is the identity (Proposition~\ref{prop:smooth-is-finitary}), and so $\cY = \cT$.
\end{proof}

\section{Drinfeld centers of some specific categories}

In this section, we use the theory developed thus far to compute the Drinfeld centers of several oligomorphic tensor categories.

\subsection{Deligne's category} \label{ss:deligne}

Let $\fS$ be the infinite symmetric group acting on $\Omega=\{1,2,\ldots\}$. This clearly satisfies the conditions of Proposition~\ref{prop:dcl}. We can identify $\Omega$ with the Fra\"iss\'e limit of the class $\sF$ of finite unstructured sets. An amalgamation of two finite sets $A$ and $B$ is a quotient of $A \amalg B$ to which $A$ and $B$ both map injectively. The number $N_A(B)$ of amalgamations of $A$ and $B$ is a polynomial in $\# B$ of degree $\# A$. It follows that condition (S') of \S \ref{ss:small-Tsmall} holds, and so (S) does too by Proposition~\ref{prop:Sprime}.

Suppose the coefficient field $k$ has characteristic~0, and fix $t \in k$. There is a unique measure $\mu_t$ for $\fS$ for which $\mu_t(\Omega)=t$. This measure is quasi-regular and the category $\uPerm(\fS, \mu_t)$ has an abelian envelope $\cT=\uRep(\fS, \mu_t)$. These claims are proved in \cite[\S 14.6]{repst}. The category $\cT$ is equivalent to the abelian version of Deligne's category \cite{Deligne3}. By Theorem~\ref{thm:sa}, we see that the natural functor $\cY \to \cZ(\cT)$ is an equivalence.

We now describe $\cZ(\cT)$ explicitly. Let $\lambda$ be a singleton-free partition of $n$, and let $g_{\lambda}$ be an element of $\fS_n \subset \fS$ with cycle type $\lambda$. The $g_{\lambda}$'s, as $\lambda$ varies, are representatives for the c-smooth conjugacy classes of $\fS$ (see Example~\ref{ex:sym-conj}). The centralizer $Z(g_{\lambda})$ of $g_{\lambda}$ in $\fS$ is $H_{\lambda} \times \fS(n)$, where $H_{\lambda}$ is the centralizer of $g_{\lambda}$ in $\fS_n$. We have
\begin{displaymath}
\uRep(Z(g_{\lambda}), \mu_t) \cong \Rep(H_{\lambda}) \boxtimes \uRep(\fS(n), \mu_t).
\end{displaymath}
Moreover, $\uRep(\fS(n), \mu_t)$ is equivalent to $\uRep(\fS_{t-n})$ (the abelian version). We thus have an equivalence of abelian categories
\begin{displaymath}
\cZ(\cT) \cong \bigoplus_{\lambda} \Rep(H_{\lambda}) \boxtimes \uRep(\fS_{t-\vert \lambda \vert}),
\end{displaymath}
where the sum is over all singleton-free partitions $\lambda$ (see Remark~\ref{rmk:yd}(c)). This agrees with the description of $\cZ(\cT)$ given in \cite{FHL}.

\subsection{The Delannoy category} \label{ss:delannoy}

Let $G$ be the group\footnote{For the purposes of measures and tensor categories, it makes no difference if one uses $\bQ$ or $\bR$ when defining $G$. In \cite{repst} and \cite{line}, $\bR$ is used. We use $\bQ$ here since we want the domain of $G$ to be countable.} of all order preserving self-bijections of the rational numbers $\bQ$. This clearly satisfies the conditions of Proposition~\ref{prop:dcl}. We can identify $\bQ$ with the Fra\"iss\'e limit of the class $\sF$ of finite totally ordered sets. It is not difficult to see that the number of amalgamations $N_A(B)$ is asymptotically a polynomial of degree $\# A$ in $\# B$. It follows that (S') holds, and so (S) does too by Proposition~\ref{prop:Sprime}.

For any field $k$, the group $G$ admits exactly four measures. One of these $\mu$ is regular; the others are not quasi-regular (see \S \ref{ss:second} for more). The category $\uPerm(G, \mu)$ admits an abelian envelope $\cT$. This category is called the \defn{Delannoy category}, and was studied in depth in \cite{line}. It has many remarkable properties; for instance, it is semi-simple over any field. It is clear that $G$ has no non-trivial finitary elements, and so Corollary~\ref{cor:sa} shows that the functor $\cT \to \cZ(\cT)$ is an equivalence.

There is a closely related category to mention. Let $\Omega$ be a countable everywhere dense cyclic order, such as the set of roots of unity in the complex unit circle. Let $H$ be the group of all self-bijections of $\Omega$ preserving the cyclic order. This can be identified with the Fra\"iss\'e limit of the class of finite cyclic orders. Once again, it is not difficult to see that the conditions of Proposition~\ref{prop:dcl} hold, as does (S'), and thus (S). The group $H$ admits a unique measure $\nu$ over any field, which is quasi-regular, and the category $\uPerm(H, \nu)$ admits an abelian envelope $\cT'$. This is the \defn{circular Delannoy category} studied in \cite{circle}. There are again no non-trivial finitary elements, so Corollary~\ref{cor:sa} gives an equivalence $\cT' \to \cZ(\cT')$.

\subsection{Arboreal categories} \label{ss:arboreal}

We recall the Fra\"iss\'e class of trees; we refer to \cite{CameronTrees} and \cite[\S 3.2]{arboreal} for additional background. Given a finite tree $T$, let $L(T)$ be its set of leaves. Define a quaternary relation $R$ on $L(T)$ by declaring $R(w,x,y,z)$ to be true if the shortest path from $w$ to $x$ in $T$ shares an edge with the shortest path from $y$ to $z$. A \defn{tree structure} is a finite set equipped with a quaternary relation that is isomorphic to $L(T)$ for some tree $T$. If $T$ is a tree and $T'$ is its \defn{reduction}, meaning we remove all vertices of valence two, then $L(T)=L(T')$ as structures. Moreover, if $T$ and $T'$ are reduced trees and $L(T) \cong L(T')$ then $T \cong T'$. Thus we can (and will) pass between tree structures and reduced trees.

Let $\sF$ be the class of all tree structures. This is a Fra\"iss\'e class with strong amalgamation. Let $\Omega$ be the Fra\"iss\'e limit of $\sF$, and let $G$ be its automorphism group. Since $\sF$ satisfies strong amalgamation, the equivalent conditions of Proposition~\ref{prop:dcl} hold.

\tikzset{leaf/.style={circle,fill=black,draw,minimum size=1mm,inner sep=0pt}}
\tikzset{boron/.style={circle,fill=white,draw,minimum size=1mm,inner sep=0pt}}

\begin{figure}
\begin{tikzpicture}[scale=1, baseline=(current bounding box.center)]
\node[boron] (X) at (0,0) {};
\node[leaf,label=right:{\tiny 1}] (A) at (0.75,0) {};
\node[leaf,label=left:{\tiny 2}] (B) at (0,0.75) {};
\node[leaf,label=left:{\tiny 3}] (C) at (-0.75,0) {};
\node[leaf,label=left:{\tiny 4}] (D) at (0,-0.75) {};
\draw (X)--(A);
\draw (X)--(B);
\draw (X)--(C);
\draw (X)--(D);
\end{tikzpicture}
\hskip 4em
\begin{tikzpicture}[scale=1, baseline=(current bounding box.center)]
\node[boron] (X) at (0,0) {};
\node[boron] (A) at (0.75,0) {};
\node[leaf,label=right:{\tiny 1}] (A1) at (1.25,0.5) {};
\node[leaf,label=right:{\tiny 5}] (A2) at (1.25,-0.5) {};
\node[boron] (B) at (0,0.75) {};
\node[leaf,label=left:{\tiny 2}] (B1) at (-0.5,1.25) {};
\node[leaf,label=right:{\tiny 6}] (B2) at (0.5,1.25) {};
\node[boron] (C) at (-0.75,0) {};
\node[leaf,label=left:{\tiny 7}] (C1) at (-1.25,0.5) {};
\node[leaf,label=left:{\tiny 3}] (C2) at (-1.25,-0.5) {};
\node[boron] (D) at (0,-0.75) {};
\node[leaf,label=right:{\tiny 4}] (D1) at (0.5,-1.25) {};
\node[leaf,label=left:{\tiny 8}] (D2) at (-0.5,-1.25) {};
\draw (X)--(A);
\draw (A)--(A1);
\draw (A)--(A2);
\draw (X)--(B);
\draw (B)--(B1);
\draw (B)--(B2);
\draw (X)--(C);
\draw (C)--(C1);
\draw (C)--(C2);
\draw (X)--(D);
\draw (D)--(D1);
\draw (D)--(D2);
\end{tikzpicture}
\caption{A tree $A$ (left) and its double $B$ (right). There is an embedding $A \to B$ by mapping each leaf to the leaf with the same label.}
\label{f:double}
\end{figure}
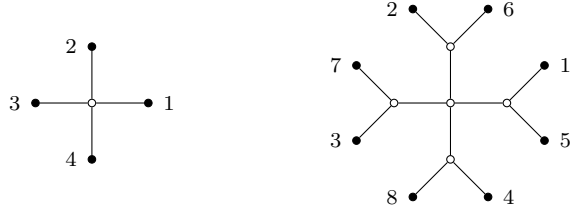

\begin{lemma} \label{lem:arboreal1}
The group $G$ has no non-trivial finitary elements.
\end{lemma}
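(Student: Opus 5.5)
The plan is to apply Proposition~\ref{prop:no-finitary}. The class $\sF$ of tree structures is a Fra\"iss\'e class with strong amalgamation, so it suffices to verify condition $(\ast)$. Given a tree structure $A$, we must produce an embedding $A \to B$ in $\sF$ such that every automorphism of $B$ fixing each element of $B \setminus A$ is the identity. The construction is the ``double'' suggested by Figure~\ref{f:double}. Let $T$ be the reduced tree with $L(T)=A$. Form $T'$ by replacing each leaf $a$ of $T$ with an internal vertex $v_a$ attached to two new leaves, $a$ itself and a partner $a'$. Take $B=L(T')=A \amalg A'$. The map $A \to B$ sending each leaf to the leaf with the same label is an embedding. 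Indeed, for $w,x,y,z \in A$, the shortest paths in $T'$ are those in $T$ extended by the pendant edges at the ends. Two such paths share an edge of $T'$ exactly when the corresponding paths in $T$ share an edge, since a pendant edge $v_a a$ can only be shared if both paths end at $a$, and then they also share the edge of $T$ at $a$. (Small cases, such as $\# A \le 1$, should be handled separately or by noting that the relation is vacuous there; this is a minor check.)

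Next, let $\sigma$ be an automorphism of $B$ fixing every $a' \in A'$. An automorphism of the structure $L(T')$ comes from a unique automorphism $\tilde\sigma$ of the reduced tree $T'$, by the correspondence between tree structures and reduced trees recalled above. Note that $T'$ is reduced: each $v_a$ has valence three, and the internal vertices of $T$ keep their valence.

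The key step is to show that $\tilde\sigma$ fixes each $v_a$, and hence fixes $a$. Since $\tilde\sigma$ fixes the leaf $a'$, it fixes the unique neighbor $v_a$ of $a'$. The vertex $v_a$ has exactly two leaf neighbors, $a$ and $a'$, provided $T$ is not tiny. Its third neighbor is the old neighbor of $a$ in $T$, which is internal whenever $\# A \ge 3$. So $\tilde\sigma$ permutes $\{a,a'\}$ while fixing $a'$, and therefore fixes $a$. Thus $\sigma$ is the identity on $A \cup A'=B$.

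For $\# A \le 2$, I would instead pick $B$ to be any rigid-enough enlargement. A cleaner option is to first enlarge $A$ to a tree structure with at least three leaves, or to verify the small cases directly by a short enumeration. I expect the main obstacle to be this bookkeeping: checking that $A \to B$ really is an embedding for the quaternary relation, and handling degenerate small trees. The rigidity argument itself is straightforward once automorphisms are transported to the reduced tree $T'$.
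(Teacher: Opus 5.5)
Your proposal is correct and follows the paper's proof: apply Proposition~\ref{prop:no-finitary}, verify $(\ast)$ by doubling each leaf of $A$ and embedding $A$ via one of the two new leaves at each old leaf, and use the fixed partner leaf $a'$ to pin down $v_a$ and hence $a$. Your worry about small cases is unnecessary: for $\#A=2$ the third neighbor of $v_a$ is $v_b$, which is internal, and for $\#A \le 1$ any bijection of $B$ fixing all but at most one point is already the identity.
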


\begin{proof}
We apply Proposition~\ref{prop:no-finitary}. We must therefore verify the condition $(\ast)$ from there. Thus let $A$ be a tree. Let $B$ be the tree obtained by doubling each leaf of $A$ (see Figure~\ref{f:double}). Choose an embedding $A \to B$ by mapping each leaf of $A$ to one of the new leaves sprouting from it. Then any automorphism of $B$ that fixes each element of $B \setminus A$ clearly fixes all elements of $B$. This verifies $(\ast)$, and so the result follows.
\end{proof}

For $A,B \in \sF$, let $N_A(B)$ be the number of amalgamations of $A$ and $B$ in $\sF$.

\begin{lemma} \label{lem:arboreal2}
Let $A \in \sF$ be a tree with $r$ leaves. There exist positive real numbers $\alpha$ and $\beta$ such that $N_A(B) \le \alpha (\# B)^r$ for all $B$, and $N_A(B) \ge \beta (\# B)^r$ for infinitely many $B$. In particular, $\sF$ satisfies property (S') from \S \ref{ss:small-Tsmall}.
\end{lemma}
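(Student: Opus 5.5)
Throughout, $\# B$ denotes the number of leaves of $B$, and $s$ denotes the number of leaves of $A$ that are not identified with a leaf of $B$. I will bound $N_A(B)$ by counting amalgamations directly. An amalgamation of $A$ and $B$ is a tree structure $D$ whose leaf set is the union of the images of $A$ and $B$. Up to isomorphism over $B$, it is determined by the following data.
\begin{enumerate}
\item A partial injection from the leaves of $A$ to the leaves of $B$ (the identified leaves). There are at most $\sum_j \binom{r}{j} (\# B)^j$ choices, which is $O((\# B)^r)$.
\item For the $s$ remaining leaves of $A$, the shape of the reduced tree $D$ once $B$ is fixed. The reduced tree of $D$ is obtained from that of $B$ by attaching a bounded amount of extra structure at locations in $B$.
\end{enumerate}

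\textbf{Upper bound.} Fix an identification of $j$ leaves with $j+s=r$. The remaining $s$ leaves of $A$ span, together with their connections to the rest, a subtree. This subtree is attached to the reduced tree of $B$ at no more than $s$ points. Each attachment point is either an edge of $B$ (subdivided) or a vertex of $B$. Since $B$ is reduced, it has $O(\# B)$ vertices and edges. Once the attachment locations are chosen, the remaining combinatorial data are bounded independently of $B$, and given the leaves of $A$ the data depend only on $A$. Hence the count is $O((\# B)^{j}\cdot(\# B)^{s})=O((\# B)^r)$, which gives $\alpha$.

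\textbf{Lower bound.} I take $B$ to range over trees with many edges, for instance caterpillars or the iterated doubles of Figure~\ref{f:double}, with $\# B=n\to\infty$. I then produce $\gg n^r$ pairwise non-isomorphic amalgamations in which no leaf is identified. Choose $r$ distinct edges $e_1,\dots,e_r$ of $B$, subdivide each, and hang one new leaf off each subdivision point. The resulting tree contains $B$ and $r$ new leaves. It must also contain $A$ via a prescribed labelling of the new leaves. This works directly only if $A$ is the tree structure induced on those new leaves, so it needs care: I instead hang a copy of the appropriate piece of $A$. Concretely, I choose a single edge $e$ of $B$ and attach all of $A$'s reduced tree there, and then vary the attachment to get $n^r$ configurations.

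A cleaner route is to take the amalgamations to be over $j=r$ identified leaves: embed $A$ into $B$ in many ways. If $B$ is a large tree containing $\gg n^r$ copies of $A$ as an induced leaf substructure, each embedding gives a distinct amalgamation (namely $D=B$ with that identification), since amalgamations are counted up to isomorphism over both $A$ and $B$. So I need infinitely many $B$ with $\#\mathrm{Emb}(A,B)\ge \beta n^r$. I will try $B_m$ defined as an $m$-fold leaf-blowup of $A$: replace each leaf of $A$ by a reduced subtree with $m$ leaves, e.g. a star. Choosing one leaf from each of the $r$ blown-up clusters yields a copy of $A$, so there are $m^r$ embeddings, while $\# B_m=rm$. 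This gives $\beta=r^{-r}$. This is the step I expect to need checking, namely that the induced quaternary relation on one chosen leaf per cluster is exactly that of $A$. It holds because paths between different clusters pass through the original internal edges of $A$.

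\textbf{(S').} Given $A$ and $B_1,\dots,B_t$ with $\# B_i<r$, take $C$ among the infinitely many $B_m$ with $N_A(C)\ge\beta(\# C)^r$. By the upper bound, $\sum_i N_{B_i}(C)\le \alpha'(\# C)^{r-1}$. For large $m$, the first quantity dominates.
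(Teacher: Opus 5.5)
Your proposal is correct and is essentially the paper's argument. The upper bound counts $O(\#B)$ attachment locations (vertices or edges of the reduced tree of $B$) per leaf of $A$, with only boundedly many completions. The lower bound counts the $m^r$ embeddings of $A$ into a leaf-blowup of $A$ with $rm$ leaves; your stars play the role of the paper's iterated doublings and give the same $\beta = r^{-r}$.

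The abandoned first sketch in your lower-bound paragraph (hanging all of $A$ off a single edge) yields only $O(\#B)$ configurations, not $(\#B)^r$. Delete it and keep the embedding-counting route.
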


\begin{proof}
We first explain the upper bound, though we only sketch the argument. Consider a tree $B$ with a large number of leaves, and let $C$ be an amalgamation of $A$ and $B$. Thus $C$ is obtained by adding at most $r$ new leaves to $B$. Let $a_1, \ldots, a_r$ be the leaves of $A$, and let $b_i$ be the vertex of $B$ that is nearest to $a_i$ (in $C$); if there are multiple choices for $b_i$, just pick one arbitrarily. It is not difficult to see that if we fix $b_1, \ldots, b_r$, then the number of choices for $C$ is bounded above by a function depending only on $r$. Thus $N_A(B)$ is bounded above by a constant (depending on $r$) times the number of choices for $b_1, \ldots, b_r$. Now, if $B$ has $n$ leaves then (since it is reduced) it has at most $2n-2$ total vertices. Thus the number of choices for $b_1, \ldots, b_r$ is at most $(2n)^r$. We thus see that $N_A(B)$ is bounded above by a constant times $(\# B)^r$, as required.

We now establish the lower bound. Let $B_1$ be the tree obtained by doubling each leaf of $A$ (as in Figure~\ref{f:double}), let $B_2$ be obtained by doubling the leaves of $B_1$, and so on. Thus $B_n$ is a tree with $r2^n$ leaves. If for each leaf of $A$ we pick one of the $2^n$ leaves in $B_n$ sprouting from it, this defines an embedding $A \to B_n$. Thus the number of embeddings is at least $2^{rn}$. Since each embedding gives an amalgamation, we find
\begin{displaymath}
N_A(B_n) \ge 2^{rn} = r^{-r} (\# B_n)^r.
\end{displaymath}
Thus the lower bound holds with $\beta=r^{-r}$.
\end{proof}

Let $k$ be a field of characteristic~0, and let $t$ be an element of $k$ that is not a natural number. In \cite[\S 3.6]{arboreal}, a $k$-valued regular measure $\mu_t$ for $G$ is defined\footnote{There is a subtlety here: in \cite{arboreal}, this measure is only defined on the structures in $\sF$, which means that it defines a measure for $G$ relative to a stabilizer class in the sense of \cite[\S 2.6]{repst}. However, since this measure yields a pre-Tannakian category, it necessarily extends uniquely to a regular measure on all of $G$; this is closely related to \cite[Theorem~7.3]{discrete}.} satisfying $\mu_t(\Omega)=t/(t-1)$. It is shown that nilpotent endomorphisms in $\uPerm(G, \mu_t)$ have trace~0 \cite[\S 5.5]{repst}, and so $\uPerm(G, \mu_t)$ has an abelian envelope $\cT$, which is a semi-simple pre-Tannakian category. Corollary~\ref{cor:sa}, combined with Lemmas~\ref{lem:arboreal1} and~\ref{lem:arboreal2}, now shows that the functor $\cT \to \cZ(\cT)$ is an equivalence.

We now discuss a variant. A tree $T$, or the corresponding tree structure $L(T)$, has \defn{level} $\le \ell$ if every vertex of $T$ has valence $\le \ell$. Fix $\ell \ge 3$, and let $\sF_{\ell}$ be the subclass of $\sF$ on structures of level $\le \ell$. This is again a Fra\"iss\'e class with strong amalgamation. This class admits a unique regular measure \cite[\S 3.7]{arboreal}, which gives rise to a semi-simple pre-Tannakian category $\cT$ over any field $k$ in which $\ell!$ is non-zero \cite[Theorem~1.2]{arboreal}. Using arguments similar to the above, one can show that $\cT \to \cZ(\cT)$ is an equivalence.

\subsection{Homeomorphisms of the Cantor set} \label{ss:cantor}

Let $\fX$ be the Cantor set, let $\Omega$ be the set of clopen subsets of $\fX$, and let $G$ be the group of all homeomorphisms $\fX \to \fX$. It is easy to see that the natural action of $G$ on $\Omega$ is oligomorphic, and we regard $G$ as an oligomorphic permutation group in this manner. In \cite{cantor}, the second author constructed a semi-simple pre-Tannakian category attached to $G$, which we recall below. The purpose of \S \ref{ss:cantor} is to compute its Drinfeld center. We begin with some preparatory work of a combinatorial nature.

The set $\Omega$ is a boolean algebra under intersection and union. It is countable and atomless, and any countable atomless boolean algebra is isomorphic to $\Omega$. The action of $G$ on $\Omega$ is compatible with the boolean algebra structure, and, in fact, $G$ is the group of all automorphisms of $\Omega$ as a boolean algebra. In particular, $(G, \Omega)$ is a closed oligomorphic permutation group on a countable set. These statements are all well-known; see, e.g. \cite{Givant}. The group $(G, \Omega)$ does not satisfy the conditions of Proposition~\ref{prop:dcl}: indeed, the definable closure of $\{A,B\}$ contains $A \cap B$, and can therefore be strictly larger than $\{A,B\}$.

We say that $(A_1, \ldots, A_n) \in \Omega^n$ is a \defn{partition} of $\fX$ if each $A_i$ is non-empty, the $A_i$'s are pairwise disjoint, and $\bigcup_{i=1}^n A_i=\fX$. We let $X(n) \subset \Omega^n$ be the set of partitions; we note that $X(n)$ can be identified with the space of continuous surjections $\fX \to [n]$. The natural action of $G$ on $X(n)$ is transitive \cite[Proposition~3.2]{cantor}. The product $X(n) \times X(m)$ decomposes as $\coprod_{r=1}^{nm} X(r)^{m(r)}$, where $m(r)$ is the number of ample subsets of $[n] \times [m]$ of cardinality $r$, where \defn{ample} means that the two projection maps are surjective \cite[Proposition~3.3]{cantor}. Observe that $X(2)$ is identified with the set of proper non-empty clopen subsets of $\fX$, and so $\Omega \cong \bbone \amalg \bbone \amalg X(2)$, where the two extra points correspond to the empty set and $\fX$ itself. We thus see that every orbit on $\Omega^n$ has the form $X(m)$. This shows that every transitive $\Omega$-smooth $G$-set is isomorphic to some $X(m)$.

Let $X$ be an $\Omega$-smooth $G$-set. We define the \defn{(multiplicative) level} of $X$, denoted $\mlev{X}$, to be the maximal $n$ such that $X(n)$ occurs as an orbit on $X$, with the convention that $\mlev(\emptyset)=0$. From the above paragraph, we see that $X(n) \times X(m)$ has level $nm$. It thus follows that
\begin{displaymath}
\mlev(X \times Y) = \mlev(X) \cdot \mlev(Y)
\end{displaymath}
for all $G$-sets $X$ and $Y$. This is very different from how level behaves in the setting of strong amalgamation (cf.\ Proposition~\ref{prop:level-prod}), which is why we use different notation. We now establish two simple properties of level.

\begin{lemma} \label{lem:cantor-1}
Let $A \in \Omega^n$, let $R \subset \Omega$ be the boolean subalgebra generated by $A_1, \ldots, A_n$, and let $r$ be such that $\# R=2^r$ (i.e., $r$ is the $\bF_2$-dimension of $R$). Then the level of the orbit of $A$ is equal to $r$.
\end{lemma}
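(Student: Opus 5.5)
The plan is to identify the orbit of $A$ with the explicit orbit $X(r)$ by comparing stabilizers, and then to check that $r$ is determined by the orbit, so that the level is unambiguous.

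First, describe $R$ through its atoms. Since $R$ is a finite boolean algebra with $2^r$ elements, it has exactly $r$ atoms $B_1, \ldots, B_r$. These are non-empty, pairwise disjoint, and their union is the top element $\fX$. Every element of $R$, in particular each $A_i$, is a union of some of the $B_j$. Hence $B=(B_1, \ldots, B_r)$ is a partition of $\fX$, so $B \in X(r)$. This also covers the degenerate case where every $A_i$ lies in $\{\emptyset, \fX\}$: then $R=\{\emptyset,\fX\}$, $r=1$, and $X(1)$ is a point.

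Second, show that $G(A)=G(B)$, where these denote the pointwise stabilizers of the tuples.
\begin{itemize}
\item Elements of $G$ act on $\Omega$ by boolean algebra automorphisms. So if $g$ fixes each $A_i$, it fixes every element of the subalgebra $R$ they generate, and in particular fixes each atom $B_j$.
\item Conversely, if $g$ fixes each $B_j$, it fixes every union of atoms, which includes each $A_i$.
\end{itemize}
It follows that the orbit of $A$ is isomorphic to $G/G(A)=G/G(B)$. This is the orbit of $B$, which equals $X(r)$ because $G$ acts transitively on $X(r)$ \cite[Proposition~3.2]{cantor}.

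Third, confirm that the level of $X(r)$ is exactly $r$. This means showing that $X(r) \cong X(m)$ forces $r=m$, so that the orbit is not also some other $X(m)$.
\begin{itemize}
\item Isomorphic transitive $G$-sets have conjugate point stabilizers.
\item The number of fixed points of a stabilizer on $\Omega$ is invariant under conjugation.
\item For $B \in X(r)$, the fixed set $\Omega^{G(B)}$ is exactly the algebra generated by the $B_j$, which has $2^r$ elements.
\end{itemize}
The inclusion of this algebra in $\Omega^{G(B)}$ is clear. For the reverse inclusion, let $C$ be a clopen set that is not a union of atoms. Then some $B_j$ meets both $C$ and its complement. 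Because homogeneity of the Cantor set extends to clopen pieces, there is a homeomorphism of $B_j$, extended by the identity elsewhere, that moves $C \cap B_j$; such an element lies in $G(B)$ but does not fix $C$.

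The main obstacle is this last point, namely that $\Omega^{G(B)}$ is no larger than the algebra of unions of atoms. I expect it to be easy using the transitivity of the homeomorphism group of a clopen piece on its proper non-empty clopen subsets, which is \cite[Proposition~3.2]{cantor} applied to $B_j$, itself a Cantor set.
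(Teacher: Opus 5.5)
Your proof is correct and follows the paper's argument: pass to the atoms $B_1,\ldots,B_r$ of $R$, note that $G(A)=G(B)$ because each tuple is boolean-expressible in terms of the other, and conclude that the orbit of $A$ is $X(r)$. Your third step, which shows $\#\Omega^{G(B)}=2^r$ and hence $X(r)\not\cong X(m)$ for $m\ne r$, supplies a detail the paper leaves implicit but needs for the level to be exactly $r$, and your argument for it is sound.
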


\begin{proof}
Let $B_1, \ldots, B_r$ be the atoms of $R$. These define a partition of $\fX$. Each $B_i$ can be expressed using the $A$'s and boolean operations, and conversely. Thus the stabilizer of $A \in \Omega^n$ is the same as the stabilizer of $B \in \Omega^r$, and so the orbit of $A$ is isomorphic to the orbit of $B$. The latter is $X(r)$, and so the result follows.
\end{proof}

\begin{lemma} \label{lem:cantor-2}
Let $A,B \in \Omega^n$ be partitions of $\fX$. Let $r$ be the number of pairs $(i,j)$ such that $A_i \cap B_j$ is non-empty. Then the orbit of $(A,B) \in \Omega^{2n}$ has level $r$.
\end{lemma}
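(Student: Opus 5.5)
The plan is to reduce directly to Lemma~\ref{lem:cantor-1}: the level of the orbit of $(A,B) \in \Omega^{2n}$ equals $s$, where $\# R = 2^s$ and $R \subset \Omega$ is the boolean subalgebra generated by $A_1, \ldots, A_n, B_1, \ldots, B_n$. It therefore suffices to show that the atoms of $R$ are exactly the non-empty sets $A_i \cap B_j$. A finite boolean algebra with $s$ atoms has $2^s$ elements, so this gives $s=r$.

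The identification of the atoms goes in three steps.
\begin{enumerate}
\item The non-empty sets $A_i \cap B_j$ are pairwise disjoint. If $(i,j) \ne (i',j')$, then either $i \ne i'$, so $A_i \cap A_{i'} = \emptyset$, or $j \ne j'$, so $B_j \cap B_{j'} = \emptyset$; in both cases $(A_i \cap B_j) \cap (A_{i'} \cap B_{j'}) = \emptyset$.
\item These sets cover $\fX$, since
\begin{displaymath}
\fX = \Big(\bigcup_i A_i\Big) \cap \Big(\bigcup_j B_j\Big) = \bigcup_{i,j} A_i \cap B_j .
\end{displaymath}
\item Each generator is a union of these pieces: $A_i = \bigcup_j (A_i \cap B_j)$ and $B_j = \bigcup_i (A_i \cap B_j)$.
\end{enumerate}

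It follows that the collection $P$ of non-empty $A_i \cap B_j$ is a partition of $\fX$ into elements of $R$. The unions of subfamilies of $P$ form a boolean subalgebra containing every generator, so this subalgebra equals $R$. Since the pieces are disjoint and non-empty, distinct subfamilies give distinct unions. Hence $\# R = 2^{\# P} = 2^r$, and the orbit of $(A,B)$ has level $r$ by Lemma~\ref{lem:cantor-1}.

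No step here is a real obstacle; the argument is bookkeeping. The one point that needs care is confirming that $P$ is precisely the set of atoms of $R$, that is, that the union-closure of $P$ is a boolean algebra. This uses only that $P$ is a finite partition: complements and intersections of unions of parts are again unions of parts.
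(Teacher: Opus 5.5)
Your proposal is correct and follows the paper's proof: the paper also notes that every element of the boolean algebra generated by the $A_i$ and $B_j$ is uniquely a disjoint union of the non-empty sets $A_i \cap B_j$, so that $\# R = 2^r$, and then concludes by Lemma~\ref{lem:cantor-1}. You spell out the disjointness, covering, and closure checks that the paper dismisses as clear.
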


\begin{proof}
Let $C_1, \ldots, C_r$ be the non-empty sets of the form $A_i \cap B_j$, and let $R$ be the boolean algebra generated by the $A_i$ and $B_j$. It is clear that every element of $R$ can be uniquely expressed as $\coprod_{i \in S} C_i$ for some $S \subset [r]$. Thus $\# R=2^r$, and so the result follows from Lemma~\ref{lem:cantor-1}.
\end{proof}

We say that a c-functor $\fA$ is \defn{small} if there is some $s$ such that $\mlev{\fA(X(n))} \le sn$ for all $n \ge 1$. We say that a conjugacy set $D$ is \defn{small} if $\fA_D$ is. The following is the key lemma we will need to compute the Drinfeld center in this case.

\begin{lemma} \label{lem:cantor-3}
Any small conjugacy set is contained in $\{1\}$.
\end{lemma}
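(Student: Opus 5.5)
The plan is to argue by contrapositive: given a conjugacy set $D$ containing some $g \ne 1$, exhibit partitions $A \in X(n)$ for which $\mlev \fA_D(X(n))$ grows faster than linearly in $n$, for instance like $n^2$.

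Fix $g \in D$ with $g \ne 1$, so $g$ is a non-trivial homeomorphism of $\fX$. Since $\fX$ is Hausdorff and zero-dimensional, there is a non-empty clopen set $U$ with $gU \cap U = \emptyset$. Pick any point $p$ with $gp \ne p$, separate $p$ and $gp$ by disjoint clopen sets, and shrink using continuity of $g$.

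The quantity to estimate comes from Lemma~\ref{lem:cantor-2}. For a partition $A=(A_1,\dots,A_n) \in X(n)$, the pair $(A, gA)$ lies in $\fA_D(X(n))$, where $gA=(gA_1,\dots,gA_n)$. Its orbit has level equal to the number $r$ of pairs $(i,j)$ with $A_i \cap gA_j \ne \emptyset$. Hence $\mlev \fA_D(X(n)) \ge r$, and it suffices to choose $A$ making $r$ of order $n^2$.

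Set $V = gU$, which is disjoint from $U$.
\begin{itemize}
\item Split $U$ into $m$ non-empty clopen pieces $U_1,\dots,U_m$.
\item Then $V$ is partitioned by $gU_1,\dots,gU_m$.
\item Split each $gU_j$ into $m$ non-empty clopen pieces $W_{1j},\dots,W_{mj}$. This is possible because the Cantor set has no isolated points.
\item Define the partition of $\fX$ by $A_j = U_j \cup \bigcup_i W_{ji}$ for $j=1,\dots,m$, together with the single extra piece $A_{m+1} = \fX \setminus (U \cup V)$ if that set is non-empty.
\end{itemize}
The sets $A_j$ are pairwise disjoint and non-empty, so this is a partition with $n \le m+1$ parts.

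Now count intersections. We have $gA_i \supset gU_i$, and $gU_i$ meets $A_j$ in $W_{ji} \ne \emptyset$ for every $j$. So $A_j \cap gA_i \ne \emptyset$ for all $i,j \le m$, which gives $r \ge m^2$.

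This yields $\mlev \fA_D(X(n)) \ge (n-1)^2$ for infinitely many $n$. More precisely, it holds for every $n \ge 2$, using the $m+1$ parts or merging the last piece if needed. This is incompatible with the bound $sn$ for any fixed $s$, so $D$ is not small. Therefore every small conjugacy set is contained in $\{1\}$.

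The main obstacle is the bookkeeping in building the partition. One must ensure that the pieces $W_{ji}$ inside $V$ are assigned to the $A_j$ consistently with the pieces $U_j$ inside $U$; this is where the disjointness $gU \cap U = \emptyset$ is used. One must also handle the leftover region, either as an extra part or, if it is empty, by noting that the $A_j$ alone already partition $\fX$.
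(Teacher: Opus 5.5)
Your proof is correct and is essentially the paper's argument: the same choice of a clopen $U$ with $gU \cap U = \emptyset$, the same interleaved partition in which each part meets $U$ in one piece and meets every $gU_i$, and Lemma~\ref{lem:cantor-2} to get level of order $n^2$. The only difference is that the paper always absorbs $\fX \setminus (U \cup gU)$ into the first part, which gives level exactly $n^2$. You optionally keep that set as a separate part and get $(n-1)^2$, which works just as well against any bound $sn$.
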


\begin{proof}
Suppose $D$ is a conjugacy set containing an element $g \ne 1$. We show that $D$ is not small. Let $x \in \fX$ be a point such that $x \ne gx$. Let $U$ be a clopen neighborhood of $x$ such that $V=gU$ is disjoint from $U$. Let $A_1, \ldots, A_n$ be a partition of $U$ and let $B_1, \ldots, B_n$ be a partition of $V$ such that $B_i$ meets $gA_j$ for all $i$ and $j$. To construct the $B_i$'s, choose a partition $gA_j=B_{1,j} \sqcup \cdots \sqcup B_{n,j}$, and then take $B_i=B_{i,1} \sqcup \cdots \sqcup B_{i,n}$. Put $C_1 = A_1 \cup B_1 \cup \fX \setminus (U \cup V)$ and $C_i=A_i \cup B_i$ for $2 \le i \le n$. Thus $C=(C_1, \ldots, C_n)$ is a partition of $\fX$.

Since $C$ is a partition, its orbit is $X(n)$. It follows that $\fA_D(X(n))$ contains the point $(C, gC)$. We have
\begin{displaymath}
gC_i \cap C_j \cap V = gA_i \cap B_j \ne \emptyset.
\end{displaymath}
Thus the orbit of $(C, gC)$ has level $n^2$ by Lemma~\ref{lem:cantor-2}, and so the level of $\fA_D(X(n))$ is $n^2$; note that $\fA_D(X(n))$ is contained in $X(n) \times X(n)$, and thus has level at most $n^2$. We thus see that there is no $s$ such that $\mlev \fA_D(X(n)) \le sn$ for all $n$, and so $D$ is not small.
\end{proof}

The next two lemmas establish a version of property (S) in this context. Recall that a subset of $[r] \times [n]$ is \defn{ample} if it surjects onto each factor. Let $N_r(n)$ be the number of ample subsets of $[r] \times [n]$. This is the number of $G$-orbits on $X(r) \times X(n)$ by \cite[Proposition~3.3]{cantor}.

\begin{lemma} \label{lem:cantor-4}
We have $(2^r-1)^n-r2^{(r-1)n} \le N_r(n) \le 2^{rn}$.
\end{lemma}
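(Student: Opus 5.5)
We count ample subsets of $[r] \times [n]$ by viewing a subset $S \subset [r] \times [n]$ column by column. For each $j \in [n]$, let $S_j = \{ i \in [r] : (i,j) \in S \}$. This identifies subsets $S$ with $n$-tuples $(S_1, \ldots, S_n)$ of subsets of $[r]$. Under this identification, $S$ is ample exactly when two conditions hold:
\begin{itemize}
\item[(i)] each $S_j$ is non-empty (surjectivity onto $[n]$);
\item[(ii)] $\bigcup_j S_j = [r]$ (surjectivity onto $[r]$).
\end{itemize}

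The upper bound is immediate. The total number of subsets of $[r] \times [n]$ is $2^{rn}$, and ample subsets are among them.

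For the lower bound, we start from the set $\mathcal{S}$ of tuples satisfying (i). There are $(2^r-1)^n$ such tuples, since each $S_j$ ranges over the $2^r-1$ non-empty subsets of $[r]$. From $\mathcal{S}$ we remove those tuples that violate (ii). A tuple violates (ii) exactly when there is some $i \in [r]$ with $i \notin S_j$ for all $j$.

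We then apply a union bound over $i$. For a fixed $i$, the tuples in which every $S_j$ avoids $i$ have each $S_j \subset [r] \setminus \{i\}$. There are at most $2^{r-1}$ choices for each $S_j$, hence at most $2^{(r-1)n}$ such tuples; dropping the non-emptiness requirement only enlarges this count, so the bound is safe. Summing over the $r$ values of $i$, the non-ample elements of $\mathcal{S}$ number at most $r2^{(r-1)n}$. This gives
\[
N_r(n) \ge (2^r-1)^n - r2^{(r-1)n}.
\]

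There is no real obstacle here. The only point needing care is matching the definition of ample (surjective onto both factors) to conditions (i) and (ii). One could sharpen the estimate by inclusion–exclusion, but the crude union bound already yields the stated inequality.
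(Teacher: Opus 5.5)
Your proof is correct and follows the paper's argument essentially verbatim. The upper bound counts all subsets of $[r] \times [n]$. The lower bound counts the $(2^r-1)^n$ tuples of non-empty column sets and then uses a union bound over $a \in [r]$ to discard the at most $r2^{(r-1)n}$ tuples contained in some $([r]\setminus\{a\}) \times [n]$.
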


\begin{proof}
Obviously, $N_r(n)$ is bounded above by the number of subsets of $[r] \times [n]$, which is $2^{rn}$. We now establish the lower bound. Let $S_1, \ldots, S_n$ be non-empty subsets of $[r]$. Then $S=\coprod_{i=1}^n (S_i \times \{i\})$ is a subset of $[r] \times [n]$ that surjects onto $[n]$. We obtain $(2^r-1)^n$ sets $S$ in this manner. If $S$ does not surject onto $[r]$ then it is contained in $([r] \setminus \{a\}) \times [n]$ for some $a$. For each $a$ there are at most $2^{(r-1)n}$ such $S$. Thus the number of $S$'s which are not ample is at most $r 2^{(r-1)n}$. This completes the proof.
\end{proof}

\begin{lemma} \label{lem:cantor-5}
Let $X$ and $Y$ be $\Omega$-smooth $G$-sets with $\mlev{X}>\mlev{Y}$. Then there exists an $\Omega$-smooth $G$-set $Z$ such that $X \times Z$ has more orbits than $Y \times Z$.
\end{lemma}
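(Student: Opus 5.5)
We will take $Z=X(n)$ for $n$ large and count orbits using the decomposition of products of the sets $X(m)$. Write the orbits of $X$ as $X(r_1),\ldots,X(r_a)$ and those of $Y$ as $X(s_1),\ldots,X(s_b)$; this is possible since every transitive $\Omega$-smooth $G$-set is some $X(m)$. Let $r=\mlev{X}=\max r_i$ and $s=\mlev{Y}=\max s_j$, so $r>s$. If $Y=\emptyset$, any $Z$ works, since $X \times Z$ is non-empty. So assume $s \ge 1$, hence $r \ge 2$. By \cite[Proposition~3.3]{cantor}, the number of $G$-orbits on $X(m)\times X(n)$ is $N_m(n)$. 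Hence the number of orbits on $X\times X(n)$ is $\sum_i N_{r_i}(n)$, which is at least $N_r(n)$. The number of orbits on $Y\times X(n)$ is $\sum_j N_{s_j}(n)$, which is at most $b\cdot N_s(n)$. The last bound uses that $N_m(n)$ is non-decreasing in $m$, but it is simpler to use the crude bound $N_{s_j}(n)\le 2^{s_jn}\le 2^{sn}$ from Lemma~\ref{lem:cantor-4}.

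It therefore suffices to show that for $n \gg 0$,
\begin{displaymath}
(2^r-1)^n - r2^{(r-1)n} > b\,2^{sn},
\end{displaymath}
which we deduce from the lower bound in Lemma~\ref{lem:cantor-4}. Since $s\le r-1$, the right side is at most $b\,2^{(r-1)n}$, so it is enough to have
\begin{displaymath}
(2^r-1)^n > (r+b)\,2^{(r-1)n}.
\end{displaymath}
Because $r\ge 2$, we have $2^r-1 > 2^{r-1}$; indeed $2^r-1-2^{r-1}=2^{r-1}-1\ge 1$. So the ratio $\bigl((2^r-1)/2^{r-1}\bigr)^n$ tends to infinity, and the inequality holds for large $n$. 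Taking $Z=X(n)$ for such $n$, which is $\Omega$-smooth, completes the argument.

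The only delicate points are bookkeeping. First, we use that orbits on $X\times Z$ split over the orbits of $X$, so that only the top-level orbit of $X$ is needed for the lower bound. Second, we handle the degenerate cases $Y=\emptyset$ and $r=1$: if $r=1$ then $s=0$, which forces $Y=\emptyset$, and that case was treated separately. The main step is the asymptotic comparison above, and it is routine once Lemma~\ref{lem:cantor-4} is available.
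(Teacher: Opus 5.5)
Your proposal is correct and is essentially the paper's proof: take $Z=X(n)$, bound the number of orbits on $X\times X(n)$ below by $N_r(n)\ge (2^r-1)^n-r2^{(r-1)n}$ and the number on $Y\times X(n)$ above by a constant times $2^{(r-1)n}$ using Lemma~\ref{lem:cantor-4}, then let $n\to\infty$ using $2^r-1>2^{r-1}$ for $r\ge 2$. The only differences are cosmetic. The paper first reduces to transitive $X$, whereas you keep all orbits of $X$ and discard the lower-level ones in the lower bound; and in the case $Y=\emptyset$ you should say ``any non-empty $Z$'' rather than ``any $Z$''.
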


\begin{proof}
It suffices to treat the case where $X$ is transitive, say $X=X(r)$. If $r=1$ then $Y=\emptyset$ and the result is clear; thus suppose $r \ge 2$. Write $Y=\coprod_{i=1}^m X(s_i)$, where $1 \le s_i<r$ for each $i$. The number of orbits on $X \times X(n)$ is $N_r(n)$, which is at least $(2^r-1)^n-r2^{(r-1)n}$ by Lemma~\ref{lem:cantor-4}. The number of orbits on $Y \times X(n)$ is $\sum_{i=1}^m N_{s_i}(n)$, which is at most $m 2^{(r-1)n}$ by Lemma~\ref{lem:cantor-4}. We thus see that if $n$ is sufficiently large then $X \times X(n)$ has more orbits than $Y \times X(n)$, as required.
\end{proof}

We are finally ready to discuss tensor categories. Let $k$ be a field of characteristic~0. The group $G$ admits exactly two $k$-valued measures $\mu$ and $\nu$, both of which are regular \cite[\S 4]{cantor}. The category $\uPerm(G, \nu)$ has nilpotent endomorphisms with non-zero trace, and so does not admit an abelian envelope. The category $\uPerm(G, \mu)$ admits a semi-simple pre-Tannakian envelope $\cT = \uRep(G, \mu)$ \cite[Corollary~5.3]{cantor}. We now determine its Drinfeld center.

\begin{theorem}
The functor $\cT \to \cZ(\cT)$ is an equivalence.
\end{theorem}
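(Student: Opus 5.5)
The plan is to apply Theorem~\ref{thm:Ztriv}. It therefore suffices to show that every $\cT$-small c-functor $\fA$ is contained in the diagonal. Since $(G,\Omega)$ is a closed oligomorphic permutation group on a countable set, Theorem~\ref{thm:cset} lets us write $\fA=\fA_D$ for a closed conjugacy set $D$. By Lemma~\ref{lem:cantor-3}, we are done once we show that $D$ is small in the multiplicative sense, i.e., $\mlev \fA_D(X(n)) \le sn$ for some $s$ and all $n$. Indeed, then $D\subset\{1\}$, so $\fA_D(X)$ is contained in the diagonal for every $X$.

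To get smallness from $\cT$-smallness, I first prove the analogue of Proposition~\ref{prop:schwartz-level}: if $X,Y$ are $\Omega$-smooth $G$-sets and there is an injection $\cC(X)\to\cC(Y)$ in $\cT$, then $\mlev X\le\mlev Y$. The proof is verbatim. Suppose $\mlev X>\mlev Y$. Lemma~\ref{lem:cantor-5} gives a transitive $\Omega$-smooth $Z=G/U$ (one can take $Z=X(n)$) such that $X\times Z$ has more orbits than $Y\times Z$. Hence $\dim\cC(X)^U>\dim\cC(Y)^U$. But taking $U$-invariants is left exact, so $\cC(X)^U\to\cC(Y)^U$ is injective, a contradiction.

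Now let $D$ be $\cT$-small, with witness $Y$ as in Definition~\ref{defn:Tsmall}. Every $G$-set is a quotient of an $\Omega$-smooth one, and pullback along a surjection gives an injection of Schwartz spaces. So we may replace $Y$ by an $\Omega$-smooth cover, say with $\mlev Y=s$. The set $\fA_D(X(n))\subset X(n)\times X(n)\subset\Omega^{2n}$ is $\Omega$-smooth, and $X(n)\times Y$ is $\Omega$-smooth. The injection $\cC(\fA_D(X(n)))\to\cC(X(n)\times Y)$ then gives
\[
\mlev\fA_D(X(n))\le \mlev(X(n)\times Y)=\mlev X(n)\cdot\mlev Y = ns,
\]
using multiplicativity of $\mlev$. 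Thus $D$ is small, and Lemma~\ref{lem:cantor-3} finishes the argument.

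The step I expect to need the most care is the reduction of Lemma~\ref{lem:cantor-5} to a usable orbit-counting inequality. Lemma~\ref{lem:cantor-5} as stated allows $X$ and $Y$ to be arbitrary finitary $\Omega$-smooth sets, and $\mlev$ of a non-transitive set is the maximum over its orbits. So one must pick an orbit $X(r)$ of $X$ of maximal level, and then check that "more orbits on $X(r)\times Z$" implies "more orbits on $X\times Z$". This holds since orbits only add. One should also make sure that the multiplicative level of the relevant sets is well defined, since every orbit on $\Omega^m$ is some $X(k)$. Beyond that bookkeeping, the argument is a direct transcription of \S\ref{ss:small-Tsmall} with additive level replaced by multiplicative level.
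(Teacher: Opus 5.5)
Your argument is correct and is essentially the paper's proof of this theorem for the Cantor-set category of \S\ref{ss:cantor}. The paper likewise applies Theorem~\ref{thm:Ztriv}, converts $\cT$-smallness into the bound $\mlev \fA(X(n)) \le sn$ by running the argument of Proposition~\ref{prop:schwartz-level} with Lemma~\ref{lem:cantor-5} and multiplicativity of $\mlev$, and then concludes with Lemma~\ref{lem:cantor-3} via Theorem~\ref{thm:cset}; your explicit replacement of $Y$ by an $\Omega$-smooth cover is a step the paper leaves implicit.
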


\begin{proof}
We apply Theorem~\ref{thm:Ztriv}. We must therefore show that a $\cT$-small c-functor $\fA$ is contained in the diagonal. Let $Y$ be a $G$-set such that for every transitive $G$-set $X$ we have an injection
\begin{displaymath}
\cC(\fA(X)) \to \cC(X \times Y).
\end{displaymath}
Arguing as in the proof of Proposition~\ref{prop:schwartz-level}, and making use of Lemma~\ref{lem:cantor-5}, we find
\begin{displaymath}
\mlev{\fA(X(n))} \le \mlev(X(n) \times Y) = sn,
\end{displaymath}
where $s=\mlev{Y}$. Thus $\fA$ is small. Lemma~\ref{lem:cantor-3} now shows that $\fA$ is contained in the diagonal; here we have used the correspondence between c-functors and conjugacy sets (Theorem~\ref{thm:cset}).
\end{proof}

\begin{remark}
Let $\cP=\uPerm(G, \nu)^{\rm kar}$. While $\cP$ does not admit an abelian envelope, one can use similar methods as above to show that $\cP \to \cZ(\cP)$ is an equivalence. We note that the elemental reasoning employed in \S \ref{s:qr} remains valid in $\cP$.
\end{remark}

\subsection{Jacobi categories} \label{ss:jacobi}

We now discuss a generalization of the story from \S \ref{ss:cantor}. Let $\bF$ be a finite field with $q$ elements, and let $k$ be algebraically closed of characteristic~0. Let $\cS$ be the $k$-linearization of the category of finite dimensional $\bF$-vector spaces. The objects of $\cS$ are symbols $[V]$, where $V$ is a finite dimensional $\bF$-vector space, and a morphism $[V] \to [W]$ is a formal $k$-linear combination of $\bF$-linear maps $V \to W$. The category $\cS$ decomposes into a direct sum of categories $\cS_{\chi}$ indexed by homomorphisms $\chi \colon \bF^* \to k^*$, together with one additional piece equivalent to $\Vec$. In \cite{dblexp}, the second author showed that each $\cS_{\chi}$ is naturally a semi-simple pre-Tannakian category.

The categories $\cS_{\chi}$ are studied in \cite{jaccat}, where they are called \defn{Jacobi categories}, due to their close connection to Jacobi sums. It is shown that $\cS_{\chi}$ is equivalent to the Karoubi envelope of a category of the form $\uPerm(H, \mu)$, where $H$ is an oligomorphic group and $\mu$ is a regular measure. The group $H$ consists of the self-homeomorphisms of the Cantor set $\fX$ that preserve a certain $\bF_q^*$-valued measure defined on clopen sets. When $q=2$, the category $\cS_{\chi}$ is equivalent to the category $\cT$ studied in \S \ref{ss:cantor}. Using the same methods, one can show that the functor $\cS_{\chi} \to \cZ(\cS_{\chi})$ is an equivalence in general.

\section{Drinfeld centers of classical interpolation categories} \label{s:classical}

In this section, we compute the Drinfeld center for the interpolation categories of classical groups. We also make some remarks on the quantum Delannoy category, which is a related example.

\subsection{Set-up}

We follow the general set-up from \cite[\S 6.1]{interp}. Fix a finite field $\bF$ of cardinality $q$. Let $\fC$ be the following category. An object is a finite dimensional $\bZ/2$-graded $\bF$-vector space $V=V_0 \oplus V_1$ equipped with a bilinear form $V_0 \times V_1 \to \bF$. A morphism $f \colon V \to W$ is an injective linear map that respects the grading and the forms. The ind-category $\Ind{\fC}$ admits a similar description, but without the condition that the vector space be finite dimensional.

Let $\bV$ be the $\bF$-vector space with basis $\{e_i, f_i\}_{i \ge 1}$. Let $\bV_0$ be the span of the $e_i$'s and let $\bV_1$ be the span of the $f_i$'s. Define a pairing between $\bV_0$ and $\bV_1$ by $\langle e_i, f_j \rangle = \delta_{i,j}$. In this way, $\bV$ is an object of $\Ind{\fC}$. We let $G \subset \GL(\bV_0)$ be the group of invertible matrices that are both row and column finite. We have an action of $G$ on $\bV_1$ by letting $g$ act by the matrix ${}^tg^{-1}$. This defines an action of $G$ on $\bV$, as an object of $\Ind{\fC}$, and it is easily verified that the resulting homomorphism $G \to \Aut(\bV)$ is an isomorphism. The action of $G$ on $\bV$ is oligomorphic, and we give $G$ the topology resulting from this action, as in \S \ref{ss:oligo}. We note that the definable closure of a finite subset of $\bV$ is the homogeneous subspace it generates; thus the conditions of Proposition~\ref{prop:dcl} do not hold.

We let $G(n) \subset G$ be the subgroup consisting of elements that fix each $e_i$ and $f_i$ for $1 \le i \le n$. This is an open subgroup of $G$, and any open subgroup of $G$ contains some $G(n)$. We let $\bV(n)$ be the subspace of $\bV$ spanned by the $e_i$ and $f_i$ with $i>n$. Note that $G(n)$ acts on $\bV(n)$, and $(G(n), \bV(n))$ is isomorphic to $(G, \bV)$. We let $G_n \subset G$ be the subgroup fixing each $e_i$ and $f_i$ with $i>n$. This group is isomorphic to $\GL_n(\bF)$. The groups $G_n$ and $G(n)$ centralize each other.

Let $V_{\ell,m,n}$ be the $\bZ/2$-graded vector space over $\bF$ of dimension $(\ell+m \vert \ell+n)$ equipped with a form that is a perfect pairing on the $(\ell \vert \ell)$ piece and vanishes on the $(m \vert n)$ piece. Every object of $\fC$ is isomorphic to a unique $V_{\ell,m,n}$. We let $X_{\ell,m,n}$ be the space of embeddings $V_{\ell,m,n} \to \bV$. These are exactly the transitive $\bV$-smooth $G$-sets \cite[\S 6.1]{interp}.

\begin{remark}
The group $G$ is one version of the infinite general linear group. Its basic open subgroups $G(n)$ are (finite index subgroups of) Levi subgroups. There is a different (and simpler) version of the infinite general linear group where the basic open subgroups are (finite index subgroups of) parabolic subgroups. However, this group is insufficient for computing the Drinfeld center. See \S \ref{ss:parabolic}.
\end{remark}

\subsection{Smooth conjugacy classes}

We begin by determining the c-smooth elements of $G$. We write $\bF^*$ for the multiplicative group of $\bF$, which we identify with the group of scalar matrices in $G$.

\begin{proposition} \label{prop:gl-csmooth}
Let $g \in G$. The following conditions are equivalent:
\begin{enumerate}
\item $g$ is c-smooth.
\item $g-\alpha$ has finite rank for some (unique) $\alpha \in \bF^*$.
\item $g$ belongs to $\bF^* \cdot G_n$ for some $n$.
\end{enumerate}
Moreover, if $g-\alpha$ has rank $r$ then $g$ is conjugate to an element of $\bF^* \cdot G_{2r}$. In particular, up to conjugacy, there are finitely many such elements.
\end{proposition}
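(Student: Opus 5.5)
I will prove (c)$\Rightarrow$(a)$\Rightarrow$(b)$\Rightarrow$(c), and handle the "moreover" statement together with (b)$\Rightarrow$(c).

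\textbf{(c)$\Rightarrow$(a).} Scalars are central. Since $G(n)$ centralizes $G_n$, every element of $\bF^* \cdot G_n$ is centralized by the open subgroup $G(n)$, hence is c-smooth.

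\textbf{(a)$\Rightarrow$(b).} Suppose $g$ is c-smooth, so $G(n) \subset Z(g)$ for some $n$.

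\begin{itemize}
\item The fixed points of $G(n)$ on $\bV$ form the span $W$ of $e_1, \ldots, e_n, f_1, \ldots, f_n$, since this is the definable closure of the finite set $\{e_i, f_i\}_{i \le n}$. Because $g$ commutes with $G(n)$, it preserves $\bV^{G(n)}=W$.
\item Next consider $\bV_0(n)$, the span of the $e_i$ with $i>n$. It should be the unique $G(n)$-stable complement to $W_0$ in $\bV_0$. Indeed, a $G(n)$-stable complement is the graph of a $G(n)$-map $\bV_0(n) \to W_0$, and such a map is zero because $G(n)$ acts on $\bV_0(n)$ as the full group $G$ acts on $\bV_0$, with no nonzero invariant functionals. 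Hence $g$ also preserves $\bV_0(n)$.
\item On $\bV_0(n)$, $g$ commutes with the action of $G(n) \cong G$. So it suffices to show that the commutant of $G$ acting on $\bV_0$ consists of scalars. Given an endomorphism $h$ commuting with $G$, the vector $he_1$ is fixed by the stabilizer of $e_1$ in $G$. The vectors in $\bV_0$ fixed by this stabilizer are just the multiples of $e_1$ (using e.g.\ elements fixing $e_1$ and $f_1$, then transvections). Thus $he_1=\alpha e_1$, and by transitivity $h=\alpha$.
\end{itemize}

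It follows that $g-\alpha$ vanishes on $\bV_0(n)$ and so has rank at most $2n$. The scalar $\alpha$ is unique because the identity has infinite rank.

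\textbf{(b)$\Rightarrow$(c) and the conjugacy bound.} This is the step I expect to be the main obstacle. Let $h=g-\alpha$ have rank $r$, with image $I=\im h \subset \bV_0$ of dimension $r$, and kernel $K$ of codimension $r$.

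\begin{itemize}
\item Since $g$ is row and column finite, $I$ is finite dimensional. The kernel $K$ is the annihilator of an $r$-dimensional subspace $J \subset \bV_1$, namely the image of the transpose, which is also finite dimensional. Concretely, $h=\sum_{i \le r} u_i \otimes \phi_i$ with $u_i \in \bV_0$ and $\phi_i \in \bV_1$.
\item The finite set of vectors $u_i, \phi_i$ spans a homogeneous subspace $S \subset \bV$ of dimension at most $(r \mid r)$. By the classification of objects of $\fC$ via $V_{\ell,m,n}$, $S$ embeds into a non-degenerate subspace $P$ of dimension at most $(2r \mid 2r)$; one can add partners for the degenerate part.
\item By homogeneity of $\bV$ (transitivity on the $X_{\ell,m,n}$), choose $k \in G$ carrying $P$ onto the span of $e_i, f_i$ for $i \le 2r$. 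Then $kgk^{-1}-\alpha=\sum ku_i \otimes {}^t k^{-1}\phi_i$ kills every $e_j$ with $j>2r$ and has image in the span of $e_1, \ldots, e_{2r}$. Hence $kgk^{-1} \in \bF^* \cdot G_{2r}$.
\end{itemize}

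This gives (c) and the "moreover" statement. Since $\bF^*$ and $\GL_{2r}(\bF)$ are finite, there are only finitely many such elements up to conjugacy.
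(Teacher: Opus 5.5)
Your proof is correct. Its structure matches the paper's for the equivalences, but you take a genuinely different route to the conjugacy bound.

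\textbf{The equivalences.} The paper gets (a)$\Rightarrow$(c) from the claim that the centralizer of $G(n)$ is $\bF^*\cdot G_n$, which it leaves as ``easily seen.'' Your argument (fixed space $W=\bV^{G(n)}$, uniqueness of the $G(n)$-stable complement $\bV_0(n)$, scalar commutant) is in effect a proof of that claim. Indeed, it shows that $g$ preserves $W_0$ and acts by $\alpha$ on $\bV_0(n)$, so you could conclude $g\in\bF^*\cdot G_n$ directly rather than stopping at (b).

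\textbf{The conjugacy bound.} The paper first has $g-\alpha$ supported in a finite top-left block. It then conjugates inside the finite group $G_n\cong\GL_n(\bF)$, using ordinary linear algebra to squeeze a rank-$r$ matrix into a $2r\times 2r$ block. You instead encode $g-\alpha=\sum u_i\otimes\phi_i$ by the homogeneous subspace $S$ of dimension $(r\vert r)$ spanned by its image and coimage. You enlarge $S$ to a nondegenerate $P\subset\bV$ of dimension at most $(2r\vert 2r)$, and move $P$ to standard position using transitivity of $G$ on the $X_{p,0,0}$.

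Your version is more structural: it explains $2r$ as the size of the nondegenerate hull of $S$, and it uses exactly the homogeneity underlying the oligomorphic framework. The paper's version needs nothing beyond finite-dimensional linear algebra once (c) is known. In exchange, yours depends on producing $P$ inside $\bV$, which you only sketch. It can be done inside some nondegenerate $\langle e_i,f_i : i\le N\rangle\supseteq S$, by adjoining dual partners to the radicals of $S_0$ and $S_1$. If $\dim P=(p\vert p)$ with $p<2r$, send $P$ onto $\langle e_i,f_i : i\le p\rangle$.

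\textbf{One small point.} Your last step yields $kgk^{-1}\in\bF^*\cdot G_{2r}$, which is not (c) for $g$ itself, so the cycle (c)$\Rightarrow$(a)$\Rightarrow$(b)$\Rightarrow$(c) is not yet closed as written. The fix is one line. The vectors $u_i\in\bV_0$ and $\phi_i\in\bV_1$ lie in $\langle e_i,f_i : i\le N\rangle$ for some $N$, so $g-\alpha$ is supported in the top-left $N\times N$ block and $g\in\bF^*\cdot G_N$. This is precisely the paper's row-finiteness argument.
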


\begin{proof}
It is clear that (c) implies (a) and (b). Suppose now that (a) holds. Then there is some $n$ such that $G(n)$ centralizes $g$. The centralizer of $G(n)$ in $G$ is easily seen to be $\bF^* \cdot G_n$, and so (c) holds. Now suppose that (b) holds. Let $m$ be such that the image of $g-\alpha$ is contained in the span of $e_1, \ldots, e_m$. Since $g$ is row finite, there is some $n \ge m$ such that all non-zero entries of $g-\alpha$ are contained in the top left $n \times n$ block. Thus (c) holds.

Now suppose that $g-\alpha$ has rank $r$, for some $\alpha \in \bF^*$. By (c), we know that the non-zero entries of $g-\alpha$ are contained in the top left $n \times n$ block, for some $n$. By standard linear algebra, we can find an element $h \in G_n$ such that the non-zero entries of $h(g-\alpha)h^{-1}$ are contained in the top left $2r \times 2r$ block. Thus $hgh^{-1}$ belongs to $\bF^* \cdot G_{2r}$, as required.
\end{proof}

We thus see that the group of c-smooth elements of $G$ is the ``small'' infinite general linear group $\bigcup_{n \ge 1} G_n$.

\subsection{Level} \label{ss:gl-level}

Let $X$ be a $\bV$-smooth $G$-set. We define the \defn{level} of $X$, denote $\lev{X}$, to be the maximum value of $2\ell+m+n$ for which $X_{\ell,m,n}$ appears as an orbit on $X$. By convention, $\lev{\emptyset} = -\infty$. It is not difficult to see directly that
\begin{equation} \label{eq:gl-lev}
\lev(X \times Y) = \lev{X} + \lev{Y}.
\end{equation}
This also follows from the analysis of the Burnside ring in \cite[\S 6.3]{interp}.

\begin{proposition} \label{prop:gl-level}
The level of the orbit of $x \in \bV^n$ is the dimension of the homogeneous subspace of $\bV$ generated by $x_1, \ldots, x_n$.
\end{proposition}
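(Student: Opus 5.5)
The plan is to reduce to the case where the orbit is one of the standard orbits $X_{\ell,m,n}$, and then compute $2\ell+m+n$ for the embedding in question. Let $W \subset \bV$ be the homogeneous subspace generated by $x_1, \ldots, x_n$, namely the span of the homogeneous components of the $x_i$. With the pairing restricted from $\bV$, $W$ is an object of $\fC$. So $W \cong V_{\ell,m,n}$ for unique $\ell,m,n$, and its dimension is $(\ell+m)+(\ell+n)=2\ell+m+n$.

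\emph{Step 1: the orbit of $x$ is isomorphic to $X_{\ell,m,n}$.} I will show that the stabilizer of $x$ in $G$ equals the pointwise stabilizer $G(W)$ of $W$. If $g$ fixes each $x_i$, then, since $G$ preserves the grading, $g$ fixes each homogeneous component of each $x_i$, and hence fixes $W$ pointwise; the converse inclusion is clear. Fix an isomorphism $j \colon V_{\ell,m,n} \to W$, viewed as an element of $X_{\ell,m,n}$. Its stabilizer is also $G(W)$. Since $X_{\ell,m,n}$ is transitive, the orbit of $x$ is isomorphic to $G/G(W) \cong X_{\ell,m,n}$.

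\emph{Step 2: compute the level.} By definition, $\lev{X_{\ell,m,n}}$ is $2\ell+m+n$ when the level is computed from this orbit alone. The concern is that the level could be defined intrinsically, as in \S\ref{ss:level}, by the minimal $n$ with an embedding into $\bV^n$. Here the definition in \S\ref{ss:gl-level} is instead read off from the isomorphism type $X_{\ell,m,n}$. For this to be well defined, distinct triples $(\ell,m,n)$ must give non-isomorphic $G$-sets. I would check this by an argument analogous to Proposition~\ref{prop:level}. Suppose $X_{\ell,m,n} \cong X_{\ell',m',n'}$. Then the corresponding embeddings have the same stabilizer $H$. The fixed space $\bV^H$ equals the image subspace: indeed $\bV^{G(W)}=W$, since the definable closure of a finite set is the homogeneous subspace it generates, as noted in the set-up. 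So the two images coincide as objects of $\fC$, and the triples agree.

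Combining the two steps, the level of the orbit of $x$ is $2\ell+m+n=\dim W$. I expect the main obstacle to be the well-definedness check, which relies on the identification $\bV^{G(W)}=W$. This needs a short argument: given $v \notin W$ homogeneous, one exhibits an element of $G(W)$ moving $v$. I would do this by using the structure of $G(n)$ acting on $\bV(n)$ after conjugating $W$ into a standard position.
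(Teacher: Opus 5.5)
Your proof is correct and follows the paper's argument: the stabilizer of $x$ is the pointwise stabilizer of the homogeneous span $W$, so the orbit of $x$ is isomorphic to $X_{\ell,m,n}$ with $W \cong V_{\ell,m,n}$, and $2\ell+m+n=\dim W$. Your Step~2 is a valid addition that the paper leaves implicit, since it simply relies on the classification of transitive $\bV$-smooth $G$-sets from \cite{interp}. In that step you show, using $\bV^{G(W)}=W$ (the paper's remark that the definable closure of a finite set is the homogeneous subspace it generates), that distinct triples $(\ell,m,n)$ give non-isomorphic $G$-sets, so the level is well defined.
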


\begin{proof}
Let $W$ be the homogeneous subspace of $\bV$ generated by $x_1, \ldots, x_n$. It is clear that an element of $G$ fixes $x$ if and only if it fixes each element of $W$. Thus the orbit of $x$ is isomorphic to the space of embeddings $W \to \bV$. Since $W$ is isomorphic to some $V_{\ell,m,n}$ as an object of $\fC$, the space of embeddings is isomorphic to $X_{\ell,m,n}$. The level of $X_{\ell,m,n}$ is $2\ell+m+n$, which is the dimension of $W$, as required.
\end{proof}

\subsection{Small conjugacy sets}

We say that a c-functor $\fA$ is \defn{small} if there is some $s$ such that $\lev{\fA(X)} \le \lev{X}+s$ for all $\bV$-smooth transitive $G$-sets $X$. We say that a conjugacy set $D \subset G$ is \defn{small} if $\fA_D$ is. We now investigate small conjugacy sets. We begin with two linear algebra lemmas.

\begin{lemma} \label{lem:glsmall-1}
Let $V$ be an infinite dimensional $\bF$-vector space, let $W_1, \ldots, W_{q-1}$ be proper subspaces of $V$, and let $W_q, \ldots, W_n$ be finite dimensional subspaces of $V$. Then $\bigcup_{i=1}^n W_i$ is a proper subset of $V$.
\end{lemma}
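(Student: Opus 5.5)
We argue by induction on the number $q-1$ of proper subspaces, after first disposing of the finite-dimensional ones. Let $W' = W_q + \cdots + W_n$, a finite dimensional subspace. Since $V$ is infinite dimensional, choose a complement and pass to a quotient: let $\bar V = V/W'$, which is still infinite dimensional. If some $W_i$ with $i<q$ has image all of $\bar V$ (i.e.\ $W_i + W' = V$), then $W_i$ has finite codimension. We handle this by restricting instead to a subspace. Each proper $W_i$ ($i<q$) misses some vector $v_i$. Choose a finite dimensional subspace $E$ containing $v_1,\ldots,v_{q-1}$ and $W'$, together with some infinite dimensional subspace $L$ with $L\cap E=0$. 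The cleanest route is the following counting argument.

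Pick a finite dimensional subspace $U\subset V$ of dimension $d$ large, containing all $v_i$. Then $U\cap W_i$ is a proper subspace of $U$ for $i<q$, so it has at most $q^{d-1}$ elements. Meanwhile $U\cap W_i$ for $i\ge q$ has at most $q^{e}$ elements where $e=\max_i\dim W_i$ is fixed. Thus
\begin{displaymath}
\Bigl|\bigcup_{i=1}^n (U\cap W_i)\Bigr| \le (q-1)q^{d-1} + (n-q+1)q^{e}.
\end{displaymath}
Since all subspaces contain $0$, we may sharpen the first term: the $q-1$ proper subspaces together have at most $(q-1)(q^{d-1}-1)+1$ elements. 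Hence the union has at most $q^d - q^{d-1} - (q-2) + (n-q+1)q^e$ elements. We need this to be $<q^d$, i.e.\ $(n-q+1)q^e < q^{d-1}+q-2$, which holds once $d$ is large, since $e$ and $n$ are fixed while $V$ infinite dimensional allows arbitrarily large $d$. So some vector of $U$ lies in no $W_i$.

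The only delicate point is the sharpness of the bound $q-1$: with $q+1$ hyperplanes one can cover $\bF^2$, so the count must use that $q-1$ hyperplanes cover at most $q^d-q^{d-1}$ plus the shared zero vector. This leaves a margin of order $q^{d-1}$, which eventually beats the fixed finite-dimensional contributions. I expect no further obstacle; the choice of $U$ containing the witnesses $v_i$ (to make each $U\cap W_i$ proper) is the one step to state carefully.
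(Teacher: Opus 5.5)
Your counting argument (take a $d$-dimensional subspace $U$ containing witnesses $v_i \notin W_i$ for $i<q$, then compare $(q-1)q^{d-1}+(n-q+1)q^e$ with $q^d = \#U$ for $d$ large) is exactly the paper's proof; your count of $n-q+1$ finite-dimensional subspaces is in fact more careful than the paper's displayed bound, which writes $n-q$. The induction and quotient setup in your first paragraph is abandoned and can be deleted, and the zero-vector sharpening is not needed, since the crude bound already leaves a margin of $q^{d-1}$.
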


\begin{proof}
Let $e$ be the maximum of the dimensions of $W_q, \ldots, W_n$. For each $1 \le i \le q-1$, let $w_i$ be an element of $V$ that does not belong to $W_i$. Let $d$ be such that $q^{d-1}>(n-q)q^e$ and let $V_0$ be a $d$ dimensional subspace of $V$ containing each $w_i$. Then $W_i \cap V_0$ is a proper subspace of $V_0$ for $1 \le i \le q-1$. We thus find
\begin{displaymath}
\sum_{i=1}^n \# (W_i \cap V_0) \le (q-1)q^{d-1} + (n-q) q^e < q^d = \# V_0.
\end{displaymath}
Thus there is an element of $V_0$ that does not belong to $\bigcup_{i=1}^n W_i$, as required.
\end{proof}

The following is a linear analog of Lemma~\ref{lem:small-conj}.

\begin{lemma} \label{lem:glsmall-2}
Let $g \in G$ and let $s \in \bN$. Suppose that for every finite dimensional subspace $U$ of $\bV_0$ we have $\dim(U+gU) \le s+\dim{U}$. Then there is some $\alpha \in \bF^*$ such that $g-\alpha$ has rank $\le 2s$.
\end{lemma}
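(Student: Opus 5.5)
The plan is to argue by contradiction, exactly parallel to Lemma~\ref{lem:small-conj}, with Lemma~\ref{lem:glsmall-1} playing the role of the counting step there. Suppose that for every $\alpha \in \bF^*$ the operator $g-\alpha$ on $\bV_0$ has rank $>2s$. We will construct a subspace $U \subset \bV_0$ with $\dim U = s+1$ and $\dim(U+gU) = 2(s+1) = \dim U + (s+1)$, which contradicts the hypothesis.

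We choose vectors $x_1, \ldots, x_{s+1} \in \bV_0$ inductively, maintaining the invariant that
\begin{displaymath}
x_1, \ldots, x_i, \; gx_1, \ldots, gx_i
\end{displaymath}
are linearly independent. Suppose $x_1, \ldots, x_{i-1}$ have been chosen, with $i \le s+1$. Let $C$ be the span of $x_1, \ldots, x_{i-1}, gx_1, \ldots, gx_{i-1}$; then $\dim C = 2(i-1) \le 2s$. We need a vector $x$ such that $x$ and $gx$ are linearly independent modulo $C$; we then take $x_i = x$. This condition fails precisely when one of the following holds:
\begin{itemize}
\item $x \in C$;
\item $gx \in C$, i.e.\ $x \in g^{-1}C$;
\item $gx - \beta x \in C$ for some $\beta \in \bF^*$, i.e.\ $x \in (g-\beta)^{-1}C$.
\end{itemize}
So the set of bad $x$ is the union
\begin{displaymath}
C \;\cup\; g^{-1}C \;\cup\; \bigcup_{\beta \in \bF^*} (g-\beta)^{-1}C .
\end{displaymath}

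We now check that this union satisfies the hypotheses of Lemma~\ref{lem:glsmall-1} with $V = \bV_0$, which is infinite dimensional.
\begin{itemize}
\item The two subspaces $C$ and $g^{-1}C$ are finite dimensional, since $g$ is invertible.
\item For each of the $q-1$ values $\beta \in \bF^*$, the preimage $(g-\beta)^{-1}C$ is a subspace. It equals all of $\bV_0$ only if the image of $g-\beta$ lies in $C$. That would force $\operatorname{rank}(g-\beta) \le \dim C \le 2s$, contrary to our assumption. Hence each of these $q-1$ subspaces is proper.
\end{itemize}
Lemma~\ref{lem:glsmall-1} therefore shows that the union is a proper subset of $\bV_0$, so a good $x$ exists and the induction continues.

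After $s+1$ steps, put $U = \operatorname{span}(x_1, \ldots, x_{s+1})$. Then $U + gU$ is spanned by $2(s+1)$ linearly independent vectors, so $\dim(U+gU) = \dim U + s + 1$, which is the desired contradiction. Hence some $\alpha \in \bF^*$ has $\operatorname{rank}(g-\alpha) \le 2s$.

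The main point needing care is the bookkeeping of the bad set. One must notice that the eigenvalue-type conditions for the $q-1$ nonzero scalars $\beta$ give possibly infinite-dimensional but proper subspaces, while the scalar $0$ contributes only the finite-dimensional subspace $g^{-1}C$. This split is exactly the shape Lemma~\ref{lem:glsmall-1} is designed to handle, with $q-1$ proper subspaces plus finitely many finite-dimensional ones.
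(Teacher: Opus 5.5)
Your proof is correct and follows the paper's argument: the paper also argues by contradiction and builds $U_0\subset\cdots\subset U_{s+1}$ with $\dim U_i=i$ and $U_i\cap gU_i=0$, which is equivalent to your linear-independence invariant. At each step it chooses a vector outside exactly your bad set, namely $U_i+gU_i$, $g^{-1}U_i+U_i$, and $(g-\alpha)^{-1}(U_i+gU_i)$ for $\alpha\in\bF^*$, using Lemma~\ref{lem:glsmall-1}.
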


\begin{proof}
Suppose, by way of contradiction, that $g-\alpha$ has rank $>2s$ for all $\alpha \in \bF^*$. We construct subspaces $0=U_0 \subset \cdots \subset U_{s+1}$ of $\bV_0$, with the property that $\dim{U_i}=i$ and $U_i \cap gU_i=0$. This will yield a contradiction, as $\dim(U_{s+1}+gU_{s+1})=2(s+1)$.

Suppose $i \le s$ and $U_i$ has been defined; we define $U_{i+1}$. For $\alpha \in \bF^*$, let
\begin{displaymath}
W_{\alpha} = (g-\alpha)^{-1}(U_i+gU_i) \subset \bV_0.
\end{displaymath}
Since $U_i+gU_i$ has dimension $2i \le 2s$ and $\rank(g-\alpha)>2s$, it follows that $W_{\alpha}$ is a proper subspace of $\bV_0$. Also, let $W_0=U_i+gU_i$ and $W_{\infty}=g^{-1}U_i+U_i$; these are finite dimensional. By Lemma~\ref{lem:glsmall-1}, we see that $\bigcup_{\alpha \in \bP^1(\bF)} W_{\alpha}$ is a proper subset of $\bV_0$. Let $v$ be an element of $\bV_0$ that does not belong to any $W_{\alpha}$, and put $U_{i+1}=U_i+\bF v$.

We now verify that $U_{i+1}$ has the requisite properties. Since $v \not\in W_0$ we have $v \not\in U_i$, and so $U_{i+1}$ has dimension $i+1$. We claim that $U_{i+1} \cap gU_{i+1}=0$. Suppose $x$ belongs to the intersection. Then $x = u+\alpha v = gu'+\beta gv$ for $u,u' \in U_i$ and $\alpha,\beta \in \bF$. We thus see that $(\beta g-\alpha)v = u-gu' \in U_i+gU_i$. By our choice of $v$, this can only happen if $\alpha=\beta=0$. Thus $x \in U_i \cap g U_i$, and so $x=0$, as required. This completes the proof.
\end{proof}

We can now characterize small conjugacy sets.

\begin{proposition} \label{prop:gl-small}
A conjugacy set is small if and only if it is a finite union of c-smooth conjugacy classes.
\end{proposition}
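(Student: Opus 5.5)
We follow the template of Proposition~\ref{prop:small-conj}, with Proposition~\ref{prop:gl-level} replacing Proposition~\ref{prop:level} and Lemma~\ref{lem:glsmall-2} replacing Lemma~\ref{lem:small-conj}.

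\emph{Finite unions of c-smooth classes are small.} Let $D$ be a finite union of c-smooth conjugacy classes. By Proposition~\ref{prop:gl-csmooth}, there is an $r$ such that every $g \in D$ has $g-\alpha_g$ of rank $\le r$ for some $\alpha_g \in \bF^*$. Then $g$ acts on $\bV_1$ by ${}^tg^{-1}$, and ${}^tg^{-1}-\alpha_g^{-1}$ also has rank $\le r$. Indeed, $g^{-1}-\alpha^{-1} = -\alpha^{-1}g^{-1}(g-\alpha)$ has rank at most that of $g-\alpha$, and transposing does not change rank.

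Now let $X$ be a transitive $\bV$-smooth $G$-set, realized as the orbit of some $x \in \bV^n$, and let $W$ be the homogeneous subspace spanned by the $x_i$. The homogeneous subspace spanned by $(x, gx)$ is $W + gW$. For $w$ homogeneous we have $gw = \alpha w + (g-\alpha)w$ (with $\alpha$ replaced by $\alpha^{-1}$ on $\bV_1$), so
\[
W+gW \subset W + \im(g-\alpha)|_{\bV_0} + \im({}^tg^{-1}-\alpha^{-1}).
\]
Hence $\dim(W+gW) \le \dim W + 2r$. By Proposition~\ref{prop:gl-level}, this gives $\lev \fA_D(X) \le \lev X + 2r$. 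Since every transitive $\bV$-smooth set arises as such an orbit, $D$ is small.

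\emph{Small conjugacy sets are finite unions of c-smooth classes.} Suppose $\lev \fA_D(X) \le \lev X + s$ for all transitive $\bV$-smooth $X$, and let $g \in D$. Take any finite-dimensional subspace $U \subset \bV_0$ with a basis $u_1,\dots,u_m$, and let $X$ be the orbit of $u = (u_1,\dots,u_m)$. By Proposition~\ref{prop:gl-level}, $\lev X = \dim U$. The pair $(u, gu)$ lies in $\fA_D(X)$, and the homogeneous span of $(u,gu)$ is $U + gU$, so
\[
\dim(U+gU) \le \dim U + s.
\]
Lemma~\ref{lem:glsmall-2} then gives $\alpha \in \bF^*$ with $\rank(g-\alpha) \le 2s$. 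By Proposition~\ref{prop:gl-csmooth}, $g$ is c-smooth. Moreover, $g$ is conjugate into $\bF^* \cdot G_{4s}$, which is a finite group, so only finitely many conjugacy classes can occur. Therefore $D$ is a finite union of c-smooth conjugacy classes.

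\emph{Main obstacle.} The delicate step is the first direction: checking that the rank bound on $\bV_0$ transfers to the action on $\bV_1$, and that the homogeneous span of $(x,gx)$ really is $W+gW$. The latter holds because $g$ preserves the grading, so it maps homogeneous subspaces to homogeneous subspaces.
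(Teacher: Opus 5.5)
Your proof is correct and follows the paper's argument. For the hard direction, you apply Proposition~\ref{prop:gl-level} to orbits of bases of subspaces $U \subset \bV_0$ to obtain the hypothesis of Lemma~\ref{lem:glsmall-2}, then invoke Proposition~\ref{prop:gl-csmooth} for c-smoothness and finiteness of the classes, exactly as the paper does. For the easy direction the paper only says it follows from Proposition~\ref{prop:gl-csmooth}, and your rank estimate $\dim(W+gW) \le \dim W + 2r$, including the check on the $\bV_1$ action, is a valid way of filling in that step.
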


\begin{proof}
It follows from Proposition~\ref{prop:gl-csmooth} that a finite union of c-smooth conjugacy classes is small. Now let $D$ be a small conjugacy set, and let $s$ be such that $\lev{\fA_D(X)} \le \lev{X}+s$ for all $\bV$-smooth transitive $G$-sets $X$. Let $g \in G$. Let $x_1, \ldots, x_n$ be linearly independent elements of $\bV_0$, and let $X \subset \bV_0^n$ be the orbit of $x=(x_1, \ldots, x_n)$. We have $\lev{X}=n$. The set $\fA_D(X)$ contains the orbit of the element $(x, gx)$. We thus see that the span of the set $\{x_i, gx_i\}_{1 \le i \le n}$ has dimension $\le n+s$. Lemma~\ref{lem:glsmall-2} shows that $\rank(g-\alpha) \le 2s$ for some $\alpha \in \bF^*$. Thus $g$ is c-smooth and belongs to one of finitely many conjugacy classes (depending on $s$) by Proposition~\ref{prop:gl-csmooth}.
\end{proof}

We next show that a small conjugacy set has isolated points. For a c-smooth element $g \in G$, there is a unique element $\alpha \in \bF^*$ such that $g-\alpha$ has finite rank (Proposition~\ref{prop:gl-csmooth}). Put $r(g)=\rank(g-\alpha)$.

\begin{proposition} \label{prop:gl-isolated}
Let $D$ be a small conjugacy set, and let $r$ be the maximum value of $r(g)$ over $g \in D$. Then any element $g \in D$ with $r(g)=r$ is isolated in $D$.
\end{proposition}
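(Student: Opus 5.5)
We follow the proof of Proposition~\ref{prop:finitary-isolated}, replacing the support of a finitary permutation by the image and kernel data of $g-\alpha$. By Proposition~\ref{prop:gl-small}, every element of $D$ is c-smooth, so $r(h)$ is defined for all $h \in D$. Fix $g \in D$ with $r(g)=r$, and let $\alpha \in \bF^*$ be the scalar with $\rank(g-\alpha)=r$.

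By Proposition~\ref{prop:gl-csmooth}, we have $g \in \bF^* \cdot G_n$ for some $n$. After enlarging $n$, we may assume the following:
\begin{itemize}
\item the image of $g-\alpha$ on $\bV_0$ lies in $E_n=\operatorname{span}(e_1,\ldots,e_n)$;
\item $g-\alpha$ kills $e_i$ for $i>n$;
\item $g$ preserves $E_n$.
\end{itemize}
We take $U=G(n)$ and aim to show $gU \cap D=\{g\}$.

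Let $h=gu \in D$ with $u \in G(n)$. Since $u$ fixes $e_1,\ldots,e_n$, the maps $h$ and $g$ agree on $E_n$. Because $E_n$ is $g$-stable and $g-\alpha$ is supported there, the operator $h-\alpha$ restricted to $E_n$ equals $(g-\alpha)|_{E_n}$, which has rank $r$. Hence $\rank(h-\alpha) \ge r$.

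The unique scalar $\beta$ attached to $h$ need not equal $\alpha$. We split into two cases.
\begin{itemize}
\item If $\beta \ne \alpha$, then $h-\alpha = (h-\beta)+(\beta-\alpha)$ is a finite-rank operator plus a nonzero scalar, so $h-\alpha$ has infinite rank. Then $h-\beta$ restricted to $E_n$ equals $(g-\alpha)|_{E_n}+(\alpha-\beta)$. This is what we must control: we need $r(h) > r$, or else a direct contradiction.
\item If $\beta=\alpha$, then $r \ge r(h)=\rank(h-\alpha) \ge r$, so $\rank(h-\alpha)=r$ exactly, and the image of $h-\alpha$ equals the image of $(h-\alpha)|_{E_n}$, which lies inside $E_n$.
\end{itemize}

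The $\beta \ne \alpha$ case is the main obstacle. To handle it, choose $n$ large relative to $r$. Then $g$ acts on $E_n$ as $\alpha$ plus a rank-$r$ operator, so $g-\beta$ on $E_n$ has rank at least $n-r$. Since $h-\beta$ agrees with $g-\beta$ on $E_n$, this gives $r(h) \ge n-r > r$ once $n>2r$, contradicting the maximality of $r$. So $\beta=\alpha$.

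In the case $\beta=\alpha$, we now show $h=g$ on $\bV_0$. Write $h-\alpha$ as the direct-sum map from $E_n \oplus E_n'$, where $E_n'$ is the span of the $e_i$ with $i>n$. Its rank is at least the rank of the restriction to $E_n$, which is $r$, and equality forces the image of $(h-\alpha)|_{E_n'}$ to be contained in the image $I$ of $(g-\alpha)|_{E_n}$. This alone is not enough.

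To finish, we also use the dual side. The same argument applied on $\bV_1$, where $g$ acts by ${}^tg^{-1}$ with the corresponding finite-rank defect, shows that the column data of $h-\alpha$ are pinned down as well. Concretely, $u$ fixes all $f_i$ with $i \le n$, so the pairing $\langle (h-\alpha)e_j, f_i\rangle$ for $i \le n$ is computed from $\langle u e_j, ({}^tg-\alpha)f_i\rangle$-type expressions. With $n$ chosen so that $\ker(g-\alpha)^{\perp}$-data lives in $\operatorname{span}(f_1,\ldots,f_n)$, these show $(h-\alpha)e_j$ pairs to zero against all $f_i$ with $i \le n$ for $j>n$. Since $(h-\alpha)e_j \in I \subset E_n$, and $E_n$ pairs perfectly with $\operatorname{span}(f_1,\ldots,f_n)$, we get $(h-\alpha)e_j=0$ for $j>n$. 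Hence $h=g$.

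The step I expect to be delicate is exactly this last pairing computation, together with the possibility of needing to enlarge $n$ once more so that both the row and the column supports of $g-\alpha$ lie in the first $n$ coordinates.
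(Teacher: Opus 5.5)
Your proof is correct and is essentially the paper's argument. The paper first conjugates $g$ into $\bF^*\cdot G_{2r}$ and takes $U=G(4r)$, whereas you take $U=G(n)$ with $g\in\bF^*\cdot G_n$ and $n>2r$. In both, $\beta\neq\alpha$ is excluded because $h-\beta$ agrees with $(\alpha-\beta)+(g-\alpha)$ on a block where this has rank $>r$, and maximality of $r$ then forces $h=g$.

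The step you flag as delicate is immediate and needs no further enlargement of $n$. Since $u\in G(n)$ fixes $e_1,\dots,e_n$ and $f_1,\dots,f_n$ (acting on the latter by ${}^tu^{-1}$), its first $n$ columns and rows are standard, so $u=\mathrm{diag}(I_n,B)$. Hence $h-\alpha=\mathrm{diag}\bigl((g-\alpha)|_{E_n},\,\alpha(B-1)\bigr)$, and $\rank(h-\alpha)=r$ gives $B=1$. Your pairing identity $\langle (h-\alpha)e_j,f_i\rangle=\langle ue_j,({}^tg-\alpha)f_i\rangle=0$ is exactly this observation.
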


\begin{proof}
If $r=0$ then $D$ consists of scalar matrices and the result is clear. Thus suppose $r \ge 1$. Let $g$ be an element of $D$ with $r(g)=r$. Replacing $g$ with a conjugate, we assume $g \in \bF^* \cdot G_{2r}$ (Proposition~\ref{prop:gl-csmooth}). We can thus write
\begin{displaymath}
g = \begin{pmatrix} A & 0 \\ 0 & \alpha \end{pmatrix},
\end{displaymath}
for some $A \in \GL_{2r}(\bF)$ and some $\alpha \in \bF^*$ such that $A-\alpha$ has rank $r$. We now break the bottom right block of $A$ into two blocks, where the first has size $2r \times 2r$:
\begin{displaymath}
g = \begin{pmatrix} A & 0 & 0 \\ 0 & \alpha & 0 \\ 0 & 0 & \alpha \end{pmatrix},
\end{displaymath}
Now, suppose $h \in g G(4r) \cap D$. We then have
\begin{displaymath}
h = \begin{pmatrix} A & 0 & 0 \\ 0 & \alpha & 0 \\ 0 & 0 & \alpha B \end{pmatrix},
\end{displaymath}
for some invertible matrix $B$. Since $h$ belongs to $D$, we have $r(h) \le r$, and so there is some $\beta \in \bF^*$ such that $h-\beta$ has rank $\le r$. Looking at the middle block above, we see that $\beta=\alpha$. Since $A-\alpha$ has rank $r$, we must have $B=1$, and so $h=g$. Thus $gG(4r) \cap D = \{g\}$, which shows that $g$ is isolated in $D$, as required.
\end{proof}

\begin{remark}
Remark~\ref{rmk:isolated} can be used to give an alternative proof of the existence of an isolated point in a small conjugacy set (see Remark~\ref{rmk:isolated-alt}).
\end{remark}

\subsection{Property (S)}

The following two results establish a version of property (S) in the present situation. For a $G$-set $X$, let $N_X(s)$ be the number of orbits of $G(s)$ on $X$.

\begin{lemma} \label{lem:gl-S}
Let $X$ be a non-empty $\bV$-smooth $G$-set of level $r$. There exist positive real numbers $\alpha$ and $\beta$ such that $\alpha q^{rs} \le N_X(s) \le \beta q^{rs}$.
\end{lemma}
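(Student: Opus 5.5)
Since $X$ is finitary, it is a finite disjoint union of transitive $\bV$-smooth $G$-sets, and $N_X(s)$ is additive over orbits. The level of $X$ is the maximum level of its orbits. So it suffices to prove the statement for transitive $X=X_{\ell,m,n}$ of level $r=2\ell+m+n$. Orbits of lower level then contribute $O(q^{(r-1)s})$, which is absorbed into the constants, and there is at least one orbit of level exactly $r$. (For $s$ small one may simply adjust $\alpha$ and $\beta$, since only $s\ge 0$ matters and each $N_X(s)$ is finite and positive.)

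For transitive $X$, the set $X$ is the space of embeddings $W=V_{\ell,m,n}\to\bV$, so we must count $G(s)$-orbits of embeddings $j\colon W\to\bV$. Write $\bV=\bV_{\le s}\oplus\bV(s)$, where $\bV_{\le s}$ is spanned by $e_i,f_i$ with $i\le s$; this is an orthogonal decomposition compatible with the forms. By homogeneity of $(G(s),\bV(s))\cong(G,\bV)$, the $G(s)$-orbit of $j$ should be determined by the following data:
\begin{itemize}
\item the projection $p\circ j\colon W\to\bV_{\le s}$, which is an arbitrary graded linear map, a priori not injective and not form-preserving;
\item the isomorphism type, as an object of $\fC$, of the image of the complementary projection $W\to\bV(s)$, together with how $W$ maps onto it.
\end{itemize}
The second piece of data is a finite amount independent of $s$. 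The precise claim to check is that $G(s)$-orbits on $X$ correspond to pairs consisting of a graded linear map $\phi\colon W\to\bV_{\le s}$ and a $G(s)$-orbit of maps $\psi\colon W\to\bV(s)$ with $\langle\psi a,\psi b\rangle=\langle a,b\rangle-\langle\phi a,\phi b\rangle$ and $\phi\oplus\psi$ injective. This is exactly where the homogeneity of $\bV(s)$ in $\Ind{\fC}$ is used.

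Given this description, the bounds follow by counting.

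For the upper bound, the number of graded linear maps $W\to\bV_{\le s}$ is $q^{s(\dim W_0+\dim W_1)}=q^{s(2\ell+m+n)}=q^{rs}$. For each such $\phi$, the number of $G(s)$-orbits of admissible $\psi$ is bounded by the number of $G$-orbits of graded linear maps from $W$ into $\bV$ with prescribed (arbitrary) pairing data up to the relevant equivalence. I would bound this by the number of isomorphism classes of triples ($\fC$-object of dimension $\le\dim W$, plus a surjection from $W$), which is finite and independent of $s$. This gives $\beta$.

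For the lower bound, restrict to $\phi=0$ composed with a fixed embedding $W\to\bV(s)$. More usefully, take embeddings of the form $j=\phi+\iota$, where $\iota$ is a fixed embedding of $W$ into $\bV(s)$ and $\phi$ is a graded map into $\bV_{\le s}$. The form condition then forces $\psi$'s pairing to be $\langle a,b\rangle-\langle\phi a,\phi b\rangle$, so instead choose $\psi$ depending on $\phi$. Such a $\psi$ always exists, since $\bV(s)$ is universal and any finite-dimensional graded space with a form embeds after enlarging; one needs the pairing to be realizable, which holds because $\bV(s)\cong\bV$ contains copies of every $V_{\ell',m',n'}$. Then $j$ is injective, and distinct $\phi$ give distinct $G(s)$-orbits because $G(s)$ acts trivially on $\bV_{\le s}$ and preserves the projection. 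Thus $N_X(s)\ge q^{rs}$, so $\alpha=1$ works for transitive $X$.

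The main obstacle is making the orbit description rigorous, in particular that the projection to $\bV_{\le s}$ is a $G(s)$-invariant. This holds because $G(s)$ fixes $e_i,f_i$ for $i\le s$ and preserves the form, so it preserves $\bV(s)=\bV_{\le s}^{\perp}$. A second point is that the remaining data has boundedly many orbits, which should follow from oligomorphy of $G(s)$ on $\bV(s)^{\dim W}$ with a uniform bound, using $(G(s),\bV(s))\cong(G,\bV)$.
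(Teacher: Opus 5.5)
Your proof is correct. Your upper bound is the paper's argument. Your lower bound takes a different route.

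\textbf{Upper bound.} The paper views $X_{\ell,m,n}$ as an orbit on $\bV_0^{\ell+m}\times\bV_1^{\ell+n}$. It writes $\bV_i=\bF^s\times\bV_i(s)$ as $G(s)$-sets, which identifies the $G(s)$-orbits on that space with $\bF^{rs}\times E$. Here $E$ is finite by oligomorphy and independent of $s$ because $(G(s),\bV(s))\cong(G,\bV)$. Your pairs $(\phi,[\psi])$, with the admissibility conditions on $\psi$ dropped, are exactly this decomposition.

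\textbf{Lower bound.} The paper bounds $N_X(s)$ below by the number of $G(s)$-fixed points of $X$, i.e.\ embeddings of $V_{\ell,m,n}$ into $\bV_{\le s}$ (your case $\psi=0$). It then cites \cite[Proposition~6.3]{interp}: this count is a polynomial of degree $r$ in $q^s$. You instead use that $j\mapsto p\circ j$ is a $G(s)$-invariant, and show it hits all $q^{rs}$ graded linear maps $\phi\colon W\to\bV_{\le s}$:
\begin{itemize}
\item the defect form $\langle a,b\rangle-\langle\phi a,\phi b\rangle$ makes $W$ into another object of $\fC$, which embeds into $\bV(s)\cong\bV$ via some $\psi$;
\item orthogonality of $\bV_{\le s}$ and $\bV(s)$ kills the cross terms, so $\phi+\psi$ preserves the form;
\item injectivity of $\psi$ gives injectivity of $\phi+\psi$.
\end{itemize}

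\textbf{What each route buys.} Yours is self-contained and gives $\alpha=1$ uniformly in $s$. The fixed-point count is zero for small $s$ and dominates $q^{rs}$ only asymptotically, so the paper's constant needs a trivial adjustment. Combined with the upper bound, your argument places $N_X(s)/q^{rs}$ between $1$ and $\#E$ for every $s$. The paper's route is shorter given the cited polynomial count.

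\textbf{Write-up.} Drop the false start with a fixed embedding $\iota$. State explicitly that $\psi$ is injective because it is an embedding of $(W,\langle\cdot,\cdot\rangle-\langle\phi\cdot,\phi\cdot\rangle)$ into $\bV(s)$.
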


\begin{proof}
It suffices to treat the case where $X$ is transitive, so suppose $X=X_{\ell,m,n}$. Note that $r=2\ell+m+n$. Now, $X$ occurs as an orbit on $\bV_0^{\ell+m} \times \bV_1^{\ell+n}$, so the number of $G(s)$ orbits on $X$ is bounded above by the number of orbits on the larger space. As a $G(s)$-set, we have $\bV_0=\bF^s \times \bV_0(s)$, and similarly for $\bV_1$. Thus
\begin{displaymath}
G(s) \backslash (\bV_0^{\ell+m} \times \bV_1^{\ell+n}) \cong \bF^{rs} \times E, \qquad E = G(s) \backslash (\bV_0(s)^{\ell+m} \times \bV_1(s)^{\ell+n}).
\end{displaymath}
The set $E$ is independent of $s$, and so we can take $\beta=\# E$. Now, the number of $G(s)$ orbits on $X$ is bounded below by the number of $G(s)$ fixed points. This is a polynomial in $q^s$ of degree $r$ by \cite[Proposition~6.3]{interp}, and so the result follows.
\end{proof}

\begin{proposition} \label{prop:gl-S}
Let $X$ and $Y$ be $\bV$-smooth $G$-sets, and suppose $\lev{X}>\lev{Y}$. Then there exists a $\bV$-smooth $G$-set $Z$ such that $X \times Z$ has more orbits than $Y \times Z$.
\end{proposition}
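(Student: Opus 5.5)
The plan is to take $Z=G/G(s)$ for $s$ large and count orbits via Lemma~\ref{lem:gl-S}. For any $G$-set $W$, the $G$-orbits on $W \times G/G(s)$ are in bijection with the $G(s)$-orbits on $W$. So $W\times Z$ has exactly $N_W(s)$ orbits.

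We must first check that $Z$ is $\bV$-smooth. This holds because $G(s)$ is the stabilizer of the tuple $(e_1,\dots,e_s,f_1,\dots,f_s)\in\bV^{2s}$. Hence $G/G(s)$ is isomorphic to the orbit of this tuple, which is a $\bV$-smooth transitive $G$-set (namely $X_{s,0,0}$).

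Now put $r=\lev{X}$ and $r'=\lev{Y}$, so $r>r'$. Since $\lev{X}>\lev{Y}\ge -\infty$, the set $X$ is non-empty. If $Y=\emptyset$, then $Y\times Z$ has no orbits while $X\times Z$ has at least one, and we are done. Otherwise Lemma~\ref{lem:gl-S} applies to both sets. It gives constants with $N_X(s)\ge \alpha q^{rs}$ and $N_Y(s)\le \beta q^{r's}$. Because $r>r'$, we have $\alpha q^{rs}>\beta q^{r's}$ once $s$ is sufficiently large. For such $s$, the set $X\times Z$ has strictly more orbits than $Y\times Z$.

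There is no real obstacle here, since all the work sits in Lemma~\ref{lem:gl-S}. The only points needing care are the following:
\begin{enumerate}
\item The orbit-counting identification, which is standard.
\item The fact that Lemma~\ref{lem:gl-S} applies to non-transitive $X$ and $Y$. The lemma states this, with the level being the maximum over orbits. The bounds add up over the finitely many orbits, and the top-level orbit dominates from below.
\end{enumerate}
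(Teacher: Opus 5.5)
Your proposal is correct and matches the paper's proof: take $Z=X_{s,0,0}=G/G(s)$ for large $s$, identify orbits on $W\times G/G(s)$ with $G(s)$-orbits on $W$, and compare the $q^{rs}$ versus $q^{r's}$ growth from Lemma~\ref{lem:gl-S}. Your separate handling of $Y=\emptyset$, where the lemma's non-emptiness hypothesis does not apply, fills in a detail the paper leaves implicit.
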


\begin{proof}
By Lemma~\ref{lem:gl-S}, we can take $Z=X_{s,0,0}=G/G(s)$ for $s$ sufficiently large.
\end{proof}

\subsection{The main result} \label{ss:gl-main}

Let $k$ be a field of characteristic~0, and let $t$ be a non-zero element of $k$. By \cite[Corollary~6.17]{repst}, there is a unique $k$-valued measure $\mu_t$ for $G$ such that $\mu_t(\bV_0)=t$. This measure is quasi-regular by \cite[Proposition~6.18]{repst}. By \cite[Theorem~7.1]{repst}, nilpotent endomorphisms in $\uPerm(G, \mu_t)$ have trace~0. We let $\cT=\uRep(G, \mu_t)$ be the abelian envelope. We note that $\mu_t$ is regular (and thus $\cT$ is semi-simple) if and only if $t$ is not a power of $q$. In the semi-simple case, $\cT$ is the category studied by Knop \cite{Knop}. Let $\cY=\cY(G, \mu_t)$ be the category of Yetter--Drinfeld modules as in \S \ref{ss:yd}. The following is the main result of \S \ref{s:classical}:

\begin{theorem}
The functor $\cY \to \cZ(\cT)$ is an equivalence.
\end{theorem}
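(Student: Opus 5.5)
The plan is to apply Theorem~\ref{thm:YD}. This requires that every $\cT$-small closed conjugacy set $D \subset G$ contain an isolated point. The argument parallels the proof of Theorem~\ref{thm:sa}, but uses the linear-algebraic notions of level and smallness developed in this section in place of those from \S\ref{s:strong}. First I check that Theorem~\ref{thm:YD} is applicable. The hypotheses of \S\ref{ss:Zmain} ask that $G$ be a closed oligomorphic permutation group on a countable set. Here $G \cong \Aut(\bV)$ as an object of $\Ind{\fC}$, so a bijection of $\bV$ that agrees on every finite subset with an automorphism preserves the grading, the linear structure and the form. It is therefore an automorphism, so $G$ is closed; and $\bV$ is countable. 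The measure $\mu_t$ is quasi-regular and nilpotent endomorphisms have trace~0, so $\cT$ is as in \S\ref{s:qr}.

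The main step is to show that a $\cT$-small closed conjugacy set $D$ is small in the sense of this section. By definition there is a $G$-set $Y$ such that for each transitive $X$ there is an injection $\cC(\fA_D(X)) \to \cC(X \times Y)$ in $\cT$. Replacing $Y$ by a cover, I take $Y$ to be $\bV$-smooth; this only enlarges $\cC(X\times Y)$, since pull-back along a surjection is injective. For $\bV$-smooth transitive $X$, both $\fA_D(X) \subset X \times X$ and $X \times Y$ are $\bV$-smooth.

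I then repeat the argument of Proposition~\ref{prop:schwartz-level}, with Proposition~\ref{prop:gl-S} playing the role of (S). Suppose $\lev{\fA_D(X)} > \lev(X \times Y)$. Proposition~\ref{prop:gl-S} gives a $Z = G/U$ such that $\fA_D(X) \times Z$ has more orbits than $X \times Y \times Z$, i.e.\ $\dim \cC(\fA_D(X))^U > \dim \cC(X\times Y)^U$. This contradicts injectivity on $U$-invariants. Hence, by \eqref{eq:gl-lev},
\begin{displaymath}
\lev{\fA_D(X)} \le \lev{X} + \lev{Y},
\end{displaymath}
so $D$ is small with $s = \lev{Y}$.

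To conclude, Proposition~\ref{prop:gl-small} shows that $D$ is a finite union of c-smooth conjugacy classes, and in particular it is non-empty whenever it is relevant. If $D=\emptyset$, the statement of Theorem~\ref{thm:YD} is vacuous for it, since the support of a nonzero object is nonempty. Proposition~\ref{prop:gl-isolated} then supplies an isolated point, namely any $g\in D$ maximizing $r(g)$. Theorem~\ref{thm:YD} therefore yields that $\cY \to \cZ(\cT)$ is an equivalence.

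The only delicate point I foresee is the bookkeeping in the main step. One must ensure that $\fA_D(X)$ is itself a finitary $\bV$-smooth $G$-set, which holds because it is a $G$-subset of $X\times X$. One must also ensure that Proposition~\ref{prop:gl-S} is applied with $X$ replaced by $\fA_D(X)$ and $Y$ replaced by $X\times Y$, which are possibly non-transitive; the proposition is stated for arbitrary $\bV$-smooth $G$-sets, so this is fine.
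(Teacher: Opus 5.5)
Your proposal is correct and follows the paper's proof essentially step for step. You apply Theorem~\ref{thm:YD}, show that a $\cT$-small $D$ is small via Proposition~\ref{prop:gl-S}, the invariant-counting argument of Proposition~\ref{prop:schwartz-level}, and \eqref{eq:gl-lev}, and then obtain an isolated point from Proposition~\ref{prop:gl-isolated}. Your additional checks are sound refinements that the paper leaves implicit: closedness of $(G,\bV)$, citing Proposition~\ref{prop:gl-small} so that $r(g)$ is defined and bounded on $D$, and handling $D=\emptyset$ by noting that a nonzero object has nonempty support.
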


\begin{proof}
We apply Theorem~\ref{thm:YD}. We must show that if $D$ is a $\cT$-small conjugacy set then $D$ has an isolated point. Let $Y$ be a $G$-set such that $\cC(\fA_D(X))$ injects into $\cC(X \times Y)$ for all transitive $G$-sets $X$. Replacing $Y$ by a cover, we can assume $Y$ is $\bV$-smooth. Let $X$ be a transitive $\bV$-smooth $G$-set. Then
\begin{displaymath}
\lev{\fA_D(X)} \le \lev(X \times Y) = \lev{X} + \lev{Y},
\end{displaymath}
where in the first step we use Proposition~\ref{prop:gl-S} and the argument from Proposition~\ref{prop:schwartz-level}, and in the second step we use \eqref{eq:gl-lev}. We thus see that $D$ is a small conjugacy set, and it therefore has an isolated point by Proposition~\ref{prop:gl-isolated}.
\end{proof}

\subsection{The parabolic topology} \label{ss:parabolic}

Let $G^*$ be the group $\GL(\bV_0)$ of all linear automorphisms of $\bV_0$. This group acts oligomorphically on $\bV_0$, and we give it the induced topology as in \S \ref{ss:oligo}. We have a containment $G \subset G^*$, but the topology on $G$ is not the subspace topology. The open subgroups of $G$ are (finite index subgroups of) Levi subgroups, while the open subgroups of $G^*$ are (finite index subgroups of) parabolic subgroups. Related to this, the action of $G$ on $\bV$ does not extend to a smooth action of $G^*$. See \cite[\S 1.4(c)]{interp} or \cite[\S 15.6]{repst} for more details. We note that $(G, \bV)$ and $(G^*, \bV_0)$ are both closed oligomorphic permutation groups, and $G$ is dense in $G^*$.

Let $k$ and $t$ be as in \S \ref{ss:gl-main}. Then there is a quasi-regular measure $\mu^*_t$ for $G^*$ with $\mu^*_t(\bV_0)=t$ by \cite[\S 15.4]{repst}. Nilpotent endomorphisms in $\uPerm(G^*, \mu^*_t)$ have trace~0 by \cite[Theorem~15.15]{repst}, and so there is an abelian envelope $\uRep(G^*, \mu^*_t)$. It is known that $\uRep(G^*, \mu^*_t)$ is equivalent to the category $\cT=\uRep(G, \mu_t)$ \cite[\S 1.4(c)]{interp}. It is not difficult to see that $G^*$ has no c-smooth elements, other than scalars. Thus the category $\cY^*$ is quite small. In particular, $\cY^* \to \cZ(\cT)$ is not an equivalence; the image only captures the piece of $\cY=\cZ(\cT)$ corresponding to $\cC(\bF^*)$-modules.

We now explain how the failure of $\cY^* \to \cZ(\cT)$ to be an equivalence relates to our arguments. Let $D \subset G^*$ be the conjugacy class of a non-identity matrix $g \in G_n$. One can show that $D$ is $\cT$-small. We claim that $g$ is not an isolated point of $D$. To see this, write
\begin{displaymath}
g = \begin{pmatrix} A & 0 \\ 0 & 1 \end{pmatrix}
\end{displaymath}
with $A \in \GL_n(\bF)$, let $B$ be an $n \times \infty$ matrix with entries in $\bF$, and consider
\begin{displaymath}
\begin{pmatrix} 1 & B \\ 0 & 1 \end{pmatrix}
\begin{pmatrix} A & 0 \\ 0 & 1 \end{pmatrix}
\begin{pmatrix} 1 & B \\ 0 & 1 \end{pmatrix}^{-1}
= \begin{pmatrix} A & B-AB \\ 0 & 1 \end{pmatrix}.
\end{displaymath}
For any choice of $B$, this matrix belongs to $D$. Let $B_n$ be a matrix such that $B_n-AB_n$ is not zero, but its first $n$ columns are. Then the above matrix (with $B=B_n$) is never equal to $g$, but converges to $g$ in $G^*$ as $n \to \infty$. This proves the claim. It follows that $D$ has no isolated points. The upshot is that Theorem~\ref{thm:YD} cannot be applied with the group $G^*$.

\subsection{Other classical groups}

The group $G$ is (one version of) the infinite general linear group over $\bF$. The infinite symplectic, orthogonal, and unitary groups over $\bF$ are also oligomorphic. They have families of quasi-regular measures that give rise to pre-Tannakian categories. The theory is worked out in detail in \cite{interp}. The Drinfeld centers of these categories can be analyzed in exactly the same way as for $G$. In each case, we find that the center is described by Yetter--Drinfeld modules.

\subsection{The quantum Delannoy category} \label{ss:kriz}

We close \S \ref{s:classical} with a brief discussion of Kriz's quantum Delannoy category \cite{Kriz}. This is not an interpolation category, but it shares some features with the $\GL$ case discussed above. We let $k$ be any field in which $q \ne 0$.

Let $\bW_0$ be the $\bF$-vector space with basis $\{e_x\}_{x \in \bQ}$. Let $H=\Aut(\bQ, <)$ act on $\bW_0$ by permuting basis vectors. Let $U^*$ be the ``unipotent'' subgroup of $\GL(\bW_0)$. An element $g$ of $\GL(\bW_0)$ belongs to $U^*$ if for each $x \in \bQ$ we have $ge_x=e_x+\cdots$, where the remaining terms use basis vectors of the form $e_y$ with $y<x$. The group $H$ normalizes $U^*$, and we let $K^*=HU^*$. This group acts oligomorphically on $\bW_0$, and we endow it with the induced topology. Kriz showed that a $k$-valued regular measure $\mu^*$ exists, and that $\uPerm(K^*, \mu^*)$ has a semi-simple abelian envelope $\cT=\uRep(K^*, \mu^*)$.

The group $K^*$ is analogous to $G^*$, the ``parabolic'' version of $\GL$ discussed in \S \ref{ss:parabolic}. There is a ``Levi'' version of $K$, analogous to $G$ itself, as follows. Let $\bW_1$ be the $\bF$-vector space with basis $\{f_x\}_{x \in \bQ}$, and let $\bW=\bW_0 \oplus \bW_1$. Define a pairing on $\bW$ like the pairing on $\bV$. Let $U$ be the subgroup of $U^*$ consisting of matrices that are both row and column finite, and let $K=HU$. This acts oligomorphically on $\bW$, and we endow it with the induced topology. Note that $K$ has many c-smooth elements: indeed, any element fixing all but finitely many of the basis vectors is c-smooth. However, $K^*$ has no c-smooth elements except the identity.

The group $K$ has a regular measure $\mu$, and $\uPerm(K, \mu)$ has an abelian envelope equivalent to the category $\cT$ above. Using methods similar to those in this section, one can show that $\cZ(\cT)$ is equivalent to the category $\cY(K, \mu)$ of Yetter--Drinfeld modules for $K$. The second author will provide additional details in the forthcoming paper \cite{vecdel}.

\section{Drinfeld centers: the non-quasi-regular case} \label{s:nonqr}

In \S \ref{s:qr}, we developed tools for studying the Drinfeld center of an oligomorphic tensor category associated to a quasi-regular measure. In this section, we extend some of that work to the general case. The basic ideas are the same, but the proofs are more complicated since we can no longer use elements. We use this theory to compute the Drinfeld center of the second Delannoy category.

\subsection{Central algebras} \label{ss:central}

Fix a pro-oligomorphic group $G$ with measure $\mu$. Let $\cP$ be the Karoubi envelope of $\uPerm(G, \mu)$, let $\cT$ be a Karoubian $k$-linear rigid symmetric monoidal category, and let $\Phi \colon \cP \to \cT$ be a symmetric monoidal functor. We write $\cC(X)$ for both the object of $\cP$ and its image in $\cT$.

Ultimately, we are interested in the case where $\cT$ is abelian, and we would like to understand $\cZ(\cT)$. For the time being, we focus on the simpler problem of understanding $\cZ_{\cT}(\cP)$, the centralizer of $\cP$ in $\cT$. An object of $\cZ_{\cT}(\cP)$ is an object $M$ of $\cT$ equipped with a functorial isomorphism $P \otimes M \to M \otimes P$ for objects $P$ of $\cP$, satisfying the usual condition (see \S \ref{ss:center}). We will study $\cZ_{\cT}(\cP)$ using the following notion:

\begin{definition} \label{defn:central}
Let $R$ be an associative unital algebra in $\cT$. A \defn{central structure} on $R$ is a rule $\beta$ assigning to each $G$-set $X$ a map
\begin{displaymath}
\beta_X \colon \cC(X \times X) \to R
\end{displaymath}
in $\cT$, such that the following axioms hold:
\begin{enumerate}
\item If $X=\bbone$ is a one-point set then $\beta_X$ is the unit of $R$.
\item Let $X$ and $Y$ be $G$-sets. Then the following diagram commutes
\begin{displaymath}
\xymatrix{
\cC(X \times X) \otimes \cC(Y \times Y) \ar[d]_{\beta_X \otimes \beta_Y} \ar[d] \ar@{=}[r] &
\cC((X \times Y) \times (X \times Y)) \ar[d]^{\beta_{X \times Y}} \\
R \otimes R \ar[r] & R }
\end{displaymath}
where the bottom map is multiplication in $R$, and the top map is induced from the isomorphism of $G$-sets taking $(x_1,x_2,y_1,y_2)$ to $(x_1,y_1,x_2,y_2)$.
\item Let $f \colon Y \to X$ be a map of $G$-sets. Then the following diagram commutes
\begin{displaymath}
\xymatrix@C=4em{
\cC(Y \times X) \ar[r]^{(f \times \id)_*} \ar[d]_{(\id \times f)^*} & \cC(X \times X) \ar[d]^{\beta_X} \\
\cC(Y \times Y) \ar[r]^{\beta_Y} & R }
\end{displaymath}
Also, the analogous diagram with $\cC(X \times Y)$ in the top left also commutes.
\end{enumerate}
A \defn{central algebra} is an algebra equipped with a central structure, and a \defn{central module} is an object $M$ of $\cT$ equipped with a central structure on $\uEnd(M)$.
\end{definition}

If $\mu$ is quasi-regular and $\cT$ is the abelian envelope of $\cP$ then a central structure is the same as an $\cO(\pi)$-structure (see \S \ref{ss:Opi}). The following proposition explains the importance of central structures.

\begin{proposition}
The center $\cZ_{\cT}(\cP)$ is equivalent to the category of central modules in $\cT$.
\end{proposition}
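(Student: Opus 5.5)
The approach is to translate between half-braidings and central structures using duality in $\cP$, mirroring the construction of $\cO(\pi)$ as a coend in \S\ref{ss:fund} and Proposition~\ref{prop:Opi-algebra}, but phrased purely in terms of morphisms. Since every object of $\cP$ is a summand of some $\cC(X)$, and $\cP$ is Karoubian, a half-braiding relative to $\cP$ is determined by its components $\gamma_X \colon \cC(X) \otimes M \to M \otimes \cC(X)$ for $G$-sets $X$. These components must be natural with respect to all morphisms $\cC(X) \to \cC(Y)$ in $\uPerm(G,\mu)$, and must be compatible with tensor products. Since $\cC(X)$ is self-dual, a map $\cC(X) \otimes M \to M \otimes \cC(X)$ is the same as a map $\cC(X) \otimes \cC(X) \otimes M \to M$, i.e.\ a map $\beta_X \colon \cC(X \times X) \to \uEnd(M)$ (internal Hom in $\cT$; if $\cT$ only has duals, use $\uEnd(M)=M \otimes M^*$). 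This gives a bijection between families $\{\gamma_X\}$ and families $\{\beta_X\}$. Explicitly, $\gamma_X$ is obtained from $\beta_X$ by composing with the coevaluation $\Delta^*f^*\colon \bone \to \cC(X)\otimes\cC(X)$, and conversely $\beta_X$ is recovered from $\gamma_X$ by using the evaluation $f_*\Delta_*$.

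Next I would match the axioms one at a time.

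First, naturality of $\gamma$ along morphisms of $\uPerm$. Every morphism $\cC(X) \to \cC(Y)$ is a linear combination of maps $g_* h^*$ arising from spans of $G$-sets. So it suffices to check naturality along $f^*$ and $f_*$ for maps of $G$-sets $f \colon Y \to X$. Under the duality translation, naturality along $f^*$ becomes the square in Definition~\ref{defn:central}(c), since the transpose of $f^*$ is $f_*$. Naturality along $f_*$ becomes the analogous square with $\cC(X \times Y)$ in the top left. Naturality along isomorphisms and direct sums is automatic once these hold, and naturality then extends from $\uPerm$ to its Karoubi envelope for free.

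Second, the tensor compatibility $\gamma_{X\otimes Y}=(\gamma_X\otimes 1)(1\otimes\gamma_Y)$. Under the translation this becomes exactly diagram (b): the composite of two half-braidings corresponds to multiplying the two endomorphisms in $\uEnd(M)$, and the isomorphism $\cC(X)\otimes\cC(Y)=\cC(X\times Y)$ accounts for the shuffle of coordinates.

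Third, the unit. The half-braiding with $\bone$ is the identity exactly when $\beta_{\bbone}$ is the unit of $\uEnd(M)$, which is axiom (a).

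It then remains to show that each $\gamma_X$ is invertible. Here I expect to use the antipode-type argument: the inverse is given by the central structure with the roles of the two factors of $X \times X$ swapped, or equivalently by the half-braiding with the dual object $\cC(X)^*\cong\cC(X)$ transposed. To check that this really is an inverse, I would use (b) together with (c) applied to the diagonal $X \to X\times X$ and the map $X \to \bbone$. This step, showing that a central structure automatically yields invertible maps, is the one I expect to be the main obstacle, as it requires a careful string-diagram computation using the Frobenius structure on $\cC(X)$.

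Finally, morphisms. A morphism in $\cZ_\cT(\cP)$ is a map $M\to N$ commuting with the half-braidings, and this translates to compatibility with the $\beta$'s. This gives the equivalence of categories.
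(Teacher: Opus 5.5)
Your proposal is correct and follows the same route as the paper. The paper converts the half-braiding into maps $\cC(X \times X) \to \uEnd(M)$ by adjunction, and conversely, and leaves the verification of the axioms to the reader; your axiom-by-axiom matching (naturality along $f^*$ and $f_*$ giving (c), tensor compatibility giving (b), the unit giving (a), then extending from $\uPerm(G,\mu)$ to its Karoubi envelope) supplies exactly those details.

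The invertibility step you flag as the main obstacle is easier than you expect. It is the standard fact that a natural transformation satisfying the tensor and unit axioms is automatically invertible on rigid objects. The inverse is built from the half-braiding on the dual, here $\cC(X)$ itself, using the evaluation $f_*\Delta_*$ and coevaluation $\Delta^* f^*$, so no separate string-diagram computation is needed.
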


\begin{proof}
Given an object $M$ of $\cZ_{\cT}(\cP)$, we have a functorial isomorphism
\begin{displaymath}
\cC(X) \otimes M \to M \otimes \cC(X),
\end{displaymath}
which, by adjunction, can be converted to a map
\begin{displaymath}
\cC(X \times X) \to \uEnd(M).
\end{displaymath}
This defines a central structure on $\uEnd(M)$. Conversely, given a central structure on $\uEnd(M)$, this process can be reversed to obtain a half-braiding on $M$. We leave the details to the reader.
\end{proof}

We observe a few simple properties of central structures.

\begin{proposition} \label{prop:pi-basic2}
Let $R$ be a central algebra.
\begin{enumerate}
\item Let $X=Y \amalg Z$ be a $G$-set. Then the restriction of $\beta_X$ to $\cC(Y \times Z)$ is zero, and the restriction of $\beta_X$ to $\cC(Y \times Y)$ is $\beta_Y$.
\item Suppose $X \to Y$ is an isomorphism of $G$-sets. Then the restriction of $\beta_X$ along the isomorphism $\cC(Y \times Y) \to \cC(X \times X)$ is $\beta_Y$.
\end{enumerate}
\end{proposition}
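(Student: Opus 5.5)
Both parts will be read off axiom (c) of Definition~\ref{defn:central}, applied to a suitable map of $G$-sets. This is the element-free analogue of Proposition~\ref{prop:pi-basic}(a,b), whose proofs used Definition~\ref{defn:pi-struct}(d) in the same way.

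For (b), let $f \colon X \to Y$ be the isomorphism and apply axiom (c) with the roles of the sets arranged so that $f$ plays the part of the map $Y \to X$ there. Since $f$ is bijective, $f \times \id$ and $\id \times f$ are isomorphisms. For an isomorphism $h$ we have $h_* = (h^{-1})^*$, since the matrices are the indicator functions of the graph and of its transpose. The commuting square then reads $\beta_Y \circ (f \times \id)_* = \beta_X \circ (\id \times f)^*$ on $\cC(X \times Y)$. Precomposing with the isomorphism $(\id \times f)_* \colon \cC(X \times X) \to \cC(X \times Y)$ gives $\beta_X = \beta_Y \circ (f \times f)_*$ on $\cC(X\times X)$, which is the claim. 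Only the fact that pushforward and pullback compose as expected (\S\ref{ss:perm}) is used.

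For (a), apply axiom (c) to the inclusion $i \colon Y \to X = Y \amalg Z$, using both versions of the square. In the version with $\cC(Y \times X)$ in the top left, we use $\cC(Y \times X) = \cC(Y \times Y) \oplus \cC(Y \times Z)$. Under this decomposition, $(i \times \id)_*$ is the inclusion of $\cC(Y\times X)$ into $\cC(X\times X)$ as the corresponding summands, and $(\id \times i)^*$ is the projection onto $\cC(Y \times Y)$, killing the $\cC(Y\times Z)$ summand. Here we use that $i^*$ is restriction of functions and $i_*$ is extension by zero, i.e.\ the graph matrix of an inclusion. Commutativity therefore says that $\beta_X$ restricted to the summand $\cC(Y \times Y) \oplus \cC(Y \times Z)$ equals $\beta_Y \circ \mathrm{pr}$. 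This gives both assertions at once: on $\cC(Y\times Y)$ the restriction is $\beta_Y$, and on $\cC(Y \times Z)$ it is zero. By symmetry (swapping $Y$ and $Z$, or using the other version of the square) one also gets vanishing on $\cC(Z \times Y)$, although only $\cC(Y\times Z)$ is asked for.

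The only real obstacle is bookkeeping. We must correctly identify the matrices of $(i\times\id)_*$ and $(\id\times i)^*$ for a disjoint-union inclusion, namely as a block inclusion and a block projection, and check that the additive structure $\cC(X\amalg Y) = \cC(X)\oplus\cC(Y)$ in $\cP$ is preserved by $\Phi$. Since $\Phi$ is additive, these identifications transport to $\cT$, and no element-level arguments are needed. This is important because in this section $\cT$ is no longer assumed to be concrete.
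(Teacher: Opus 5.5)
Your proof is correct and is exactly the argument the paper intends: the paper leaves the details to the reader, citing the analogy with Proposition~\ref{prop:pi-basic}, which applies Definition~\ref{defn:pi-struct}(d) to an isomorphism and to the inclusion of an orbit. You carry this out with Definition~\ref{defn:central}(c) in its place, correctly identifying $(i\times\id)_*$ and $(\id\times i)^*$ as block inclusion and block projection, and using $h^*h_*=\id$ and $h_*=(h^{-1})^*$ for isomorphisms. The only implicit input is that $\Phi$ is $k$-linear, so that it preserves the direct-sum decompositions and zero morphisms; this is already presupposed by the phrasing of the statement itself.
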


\begin{proof}
This is analogous to Proposition~\ref{prop:pi-basic}. We leave the details to the reader.
\end{proof}

\begin{remark}
Let $C$ be a c-smooth conjugacy class in $G$. Then $\cC(C)$ carries a natural central structure, as follows. For a $G$-set $X$, let $E(X) \subset C \times X \times X$ be the set of tuples $(g,x,y)$ with $y=gx$, and let $B_X$ be the indicator function of $E(X)$. Then $\beta_X \colon \cC(X \times X) \to \cC(C)$ is the matrix $B_X$. The linear map $B_X \colon \cC(X \times X) \to \cC(C)$ is given by
\begin{displaymath}
(B_X \phi)(g) = \int_X \phi(x, gx) \, dx.
\end{displaymath}
This permits one to define a category of Yetter--Drinfeld modules, just as in quasi-regular case (see \S \ref{ss:yd}).
\end{remark}

\subsection{Support} \label{ss:support2}

Let $\Phi \colon \cP \to \cT$ be as in \S \ref{ss:central}. For \S \ref{ss:support2}, we impose the following condition:
\begin{itemize}
\item[$(\ast)$] $\cT$ is rigid abelian and $\Phi \colon \cP \to \cT$ is fully faithful.
\end{itemize}
If $X$ is a transitive $G$-set then we have
\begin{displaymath}
\Hom_{\cT}(\bone, \cC(X)) = \Hom_{\cP}(\bone, \cC(X)) = k.
\end{displaymath}
It follows that $\cC(X)$ is a simple \'etale algebra in $\cT$ \cite[Corollary~5.2]{discrete}. In particular, if $f \colon Y \to X$ is a map of transitive $G$-sets then $f^* \colon \cC(X) \to \cC(Y)$ is an injective map in $\cT$, since it is an algebra homomorphism and $\cC(X)$ is simple, and so the dual map $f_* \colon \cC(Y) \to \cC(X)$ is surjective.

Let $R$ be a central algebra. For a transitive $G$-set $X$, define $\fA_R(X)$ to be the union of those orbits $W$ on $X \times X$ for which $\beta_X$ is non-zero on $\cC(W) \subset \cC(X \times X)$. As in the quasi-regular case, we call this the \defn{support} of $R$. We now show that it satisfies some of the same properties.

The following proofs mimic those from the quasi-regular case (see \S \ref{ss:support}), but without using elements. We will use string diagrams as a notational device. In these diagrams, the source is at the top and the target at the bottom. Nodes in the middle of the diagram represent (co)multiplication in $\cC(X)$ or $R$, and boxes represent $\beta$. For example, Definition~\ref{defn:central}(b) can be expressed diagrammatically as
\begin{displaymath}
\begin{tikzpicture}[scale=1, baseline=(current bounding box.center)]
  \node[circle, fill, inner sep=1.5pt] () at (0,0) {};
  \draw (0,0)--(0,-0.5);
  \node[circle, fill, inner sep=1.5pt] () at (0.75,0) {};
  \draw (0.75,0)--(0.75,-0.5);
  \node[circle, fill, inner sep=1.5pt] () at (1.5,0) {};
  \draw (1.5,0)--(1.5,-0.5);
  \node[circle, fill, inner sep=1.5pt] () at (2.25,0) {};
  \draw (2.25,0)--(2.25,-0.5);

  \draw (-0.25,-0.5) rectangle ++(1.25,-0.5);
  \draw (1.25,-0.5) rectangle ++(1.25,-0.5);
  \draw (0.37,-1) -- (0.37,-1.5);
  \draw (0.37,-1.5) -- (1.12,-1.5);
  \node[circle, fill, inner sep=1.5pt] () at (1.12,-1.5) {};
  \draw (1.87,-1) -- (1.87,-1.5);
  \draw (1.87,-1.5) -- (1.12,-1.5);
  \draw (1.12,-1.5) -- (1.12,-2);
  \node[circle, fill, inner sep=1.5pt] () at (1.12,-2) {};
\end{tikzpicture}
\qquad = \qquad
\begin{tikzpicture}[scale=1, baseline=(current bounding box.center)]
  \node[circle, fill, inner sep=1.5pt] () at (0,0) {};
  \draw (0,0)--(0,-1);
  \node[circle, fill, inner sep=1.5pt] () at (0.75,0) {};
  \draw (0.75,0)--(1.5,-0.75);
  \draw (1.5,-0.75)--(1.5,-1);
  \node[circle, fill, inner sep=1.5pt] () at (1.5,0) {};
  \draw[white,line width=4pt] (1.3,-0.2)--(0.95,-0.55);
  \draw (1.5,0)--(0.75,-0.75);
  \draw (0.75,-0.75)--(0.75,-1);
  \node[circle, fill, inner sep=1.5pt] () at (2.25,0) {};
  \draw (2.25,0)--(2.25,-1);

  \draw (-0.25,-1) rectangle ++(2.75,-0.5);
  \draw (1.12,-1.5) -- (1.12,-2);
  \node[circle, fill, inner sep=1.5pt] () at (1.12,-2) {};
\end{tikzpicture}
\end{displaymath}
On the left, the two boxes are $\beta_X$ and $\beta_Y$, and the outputs of these are multiplied in $R$. On the right side, the box is $\beta_{X \times Y}$, which we regard as having four inputs.

\begin{proposition}
$\fA_R$ is a c-functor.
\end{proposition}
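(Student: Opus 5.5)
We mimic the quasi-regular proof, replacing elements by orbits and statements about individual $\beta^X_{x,y}$ by statements about restrictions of $\beta_X$ to summands $\cC(W) \subset \cC(X \times X)$ for orbits $W$. Fix a map $f \colon Y \to X$ of transitive $G$-sets. Since $\fA_R(Y)$ and $\fA_R(X)$ are unions of orbits, conditions (a)--(c) of Definition~\ref{defn:cset} are orbit statements. For (a) we must show: if $W \subset Y \times Y$ is an orbit with $\beta_Y|_{\cC(W)} \ne 0$, then $\beta_X|_{\cC((f\times f)(W))} \ne 0$. For (b) we must show: if $V \subset X \times X$ is an orbit with $\beta_X|_{\cC(V)} \ne 0$, then some orbit $W \subset Y \times Y$ with $(f \times f)(W) = V$ has $\beta_Y|_{\cC(W)} \ne 0$. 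Indeed, by transitivity of $Y$, every lift $y_1$ of $x_1$ then has a partner $y_2$ lying over $x_2$. Condition (c) is symmetric, using the second diagram in Definition~\ref{defn:central}(c).

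\textbf{Step (b).} This is the easier direction. Consider the diagram of Definition~\ref{defn:central}(c) restricted to the summand $\cC(V') \subset \cC(Y \times X)$, where $V' = (f\times\id)^{-1}(V)$. Since $(f \times \id)_* \colon \cC(V') \to \cC(V)$ is surjective (the discussion preceding this proposition, using $(\ast)$), the composite $\beta_X \circ (f\times\id)_*$ is non-zero on $\cC(V')$. The diagram says this equals $\beta_Y \circ (\id \times f)^*$. Now $(\id\times f)^*$ maps $\cC(V')$ into $\bigoplus_W \cC(W)$, where $W$ ranges over orbits of $Y\times Y$ with $(\id\times f)(W)\subset V'$, equivalently $(f\times f)(W) = V$. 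So $\beta_Y$ is non-zero on one of these $\cC(W)$, which is what we need.

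\textbf{Step (a).} This is the analogue of \eqref{eq:beta1}: $\beta^X_{f(y),f(z)}\beta^Y_{y,z} = \beta^Y_{y,z}$. We translate it into an identity of maps out of $\cC(Y\times Y)$.
\begin{itemize}
\item Let $\Gamma \colon Y \times Y \to (X\times Y)\times(X\times Y)$ send $(y,z) \mapsto ((f(y),y),(f(z),z))$. The graph of $f$ is a $G$-subset of $X\times Y$ isomorphic to $Y$.
\item Combine Definition~\ref{defn:central}(b) for $X\times Y$ with Proposition~\ref{prop:pi-basic2}: (a) restricts $\beta_{X\times Y}$ to the graph, and (b) identifies the graph with $Y$.
\item This should give the string-diagram identity
\[
m\circ(\beta_X\otimes\beta_Y)\circ\big((f\times f)^*\otimes\id\big)\circ\Delta^*_{Y\times Y} = \beta_Y.
\]
Here $\Delta^*$ is comultiplication in the étale algebra $\cC(Y\times Y)$, and $(f\times f)^*$ is pullback along $f\times f$.
\end{itemize}

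Now restrict both sides to $\cC(W)$. Comultiplication and pullback respect the orbit decomposition, so the left side factors through $\beta_X|_{\cC((f\times f)(W))}$ in the first tensor slot. Hence if $\beta_X$ vanishes there, then $\beta_Y|_{\cC(W)} = 0$, which is the contrapositive of (a).

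\textbf{Main obstacle.} The main difficulty is step (a): deriving the displayed identity carefully from the axioms, keeping track of the idempotent (Frobenius/étale) structure without elements. Steps (b) and (c) are straightforward consequences of the surjectivity of $f_*$ provided by $(\ast)$.
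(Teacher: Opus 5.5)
Your plan follows the paper's proof. Part (a) is the diagrammatic form of \eqref{eq:beta1}, obtained from Definition~\ref{defn:central}(b) and Proposition~\ref{prop:pi-basic2} applied to the graph of $f$. Part (b) uses the square of Definition~\ref{defn:central}(c) together with surjectivity of push-forward between transitive $G$-sets. In your displayed identity for (a), the first tensor slot needs the push-forward $(f\times f)_*\colon\cC(Y\times Y)\to\cC(X\times X)$, as your $\Gamma$ indicates; the pullback $(f\times f)^*$ goes the wrong way. With that fix, (a) is exactly the paper's argument.

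The genuine gap is your reduction of Definition~\ref{defn:cset}(b) to the claim that some orbit $W\subset\fA_R(Y)$ satisfies $(f\times f)(W)=V$. That claim is strictly weaker than (b). Suppose $(y_1',y_2')\in W$ lies over $(x_1,x_2)$ and $g y_1'=y_1$. Then $g$ fixes $x_1$ but not necessarily $x_2$, so $gy_2'$ lies over $gx_2$ rather than over $x_2$. For instance, take $G=\fS$, $X=\Omega$, $Y=\Omega^{[2]}$, $f(a,b)=a$, and let $W$ be the orbit of $((a,b),(c,d))$ with $a,b,c,d$ distinct. Then $W$ maps onto the off-diagonal orbit $V$. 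But for $(x_1,x_2)=(a,c)$ and the lift $y_1=(a,c)$, no $y_2$ over $c$ has $(y_1,y_2)\in W$.

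The underlying issue is that $(f\times\id)^{-1}(V)\subset Y\times X$ may consist of several $G$-orbits. Condition (b) says precisely that $\id\times f$ maps $\fA_R(Y)$ onto $(f\times\id)^{-1}(\fA_R(X))$, so every one of these orbits must be hit.

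The repair is to run your own argument on each $G$-orbit $V_0'$ of $(f\times\id)^{-1}(V)$ separately, rather than on the whole preimage:
\begin{enumerate}
\item Since $V_0'\to V$ is a map of transitive $G$-sets, $(f\times\id)_*\colon\cC(V_0')\to\cC(V)$ is still surjective.
\item Hence $\beta_Y\circ(\id\times f)^*$ is non-zero on $\cC(V_0')$.
\item The map $(\id\times f)^*$ lands in the sum of the $\cC(W)$ over orbits $W$ with $(\id\times f)(W)=V_0'$, so one of these $W$ lies in $\fA_R(Y)$.
\end{enumerate}
This is exactly how the paper argues, fixing an orbit $W_0$ of $(f\times\id)^{-1}(\fA_R(X))$. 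Condition (c) needs the same correction.
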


\begin{proof}
Let $f \colon Y \to X$ be a map of transitive $G$-sets. We have the following equalities of maps $\cC(Y)^{\otimes 2} \to R$:
\begin{displaymath}
\begin{tikzpicture}[scale=1, baseline=(current bounding box.center)]
  \node[circle, fill, inner sep=1.5pt] () at (1,0) {};
  \draw (1,0)--(1,-0.5);
  \node[circle, fill, inner sep=1.5pt] () at (1,-0.5) {};
  \draw (1,-0.5)--(1,-0.75);
  \draw (1,-0.75) --  node[midway, draw, circle, fill=white, inner sep=1pt] {\tiny$f$} (1,-1.5);
  \draw (1,-0.5)--(2.5,-0.5);

  \node[circle, fill, inner sep=1.5pt] () at (3,0) {};
  \draw (3,0) -- (3,-0.75);
  \node[circle, fill, inner sep=1.5pt] () at (3,-0.75) {};
  \draw (3,-0.75) -- (1.5,-0.75);
  \draw (3,-0.75) -- (3,-1.5);
  \draw (1.5,-0.75) --  node[midway, draw, circle, fill=white, inner sep=1pt] {\tiny$f$} (1.5,-1.5);
  \draw (0.75,-1.5) rectangle ++(1,-0.5);
  \draw (1.25,-2) -- (1.25,-2.5);
  \draw (1.25,-2.5) -- (2,-2.5);

  \draw[white,line width=4pt] (2.5,-0.6)--(2.5,-0.9);
  \draw (2.5,-0.5)--(2.5,-1.5);
  \draw (2.25,-1.5) rectangle ++(1,-0.5);
  \draw (2.75,-2) -- (2.75,-2.5);
  \draw (2.75,-2.5) -- (2,-2.5);
  \node[circle, fill, inner sep=1.5pt] () at (2,-2.5) {};
  \draw (2,-2.5) -- (2,-3);
  \node[circle, fill, inner sep=1.5pt] () at (2,-3) {};
\end{tikzpicture}
\qquad = \qquad
\begin{tikzpicture}[scale=1, baseline=(current bounding box.center)]
  \node[circle, fill, inner sep=1.5pt] () at (1,0) {};
  \draw (1,0)--(1,-0.5);
  \node[circle, fill, inner sep=1.5pt] () at (1,-0.5) {};
  \draw (1,-0.5) --  node[midway, draw, circle, fill=white, inner sep=1pt] {\tiny$f$} (1,-1.5);
  \draw (1,-0.5)--(1.5,-0.5);
  \draw (1.5,-0.5)--(1.5,-1.5);

  \node[circle, fill, inner sep=1.5pt] () at (3,0) {};
  \draw (3,0) -- (3,-0.5);
  \node[circle, fill, inner sep=1.5pt] () at (3,-0.5) {};
  \draw (3,-0.5) -- (2.5,-0.5);
  \draw (2.5,-0.5) --  node[midway, draw, circle, fill=white, inner sep=1pt] {\tiny$f$} (2.5,-1.5);
  \draw (3,-0.5) -- (3,-1.5);

  \draw (0.75,-1.5) rectangle ++(2.5,-0.5);
  \draw (2,-2) -- (2,-2.5);
  \node[circle, fill, inner sep=1.5pt] () at (2,-2.5) {};
\end{tikzpicture}
\qquad = \qquad
\begin{tikzpicture}[scale=1, baseline=(current bounding box.center)]
  \node[circle, fill, inner sep=1.5pt] () at (1,0) {};
  \draw (1,0)--(1,-0.5);
  \node[circle, fill, inner sep=1.5pt] () at (2,0) {};
  \draw (2,0)--(2,-0.5);
  \draw (0.75,-0.5) rectangle ++(1.5,-0.5);
  \draw (1.5,-1)--(1.5,-1.5);
  \node[circle, fill, inner sep=1.5pt] () at (1.5,-1.5) {};
\end{tikzpicture}
\end{displaymath}
The circles labeled with $f$ represent the map $f_* \colon \cC(Y) \to \cC(X)$. On the left, the two boxes are $\beta_X$ and $\beta_Y$, in the middle the box is $\beta_{X \times Y}$, and on the right the box is $\beta_Y$. The first equality is simply Definition~\ref{defn:central}(b). Note that the composition $\cC(Y) \to \cC(Y \times Y) \to \cC(X \times Y)$ is just push-forward along the map $f \times \id \colon Y \to X \times Y$. Since $f \times \id$ identifies $Y$ with an orbit on $X \times Y$, the second equality follows from Proposition~\ref{prop:pi-basic2}. Note that this identity is simply the diagrammatic form of \eqref{eq:beta1}.

Now let $Z$ be a $G$-orbit on $\fA_R(Y)$. The above identity shows that the restriction of $\beta_Y$ to $\cC(Z)$ can be written as a product, where one of the factors is a map that factors through the restriction of $\beta_X$ to $\cC(f(Z))$. Since $\beta_Y$ is non-zero on $\cC(Z)$, it follows that $\beta_X$ must be non-zero on $\cC(f(Z))$, and so $f(Z) \subset \fA_R(X)$. We have therefore shown that $f$ maps $\fA_R(Y)$ to $\fA_R(X)$, which verifies Definition~\ref{defn:cset}(a).

We now prove Definition~\ref{defn:cset}(b). Put
\begin{displaymath}
W = (f \times \id)^{-1}(\fA_R(X)) \subset Y \times X.
\end{displaymath}
Note that $\id \times f \colon Y \times Y \to Y \times X$ maps $\fA_R(Y)$ to $W$ by the previous paragraph; we must show that it maps onto $W$. Fix a $G$-orbit $W_0$ on $W$. Let $V_1, \ldots, V_r$ be the $G$-orbits on $(\id \times f)^{-1}(W_0)$, and let $\ol{W}_0=(f \times \id)(W_0)$. Now, we have a commutative diagram
\begin{displaymath}
\xymatrix@C=5em{
\cC(W_0) \ar[r]^{(f \times \id)_*} \ar[d]_{(\id \times f)^*} & \cC(\ol{W}_0) \ar[d]^{\beta_X} \\
\bigoplus_{i=1}^r \cC(V_i) \ar[r]^{\beta_Y} & R }
\end{displaymath}
Since $\ol{W}_0 \subset \fA_R(X)$, it follows that $\beta_X$ is not identically~0 on $\cC(\ol{W}_0)$. Since the top map is surjective (as explained above), it follows that the top path in the diagram is non-zero. Hence the bottom path is non-zero, and so $\beta_Y$ is non-zero on some $\cC(V_i)$. Thus $V_i \subset \fA_R(Y)$ for some $i$, and so $W_0$ belongs to the image of $(\id \times f)(\fA_R(Y))$, as required. This establishes Definition~\ref{defn:cset}(b). The proof of Definition~\ref{defn:cset}(c) is similar.
\end{proof}

We say that a central algebra $R$ is \defn{trivial} if $\beta_X$ factors as
\begin{displaymath}
\cC(X \times X) \to \bone \to R,
\end{displaymath}
where the first map is the pairing on $\cC(X) \otimes \cC(X)$, and the second is the unit of $R$. This is compatible with the definition of trivial given in \S \ref{ss:support}.

\begin{proposition} \label{prop:triv2}
A central algebra $R$ has trivial central structure if and only if $\fA_R$ is contained in the diagonal.
\end{proposition}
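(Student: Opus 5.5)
The plan is to imitate the proof of Proposition~\ref{prop:triv-supp}, replacing elements by the string-diagram identities already established. One direction is immediate. Suppose the central structure is trivial. Then $\beta_X$ factors through the pairing $\cC(X \times X) \to \bone$, which is integration against the indicator function of the diagonal. Hence $\beta_X$ vanishes on $\cC(W)$ for every orbit $W$ off the diagonal, and so $\fA_R(X)$ is contained in the diagonal $\Delta(X)$.

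For the converse, assume $\fA_R(X) \subset \Delta(X)$ for all transitive $X$. By Proposition~\ref{prop:pi-basic2}(a), $\beta_X$ vanishes on $\cC(Y \times Z)$ for distinct orbits $Y,Z$ of $X$, and restricts to $\beta_Y$ on $\cC(Y \times Y)$. The pairing has the same behaviour on these summands, so it suffices to treat transitive $X$. In that case $\cC(X \times X) = \cC(\Delta(X)) \oplus \cC(\text{off-diagonal orbits})$, and $\beta_X$ is zero on the second summand by hypothesis. So we only need to identify the restriction of $\beta_X$ to $\cC(\Delta(X)) \cong \cC(X)$, which is the composite $\beta_X \circ \Delta_*$, with the counit $f_* \colon \cC(X) \to \bone$ followed by the unit of $R$. (Here $f \colon X \to \bbone$, and the pairing restricted along $\Delta_*$ is exactly $f_*$.)

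To identify this restriction, apply Definition~\ref{defn:central}(c) to $f$, in the version with $\cC(Y \times X)$ replaced by $\cC(X \times \bbone) = \cC(X)$:
\begin{displaymath}
\beta_{\bbone} \circ (f \times \id)_* = \beta_X \circ (\id \times f)^*,
\end{displaymath}
as maps $\cC(X) \to R$. By Definition~\ref{defn:central}(a), $\beta_{\bbone}$ is the unit, so the left side is $\mathrm{unit} \circ f_*$. On the right, $(\id \times f)^* \colon \cC(X) \to \cC(X \times X)$ is pullback along the projection, i.e.\ $\phi \mapsto \phi \otimes 1$. Its image is therefore not contained in $\cC(\Delta(X))$. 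However, composing with $\beta_X$ kills the components on off-diagonal orbits, since $\beta_X$ vanishes on those summands. The remaining step is to check that the projection of $\phi \otimes 1$ onto the summand $\cC(\Delta(X))$ is $\Delta_*\phi$ under the identification $\cC(\Delta(X)) \cong \cC(X)$. The orbit decomposition gives a direct sum of Schwartz spaces, and $\phi \otimes 1$ restricted to $\Delta(X)$ is $\phi$, so this should hold. We then obtain $\beta_X \circ \Delta_* = \mathrm{unit} \circ f_*$, as required.

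The main obstacle is this bookkeeping. Since we cannot use elements, we must work with the decomposition of $\cC(X \times X)$ into $\cC(W)$ over orbits $W$ and verify that $(\id \times f)^*$ composed with the projection to $\cC(\Delta(X))$ is the canonical isomorphism. This is a statement purely about matrices in $\uPerm(G,\mu)$ (indicator functions of graphs), so it holds in $\cP$. Because $\Phi$ is a functor, it then transfers to $\cT$, and the hypothesis $(\ast)$ is not even needed for this direction.
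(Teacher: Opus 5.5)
Your proposal is correct and follows the paper's proof. Both apply Definition~\ref{defn:central}(c) to $f \colon X \to \bbone$, use the vanishing of $\beta_X$ off the diagonal to write $\beta_X = \beta_X \Delta_* \Delta^*$, and use that $(\id \times f) \circ \Delta$ is an isomorphism to get $\beta_X = \eta f_* \Delta^*$, where $\eta$ is the unit of $R$ and $f_* \Delta^*$ is the pairing; your explicit reduction from non-transitive to transitive $X$ via Proposition~\ref{prop:pi-basic2}(a) is a detail the paper leaves implicit.
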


\begin{proof}
If $R$ has trivial central structure then $\fA_R$ is certainly contained in the diagonal. Conversely, suppose $\fA_R$ is contained in the diagonal. Let $X$ be a transitive $G$-set and let $f \colon X \to \bbone$ be the unique map. Consider the commutative diagram
\begin{displaymath}
\xymatrix@C=4em{
\cC(X \times \bbone) \ar[r]^{(f \times \id)_*} \ar[d]_{(\id \times f)^*} & \cC(\bbone \times \bbone) \ar[d]^{\beta_{\bbone}} \ar@{=}[r] & \bone \ar[d]^{\eta} \\
\cC(X \times X) \ar[r]^-{\beta_X} & R \ar@{=}[r] & R }
\end{displaymath}
where $\eta$ is the unit of $R$. We have $X \times X = \Delta(X) \amalg X^{[2]}$, where the first summand is the image of the diagonal map $\Delta$. Since $\beta_X$ vanishes on $X^{[2]}$, it must factor through the projection $\Delta^* \colon \cC(X \times X) \to \cC(X)$. Now, the composition
\begin{displaymath}
\xymatrix@C=3em{
X \ar[r]^-{\Delta} & X \times X \ar[r]^-{\id \times f} & X }
\end{displaymath}
is an isomorphism. The commutative diagram thus implies that $\beta_X$ factors as $\eta f_* \Delta^*$. Since $f_* \Delta^*$ is the pairing on $\cC(X) \otimes \cC(X)$, we see that the central structure is trivial.
\end{proof}

We next want to establish a ``small support'' result for central algebras. This will require some preparation. Let $R$ be a central algebra. For a $G$-set $X$, we let $\rho_X$ be the composition
\begin{displaymath}
\cC(X \times X) \to \cC(X) \otimes \cC(X \times X) \to \cC(X) \otimes R,
\end{displaymath}
where the first map is induced from the map of $G$-sets taking $(x,y)$ to $(x,x,y)$, and the second map is $\id \otimes \beta_X$. In diagrammatic form, $\rho_X$ is
\begin{displaymath}
\begin{tikzpicture}[scale=1, baseline=(current bounding box.center)]
  \node[circle, fill, inner sep=1.5pt] () at (1,0) {};
  \draw (1,0)--(1,-0.5);
  \node[circle, fill, inner sep=1.5pt] () at (2,0) {};
  \draw (2,0)--(2,-1);
  \node[circle, fill, inner sep=1.5pt] () at (1,-0.5) {};
  \draw (1,-0.5)--(0.5,-0.5);
  \draw (0.5,-0.5)--(0.5,-2);
  \node[circle, fill, inner sep=1.5pt] () at (0.5,-2) {};
  \draw (1,-0.5)--(1.5,-0.5);
  \draw (1.5,-0.5)--(1.5,-1);
  \draw (1.25,-1) rectangle ++(1,-0.5);
  \draw (1.75,-1.5)--(1.75,-2);
  \node[circle, fill, inner sep=1.5pt] () at (1.75,-2) {};
\end{tikzpicture}
\end{displaymath}
We now establish some basic properties of $\rho$.

\begin{lemma} \label{lem:rho-homo}
The map $\rho_X$ is an algebra homomorphism.
\end{lemma}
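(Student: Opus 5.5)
The plan is to mimic the elementwise computation from the quasi-regular case and then translate each step into a diagrammatic identity using only the axioms of Definition~\ref{defn:central} and Proposition~\ref{prop:pi-basic2}.

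\emph{Heuristic computation.} Heuristically, $\rho_X(\phi)$ is $\int_X \delta_x \otimes \bigl(\int_X \phi(x,y)\beta^X_{x,y}\,dy\bigr)\,dx$. Multiplicativity should then come from two facts:
\begin{itemize}
\item the idempotent relation $\beta^X_{x,y}\beta^X_{x,y'}=0$ for $y\neq y'$;
\item the relation $\beta^X_{x,y}\beta^X_{x,y}=\beta^X_{x,y}$.
\end{itemize}
Both are exactly Proposition~\ref{prop:pi-basic}(c). Its proof came from multiplicativity ($\beta^X\cdot\beta^X=\beta^{X\times X}$), combined with the observation that $(x,x),(y,y')$ lies off the diagonal orbit unless $y=y'$.

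\emph{Multiplicativity, first two moves.} Concretely, I would compare the two composites $\cC(X\times X)^{\otimes 2}\to \cC(X)\otimes R$, namely $\rho_X\circ m$ and $m\circ(\rho_X\otimes\rho_X)$. I expand the latter. The multiplication in $\cC(X)\otimes R$ uses the multiplication $\Delta_*$ on the two $\cC(X)$-strands, and this is push-forward along a diagonal. Since the $\cC(X)$-factor of each $\rho_X$ is a copy of the first input via $\Delta^*$, special Frobenius/\'etale identities let me merge the two first inputs. The result is that the first inputs are identified with each other and with the output $\cC(X)$-strand. Next, I use Definition~\ref{defn:central}(b) to rewrite the product in $R$ of $\beta_X$ and $\beta_X$ as $\beta_{X\times X}$. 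The input of $\beta_{X\times X}$ is $((x,x),(y,y'))$, obtained by doubling the common first variable.

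\emph{Multiplicativity, final move.} This input factors through the inclusion of the diagonal $G$-set $\Delta(X)\amalg(\text{rest})=X\times X$ in the first slot. By Proposition~\ref{prop:pi-basic2}(a), $\beta_{X\times X}$ vanishes on $\cC(\Delta(X)\times \text{rest})$, and it restricts to $\beta_{\Delta(X)}$ on $\cC(\Delta(X)\times\Delta(X))$. This forces $y=y'$ (diagrammatically: the second inputs are merged by $\Delta_*\Delta^*$). Then Proposition~\ref{prop:pi-basic2}(b), applied to $\Delta\colon X\cong\Delta(X)$, turns $\beta_{\Delta(X)}$ back into $\beta_X$. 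The resulting diagram is precisely $\rho_X$ precomposed with pointwise multiplication $\Delta_*$ on $\cC(X\times X)$, as desired.

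\emph{Unit.} For unitality, $\rho_X$ applied to the unit $f^*\colon\bone\to\cC(X\times X)$ (pullback along $X\times X\to\bbone$) should equal $f^*\otimes\eta$. I would get this from Definition~\ref{defn:central}(c) applied to $f\colon X\to\bbone$, together with (a). This is the same diagram as in the proof of Proposition~\ref{prop:triv2}. It says that $\beta_X\circ(\id\times f)^*=\eta\circ(f\times\id)_*$, i.e.\ integrating $\beta_{x,y}$ over $y$ gives $1$.

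\emph{Main obstacle.} The main obstacle is the bookkeeping in the multiplicativity step. One must decompose $(X\times X)\times(X\times X)$ correctly into the diagonal-orbit part and the rest, and must check that the \'etale-algebra identities ($\Delta^*\Delta_*$ versus $\Delta_*\Delta^*$, specialness) move the nodes as claimed in the string diagram. I would first verify it elementwise in the quasi-regular setting, then transcribe each equality as a string-diagram move.
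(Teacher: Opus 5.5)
Your proposal is correct and follows the paper's proof essentially step for step. The paper also merges the first inputs using the Frobenius axioms for $\cC(X)$ together with Definition~\ref{defn:central}(b). It then proves the identity \eqref{eq:rho-homo} by splitting $X^2=\Delta(X)\sqcup X^{[2]}$ and applying Proposition~\ref{prop:pi-basic2}, which collapses $\beta_{X\times X}$ on $((x,x),(y,y'))$ to $\beta_X$ with $y,y'$ multiplied. Your unitality check, via Definition~\ref{defn:central}(a),(c) for $X\to\bbone$, is a correct extra that the paper omits, since only multiplicativity is needed to make $\ker\rho_X$ an ideal in Proposition~\ref{prop:rho-inj}.
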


\begin{proof}
Let $a,b \colon \cC(X)^{\otimes 4} \to \cC(X) \otimes R$ be the two maps in question, i.e., $a$ is the map where we first apply $\rho_X \otimes \rho_X$ and then multiply in $\cC(X) \otimes R$, and $b$ is the map where we first multiply in $\cC(X \times X)$ and then apply $\rho_X$. We must show $a=b$. We have the following identity of maps $\cC(X)^{\otimes 4} \to \cC(X) \otimes R$:
\begin{displaymath}
\begin{tikzpicture}[scale=1, baseline=(current bounding box.center)]
  \node[circle, fill, inner sep=1.5pt] () at (0,0) {};
  \draw (0, 0) -- (0,-0.5);
  \node[circle, fill, inner sep=1.5pt] () at (1.5,0) {};
  \draw (1.5,0) -- (1.5,-1);
  \node[circle, fill, inner sep=1.5pt] () at (3,0) {};
  \draw (3,0) -- (3,-0.5);
  \node[circle, fill, inner sep=1.5pt] () at (4.5,0) {};
  \draw (4.5,0) -- (4.5,-0.5);

  \node[circle, fill, inner sep=1.5pt] () at (0,-0.5) {};
  \draw (0,-0.5)--(0,-2.5);
  \draw (0,-2.5)--(0.75,-2.5);
  \node[circle, fill, inner sep=1.5pt] () at (0.75,-2.5) {};
  \draw (0.75,-2.5)--(0.75,-3);
  \node[circle, fill, inner sep=1.5pt] () at (0.75,-3) {};
  \draw (0,-0.5)--(1,-0.5);
  \draw (1,-0.5)--(1,-1);
  \draw (0.75,-1) rectangle ++(1,-0.5);
  \draw (1.25,-1.5)--(1.25,-2);

  \node[circle, fill, inner sep=1.5pt] () at (3,-0.5) {};
  \draw (3,-0.5)--(2.5,-0.5);
  \draw (2.5,-0.5)--(2.5,-2.5);
  \draw (2.5,-2.5)--(0.75,-2.5);
  \draw (3,-0.5)--(3.5,-0.5);
  \draw (3.5,-0.5)--(3.5,-1);
  \draw (4.5,-0.5)--(4,-0.5);
  \draw (4,-0.5)--(4,-1);
  \draw (3.25,-1) rectangle ++(1,-0.5);
  \draw[white,line width=4pt] (1.5,-2)--(3.5,-2);
  \draw (1.25,-2)--(3.75,-2);
  \draw (3.75,-1.5)--(3.75,-2);
  \node[circle, fill, inner sep=1.5pt] () at (3.75,-2) {};
  \draw (3.75,-2)--(3.75,-3);
  \node[circle, fill, inner sep=1.5pt] () at (3.75,-3) {};
\end{tikzpicture}
\qquad = \qquad
\begin{tikzpicture}[scale=1, baseline=(current bounding box.center)]
  \node[circle, fill, inner sep=1.5pt] () at (0,0) {};
  \draw (0, 0) -- (0,-0.5);
  \node[circle, fill, inner sep=1.5pt] () at (1.5,0) {};
  \draw (1.5,0) -- (1.5,-1);
  \draw (1.5,-1) -- (2,-1);
  \draw (2,-1) -- (2,-1.5);
  \draw (2,-1.5) -- (2,-2);
  \node[circle, fill, inner sep=1.5pt] () at (3,0) {};
  \draw (3,0) -- (3,-0.5);
  \node[circle, fill, inner sep=1.5pt] () at (4.5,0) {};
  \draw (4.5,0) -- (4.5,-1);
  \draw (4.5,-1) -- (2.25,-1);
  \draw (2.25,-1) -- (2.25,-2);

  \draw[white,line width=4pt] (2.75,-0.5)--(1,-0.5);
  \draw (3,-0.5)--(0.75,-0.5);
  \draw (0,-0.5)--(0.75,-0.5);
  \node[circle, fill, inner sep=1.5pt] () at (0.75,-0.5) {};
  \draw (0.75,-0.5)--(0.75,-1);
  \node[circle, fill, inner sep=1.5pt] () at (0.75,-1) {};
  \draw (0.75,-1)--(0,-1);
  \draw (0,-1)--(0,-1.75);
  \draw (0.75,-1)--(0.75,-2);
  \draw (0.75,-1)--(1,-1);
  \draw (1,-1)--(1,-1.25);
  \draw (1,-1.25)--(1,-2);

  \draw (.5,-2) rectangle ++(2,-0.5);
  \draw (1.5,-2.5)--(1.5,-3);
  \draw (0,-1.75)--(0,-3);
  \node[circle, fill, inner sep=1.5pt] () at (1.5,-3) {};
  \node[circle, fill, inner sep=1.5pt] () at (0,-3) {};
\end{tikzpicture}
\end{displaymath}
The left side above is the diagram for the map $a$. The equality comes from the Frobenius algebra axioms for $\cC(X)$, together with Definition~\ref{defn:central}(b). Now, we have the following identity of maps $\cC(X)^{\otimes 3} \to R$:
\begin{equation} \label{eq:rho-homo}
\begin{tikzpicture}[scale=1, baseline=(current bounding box.center)]
  \node[circle, fill, inner sep=1.5pt] () at (0,0) {};
  \draw (0,0)--(0,-0.5);
  \draw (0,-0.5)--(0.25,-0.5);
  \node[circle, fill, inner sep=1.5pt] () at (1,0) {};
  \draw (1,0)--(1,-0.5);
  \draw (1,-0.5)--(1.5,-0.5);
  \draw (1.5,-0.5)--(1.5,-1);
  \node[circle, fill, inner sep=1.5pt] () at (2,0) {};
  \draw (2,0)--(2,-0.5);
  \draw (2,-0.5)--(1.75,-0.5);
  \draw (1.75,-0.5)--(1.75,-1);

  \node[circle, fill, inner sep=1.5pt] () at (0.25,-0.5) {};
  \draw (0.25,-0.5) -- (0.25,-1);
  \draw (0.25,-0.5) -- (0.5,-0.5);
  \draw (0.5,-0.5) -- (0.5,-1);
  \draw (0,-1) rectangle ++(2,-0.5);
  \draw (1,-1.5)--(1,-2);
  \node[circle, fill, inner sep=1.5pt] () at (1,-2) {};
\end{tikzpicture}
\qquad = \qquad
\begin{tikzpicture}[scale=1, baseline=(current bounding box.center)]
  \node[circle, fill, inner sep=1.5pt] () at (0,0) {};
  \draw (0, 0)--(0,-0.5);
  \draw (0, -0.5)--(0.5,-0.5);
  \draw (0.5,-0.5)--(0.5,-1);
  \node[circle, fill, inner sep=1.5pt] () at (1,0) {};
  \draw (1,0)--(1,-0.5);
  \draw (1,-0.5)--(1.5,-0.5);
  \node[circle, fill, inner sep=1.5pt] () at (2,0) {};
  \draw (2,0)--(2,-0.5);
  \draw (2,-0.5)--(1.5,-0.5);
  \node[circle, fill, inner sep=1.5pt] () at (1.5,-0.5) {};
  \draw (1.5,-0.5)--(1.5,-1);
  \draw (0.25,-1) rectangle ++(1.5,-0.5);
  \draw (1,-1.5)--(1,-2);
  \node[circle, fill, inner sep=1.5pt] () at (1,-2) {};
\end{tikzpicture}
\end{equation}
where the box on the left is $\beta_{X \times X}$, and the box on the right is $\beta_X$. To see this, write $X^2=\Delta(X) \sqcup X^{[2]}$, where $X^{[2]}$ is the off diagonal in $X^2$. Thus
\begin{displaymath}
\cC(X)^{\otimes 3} = \cC(X) \otimes \cC(\Delta(X)) \oplus \cC(X) \otimes \cC(X^{[2]}).
\end{displaymath}
The left side of \eqref{eq:rho-homo} vanishes on the second term above by Proposition~\ref{prop:pi-basic2}; moreover, on the first term it agrees with the right side, by the same proposition. Now, using \eqref{eq:rho-homo} in the previous equation, we find that $a$ is equal to
\begin{displaymath}
\begin{tikzpicture}[scale=1, baseline=(current bounding box.center)]
  \node[circle, fill, inner sep=1.5pt] () at (0,0) {};
  \draw (0, 0) -- (0,-0.5);
  \node[circle, fill, inner sep=1.5pt] () at (1.5,0) {};
  \draw (1.5,0) -- (1.5,-1);
  \draw (1.5,-1) -- (3,-1);
  \node[circle, fill, inner sep=1.5pt] () at (3,0) {};
  \draw (3,0) -- (3,-0.5);
  \node[circle, fill, inner sep=1.5pt] () at (4.5,0) {};
  \draw (4.5,0) -- (4.5,-1);
  \draw (4.5,-1) -- (3,-1);
  \node[circle, fill, inner sep=1.5pt] () at (3,-1) {};
  \draw (3,-1) -- (3,-2);

  \draw[white,line width=4pt] (2.75,-0.5)--(1,-0.5);
  \draw (3,-0.5)--(0.75,-0.5);
  \draw (0,-0.5)--(0.75,-0.5);
  \node[circle, fill, inner sep=1.5pt] () at (0.75,-0.5) {};
  \draw (0.75,-0.5)--(0.75,-1.5);
  \node[circle, fill, inner sep=1.5pt] () at (0.75,-1.5) {};
  \draw (0.75,-1.5)--(0.25,-1.5);
  \draw (0.75,-1.5)--(1.25,-1.5);
  \draw (1.25,-1.5)--(1.25,-2);

  \draw (1,-2) rectangle ++(2.25,-0.5);
  \draw (2.12,-2.5)--(2.12,-3);
  \draw (0.25,-1.5)--(0.25,-3);
  \node[circle, fill, inner sep=1.5pt] () at (2.12,-3) {};
  \node[circle, fill, inner sep=1.5pt] () at (0.25,-3) {};
\end{tikzpicture}
\end{displaymath}
and this is exactly $b$.
\end{proof}

\begin{lemma} \label{lem:rho-supp}
Let $W$ be a $G$-orbit on $X \times X$. Then $W \subset \fA_R(X)$ if and only if the restriction of $\rho_X$ to $\cC(W)$ is non-zero.
\end{lemma}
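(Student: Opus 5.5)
One direction is formal. The map $\rho_X$ is $\beta_X$ followed by a map of the form $(\id \otimes \beta_X) \circ (\text{pullback along } (x,y) \mapsto (x,x,y))$. Post-composing with the counit $f_* \otimes \id \colon \cC(X) \otimes R \to R$ recovers $\beta_X$. Indeed, the composite $\cC(X \times X) \to \cC(X \times X \times X) \to \cC(X \times X)$, given by pulling back along $(x,y) \mapsto (x,x,y)$ and then pushing forward along the projection forgetting the first coordinate, is the identity, since that composite $X \times X \to X \times X$ is the identity map. So
\begin{displaymath}
(f_* \otimes \id) \circ \rho_X = \beta_X.
\end{displaymath}
Restricting to $\cC(W)$, if $\beta_X$ is non-zero on $\cC(W)$ then so is $\rho_X$. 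This gives ``only if''.

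For the converse, suppose $W \not\subset \fA_R(X)$, so $\beta_X$ vanishes on $\cC(W)$. I want to show that $\rho_X$ vanishes on $\cC(W)$. Write the first map in $\rho_X$ as $\cC(X \times X) \to \cC(X) \otimes \cC(X \times X)$, pulled back along the diagonal-type embedding $j \colon X \times X \to X \times X \times X$. The map $j$ identifies $X \times X$ with a $G$-subset of $X \times (X \times X)$, and the morphism is $j_*$, up to convention. Since $j$ is an injection of $G$-sets, it sends the orbit $W$ isomorphically onto an orbit $j(W)$ of $X \times (X \times X)$. The key point is that $j_*$ carries $\cC(W)$ into $\cC(X) \otimes \cC(W) = \cC(X \times W)$, viewed inside $\cC(X) \otimes \cC(X \times X)$, because $j(W) \subset X \times W$. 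Then $\id \otimes \beta_X$ kills $\cC(X) \otimes \cC(W)$, since $\beta_X$ kills $\cC(W)$. Hence $\rho_X$ vanishes on $\cC(W)$.

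The main thing to check carefully is this last compatibility. I need the pushforward along $j$ restricted to $\cC(W)$ to factor through $\cC(X \times W) = \cC(X) \otimes \cC(W)$. To see this, I use the decomposition $\cC(X \times X \times X) = \bigoplus_{W'} \cC(X \times W')$ over orbits $W'$ of $X \times X$. This decomposition is compatible with the tensor decomposition $\cC(X) \otimes \bigoplus_{W'} \cC(W')$, by Proposition~\ref{prop:pi-basic2}(a) style bookkeeping for disjoint unions. Since $j$ commutes with the projection to the last two coordinates, $j_*$ maps $\cC(W)$ into the summand $\cC(X \times W)$, as required. I expect no serious obstacle here. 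The only subtlety is making sure that ``non-zero on $\cC(W)$'' for $\beta_X$ is defined via the restriction to the summand, which is exactly how $\fA_R(X)$ was defined.
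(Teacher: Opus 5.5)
Your proof is correct and is essentially the paper's: the implication $W \subset \fA_R(X) \Rightarrow \rho_X\vert_{\cC(W)} \ne 0$ comes from the identity $(\epsilon \otimes \id) \circ \rho_X = \beta_X$ with $\epsilon$ the augmentation, exactly as in the paper. Your observation that $j$ carries $W$ into $X \times W$, so that $\rho_X\vert_{\cC(W)}$ factors through $\id \otimes \beta_X\vert_{\cC(W)}$, simply spells out the converse direction that the paper calls clear. One small wording fix: the first map in $\rho_X$ is the push-forward $j_*$, not a pull-back, so the identity you need in your first paragraph is $p_* \circ j_* = (p \circ j)_* = \id$, where $p$ is the projection forgetting the first coordinate.
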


\begin{proof}
It is clear that if $W$ is not contained in $\fA_R(X)$ then $\rho_X$ is zero on $\cC(W)$. Conversely, suppose that $W$ is contained in $\fA_R(X)$. Letting $\epsilon \colon \cC(X) \to \bone$ be the augmentation map, we have $(\epsilon \otimes \id) \circ \rho_X = \beta_X$. Since $\beta_X$ is non-zero on $\cC(W)$, it follows that $\rho_X$ is as well.
\end{proof}

\begin{proposition} \label{prop:rho-inj}
Let $R$ be a central algebra. Then for any transitive $G$-set $X$, the map
\begin{displaymath}
\rho_X \colon \cC(\fA_R(X)) \to \cC(X) \otimes R
\end{displaymath}
is injective.
\end{proposition}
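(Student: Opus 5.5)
The plan is to exploit the algebra structure on $\cC(X \times X)$ rather than elements. By Lemma~\ref{lem:rho-homo}, $\rho_X$ is an algebra homomorphism. Decompose $\fA_R(X)=\coprod_{W} W$ into $G$-orbits. Then, as an algebra in $\cT$,
\begin{displaymath}
\cC(\fA_R(X)) = \bigoplus_W \cC(W)
\end{displaymath}
is a finite product of the algebras $\cC(W)$, with orthogonal central idempotents $e_W$ (the units of the factors, i.e.\ the images of $\bone \to \cC(W)$) summing to the unit. The restriction of $\rho_X$ to $\cC(\fA_R(X))$ is then a (possibly non-unital) algebra homomorphism. Let $K$ be its kernel, which exists since $\cT$ is abelian by $(\ast)$. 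The goal is to show $K=0$.

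\emph{Step 1: $K$ is an ideal.} Since $\rho_X$ is multiplicative, the multiplication maps $\cC(\fA_R(X)) \otimes K \to \cC(\fA_R(X))$ and $K \otimes \cC(\fA_R(X)) \to \cC(\fA_R(X))$ factor through $K$. (The algebra is commutative, so one side suffices.) This uses that $\otimes$ is exact in each variable, which holds because $\cT$ is rigid.

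\emph{Step 2: $K$ splits along the factors.} Multiplication by $e_W$ is an idempotent endomorphism $p_W$ of $\cC(\fA_R(X))$ with image $\cC(W)$, and $\sum_W p_W=\id$. Because $K$ is an ideal, each $p_W$ preserves $K$. Hence
\begin{displaymath}
K=\bigoplus_W p_W(K), \qquad p_W(K) \subset K \cap \cC(W),
\end{displaymath}
and each $K_W = K \cap \cC(W)$ is an ideal of $\cC(W)$.

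\emph{Step 3: each piece vanishes.} Each $W$ is a transitive $G$-set, so under $(\ast)$ the algebra $\cC(W)$ is a simple \'etale algebra in $\cT$ \cite[Corollary~5.2]{discrete}, and so has no nonzero proper ideals. Thus $K_W$ is either $0$ or all of $\cC(W)$. Since $W \subset \fA_R(X)$, Lemma~\ref{lem:rho-supp} says $\rho_X$ is nonzero on $\cC(W)$, so $K_W \ne \cC(W)$. Hence $K_W=0$ for every $W$, and therefore $K=0$, as required.

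The main obstacle I expect is making Steps~1 and~2 rigorous in the element-free setting. Concretely, one must check that the kernel of a morphism of algebra objects is an ideal object, and that the ideal is stable under multiplication by the central idempotents. Both are routine string-diagram verifications using exactness of $\otimes$. One must also confirm that ``simple'' in \cite{discrete} means ``no nontrivial ideals''; if it only means something weaker (for instance, defined via $\Hom(\bone,-)$), I would instead argue as follows. The ideal $K_W$ is a $\cC(W)$-submodule of $\cC(W)$, so it is cut out by an idempotent of the separable algebra $\cC(W)$. Such an idempotent must be $0$ or $1$, because $\Hom(\bone,\cC(W))=k$.
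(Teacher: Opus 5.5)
Your proposal is correct and is essentially the paper's argument. Both use that $\rho_X$ is an algebra homomorphism (Lemma~\ref{lem:rho-homo}), that each $\cC(W)$ is a simple \'etale algebra so that ideals split along the orbit decomposition, and Lemma~\ref{lem:rho-supp} to see that no orbit in the support is killed. The only cosmetic difference is the order: the paper applies the ideal classification to the kernel of $\rho_X$ on all of $\cC(X \times X)$ and then identifies the surviving factors with $\fA_R(X)$, whereas you restrict to $\cC(\fA_R(X))$ first and split the kernel by hand using the central idempotents.
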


\begin{proof}
Let $W_1, \ldots, W_n$ be the $G$-orbits on $X \times X$. Since $\cP \to \cT$ is fully faithful, each $\cC(W_i)$ is a simple \'etale algebra of $\cT$, and so
\begin{displaymath}
\cC(X \times X) = \bigoplus_{i=1}^n \cC(W_i)
\end{displaymath}
is the decomposition of $\cC(X \times X)$ into simple \'etale algebras. Since $\rho_X$ is an algebra homomorphism (Lemma~\ref{lem:rho-homo}), its kernel is an ideal of $\cC(X \times X)$. Every ideal of $\cC(X \times X)$ is a sum of some subset of the $\cC(W_i)$ \cite[Corollary~5.2]{discrete}; thus, relabeling if necessary, we assume that the kernel of $\rho_X$ is $\bigoplus_{i=r+1}^n \cC(W_i)$ for some $r$. It follows that $\rho_X$ restricts to an injection $\bigoplus_{i=1}^r \cC(W_i) \to \cC(X) \otimes R$. It is clear that the restriction of $\rho_X$ to $\cC(W_i)$ is non-zero for $1 \le i \le r$, and zero for $r<i \le n$, and so $\fA_R(X)=W_1 \amalg \cdots \amalg W_r$ by Lemma~\ref{lem:rho-supp}. The result follows.
\end{proof}

We are finally ready to formulate and prove the ``small support'' result.

\begin{definition} \label{defn:Tsmall2}
A c-functor $\fA$ is \defn{$\cT$-small} if there is an object $M$ of $\cT$ such that for all transitive $G$-sets $X$ there is an injection $\cC(\fA(X)) \to \cC(X) \otimes M$ in $\cT$.
\end{definition}

\begin{corollary} \label{cor:Tsmall2}
The support of a central algebra $R$ is $\cT$-small.
\end{corollary}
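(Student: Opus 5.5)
The plan is to take $M=R$ in Definition~\ref{defn:Tsmall2}; the injections required there are the maps $\rho_X$. Note that $R$ is by definition an algebra in $\cT$, so it is a legitimate object of $\cT$ to use as $M$. The only issue is that Definition~\ref{defn:Tsmall2} needs injections defined on $\cC(\fA_R(X))$, whereas $\rho_X$ is defined on all of $\cC(X \times X)$. This is already handled by Proposition~\ref{prop:rho-inj}.

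Concretely, fix a transitive $G$-set $X$. We have $\fA_R(X) \subset X \times X$, and it is a union of $G$-orbits. By Proposition~\ref{prop:pi-basic2}(a), or more simply because $\cC$ of a disjoint union is the direct sum, $\cC(\fA_R(X))$ is a direct summand of $\cC(X \times X)$ in $\cP$, and hence in $\cT$. Restricting $\rho_X$ to this summand gives a morphism
\begin{displaymath}
\cC(\fA_R(X)) \to \cC(X) \otimes R
\end{displaymath}
in $\cT$. Proposition~\ref{prop:rho-inj} asserts exactly that this morphism is injective; its proof uses condition $(\ast)$, Lemma~\ref{lem:rho-homo}, and the structure of ideals in \'etale algebras. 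The object $M=R$ does not depend on $X$, so Definition~\ref{defn:Tsmall2} is satisfied and $\fA_R$ is $\cT$-small.

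There is essentially no obstacle here; the corollary is a direct repackaging of Proposition~\ref{prop:rho-inj}. The only points to check are the following.

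\begin{enumerate}
\item $\fA_R$ is indeed a c-functor. This was shown in the preceding proposition of this subsection.
\item ``Injection'' in Definition~\ref{defn:Tsmall2} means a monomorphism in the abelian category $\cT$. This is what Proposition~\ref{prop:rho-inj} provides, since its kernel argument takes place in $\cT$.
\end{enumerate}

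If one wanted a central \emph{module} $N$ rather than a central algebra, one would apply the same argument to $R=\uEnd(N)=N^*\otimes N$, which lies in $\cT$ because $\cT$ is rigid.
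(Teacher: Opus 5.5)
Your proposal is correct and is the paper's argument: the paper also obtains the corollary immediately from Proposition~\ref{prop:rho-inj}. As you do, it takes $M=R$ in Definition~\ref{defn:Tsmall2} and uses the restriction of $\rho_X$ to $\cC(\fA_R(X))$ as the required injection for each transitive $X$.
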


\begin{proof}
This follows immediately from Proposition~\ref{prop:rho-inj}.
\end{proof}

\begin{remark}
Definition~\ref{defn:Tsmall2} is compatible with the previous definition of $\cT$-small (Definition~\ref{defn:Tsmall}). Indeed, suppose $\mu$ is quasi-regular and $\cT$ is the abelian envelope of $\cP$. If $\fA$ is $\cT$-small according to Definition~\ref{defn:Tsmall} then it is obviously so according to Definition~\ref{defn:Tsmall2}. Conversely, if $\fA$ is $\cT$-small according to Definition~\ref{defn:Tsmall2}, then we see that it is so according to Definition~\ref{defn:Tsmall} by choosing an injection $M \to \cC(Y)$ for some $G$-set $Y$. This is possible since, in this case, every object of $\cT$ is a sub (or quotient) of an object from $\cP$.
\end{remark}

Combining all of the above work, we obtain the following result.

\begin{theorem} \label{thm:Ztriv2}
Suppose that any $\cT$-small c-functor is contained in the diagonal. Then the natural functor $\cT \to \cZ_{\cT}(\cP)$ is an equivalence.
\end{theorem}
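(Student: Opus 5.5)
The plan is to mirror the proof of Theorem~\ref{thm:Ztriv}, replacing the elementwise arguments with the element-free results of \S\ref{ss:central} and \S\ref{ss:support2}. We work under the standing assumption $(\ast)$ of \S\ref{ss:support2}. By the proposition identifying $\cZ_{\cT}(\cP)$ with central modules, an object of $\cZ_{\cT}(\cP)$ is an object $M$ of $\cT$ together with a central structure $\beta$ on $R=\uEnd(M)$. Here $\uEnd(M)=M^*\otimes M$ is an algebra in $\cT$, so the results of \S\ref{ss:support2} apply to it.

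The first step is essential surjectivity. Given $(M,\beta)$, Corollary~\ref{cor:Tsmall2} shows that the support $\fA_R$ is $\cT$-small. Concretely, Proposition~\ref{prop:rho-inj} supplies injections $\cC(\fA_R(X))\to\cC(X)\otimes R$, so we may take $M^*\otimes M$ as the object in Definition~\ref{defn:Tsmall2}. By hypothesis $\fA_R$ is therefore contained in the diagonal. Proposition~\ref{prop:triv2} then says the central structure is trivial, meaning $\beta_X$ factors as the pairing $\cC(X\times X)\to\bone$ followed by the unit of $R$.

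Next we check that trivial central structures correspond exactly to half-braidings coming from the symmetry. Running the adjunction in the proof of that proposition backwards, the half-braiding $\cC(X)\otimes M\to M\otimes\cC(X)$ determined by $\eta\circ(\text{pairing})$ is the symmetry $c_{\cC(X),M}$. This holds because the coevaluation/evaluation on the self-dual $\cC(X)$ is exactly the pairing (via $f_*\Delta^*$) and the unit. Hence $(M,\beta)$ is isomorphic to the image of $M$ under $\cT\to\cZ_{\cT}(\cP)$. For morphisms, we must show that every $\cT$-morphism between two such objects respects the half-braidings, which is automatic since the symmetry is natural in $M$. Faithfulness is clear, because the functor does not change underlying morphisms. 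So the functor is fully faithful and essentially surjective.

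The main obstacle is the bookkeeping in identifying ``trivial central structure'' with ``half-braiding equals the symmetry''. One has to confirm that the adjunction correspondence $\Hom(\cC(X)\otimes M, M\otimes\cC(X))\cong\Hom(\cC(X\times X),\uEnd(M))$ sends $c_{\cC(X),M}$ to $\eta\circ f_*\Delta^*$. I would verify this diagrammatically. Use the Frobenius structure on $\cC(X)$, where the duality maps are $f_*\Delta^*$ and $\Delta_*f^*$, and check that the snake identity turns the pairing composed with $\eta$ into the symmetry. This is also where we need the half-braiding to be determined by its values on the objects $\cC(X)$, which holds since we are only working with $\cP$.
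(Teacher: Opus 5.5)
Your proposal is correct and follows the paper's proof. The chain is the same: the support of $\uEnd(M)$ is $\cT$-small (Corollary~\ref{cor:Tsmall2}), so by hypothesis it lies in the diagonal, so the central structure is trivial (Proposition~\ref{prop:triv2}), so the half-braiding is the one induced by the symmetry. The paper leaves implicit the two points you check explicitly, namely that a trivial central structure corresponds to the symmetry under the adjunction and that full faithfulness follows from naturality of the symmetry; these are elaborations of the same argument, not a different route.
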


\begin{proof}
Let $M$ be an object of $\cZ_{\cT}(\cP)$. Then the support of the central algebra $\uEnd(M)$ is $\cT$-small (Corollary~\ref{cor:Tsmall2}), and thus, by our assumption, contained in the diagonal. Thus the central structure on $M$ is trivial (Proposition~\ref{prop:triv2}), and so the half-braiding on $M$ is induced from the symmetric structure on $\cT$, as required.
\end{proof}

\subsection{The second Delannoy category} \label{ss:second}

Let $G$ be the oligomorphic group $\Aut(\bQ, <)$ discussed in \S \ref{ss:delannoy}. As stated there, for any field $k$, this group has exactly four measures $\mu_1, \ldots, \mu_4$; here we label the measures as in \cite[\S 4.2]{fake}. The ``classical'' Delannoy category is associated to the regular measure $\mu_1$. Let $\cP$ be the Karoubi envelope of $\uPerm(G, \mu_2)$. In \cite{fake}, the authors construct\footnote{Our $\cP$ and $\cT$ are denoted $\cA$ and $\cD$ in \cite{fake}.} a pre-Tannakian category $\cT$ and a fully faithful tensor functor $\Phi \colon \cP \to \cT$. The category $\cT$ is called the \defn{(abelian) second Delannoy category}. We now determine its Drinfeld center. Since $\mu_2$ is not quasi-regular, we cannot use the theory developed in \S \ref{s:qr}, and must instead use the theory developed above.

We begin by analyzing $\cT$-small c-functors. Since $G$ is a closed oligomorphic group on a countable set, c-functors correspond to closed conjugacy sets (Theorem~\ref{thm:cset}). We say that a conjugacy set $D$ is \defn{$\cT$-small} if $\fA_D$ is. Since $G$ satisfies the conditions of Proposition~\ref{prop:dcl}, we have a notion of small for c-functors and conjugacy sets (see \S \ref{ss:strong-conj}).

\begin{lemma} \label{lem:fake-Tsmall}
A $\cT$-small conjugacy set $D$ is small, and thus contained in $\{1\}$.
\end{lemma}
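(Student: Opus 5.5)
The plan has two parts. First, show that $\cT$-small implies small. Second, deduce from Proposition~\ref{prop:small-conj} and the absence of finitary elements that $D \subset \{1\}$.

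\textbf{Second part.} If $D$ is small, Proposition~\ref{prop:small-conj} makes it a finite union of c-smooth conjugacy classes. By Proposition~\ref{prop:smooth-is-finitary} its elements are finitary. The group $\Aut(\bQ,<)$ has no non-trivial finitary elements: an order-preserving bijection moving some point $x$, say $gx>x$, moves every point of $[x,gx)$. Hence $D \subset \{1\}$.

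\textbf{First part, setup.} Let $D$ be $\cT$-small in the sense of Definition~\ref{defn:Tsmall2}. Then there is $M \in \cT$ with an injection $\cC(\fA_D(X)) \to \cC(X) \otimes M$ for every transitive $X$. We cannot take $M \subset \cC(Y)$ as in the quasi-regular case, so I would replace the invariants count in Proposition~\ref{prop:schwartz-level} by Hom dimensions. For a transitive $\Omega$-smooth $Z$, apply the left exact functor $\Hom_\cT(\cC(Z),-)$. Full faithfulness of $\Phi$ gives
\[
\dim \Hom(\cC(Z),\cC(\fA_D(X))) = \#\text{orbits on } Z \times \fA_D(X).
\]
By rigidity,
\[
\dim \Hom(\cC(Z), \cC(X) \otimes M) = \dim \Hom(\cC(Z \times X), M).
\]
I expect this right-hand side to be bounded by a constant times the number of orbits on $Z \times X \times Y$ for some fixed $G$-set $Y$. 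To get this, choose an epimorphism $\cC(Y) \to M'$ onto some $M'$ with $M \subset M'$, or use that $M$ has finite length with each simple constituent receiving a map from some $\cC(Y_i)$. Then $\dim \Hom(\cC(W), L) \le \dim \Hom(\cC(W), \cC(Y_i))$ should hold for each simple constituent $L$, if $L$ is a quotient of $\cC(Y_i)$ and Hom out of $\cC(W)$ is right exact.

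\textbf{Main obstacle.} Controlling $\Hom(\cC(W), L)$ by permutation-module data is the hard step, since $\cC(W)$ need not be projective in $\cT$. I would try to use the explicit structure of the second Delannoy category from \cite{fake}: its simples are indexed by combinatorial data, and the multiplicity of a simple in $\cC(W)$ grows polynomially in the level of $W$. This should give a bound of the form
\[
\dim \Hom(\cC(Z), \cC(X) \otimes M) \le c \cdot P(\#\text{orbits on } Z \times X \times Y),
\]
polynomial of degree at most $\lev X + \lev Y$ in the size of $Z$.

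\textbf{Conclusion.} With that bound, I would run the argument of Proposition~\ref{prop:small-Tsmall}. Property (S$'$) for finite total orders, as in \S\ref{ss:delannoy}, says the orbit count $N_A(C)$ grows like $\#C^{\#A}$. Comparing growth rates as $Z=\Omega^{[C]}$ with $\#C \to \infty$ then forces $\lev \fA_D(X) \le \lev X + \lev Y$. Hence $D$ is small, and the second part finishes the proof.
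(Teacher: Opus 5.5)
The reduction from ``small'' to $D\subset\{1\}$ is correct. So are your identities $\dim\Hom(\cC(Z),\cC(\fA_D(X)))=\#\{\text{orbits on } Z\times\fA_D(X)\}$ and $\dim\Hom(\cC(Z),\cC(X)\otimes M)=\dim\Hom(\cC(Z\times X),M)$.

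\textbf{The gap.} The proof needs an upper bound on $\dim\Hom(\cC(Z\times X),M)$ that is polynomial of degree $\le \lev X+s$ in $\#C$. You assert this bound but do not prove it, and the tools you propose do not give it.
\begin{itemize}
\item The inequality $\dim\Hom(\cC(W),L)\le\dim\Hom(\cC(W),\cC(Y_i))$ needs $\Hom(\cC(W),-)$ to be right exact, i.e.\ $\cC(W)$ projective, which is not available.
\item As you note, you also have no embedding $M\hookrightarrow\cC(Y)$. That embedding is what made the count in Proposition~\ref{prop:schwartz-level} work.
\item Reducing to simple constituents $\bS_\lambda$ of $M$ is fine by left exactness. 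But bounding $\dim\Hom(\cC(Z\times X),\bS_\lambda)$ depends on which tilting summands $\bT_\mu$ admit nonzero maps to $\bS_\lambda$. Take $X$ a point: $\bT_\mu$ occurs in $\cC(\bQ^{(c)})$ with multiplicity $\binom{c-1}{\ell(\mu)-1}$. So a single long $\mu$ with $\Hom(\bT_\mu,\bS_\lambda)\ne 0$ already pushes the degree up to at least $\ell(\mu)-1$.
\item Unitriangularity $[\bT_\mu]=[\bS_\mu]+(\text{shorter})$ does not exclude this. Your claim that ``multiplicities grow polynomially in the level'' is exactly the unproved structural input.
\end{itemize}

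\textbf{The missing idea.} Work with Jordan--H\"older constituents instead of Hom dimensions, since constituents pass to subobjects. You then need no Hom counting, no (S) or (S$'$), and no growth comparison. Two facts from the structure theory recalled before the proof suffice:
\begin{enumerate}
\item $\lev W$ equals the maximal length of a simple constituent of $\cC(W)$. This follows from $\cC(\bQ^{(n)})=\bigoplus_{\ell(\lambda)\le n}\bT_\lambda^{\oplus m(\lambda)}$, where every length $\le n$ actually occurs, together with unitriangularity.
\item The constituents of $\bS_\lambda\otimes\bS_\mu$ have length $\le\ell(\lambda)+\ell(\mu)$. This comes from the tensor product rule for $M_\lambda\otimes M_\mu$, transported along $\Phi$ and combined with the change of basis.
\end{enumerate}
Let $s$ be the maximal length of a constituent of $M$. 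Exactness of $\otimes$ then shows every constituent of $\cC(X)\otimes M$, and hence of its subobject $\cC(\fA_D(X))$, has length $\le\lev X+s$. Therefore $\lev\fA_D(X)\le\lev X+s$ for every transitive $X$, so $D$ is small. This is the paper's proof.
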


To prove this, we use some structural results about $\cT$, which we now recall. A \defn{weight} is a word $\lambda$ in the alphabet $\{\ww, \bb\}$. Write $\ell(\lambda)$ for the length of $\lambda$. For each weight $\lambda$, there is a simple object $\bS_{\lambda}$ \cite[\S 6.2]{fake} and indecomposable tilting object $\bT_{\lambda}$ \cite[\S 6.5]{fake}; we will say that $\bS_{\lambda}$ and $\bT_{\lambda}$ have \defn{length} $\ell(\lambda)$. The classes $[\bS_{\lambda}]$ form a $\bZ$-basis for the Grothendieck ring $\rK(\cT)$, as do the classes $[\bT_{\lambda}]$. The change of basis is upper triangular: we have $[\bS_{\lambda}] = [\bT_{\lambda}]+\cdots$ where the remaining terms use classes of tilting modules of shorter length \cite[Corollary~6.9]{fake}, and similarly $[\bT_{\lambda}]=[\bS_{\lambda}]+\cdots$ where the remaining terms are classes of simple modules of shorter length (this follows from the definition of $\bT_{\lambda}$).

For each weight $\lambda$, there is an indecomposable object $M_{\lambda}$ of $\cP$ \cite[\S 5.2]{fake}, and the functor $\Phi \colon \cP \to \cT$ maps $M_{\lambda}$ to $\bT_{\lambda}$ \cite[Corollary~6.14]{fake}. The decomposition of $M_{\lambda} \otimes M_{\mu}$ into indecomposable objects is given in \cite[Theorem~5.18]{fake}. The rule shows that every indecomposable summand $M_{\nu}$ satisfies $\ell(\nu) \le \ell(\lambda)+\ell(\mu)$, and that there are summands with $\ell(\nu)=\ell(\lambda)+\ell(\mu)$. Since $\Phi$ is a tensor functor, $\bT_{\lambda} \otimes \bT_{\mu}$ decomposes according to the same rule. Combined with the change of basis discussed above, we see that the simple constituents of $\bS_{\lambda} \otimes \bS_{\mu}$ have length $\le \ell(\lambda)+\ell(\mu)$, and that a simple constituent of length $\ell(\lambda)+\ell(\mu)$ exists.

Let $\bQ^{(n)}$ be the subset of $\bQ$ consisting of tuples $(x_1, \ldots, x_n)$ with $x_1<\cdots<x_n$. These are exactly the transitive $G$-sets \cite[Corollary~16.2]{repst}. We note that $\bQ^{(n)}$ has level $n$ (Proposition~\ref{prop:level}). We have a decomposition
\begin{displaymath}
\cC(\bQ^{(n)}) = \bigoplus_{\ell(\lambda) \le n} \bT_{\lambda}^{\oplus m(\lambda)} \qquad
m(\lambda) = \binom{n-1}{\ell(\lambda)-1},
\end{displaymath}
by \cite[Corollary~5.17]{fake}. In particular, we see that the simple constituents of $\cC(\bQ^{(n)})$ have length $\le n$, and that a simple constituent of length $n$ exists. Therefore, if $X$ is a non-empty $G$-set then the level of $X$ is the maximal length of a simple in $\cC(X)$.

We are finally ready to return to the proof.

\begin{proof}[Proof of Lemma~\ref{lem:fake-Tsmall}]
Let $M$ be an object of $\cT$ such that we have an injection $\cC(\fA_D(X)) \to \cC(X) \otimes M$ for each transitive $G$-set $X$. Let $s$ be the maximum length of a simple object appearing in $M$. The simple constituents of $\cC(\bQ^{(n)}) \otimes M$ have length $\le n+s$, and so $\fA_D(\bQ^{(n)})$ has level $\le n+s$. This shows that $D$ is small. Since $G$ has no non-trivial finitary elements, its only small conjugacy sets are $\emptyset$ and $\{1\}$ (Proposition~\ref{prop:small-conj}), which completes the proof.
\end{proof}

We now come to the main result.

\begin{theorem}
The functor $\cT \to \cZ(\cT)$ is an equivalence.
\end{theorem}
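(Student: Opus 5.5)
The plan is to pass from $\cZ(\cT)$ to the centralizer $\cZ_{\cT}(\cP)$, and then apply Theorem~\ref{thm:Ztriv2} together with Lemma~\ref{lem:fake-Tsmall}.

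First I would verify hypothesis $(\ast)$ of \S\ref{ss:support2}. This is immediate: $\cT$ is pre-Tannakian (hence rigid abelian), and $\Phi \colon \cP \to \cT$ is fully faithful by the construction in \cite{fake}. Next, Lemma~\ref{lem:fake-Tsmall} says every $\cT$-small closed conjugacy set is contained in $\{1\}$. By Theorem~\ref{thm:cset}, which applies because $(G,\bQ)$ is closed and $\bQ$ is countable, every $\cT$-small c-functor has the form $\fA_D$ with $D \subset \{1\}$. Such a c-functor is contained in the diagonal. Theorem~\ref{thm:Ztriv2} then shows that $\cT \to \cZ_{\cT}(\cP)$ is an equivalence.

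It remains to compare $\cZ(\cT)$ with $\cZ_{\cT}(\cP)$. There is a restriction functor $\cZ(\cT) \to \cZ_{\cT}(\cP)$ that restricts a half-braiding to objects of $\cP$. The composite $\cT \to \cZ(\cT) \to \cZ_{\cT}(\cP)$ is the equivalence just obtained. So it suffices to show that the restriction functor is faithful and injective on half-braidings, i.e.\ that a half-braiding on $M$ is determined by its values on $\cP$.

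\begin{itemize}
\item \emph{Faithfulness.} This is trivial, since morphisms in both categories are morphisms of underlying objects.
\item \emph{Essential surjectivity of $\cT \to \cZ(\cT)$.} Let $(M,\gamma) \in \cZ(\cT)$. Its restriction is isomorphic to the symmetric half-braiding $c$ on some object. Since the forgetful functor is unchanged, the restriction of $\gamma$ agrees with $c$ on all of $\cP$.
\item \emph{Fullness.} A morphism in $\cZ(\cT)$ between objects coming from $\cT$ is a morphism in $\cT$ that commutes with the half-braidings. Once the half-braidings are known to equal the symmetry, fullness holds automatically.
\end{itemize}

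The key step, and the one I expect to be the main obstacle, is to show that two half-braidings $\gamma,\gamma'$ on $M$ that agree on $\cP$ agree on all of $\cT$. Here one cannot use that every object is a quotient of a permutation module, because $\mu_2$ is not quasi-regular. I would argue via tilting objects. By \cite{fake}, $\Phi$ hits the tilting objects $\bT_\lambda$, and the $\bT_\lambda$ generate $\cT$.

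First I would try to show that every simple $\bS_\lambda$ is a quotient of some $\bT_\mu$; for example, $\bT_\lambda$ has a map onto $\bS_\lambda$, by the upper-triangularity/highest-weight structure of \cite[\S 6]{fake}. Granting this, every object $N$ of $\cT$ has a presentation $P_1 \to P_0 \to N \to 0$ with $P_i$ in the image of $\cP$. To build it, one uses a surjection from a finite sum of tiltings onto $N$ (by induction on length, using rigidity and the quotient property for simples, together with projectivity-free arguments through duals). The kernel has finite length, so the same construction applies to it.

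Naturality of $\gamma$ and $\gamma'$ with respect to $P_0 \to N$, together with right exactness of $-\otimes M$ and $M \otimes -$ (by rigidity), then forces $\gamma_N = \gamma'_N$. If the surjection claim turns out to be awkward, a fallback is to use rigidity to rewrite $\gamma_N$ via evaluation and coevaluation through an embedding $N \hookrightarrow P$. Such an embedding is obtained dually, since duals of tiltings are tiltings.
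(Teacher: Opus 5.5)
The first half of your proposal is exactly the paper's argument. Hypothesis $(\ast)$ holds, Lemma~\ref{lem:fake-Tsmall} together with Theorem~\ref{thm:cset} puts every $\cT$-small c-functor inside the diagonal, and Theorem~\ref{thm:Ztriv2} then gives $\cT\simeq\cZ_{\cT}(\cP)$. Your reduction of the rest to \emph{uniqueness} of the extension of a half-braiding from $\cP$ to $\cT$ is also correct. The paper proves more (that $\cZ(\cT)\to\cZ_{\cT}(\cP)$ is an equivalence), but only uniqueness is needed.

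The gap is in how you prove uniqueness. You rely on every object of $\cT$ being a quotient of an object of $\cP$, and neither step of your sketch establishes this.
\begin{enumerate}
\item Upper triangularity in $\rK(\cT)$ says nothing about heads. In a highest-weight situation, $\bS_\lambda$ is the head of the standard object, which is a \emph{sub}object of $\bT_\lambda$. The head of a tilting object can consist of other simples: for $\SL_2$ in characteristic $p$, the tilting module $T(p)$ has head $L(p-2)$, not $L(p)$.
\item More seriously, induction on length fails even if every simple were a quotient of a tilting object. Given $0\to N'\to N\to S\to 0$, you would need to lift a surjection $P\to S$ along $N\to S$. That is a projectivity property, and tilting objects lack it. ``Projectivity-free arguments through duals'' does not fill this.
\end{enumerate}
Your fallback, an embedding $N\hookrightarrow P$, is just the dual of the same unproved claim, since $\cP$ is closed under duals.

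The paper closes this step with the equivalence $D^b(\cT)\simeq K^b(\cP)$ of \cite[Proposition~6.15]{fake}. Your route can be repaired with that same input, because uniqueness only needs every $N\in\cT$ to be a \emph{subquotient} of an object of $\cP$:
\begin{enumerate}
\item Suppose $i\colon Z\hookrightarrow P$ with $P\in\cP$. Then $1_M\otimes i$ is a monomorphism, so the naturality identity $(1_M\otimes i)\circ\gamma_Z=\gamma_P\circ(i\otimes 1_M)$ determines $\gamma_Z$ from $\gamma_P$.
\item Suppose $q\colon Z\twoheadrightarrow N$. Then $q\otimes 1_M$ is an epimorphism, so $\gamma_N\circ(q\otimes 1_M)=(1_M\otimes q)\circ\gamma_Z$ determines $\gamma_N$ from $\gamma_Z$.
\end{enumerate}
The subquotient property follows from the derived equivalence, assuming it is induced by $\Phi$ (which the paper's argument also presupposes). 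Then $N$ is isomorphic in $D^b(\cT)$ to a bounded complex $P^\bullet$ of objects of $\cP$. Since $N$ lies in the heart, $N\cong H^0(P^\bullet)$, which is a subquotient of $P^0$.
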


\begin{proof}
Theorem~\ref{thm:Ztriv2}, combined with Lemma~\ref{lem:fake-Tsmall}, shows that $\cT \to \cZ_{\cT}(\cP)$ is an equivalence. Since the bounded derived category of $\cT$ is equivalent to the bounded homotopy category of $\cP$ \cite[Proposition~6.15]{fake}, it follows that the natural functor $\cZ(\cT) \to \cZ_{\cT}(\cP)$ is an equivalence. The result follows.
\end{proof}

\begin{remark}
The measures $\mu_2$ and $\mu_3$ are interchanged by the outer automorphism of $G$ that reverses the line, and so the categories $\uPerm(G, \mu_2)$ and $\uPerm(G, \mu_3)$ are equivalent \cite[Proposition~7.16]{delmap}. Thus all of the above discussion extends to the third Delannoy category. For $\mu_4$, we do not yet know if there is an associated pre-Tannakian category. However, the center of $\cP = \uPerm(G, \mu_4)^{\rm kar}$ can be determined using the methods developed in this paper; the functor $\cP \to \cZ(\cP)$ is an equivalence.
\end{remark}

\section{Further discussion}

In this final section, we discuss a few miscellaneous topics.

\subsection{The center of the category of $G$-sets}

Let $G$ be a pro-oligomorphic group. Let $\bS(G)$ be the category of (finitary smooth) $G$-sets. This is symmetric monoidal under the cartesian product. We now determine its Drinfeld center, under mild technical assumptions. The following is the key definition; see \cite[\S 4.2]{FY} and \cite[\S 3.4]{KW} for discussion of this concept for ordinary groups.

\begin{definition}
A \defn{crossed $G$-set} is a (finitary smooth) $G$-set $M$ equipped with a function $\vert \cdot \vert \colon M \to G$ such that $\vert gm \vert = g \vert m \vert g^{-1}$. A \defn{map} of crossed $G$-sets $f \colon N \to M$ is a map of $G$-sets such that $\vert f(n) \vert = \vert n \vert$. We write $\bT(G)$ for the category of crossed $G$-sets.
\end{definition}

Suppose $M$ is a crossed $G$-set. Since the stabilizer of $m$ is an open subgroup of $G$, it follows that $\vert m \vert$ is a c-smooth element of $G$. Thus $\vert \cdot \vert$ is valued in the group $G_{\sm}$ of c-smooth elements; since $M$ is finitary, the image of $\vert \cdot \vert$ in $G_{\sm}$ is a finite union of conjugacy classes. For a $G$-set $X$, we define a map
\begin{displaymath}
\beta_X \colon X \times M \to M \times X, \qquad (x, m) \mapsto (m, \vert m \vert \cdot x).
\end{displaymath}
One readily verifies that this defines a half-braiding on $M$. We thus have a functor $\bT(G) \to \cZ(\bS(G))$. We leave to the reader the verification that it is fully faithful.

\begin{theorem} \label{thm:ZS}
Suppose $(G, \Omega)$ is a closed oligomorphic permutation group, with $\Omega$ countable. Then the functor $\bT(G) \to \cZ(\bS(G))$ is an equivalence.
\end{theorem}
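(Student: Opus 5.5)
The plan is to show directly that every half-braiding on a $G$-set is of the form $(x,m) \mapsto (m, \vert m \vert x)$. Full faithfulness of $\bT(G) \to \cZ(\bS(G))$ is left to the reader in the paper, so only essential surjectivity remains. Let $M$ be a $G$-set with a half-braiding $\beta_X \colon X \times M \to M \times X$, natural in $X$ and compatible with products.

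\emph{Step 1: $\beta$ does not move $m$.} Apply naturality to the unique map $X \to \bbone$, and use that $\beta_{\bbone}$ is the identity; this follows from the product axiom applied to $\bbone \times \bbone = \bbone$. Composing $\beta_X$ with the projection $M \times X \to M$ then gives the projection $X \times M \to M$. Hence we can write
\[
\beta_X(x,m) = (m, \phi_X(x,m))
\]
for a function $\phi_X \colon X \times M \to X$, and the following hold.
\begin{itemize}
\item For fixed $m$, the map $\phi_X(-,m)$ is a bijection of $X$, since $\beta_X$ is invertible and fixes the $M$-coordinate.
\item $\phi$ is natural in $X$: $f(\phi_Y(y,m)) = \phi_X(f(y),m)$ for every map $f \colon Y \to X$ of $G$-sets.
\item $\phi$ is equivariant: $\phi_X(hx,hm) = h\phi_X(x,m)$, because $\beta_X$ is a map of $G$-sets.
\item $\phi$ is multiplicative: $\phi_{X \times Y}(-,m) = \phi_X(-,m) \times \phi_Y(-,m)$, by the hexagon axiom $\gamma_{X\times Y} = (\gamma_X \times 1)(1 \times \gamma_Y)$.
\end{itemize}

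\emph{Step 2: build the group element.} Since $(G,\Omega)$ is oligomorphic, $\Omega$ is itself a (finitary smooth) $G$-set. Fix $m \in M$ and put $g = \phi_\Omega(-,m)$, a bijection $\Omega \to \Omega$ by Step 1. By multiplicativity, $\phi_{\Omega^n}(-,m)$ is $g$ applied coordinatewise. Naturality for the inclusion of a $G$-orbit $W \subset \Omega^n$ shows that coordinatewise $g$ maps each orbit $W$ into itself. So for every finite tuple $a$ there is $h_a \in G$ with $ga = h_a a$. This is exactly the condition in the definition of a closed permutation group, so $g \in G$; here is where closedness of $G$ is used.

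We then check that $\phi_X(x,m) = gx$ for every $G$-set $X$.
\begin{itemize}
\item For $\Omega$-smooth $X$, this follows from naturality along an isomorphism of $X$ with a union of orbits in powers $\Omega^n$.
\item For general transitive $X$, the paper shows $X$ is a quotient $\pi \colon \tilde X \to X$ of an $\Omega$-smooth $G$-set. Naturality then gives $\phi_X(\pi(y),m) = \pi(gy) = g\pi(y)$.
\item For arbitrary $X$, this follows from naturality along the inclusion of orbits.
\end{itemize}

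\emph{Step 3: the crossed structure.} Define $\vert m \vert = \phi_\Omega(-,m) \in G$. Equivariance from Step 1 applied to $X = \Omega$ gives
\[
\vert hm \vert (hx) = h \vert m \vert x \quad \text{for all } x \in \Omega,
\]
so $\vert hm \vert = h \vert m \vert h^{-1}$. Hence $(M, \vert\cdot\vert)$ is a crossed $G$-set, and by Step 2 its associated half-braiding is exactly $\beta$. This proves essential surjectivity.

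The main obstacle is Step 2: identifying the abstract bijections $\phi_X(-,m)$ with a single element of $G$. The idea is to take $X = \Omega$ as a test object. Multiplicativity reduces tuples to $\Omega$, and naturality with respect to orbit inclusions supplies the local witnesses $h_a$ that closedness needs. The remaining delicate points are routine but should be checked: that $\beta_{\bbone}$ is the identity, and that the inverse braiding makes $\phi_\Omega(-,m)$ surjective.
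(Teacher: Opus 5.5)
Your proof is correct, but its key step, producing an element of $G$ from the half-braiding, goes differently from the paper's.

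The paper never uses $\Omega$ as a test object. It takes the inverse limit of the $\beta_{G/G(A)}$ over finite $A \subset \Omega$ to get a bijection $\beta^* \colon G^* \times M \to M \times G^*$. Here $G^* = \varprojlim_A G/G(A)$ is the closure of $G$ in the monoid of injections $\Omega \to \Omega$, and its unit group is $G$ by closedness. The paper then defines $\vert m \vert$ by $\beta^*(1,m) = (m, \vert m \vert)$. With that choice, agreement of $\vert m \vert$ with elements of $G$ on finite sets is automatic. The formula $\beta_X(x,m) = (m, \vert m \vert x)$ on an arbitrary $X$ also follows at once from naturality along the maps $G/G(A) \to X$. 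The extra work is showing that $\vert m \vert$ is invertible. The paper does this by using bijectivity of $\beta^*$ to find $h \in G^*$ with $\beta^*(h,m) = (m,1)$, which gives $\vert m \vert h = h \vert m \vert = 1$.

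Your argument is the dual of this. Testing on $\Omega$ makes bijectivity free: it is just invertibility of $\beta_\Omega$ restricted to the fiber over $m$. The work moves instead to two places: showing local agreement with $G$, via orbit preservation on $\Omega^n$, and propagating $\phi_X(x,m) = gx$ from $\Omega$ to all $G$-sets through $\Omega$-smooth covers.

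What each route buys:
\begin{itemize}
\item Yours is more elementary. It needs no auxiliary monoid, no inverse limits, and no extension of the action to $G^*$.
\item The paper's works directly with the coset spaces $G/G(A)$, so it never needs the detour through $\Omega$-smooth sets and quotients.
\end{itemize}

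One small simplification: the coordinatewise description of $\phi_{\Omega^n}(-,m)$ does not need the hexagon axiom. Naturality along the projections $\Omega^n \to \Omega$ already gives it.
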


\begin{proof}
Define $G^*$ to be the inverse limit of $G/G(A)$ over finite subsets $A$ of $\Omega$. This is, essentially by definition, the topological closure of $G$ in the space of injections $\Omega \to \Omega$. It is clear that $G^*$ is a monoid. Since $G$ is closed, the group of invertible elements in $G^*$ is $G$. If $X$ is a $G$-set then there is a unique continuous action of $G^*$ on $X$ extending the action of $G$. To see this, suppose $g \in G^*$ and $x \in X$ are given. Then $x$ is fixed by some $G(A)$, and there is some $h \in G$ such that $ga=ha$ for all $a \in A$. The action is defined by $gx=hx$.

Let $M$ be an object in the center, and let
\begin{displaymath}
\beta_X \colon X \times M \to M \times X
\end{displaymath}
denote the half-braiding on $M$. If $X=\bbone$ is a one-point $G$-set then $\beta_X$ is the identity on $M$, i.e., we have $\beta_X(\ast, m)=(m, \ast)$. From this and the naturality of $\beta$, it follows that for any $G$-set $X$ and $x \in X$, we have $\beta_X(x,m)=(m,y)$ for some $y \in X$. Taking the inverse limit of $\beta_{G/G(A)}$ over finite subsets $A$ of $\Omega$, we obtain a bijection
\begin{displaymath}
\beta^* \colon G^* \times M \to M \times G^*.
\end{displaymath}
Define $\vert \cdot \vert \colon M \to G^*$ by
\begin{displaymath}
\beta^*(1,m) = (m, \vert m \vert).
\end{displaymath}
Essentially by definition, we have
\begin{displaymath}
\beta_{G/G(A)}(1,m) = (m, \vert m \vert)
\end{displaymath}
for all finite subsets $A \subset \Omega$. Suppose $X$ is a $G$-set and $x \in X$. There is some $G(A)$ fixing $x$, and so we have a map $G/G(A) \to X$ taking 1 to~$x$. From the naturality of $\beta$, we find
\begin{displaymath}
\beta_X(x,m) = (m, \vert m \vert x).
\end{displaymath}
For $g \in G$, we have
\begin{displaymath}
(gm, g\vert m \vert x) = g \beta_X(x,m) = \beta_X(gx,gm) = (gm, \vert gm \vert gx).
\end{displaymath}
Thus $\vert gm \vert$ and $g\vert m \vert g^{-1}$ act the same on all $G$-sets, and are therefore equal.

The above discussion shows that $M$ is essentially a crossed $G$-set, except that $\vert \cdot \vert$ maps to $G^*$. To complete the proof, it suffices to show that $\vert \cdot \vert$ maps to $G$. Fix $m$, and put $g=\vert m \vert$. Since $\beta^*$ is a bijection, there is some $h \in G^*$ such that $\beta^*(h,m) = (m,1)$. Arguing similarly to the above, we find that $\beta_X^{-1}(m,x)=(hx,m)$ for all $G$-sets $X$ and $x \in X$. We thus see that $ghx=hgx=x$ for all $x$ and $X$, which implies $gh=hg=1$. Hence $g$ is invertible, and thus belongs to $G$.
\end{proof}

\begin{remark}
Let $M$ be an object of $\cZ(\bS(G))$ and write $\beta$ for the half-braiding. For a transitive $G$-set $X$, define $\fA_M(X) \subset X \times X$ to be the set of all pairs $(x,y)$ for which there exists some $m \in M$ such that $\beta(x,m)=(m,y)$. It is easy to directly verify that $\fA_M$ is a c-functor. Thus, in the setting of the theorem, we have $\fA_M=\fA_D$ for some closed conjugacy set $D \subset G$ (Theorem~\ref{thm:cset}). This set $D$ is the image of $\vert \cdot \vert \colon M \to G$.
\end{remark}

\subsection{The MZ operation} \label{ss:mz}

Two objects $X$ and $Y$ of a braided monoidal category $(\cZ,c)$ are said to \defn{centralize} each other if
\begin{displaymath}
c_{Y,X} \circ c_{X,Y} = \id_{X \otimes Y}
\end{displaymath}
(\cite[Definition~8.20.1]{EGNO}). For example, if $\cC$ is a symmetric monoidal category then $Z \in \cZ(\cC)$ centralizes $\cC$ if and only if $Z \in \cC \subset \cZ(\cC)$. 

The \defn{M\"uger center} $\cM(\cZ)$ of $(\cZ,c)$ is the full subcategory of objects that centralize all other objects. It is a symmetric monoidal subcategory of $\cZ$. If in addition $\cZ$ is $k$-linear abelian then it is called \defn{non-degenerate} if $\cM(\cZ)=\Vec_k$. 

We write $\cM\cZ$ for the composition of the Drinfeld center and M\"uger center operations on symmetric categories. Thus $\cM\cZ(\cC)$ is a monoidal subcategory of $\cC$. We now make some comments on this operation in the case of pre-Tannakian categories over an algebraically closed field $k$. 

The following proposition is well-known; see \cite[Corollary~8.20.13]{EGNO} for the fusion case and \cite[8.6.3]{EGNO}, \cite{Shimizu} in general.

\begin{proposition}\label{nondeg}
If $\cC$ is a finite (not necessarily braided) tensor category over $k$ then $\cZ(\cC)$ is non-degenerate, i.e., $\cM\cZ(\cC)=\Vec_k$.
\end{proposition}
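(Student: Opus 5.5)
The plan is to use the standard characterization of the Müger center of $\cZ(\cC)$ via the canonical (forgetful) functor and its right adjoint, following the Shimizu/EGNO approach. Let $F \colon \cZ(\cC) \to \cC$ be the forgetful functor. Since $\cC$ is finite, $F$ has a right adjoint $I \colon \cC \to \cZ(\cC)$, and $A = I(\bone)$ is a commutative algebra in $\cZ(\cC)$. I would first record that $\Hom_{\cZ(\cC)}(\bone, A) = \Hom_{\cC}(\bone,\bone) = k$ by adjunction. I would then use that $\cZ(\cC)$ is equivalent to the category of $A$-modules in $\cZ(\cC)$ which are \emph{local} (dyslectic) in a suitable sense. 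More usefully, I would use that $\cC \simeq \Mod_{\cZ(\cC)}(A)$ as module categories, so that the free-module functor $Z \mapsto A \otimes Z$ recovers $F$.

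Next, take $Z$ in the Müger center $\cM(\cZ(\cC))$. The key step is to show that $Z \cong \bone^{\oplus n}$. Because $Z$ centralizes every object, it centralizes $A$ in particular. Hence the half-braiding of $Z$, viewed through $F$, is detected by the structure of $A$. Concretely, I would show that for $Z$ transparent, the free $A$-module $A \otimes Z$ is a local module. Its underlying object $F(Z)$ would then have a half-braiding agreeing with the given one, and at the same time it would be forced to be trivial. The cleanest route is through the double-braiding: transparency gives $c_{A,Z} c_{Z,A} = \id$. This makes $A \otimes Z$ an object of the category of local $A$-modules, which is equivalent to $\cZ(\cC)$ composed with a functor that sends $Z$ to a multiple of $\bone$.

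An alternative route I would keep in reserve is via the factorizability criterion. Since $\cC$ is finite, $\cZ(\cC)$ is finite and unimodular, and the Drinfeld map $\cO \to \cO^*$ (Lyubashenko coend) is an isomorphism. This is the content of \cite{Shimizu}, where factorizable is shown to be equivalent to non-degenerate. Transparent objects $Z$ then give maps $\bone \to \cO$ killed by the difference of the Drinfeld map and the counit pairing. Non-degeneracy of the Hopf pairing forces these maps to factor through the unit, so $Z$ is trivial.

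The main obstacle is the identification of the transparent part with $\Vec_k$ in the non-semisimple case. There one cannot argue via simple objects and the $S$-matrix as in the fusion case of \cite[Corollary~8.20.13]{EGNO}. Instead I would rely on the Hopf-algebraic statement that the Drinfeld double $D(H)$ (locally, through a finite tensor category presentation $\cC \simeq \Comod(H)$-type bimodule categories) is factorizable. Non-degeneracy would then be read off from invertibility of the Drinfeld map, as in \cite[8.6.3]{EGNO}.
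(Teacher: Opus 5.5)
The paper gives no argument of its own here. It cites \cite[Corollary~8.20.13]{EGNO} for the fusion case and \cite[8.6.3]{EGNO}, \cite{Shimizu} in general: $\cZ(\cC)$ is factorizable, and factorizability forces the M\"uger center to be $\Vec_k$. Your fallback points to the same sources, but your main argument has a genuine gap.

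\textbf{The main route.} The claim that $\cZ(\cC)$ is equivalent to the category of local $A$-modules is false. For $A=I(\bone)$ the local modules form only $\Vec_k$. This is visible already for $\cC=\Rep(G)$ with $G$ a nontrivial finite group, where $A$ is $\cO(G)$ with the conjugation action and $\delta_g$ in degree $g$.

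That fact, $\cZ(\cC)^{\mathrm{loc}}_{I(\bone)}\simeq\Vec_k$, is exactly the ingredient your argument needs. You correctly observe that $A\otimes Z$ is local for transparent $Z$. But to get from there to $A\otimes Z\cong A^{\oplus n}$, and hence $F(Z)\cong\bone^{\oplus n}$ via $\cZ(\cC)_A\simeq\cC$, you must know the local modules are just $\Vec_k$. You never prove this. Its standard proofs use the non-degeneracy of $\cZ(\cC)$, for instance via $\mathrm{FPdim}\,\cD^{\mathrm{loc}}_A=\mathrm{FPdim}\,\cD/\mathrm{FPdim}(A)^2$ in the fusion case. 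So the route is circular as it stands.

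Even granting $F(Z)\cong\bone^{\oplus n}$, you still have to show that the half-braiding of $Z$ is the canonical one; ``it would be forced to be trivial'' is not an argument. This can be done:
\begin{enumerate}
\item Transparency says the half-braiding of $Z$ at $F(W)$ is inverse to the half-braiding of $W$ at $F(Z)$.
\item Since $F(Z)\cong\bone^{\oplus n}$, the latter is the canonical isomorphism.
\item Naturality, together with the fact that every object of $\cC$ is a subquotient of some $F(W)$, then extends this to all of $\cC$.
\end{enumerate}

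\textbf{The fallback.} This also needs repair. A general finite tensor category need not be $\Rep(H)$ or $\Comod(H)$ for a Hopf algebra $H$, because it need not admit a fiber functor; an example is $\Vec_G^\omega$ with $[\omega]\ne 0$. So factorizability of Drinfeld doubles does not suffice. You need the categorical statement \cite[8.6.3]{EGNO} that $\Theta\colon \cZ(\cC)\boxtimes\cZ(\cC)^{\mathrm{rev}}\to\cZ(\cZ(\cC))$ is an equivalence.

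Given that, the conclusion is elementary. It needs neither the hard direction of \cite{Shimizu} nor the maps $\bone\to\cO$ you describe:
\begin{enumerate}
\item If $Z$ is transparent, the half-braidings on $Z$ coming from $c$ and from $c^{-1}$ coincide, so $\Theta(Z\boxtimes\bone)\cong\Theta(\bone\boxtimes Z)$.
\item By full faithfulness, $Z\boxtimes\bone\cong\bone\boxtimes Z$.
\item Let $P$ be the projective cover of $\bone$, so $\Hom(P,\bone)=k$. Apply the exact functor induced by $X\boxtimes Y\mapsto X\otimes_k\Hom(P,Y)$. This gives $Z\cong\bone^{\oplus m}$.
\end{enumerate}
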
 

This proposition fails, however, when $\cC$ is a nested union of finite tensor categories. For example, if $G$ is a profinite group then by Proposition~\ref{prop:Opi-center}, $\cZ(\Rep{G})$ is the category of conjugation-equivariant finite length sheaves of $k$-vector spaces on $G$, and such a sheaf must be supported on c-smooth elements, i.e., those generating a finite conjugacy class. Thus, denoting by $G_{\sm} \subset G$ the (normal) subgroup of smooth elements and by $\ol{G_{\sm}}$ its closure in $G$, we obtain that $\cM\cZ(\Rep{G})=\Rep(G/\ol{G_{\sm}})$. In particular, if $G$ has no non-trivial c-smooth elements (e.g., $G = \SL_2(\bZ_p)/\{ \pm 1 \}$), then $\cM\cZ(\cC)=\cC$. 

Nevertheless, we have:

\begin{proposition}\label{nondeg1}
Proposition~\ref{nondeg} holds for the category $\cC=\Rep(G)$ where $G$ is a connected affine group scheme over $k$, i.e., $\cM\cZ(\cC)=\Vec_k$.  
\end{proposition}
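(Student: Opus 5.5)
We will use Proposition~\ref{prop:Opi-center} to identify $\cZ(\Ind{\cC})$ with the category of $\cO(G)$-modules in $\Ind{\cC}$, i.e., $G$-equivariant quasi-coherent sheaves on $G$ with $G$ acting by conjugation. The half-braiding on such a sheaf $\cF$ with an object $W \in \cC$ is given fiberwise at $g \in G$ by $w \otimes v \mapsto v \otimes gw$. The plan has three steps: describe the double braiding $c_{Z,W}\circ c_{W,Z}$ for $Z=(\cF,\text{half-braiding})$ and an arbitrary $W=(\cF',\ldots)$ in $\cZ(\cC)$; show that centralizing everything forces $\cF$ to be supported at the identity with trivial half-braiding; and finally show that such an object centralizes $\cC \subset \cZ(\cC)$ only if the $G$-action on $\cF$ is trivial.

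For the first step, the double braiding on $\cF \otimes \cF'$ restricted to the fiber over $(g,h)$ acts by $v \otimes w \mapsto hv \otimes gw$. If $Z$ centralizes every object, then in particular $Z$ centralizes all $W \in \cC \subset \cZ(\cC)$, which are supported at $1$. On these the double braiding is simply the half-braiding of $Z$ followed by the symmetry. By the remark in \S\ref{ss:mz}, this already forces $Z \in \cC$: its $\cO(G)$-action factors through the augmentation, so $\cF$ is set-theoretically supported at $1$ with trivial half-braiding. A more careful version of this step is needed because $\cF$ may have nilpotent structure at $1$, as the nontrivial extensions of Remark~\ref{rmk:IndZ} show. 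The cleanest approach is to take $W=\cO(G)$ viewed as an object of $\cZ(\Ind{\cC})$ (the regular module). Centralizing $\cO(G)$ forces the coaction of $\cO(G)$ on $Z$ to be trivial, i.e., the underlying $G$-representation of $Z$ is trivial. Here one must check that the Müger condition can be tested against ind-objects, which holds because $\cZ(\cC)$-objects are locally finite and $\cO(G)$ is a union of finite-dimensional $G$-stable subcomodules carrying compatible finite-length module structures. Alternatively, we can test against finite-length quotients $\cO(G)/I^n$ for $G$-stable ideals $I$ of finite codimension. Connectedness is not needed here; it enters in the final step.

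Next, once the $G$-action on the underlying space $V$ of $Z$ is trivial, the equivariance of the $\cO(G)$-module structure means that $\cO(G) \to \End(V)$ is $G$-equivariant with trivial target action. It therefore factors through $\cO(G)^G$, and its image is a finite-dimensional quotient of $\cO(G)$. The key point is to show that this image is just $k$ acting through evaluation at $1$. A finite-dimensional quotient of $\cO(G)$ corresponds to a finite subscheme $S \subset G$. Conjugation-invariance of the ideal means $S$ is stable under the conjugation action of $G$. Because $G$ is connected, $G$ acts trivially on the finite set of points of $S$, and so these points are central. We then pair $Z$ against objects supported at a central point $z \neq 1$, or against the double braiding with $W=\cO(G)$ again, to see that a nontrivial central point acts nontrivially on some representation $W$, contradicting centralization. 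Nilpotent thickenings at central points are treated in the same way: the derivative directions produce a nontrivial Lie algebra action, which is again detected by the double braiding with suitable $W$.

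We expect the main obstacle to be this last step: ruling out nilpotent structure on a conjugation-invariant finite subscheme. Our first attempt will be to show that such a subscheme, being conjugation-stable in a connected group, lies in the center $Z(G)$, and that any nonzero tangent direction at a central point gives a nonzero action on a faithful representation $W$ of $G$. Such a $W$ exists because $G$ is the limit of algebraic groups, so we may reduce to $G$ of finite type. That nonzero action then breaks $c\circ c=\id$ for the object $W \otimes(\text{regular }\cO(G)\text{-module})$.
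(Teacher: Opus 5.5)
There is a genuine gap at the one step that carries the content. Your first step is fine: centralizing $\cC\subset\cZ(\cC)$ forces the half-braiding of $Z$ to be the symmetry, so $Z=V\in\cC$ with $\cO(G)$ acting through the counit. What remains is to show that $G$ acts trivially on $V$. You do this by testing the double braiding against the regular module $\cO(G)\in\cZ(\Ind\cC)$. But the M\"uger condition only constrains braidings with objects of $\cZ(\cC)$, and $\cO(G)$ is not in $\Ind\cZ(\cC)$. As a module over itself it is generated by $1$, so it is a union of finite-dimensional $\cO(G)$-submodules only when $G$ is finite. The finite-dimensional subcomodules you invoke are not $\cO(G)$-submodules. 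Your remark that ``connectedness is not needed here'' is the warning sign: the same argument would give $\cM\cZ(\Rep G)=\Vec_k$ for every affine group scheme. That contradicts $\cM\cZ(\Rep\bO_{2n})=\Rep(\bZ/2)$ and the profinite examples of \S\ref{ss:mz}; for instance, the sign character $\det$ centralizes all of $\cZ(\Rep\bO_{2n})$ but not $\cO(\bO_{2n})$.

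Your fallback of testing against finite-length quotients $\cO(G)/I^n$ is the right kind of test object. However, it only shows that the coaction $V\to\cO(G)\otimes V$ becomes trivial modulo the ideals used, and you never argue that these ideals intersect to zero. Take $I=\fm$, the augmentation ideal. Centralizing $\cO(G)/\fm^n$ shows that $V\to\cO(G)\otimes V\to(\cO(G)/\fm^n)\otimes V$ equals $\iota\otimes 1_V$ for all $n$, where $\iota\colon\bone\to\cO(G)/\fm^n$ is the unit. Connectedness then enters precisely through Krull's theorem $\bigcap_n\fm^n=0$, which forces the coaction to be trivial; this is the paper's proof.

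This argument also uses that $\cO(G)/\fm^n$ is finite-dimensional, which holds for $G$ of finite type. Without that hypothesis the conclusion can fail. For example, take the free pro-unipotent group on two generators in characteristic $0$. Every finite-dimensional conjugation-equivariant $\cO(G)$-module is killed by $\fm$, for two reasons. Its support is central, hence equal to $\{1\}$. A nonzero finite-dimensional equivariant quotient of $\fm/\fm^2$ would dualize to a nonzero finite-dimensional ideal of the completed free Lie algebra, and none exists. Hence $\cZ(\cC)=\cC$ in that case.

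The rest of your plan is misdirected. The ``third step'' as stated is false, since every object of $\cC$ centralizes $\cC$ by symmetry. The analysis of conjugation-stable finite subschemes, central points and nilpotent thickenings only re-proves that $\cO(G)$ acts on $Z$ through evaluation at $1$. Your first step already gives this, so there is no nilpotent structure left to rule out.
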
 

\begin{proof}
In this case, by Proposition \ref{prop:Opi-center}, $\cZ(\cC)$ is the category of conjugation-equivariant finite dimensional $\cO(G)$-modules. Let us show that it is non-degenerate. Let $V\in \cM\cZ(\cC) \subset \cC$, i.e., $V$ is a representation of $G$. Let $\fm$ be the augmentation ideal of $\cO(G)$. Since $G$ is connected, we have $\bigcap_{n\ge 1} \fm^n=0$, thus 
\begin{equation} \label{inclu}
\cO(G)\subset \varprojlim \cO(G)/\fm^n.
\end{equation}  
Note that $G=\varprojlim G_r$ where $G_r$ are of finite type, so $\cO(G)/\fm^n = \varinjlim \cO(G_r)/\fm_r^n$, where $\fm_r$ is the augmentation ideal in $\cO(G_r)$. Thus $\cO(G)/\fm^n$ is a locally finite $G$-module, hence centralizes $V$. This implies that the composition $V \to \cO(G) \otimes V \to (\cO(G)/\fm^n) \otimes V$ of the coaction with reduction modulo $\fm^n \otimes V$ is just the tensor product of the unit map $\iota \colon \bone \hookrightarrow \cO(G)/\fm^n$ with $1_V$. Since this holds for all $n$, \eqref{inclu} implies that $G$ acts trivially on $V$, as desired. 
\end{proof} 

Proposition \ref{nondeg1} extends verbatim to connected affine supergroup schemes. However, it fails in the disconnected case. For example, working over the complex numbers, for $\cC = \Rep{\bO_{2n}}$ we have $\cM\cZ(\cC) = \Rep(\bZ/2)$. Indeed, the conjugacy classes with $\det=-1$ in $\bO_{2n}$ do not contribute to $\cZ(\cC)$, since they are infinite, so $\cZ(\cC) = \cZ(\Rep{\SO_{2n}})^{\bold Z/2}$, the equivariantization 
of $\cZ(\Rep{\SO}_{2n})$ by $\bZ/2=\bO_{2n}/\SO_{2n}$, which implies the claim since $\cM\cZ(\Rep{\SO_n}) = \Vec_{\bC}$ by Proposition~\ref{nondeg1}.

We also have the following proposition, giving more such examples. For a tensor category $\cC$, let $S^n\cC:=(\cC^{\boxtimes n})^{\fS_n}$, the equivariantization of the $n$-th Deligne tensor power of $\cC$ by  $\fS_n$.

\begin{proposition} \label{smash}
Let $\cC$ be a pre-Tannakian category. If $\cC$ is infinite then $\cZ(S^n\cC) = S^n\cZ(\cC)$, thus  $\cM\cZ(S^n\cC) = S^n\cM\cZ(\cC)$. 
\end{proposition}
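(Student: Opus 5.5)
We would model the argument on the finite-group analogue: for a finite group $\Gamma$ acting on a fusion category, $\cZ(\cD^{\Gamma})$ is the $\Gamma$-equivariantization of the relative center $\cZ_{\cD}(\cD\rtimes\Gamma)$, which decomposes as $\bigoplus_{\sigma}$ of $\sigma$-twisted centers $\cZ_{\cD}(\cD_\sigma)$. We apply this with $\cD=\cC^{\boxtimes n}$ and $\Gamma=\fS_n$ permuting factors, working in $\Ind$ where needed. The plan is to show that for $\sigma\ne 1$ the twisted component vanishes whenever $\cC$ is infinite, so only the untwisted summand $\cZ(\cC^{\boxtimes n})=\cZ(\cC)^{\boxtimes n}$ survives, and equivariantizing gives $S^n\cZ(\cC)$.

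\emph{Step 1.} Write $S^n\cC$ as the de-equivariantization picture: $S^n\cC$ contains $\Rep(\fS_n)$ as a Tannakian subcategory (via the symmetric structure), and $\cC^{\boxtimes n}=(S^n\cC)_{\fS_n}$. By the general theory of centers of equivariantizations (e.g.\ Gelaki--Naidu--Nikshych, adapted to pre-Tannakian categories using \S\ref{ss:fund}), $\cZ(S^n\cC)\simeq\big(\bigoplus_{\sigma\in\fS_n}\cZ_\sigma\big)^{\fS_n}$, where $\cZ_\sigma$ is the category of objects $Z\in\cC^{\boxtimes n}$ with a twisted half-braiding $X\otimes Z\cong Z\otimes\sigma(X)$. \emph{Step 2.} For $\sigma=1$ we get $\cZ(\cC^{\boxtimes n})$, which equals $\cZ(\cC)^{\boxtimes n}$ (a half-braiding against $\cC^{\boxtimes n}$ is the same as commuting half-braidings against each factor; via Proposition~\ref{prop:Opi-center}, $\cO(\pi(\cC^{\boxtimes n}))=\cO(\pi(\cC))^{\otimes n}$). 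Its $\fS_n$-equivariantization is $S^n\cZ(\cC)$.

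\emph{Step 3 (main obstacle).} Show $\cZ_\sigma=0$ for $\sigma\ne1$ when $\cC$ is infinite. Take $\sigma$ moving index $1$ to $2$, and test the twisted half-braiding on $X=V\boxtimes\bone\boxtimes\cdots\boxtimes\bone$: it gives $V\otimes_1 Z\cong Z\otimes_2 V$, an isomorphism in $\cC^{\boxtimes n}$ between $V$ tensored in slot $1$ and in slot $2$. Comparing simple constituents (or Frobenius--Perron/length, e.g.\ the length of $\Hom$ from $\bone$, or taking internal invariants in slot $2$), one gets that the ``slot-$1$ content'' of $Z$ multiplied by $V$ matches content independent of $V$ in slot $1$. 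Concretely, $Z$ has finite length, so only finitely many simples of $\cC$ occur in its slot-$1$ components; choosing $V$ a simple not occurring in the finite set $\{$constituents of $S\otimes W^*\}$ for $S,W$ ranging over slot-$1,2$ constituents of $Z$ — possible because $\cC$ is infinite, i.e.\ has infinitely many simples — produces a contradiction unless $Z=0$. Making this rigorous (choice of $V$ and the length bookkeeping in the Deligne product) is where care is needed; I would first try comparing $\Hom(S\boxtimes T\boxtimes\cdots, -)$ on both sides.

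\emph{Step 4.} Conclude $\cZ(S^n\cC)\simeq S^n\cZ(\cC)$ as braided categories, and since Müger centers commute with Deligne products and equivariantization by the permutation action, $\cM\cZ(S^n\cC)=S^n\cM\cZ(\cC)$.
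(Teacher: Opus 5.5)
Your Step~1 is the decomposition the paper also uses: via Proposition~\ref{prop:Opi-center}, the underlying object $F(M)\in\cC^{\boxtimes n}$ of $M\in\cZ(S^n\cC)$ splits as $\bigoplus_{s\in\fS_n}F(M)_s$, and the task is to kill the summands with $s\neq 1$. The gap is in Step~3, where you read ``$\cC$ is infinite'' as ``$\cC$ has infinitely many simple objects.'' For pre-Tannakian categories these are not the same. The hypothesis only says that $\cC$ is not a finite tensor category. It is there to exclude cases like $\cC=\Vec$, where $\cZ(S^n\Vec)=\cZ(\Rep\fS_n)$ is much larger than $S^n\Vec=\Rep\fS_n$. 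There are infinite pre-Tannakian categories whose only simple object is $\bone$, for example $\Rep(\mathbf{G}_a)$ in characteristic~$0$, or $\Rep(P)$ for an infinite pro-$p$ group $P$ in characteristic~$p$. The proposition holds for these, but your argument gives nothing there. The only constraint you extract from the twisted half-braiding is that $V_{(1)}\otimes Z$ and $Z\otimes V_{(j)}$ have the same composition factors. When $\bone$ is the only simple object this is automatic, since the Grothendieck ring is $\bZ$. So the vanishing of the twisted components needs an input that sees more than composition factors.

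The paper supplies this input as follows. Pick $j$ with $s(j)\neq j$, and regard $F(M)_s$ as a module over the $j$-th tensor factor $\cO(\pi)$, where $\pi=\pi(\cC)$. Because $s$ moves slot $j$, the compatibility between this action and the canonical $\pi$-action makes $F(M)_s$ a Hopf module over $\cO(\pi)$. The fundamental theorem for Hopf modules then gives $F(M)_s\cong\cO(\pi)\otimes V$ with $V\in\cC$. Since $\cC$ is infinite, $\cO(\pi)$ has infinite length, so finite length of $F(M)_s$ forces $V=0$. This uses the actual module structure rather than isomorphism classes, and it covers every infinite $\cC$.

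In your restricted setting your counting is a valid, more elementary alternative. That setting is: infinitely many simples, and $k$ algebraically closed, so that simples of $\cC^{\boxtimes n}$ are outer products. The bookkeeping that makes it rigorous goes through $V\hookrightarrow V\otimes W\otimes W^*$. Suppose every composition factor of $V\otimes W$ lay in the finite set $A$ of slot-$1$ factors of $Z$. Then $V$ would be a composition factor of some $S\otimes W^*$ with $S,W\in A$. Choosing $V$ outside this finite set produces a slot-$1$ factor of $V_{(1)}\otimes Z$ outside $A$, whereas all slot-$1$ factors of $Z\otimes V_{(j)}$ lie in $A$.
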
 

\begin{proof}
Let $\pi=\pi(\cC)$, and $F \colon S^n\cC \to \cC$ be the natural functor. By Proposition~\ref{prop:Opi-center}, for every object $M$ of $\cZ(S^n\cC)$, we have a decomposition $F(M)=\bigoplus_{s \in \fS_n} F(M)_s$, where $F(M)_s$ is an $\mathcal O(\pi)^{\otimes n}$-module. Suppose $s(j)\ne j$ for some $j\in [1,n]$, and regard $F(M)_s$ as an $\mathcal O(\pi)$-module using the $j$-th factor in $\mathcal O(\pi)^{\otimes n}$. By the fundamental theorem for Hopf modules \cite[Proposition~2.3.3]{AG}, this module is free, i.e., we have $F(M)_s \cong \cO(\pi) \otimes V$, where $V\in \cC$. Since $F(M)_s$ must have finite length, if $\cC$ is infinite then $F(M)_s=0$. This implies the statement.
\end{proof}

The same result with the same proof applies more generally when $\fS_n$ is replaced by its subgroup $G$: if $\cC$ is infinite then $\cZ((\cC^{\boxtimes n})^G) = (\cZ(\cC)^{\boxtimes n})^G$ and $\cM\cZ((\cC^{\boxtimes n})^G) = (\cM\cZ(\cC)^{\boxtimes n})^G$.

For disconnected supergroup schemes, we have the following proposition, in which for simplicity we restrict to finite type. Let $G$ be an affine supergroup scheme of finite type over $k$ with even part $\overline G$, and let $\varepsilon \in \ol{G}(k)$ be a (central) element of order $\le 2$ inducing the parity operator on $\cO(G)$. Let $\cC = \Rep(G,\varepsilon)$ be the category of representations of $G$ on superspaces in which $\varepsilon$ acts by parity (i.e., a general finitely tensor-generated super-Tannakian category over $k$). Let $\ol{G}^\circ$ be the identity component of $\ol{G}$ and $Z$ be the centralizer of $\ol{G}^\circ$ in $\ol{G}$; note that $\varepsilon \in Z(k)$. Thus $Z \ol{G}^\circ \subset \ol{G}$ is a closed normal subgroup scheme, and $\ol{G}/Z\ol{G}^\circ$ is a finite group. 

\begin{proposition} \label{nondeg2}
We have $\cM\cZ(\cC) = \Rep(\ol{G}/Z\ol{G}^\circ)$. Thus for any finitely tensor-generated super-Tannakian category $\cC$, we have $\cM\cZ^2(\cC)=\Vec_k$.
\end{proposition}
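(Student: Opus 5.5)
We will use Proposition~\ref{prop:Opi-center} in its super form: $\cZ(\cC)$ is the category of finite-dimensional $\cO(G)$-modules in $\cC$ that are equivariant for the conjugation action, with the braiding given by the R-matrix formula of \S\ref{ss:center}. Such a module $V$ lies in $\cM\cZ(\cC)$ exactly when it is an object of $\cC$ carrying the trivial $\cO(G)$-action (augmentation), and its braiding with every object of $\cZ(\cC)$ is the symmetry. Concretely, the double braiding of $V$ with $(W,\mu_W)$ is the map $w\otimes v \mapsto \mu_W(w\otimes v_{(-1)})\otimes v_{(0)}$, where $v\mapsto v_{(-1)}\otimes v_{(0)}$ is the coaction $V\to \cO(G)\otimes V$. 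So $V$ is Müger central iff for every equivariant finite-dimensional $\cO(G)$-module $W$, the matrix coefficients of $V$ act trivially on $W$. The plan is to identify which elements can occur in the ``support'' of such $W$, and to show that $G$ acts on $V$ through the quotient by the group those supports generate.

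\emph{Step 1 (supports).} A finite-dimensional equivariant $\cO(G)$-module $W$ is annihilated by an ideal of finite codimension that is stable under conjugation. Hence it is supported, set-theoretically, on finitely many $k$-points $g$ of $\ol{G}$ whose $\ol{G}$-conjugacy class is finite, together with their infinitesimal neighborhoods in the odd directions. A finite conjugacy class means the centralizer of $g$ has finite index, so it contains $\ol{G}^\circ$. Thus the support lies in $Z(k)$, and in fact in $Z$ together with its odd-nilpotent thickening. Conversely, for each $z\in Z(k)$ the finite conjugacy class $C$ of $z$ gives the module $\cO(C)$, which is an object of $\cZ(\cC)$. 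The double braiding of $V$ with $\cO(C)$ is the action of $c\in C$ on $V$.

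\emph{Step 2 (the inclusion $\supseteq$).} If $V$ is a representation of $\ol{G}/Z\ol{G}^\circ$, viewed as a purely even $G$-module via $G\to \ol G \to \ol{G}/Z\ol{G}^\circ$, then its coaction lands in functions pulled back from the finite quotient. These act on any $W$ through evaluation at support points of $W$, which lie in $Z$ or its nilpotent thickening and therefore map to the identity. So the double braiding is trivial.

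\emph{Step 3 (the inclusion $\subseteq$).} Conversely, suppose $V$ is Müger central. Step~1 with $\cO(C)$ shows that $Z(k)$ acts trivially on $V$; here we use that $\varepsilon\in Z$ acts by parity, which forces $V$ to be even. Next we show that $\ol{G}^\circ$ acts trivially. For this we adapt the proof of Proposition~\ref{nondeg1}. Let $\fm$ be the augmentation ideal of $\cO(G)$. The quotients $\cO(G)/\fm^n$ are equivariant modules centralizing $V$, so the coaction of $V$ becomes trivial modulo every $\fm^n$. Since $\bigcap_n \fm^n$ cuts out exactly the functions vanishing on the formal neighborhood of the identity, this gives triviality on $\ol{G}^\circ$ together with the odd part: in finite type, the Krull intersection detects the connected component containing the identity. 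The same argument applied at each $z\in Z$, using $\cO(G)/\fm_z^n$ summed over the finite class of $z$, shows that $V$ is trivial on the formal neighborhood of $Z$. Hence $G$ acts through $\ol{G}/Z\ol{G}^\circ$.

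\emph{Step 4 (iteration).} For the second statement, $\cM\cZ(\cC)=\Rep(\Gamma)$ with $\Gamma$ a finite group. By Proposition~\ref{nondeg}, the center of a finite tensor category is non-degenerate, so $\cM\cZ(\Rep\Gamma)=\Vec_k$.

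The main obstacle is Step~3. We must make rigorous that triviality on all the formal neighborhoods $\cO(G)/\fm_z^n$, for $z\in Z$, forces the action to factor through $\ol{G}/Z\ol{G}^\circ$. This requires the super version of Krull's intersection theorem on each connected component, and care that the components of $Z\ol{G}^\circ$ are exactly the translates $z\ol G^\circ$.
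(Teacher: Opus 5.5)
Your proposal is correct and follows the paper's route. The paper's proof simply reruns the Krull-intersection argument of Proposition~\ref{nondeg1} using that $z\in\ol{G}(k)$ has a finite conjugacy class iff $z\in Z(k)$, which is what your Steps~1 and~3 do, and your Step~4 is Proposition~\ref{nondeg} applied to the finite group $\ol{G}/Z\ol{G}^\circ$.

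There are two minor slips. First, $\ol{G}$ is a subgroup rather than a quotient of $G$, so in Step~2 the map should be $G\to G/G^\circ\cong\ol{G}/\ol{G}^\circ\to\ol{G}/Z\ol{G}^\circ$. Second, odd elements of $G$ move $z$ under conjugation (for instance $z=\varepsilon$), so $C$ must be the scheme-theoretic orbit, and powers of the maximal ideal $\fm_z$ need not be conjugation-stable; your final ``formal neighborhood of $Z$'' step is unnecessary anyway, since triviality of $G^\circ$ and of $Z(k)$ already forces the action to factor through $\ol{G}/Z\ol{G}^\circ$.
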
 

\begin{proof}
The proof is similar to the proof of Proposition~\ref{nondeg1}, using that an element $z \in \ol{G}(k)$ belongs to a finite conjugacy class if and only if $z \in Z(k)$. 
\end{proof} 

This motivates the following conjecture. 
 
\begin{conjecture} \label{conj:mz}
If $\cC$ is a finitely tensor-generated pre-Tannakian category of moderate growth, then $\cM\cZ(\cC) = \Rep(K)$, where $K$ is a finite group. Thus $(\cM\cZ)^2(\cC) = \Vec_k$. 
\end{conjecture}

Indeed, by Deligne's theorem \cite{Deligne2} and Proposition~\ref{nondeg2}, Conjecture~\ref{conj:mz} holds in characteristic zero. Moreover, we have:

\begin{proposition} \label{nondeg3}
Conjecture~\ref{conj:mz} holds for Frobenius exact categories $\cC$ over $k$ of characteristic $p>0$. 
\end{proposition}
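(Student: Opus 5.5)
We reduce to the super-Tannakian situation, where Proposition~\ref{nondeg2} already gives the answer. Let $\cC$ be Frobenius exact, finitely tensor-generated, and of moderate growth over $k$ of characteristic $p>0$. By the Coulembier--Etingof--Ostrik theorem, there is a fiber functor $F\colon \cC \to \Ver_p$ to the Verlinde category, and $\cC \simeq \Rep(G,\varepsilon)$ for an affine group scheme $G$ in $\Ver_p$. Finite tensor-generation makes $G$ of finite type. The structure theory of such groups (Venkatesh, Coulembier) tells us what $G$ looks like: the even part is an ordinary affine group scheme $\ol{G}$ of finite type, which is the part living in $\Vec \subset \Ver_p$. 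Moreover, $G$ is generated by $\ol{G}$ together with a connected infinitesimal ``odd/Verlinde'' part.

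The computation then follows Proposition~\ref{nondeg2}. By Proposition~\ref{prop:Opi-center}, applied over $\Ver_p$ after passing through $F$, the center $\cZ(\cC)$ is the category of finite length $G$-equivariant $\cO(G)$-modules in $\Ver_p$ with the conjugation action. First let $V \in \cM\cZ(\cC)$; we show that $V$ is acted on trivially by $Z\ol{G}^\circ$. The argument of Proposition~\ref{nondeg1} carries over: the quotients $\cO(G)/\fm^n$ are locally finite and so centralize $V$. Here $\fm$ is the augmentation ideal of $\cO(G)$, and $\bigcap_n \fm^n$ cuts out the identity component $G^\circ$ (including all infinitesimal parts). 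Consequently $G^\circ$ acts trivially on $V$.

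To handle the disconnected part, we use the finite-length objects $\cO(C)$, one for each conjugacy class $C$ of $\ol{G}(k)$ that is finite. As in the finite group case (\S\ref{ss:finite-center}), $V$ must centralize each of these. The key fact is that a point $z\in \ol{G}(k)$ has finite conjugacy class exactly when $z$ centralizes $\ol{G}^\circ$, and also $G^\circ$ if we account for the odd part. Centralizing all such $\cO(C)$ then forces the elements of $Z$ to act trivially on $V$. Conversely, every representation of the finite group $\ol{G}/Z G^\circ$ centralizes the whole center: finite-length objects of $\cZ(\cC)$ are supported on classes of $Z\ol{G}^\circ$, and by the Yetter--Drinfeld braiding formula a $g$-graded piece acts on $V$ through $g$, which acts trivially. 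This yields $\cM\cZ(\cC)=\Rep(K)$ with $K$ finite. Since $\Rep(K)$ is fusion (when $p\nmid |K|$) or at least finite, Proposition~\ref{nondeg} gives $(\cM\cZ)^2(\cC)=\Vec_k$.

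The main obstacle is the connected step in characteristic $p$ inside $\Ver_p$. Here $G^\circ$ can have a large infinitesimal part, and the Krull intersection argument must be checked for commutative Hopf algebras in $\Ver_p$. The cleanest way to do this is to use the decomposition $\cO(G)\cong \cO(\ol{G})\otimes \cO(G/\ol{G})$, in which the second factor is local with nilpotent augmentation ideal. That reduces the claim to $\bigcap_n \fm^n$ for the ordinary group $\ol{G}^\circ$, where Proposition~\ref{nondeg1} already applies. The second delicate point is the identification of finite conjugacy classes with $Z$, which holds because $\ol{G}$ is of finite type.
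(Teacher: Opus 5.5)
Your overall strategy is the paper's. You write $\cC$ as $\Rep(G,\varepsilon)$, with $G$ an affine group scheme in $\Ver_p$, via Coulembier--Etingof--Ostrik. You then rerun the argument of Proposition~\ref{nondeg1}, whose only new input is the Krull intersection theorem in $\Ver_p$. The paper just cites Venkatesh's Proposition~3.9 for Krull. Your derivation from a decomposition $\cO(G)\cong\cO(\ol{G})\otimes B$ with $\fm_B^N=0$ is a reasonable alternative, provided the decomposition respects augmentations. In that case $\fm^n\subseteq\fm_{\ol G}^{\,n-N+1}\otimes B$, and $-\otimes B$ commutes with the intersection because $B$ is rigid.

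The step that fails is your exact identification of $K$. Your ``key fact'' says that $z\in\ol G(k)$ has a finite conjugacy class exactly when it centralizes $\ol G^\circ$. This is false in characteristic $p$, where $\ol G^\circ$ can be non-reduced: a finite orbit only forces $z$ to centralize $(\ol G^\circ)_{\mathrm{red}}$, and finite type does not help.

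For a counterexample, take $p$ odd and $G=\ol G=\mu_p\rtimes\bZ/2$, with the generator $z$ of $\bZ/2$ acting on $\mu_p$ by inversion.
\begin{itemize}
\item The conjugation orbit of $z$ is the finite scheme $\mu_p z$.
\item But $z$ does not centralize $\mu_p$, so $Z=\mu_p$ and $\ol G/Z\ol G^\circ=\bZ/2$.
\item Your converse step breaks: $\cO(\mu_p z)$ is a $p$-dimensional object of $\cZ(\cC)$ not supported on $Z\ol G^\circ$, and the sign character of $\bZ/2$ double-braids with it by $-1$.
\item Consistently, $\Rep(G)$ is a finite tensor category, so Proposition~\ref{nondeg} gives $\cM\cZ(\cC)=\Vec_k$, whereas your formula predicts $\Rep(\bZ/2)$.
\end{itemize}

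The repair is easy, since the conjecture only asks for some finite $K$, and the paper's proof only invokes the argument of Proposition~\ref{nondeg1}. Keep your forward direction: $G^\circ$ and the points of $Z$ act trivially on every object of $\cM\cZ(\cC)$, which is correct because points of $Z$ do have finite orbits. Drop the converse. Then $\cM\cZ(\cC)$ is a full tensor subcategory of $\cC$, closed under subquotients and duals, and contained in the representations of the finite group $\ol G/Z\ol G^\circ$. Hence it equals $\Rep(K)$ for a quotient $K$ of that group, and Proposition~\ref{nondeg} gives $(\cM\cZ)^2(\cC)=\Vec_k$.
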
 

\begin{proof}
By \cite{CEO}, $\cC$ is the category of representations of an affine group scheme $G$ in the Verlinde category $\Ver_p(k)$ (compatible with $\pi(\Ver_p(k))$). Thus the proposition can be proved in the same way as Proposition~\ref{nondeg1}. The only ingredient that needs justification is the Krull intersection theorem $\bigcap_{n\ge 1} \fm^n=0$ for the augmentation ideal $\fm \subset \cO(G)$ for connected $G$, which in $\Ver_p(k)$ is \cite[Proposition~3.9]{Venkatesh}.
\end{proof}

We expect that a similar method should apply if $\cC$ fibers over a finite pre-Tannakian category. This conjecturally includes all cases by \cite[Conjecture~1.4]{BEO}.

On the other hand, in non-moderate growth there are examples of finitely tensor-generated categories with $(\cM\cZ)^2(\cC)\ne \Vec_k$; in fact, there are examples when one gets $\Vec_k$ only after applying $\cM\cZ$ $m$ times, where $m$ is arbitrarily large. Namely, let $\cC$ be a semi-simple pre-Tannakian category over a field $k$ of characteristic zero. Recall that Mori \cite{Mori} defines the smash product $S^t \cC$ of $\cC$ with Deligne's category $\uRep(\fS_t)$ for $t\in k \setminus \bN$, which is again a semi-simple pre-Tannakian category. For instance, $S^t\Vec_k = \uRep(\fS_t)$. One can think of $S^t\cC$ as an interpolation of the construction $S^n\cC$. It is clear that if $\cC$ is finite tensor-generated, then so is $S^t\cC$. Interpolating the proof of Proposition~\ref{smash}, we obtain

\begin{proposition} \label{smash1}
If $\cC$ is infinite then $\cZ(S^t\cC) = S^t\cZ(\cC)$, thus $\cM\cZ(S^t\cC)=S^t\cM\cZ(\cC)$. 
\end{proposition}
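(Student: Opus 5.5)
The plan is to interpolate the proof of Proposition~\ref{smash} in $t$. Put $\pi=\pi(\cC)$. By Proposition~\ref{prop:Opi-center}, an object of $\cZ(S^t\cC)$ is an object of $S^t\cC$ with an $\cO(\pi(S^t\cC))$-module structure. So the first step is to describe $\pi(S^t\cC)$, or at least $\cO(\pi(S^t\cC))$, in terms of $\pi$ and the fundamental group of $\uRep(\fS_t)$.

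Mori's construction realizes $S^t\cC$ as a wreath-type interpolation: its objects are built from $\fS_t$-interpolated tensor powers of objects of $\cC$. The fundamental group should therefore be an interpolated wreath product ``$\pi\wr\fS_t$''. Its coordinate ring should decompose over Yetter--Drinfeld-type data for $\fS_t$, namely singleton-free partitions $\lambda$, matching the description of $\cZ(\uRep(\fS_t))$ in \S\ref{ss:deligne}.

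Concretely, the center of the $\fS_t$ part is spanned by $\cC(C_\lambda)$-modules, where $C_\lambda$ is the conjugacy class of a finitary permutation of cycle type $\lambda$ on $n=|\lambda|$ points. An object of $\cZ(S^t\cC)$ then decomposes as a direct sum over $\lambda$, just as in Proposition~\ref{smash} it decomposes over $s\in\fS_n$. I would prove this decomposition with the oligomorphic machinery. The category $S^t\cC$ is fibered over $\uRep(\fS_t)$ by a forgetful tensor functor. Applying the support theory of \S\ref{s:qr} to the restriction of the half-braiding to $\uRep(\fS_t)\subset S^t\cC$ produces $\cT$-small supports. By \S\ref{ss:deligne} (Theorem~\ref{thm:sa}), these are finite unions of finitary conjugacy classes, which gives a Yetter--Drinfeld grading indexed by the classes $C_\lambda$.

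The key step is to show that only $\lambda=\emptyset$ contributes, that is, that the summand supported on nontrivial finitary permutations vanishes. Fix $\lambda\neq\emptyset$ and a point $j$ moved by $g\in C_\lambda$. The summand is a module over the copy of $\cO(\pi)$ attached to the $j$-th tensor factor. The half-braiding intertwines the $j$-th factor with the $g(j)$-th factor, so this module is a Hopf module over $\cO(\pi)$, exactly as in Proposition~\ref{smash}. By the fundamental theorem of Hopf modules it is then free, of the form $\cO(\pi)\otimes V$. Since $\cC$ is infinite, $\cO(\pi)$ has infinite length, so finite length forces $V=0$.

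This step is the main obstacle. The coaction lives in an interpolated setting, so I would make it rigorous via the fiber over $C_\lambda$, identifying it with $S^{t-n}\cC\boxtimes\cC^{\boxtimes n}$ twisted by the centralizer $Z(g)$, on which the argument reduces to the finite case for the $n$ moved coordinates. With the nontrivial summands gone, what remains is the $\lambda=\emptyset$ part. There the half-braiding is given factorwise by $\cO(\pi)$-structures, which identifies it with $S^t\cZ(\cC)$.

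For the second statement, $\cM\cZ(S^t\cC)=S^t\cM\cZ(\cC)$, I would compare double braidings componentwise under the equivalence $\cZ(S^t\cC)=S^t\cZ(\cC)$. The double braiding in $S^t\cZ(\cC)$ is induced factorwise from that of $\cZ(\cC)$, so an object centralizes everything exactly when its constituents lie in $\cM\cZ(\cC)$.
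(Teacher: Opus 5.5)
Your proposal follows essentially the paper's route: the paper proves this only by saying that one interpolates the proof of Proposition~\ref{smash}, i.e., one splits an object of the center according to permutation data (for $S^t\cC$, the finitary classes $C_\lambda$ from the Yetter--Drinfeld description of $\cZ(\uRep(\fS_t))$ in \S\ref{ss:deligne}) and kills every component that moves an index via the fundamental theorem of Hopf modules and the infinite length of $\cO(\pi)$, which is exactly your plan. One small correction: what you actually use is the inclusion $\uRep(\fS_t)\subset S^t\cC$, not a forgetful tensor functor $S^t\cC\to\uRep(\fS_t)$ (which need not exist), and the support theory of \S\ref{s:qr} should be applied to the image of $\cO(\pi(\uRep(\fS_t)))$ in $\uEnd(M)$, which lies in $\uRep(\fS_t)$ because everything is semisimple, so that the arguments behind Theorem~\ref{thm:sa} apply to it.
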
 

Let $\cD(t_1, \ldots, t_m)$ be the iterated smash product $S^{t_1} \cdots S^{t_m} \Vec_k$, where each $t_i \in k \setminus \bN$. Using that $\cM\cZ(\uRep(\fS_t)) = \Vec_k$ (\cite{FHL}), Proposition~\ref{smash1} implies:

\begin{corollary}\label{smash2}
We have
\begin{displaymath}
\cM\cZ(\cD(t_1, \ldots, t_m)) = \cD(t_1,...,t_{m-1}).
\end{displaymath}
In particular, $(\cM\cZ)^{m-1} (\cD(t_1, \ldots, t_m)) = \uRep(\fS_{t_1})$ and $(\cM\cZ)^m (\cD(t_1, \ldots, t_m)) = \Vec_k$.
\end{corollary}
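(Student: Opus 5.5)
The statement is Corollary~\ref{smash2}, and I will prove it by induction on $m$, using Proposition~\ref{smash1} together with the input $\cM\cZ(\uRep(\fS_t)) = \Vec_k$ from \cite{FHL}.

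\textbf{Base case} ($m=1$). Here $\cD(t_1) = S^{t_1}\Vec_k = \uRep(\fS_{t_1})$. The claim reads $\cM\cZ(\cD(t_1)) = \cD()$, where the empty iterated smash product is interpreted as $\Vec_k$. This is exactly the cited result of \cite{FHL}.

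\textbf{Inductive step.} Write $\cD(t_1,\ldots,t_m) = S^{t_1}\cC$ with $\cC = \cD(t_2,\ldots,t_m)$. To apply Proposition~\ref{smash1} I need two hypotheses on $\cC$.
\begin{itemize}
\item $\cC$ is semi-simple pre-Tannakian: this holds by Mori's construction, since each $t_i \notin \bN$.
\item $\cC$ is infinite: Deligne's category $\uRep(\fS_t)$ already has infinitely many simples, and $S^t$ of a nonzero category contains $\uRep(\fS_t)$ (via the unit). So every $\cD(\ldots)$ with at least one parameter is infinite.
\end{itemize}
For $m\ge 2$ we have $m-1 \ge 1$, so $\cC$ has at least one parameter and is infinite. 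Proposition~\ref{smash1} then gives
\[
\cM\cZ(S^{t_1}\cC) = S^{t_1}\cM\cZ(\cC).
\]
By the induction hypothesis $\cM\cZ(\cC) = \cD(t_2,\ldots,t_{m-1})$. Hence
\[
\cM\cZ(\cD(t_1,\ldots,t_m)) = S^{t_1}\cD(t_2,\ldots,t_{m-1}) = \cD(t_1,\ldots,t_{m-1}).
\]
When $m=2$ this uses $\cM\cZ(\uRep(\fS_{t_2})) = \Vec_k$, which is again \cite{FHL}, giving $S^{t_1}\Vec_k = \uRep(\fS_{t_1})$.

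\textbf{Main obstacle.} The one delicate point is the order of the parameters. The corollary peels off the \emph{last} parameter $t_m$, which is the innermost smash factor in $\cD(t_1,\ldots,t_m) = S^{t_1}\cdots S^{t_m}\Vec_k$. The inductive step must therefore commute $\cM\cZ$ past the outer $S^{t_1}$ and apply the hypothesis to the inner category $\cD(t_2,\ldots,t_m)$, exactly as in the computation above; applying it in the other order would drop the wrong parameter. I also need $\cM\cZ$ to commute with $S^{t}$ as an identification of subcategories, not merely as an abstract equivalence, and this is what Proposition~\ref{smash1} asserts.

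\textbf{Iterated statements.} Applying the identity $m-1$ times gives
\[
(\cM\cZ)^{m-1}\bigl(\cD(t_1,\ldots,t_m)\bigr) = \cD(t_1) = \uRep(\fS_{t_1}).
\]
One more application, using \cite{FHL} a final time, gives
\[
(\cM\cZ)^{m}\bigl(\cD(t_1,\ldots,t_m)\bigr) = \Vec_k.
\]
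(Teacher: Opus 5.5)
Your proof is correct and follows the same route as the paper. The paper only says that the corollary follows from Proposition~\ref{smash1} together with $\cM\cZ(\uRep(\fS_t))=\Vec_k$ from \cite{FHL}, and your induction spells out that omitted argument: you write $\cD(t_1,\ldots,t_m)=S^{t_1}\cD(t_2,\ldots,t_m)$, check that the inner category is infinite when $m\ge 2$, and use \cite{FHL} for the base case, where Proposition~\ref{smash1} is unavailable because $\Vec_k$ is finite.
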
 

Let us say that a pre-Tannakian category $\cC$ is \defn{$\cM\cZ$-solvable} if $(\cM\cZ)^n(\cC) = \Vec_k$ for some $n$. Thus Conjecture~\ref{conj:mz} (which is a theorem in characteristic zero) implies that any finitely tensor-generated pre-Tannakian category of moderate growth is $\cM\cZ$-solvable. Also, by Corollary~\ref{smash2} the category $\cD(t_1, \ldots, t_m)$ is $\cM\cZ$-solvable. In fact, we don't know any 
interpolation categories that are not $\cM\cZ$-solvable. 

Yet our results show that a finitely tensor-generated pre-Tannakian category need not be $\cM\cZ$-solvable. The simplest such example is provided by the Delannoy category: in this case $\cZ(\cC)=\cC$ (Theorem~\ref{mainthm1}), and so $\cM\cZ(\cC)=\cC$. Let us call a pre-Tannakian category $\cC$ \defn{inert} if $\cM\cZ(\cC) = \cC$. Thus the Delannoy category is inert. Another example of an inert category is the equivariantization of the Delannoy category under $\bZ/2$ acting by flipping the line. This shows that an inert category can have a tensor subcategory that is not inert. Also, as we have mentioned, if $G$ is a profinite group then $\Rep(G)$ is inert if and only if $G$ has no nontrivial finite conjugacy classes. 

If $\cC$ is inert, one may wonder if one can ``engage'' any of its objects to braid nontrivially with 
other objects by possibly extending $\cC$ to a bigger pre-Tannakian category and then taking the Drinfeld center. Namely, we ask

\begin{question}
Suppose $\cC \subset \cB$ are pre-Tannakian categories (so $\cC$ is a tensor subcategory of $\cB$). Is $\cM\cZ(\cC)$ necessarily contained in $\cM\cZ(\cB)$? In particular, if $\cC$ is inert, must $\cM\cZ(\cB)$ contain $\cC$? E.g., if $\cC$ is the Delannoy category? 
\end{question}

The answer to this question is ``yes'' if $\cC$ is the representation category of an affine group scheme $G$. Indeed, in this case we can consider the de-equivariantized category $\cZ(\cB)_G$, which is a $G$-crossed category in the sense of Turaev. Every object $M$ of this category decomposes into components $M_g$, $g\in G(k)$. It is easy to see that $M_g=0$ unless $g$ is c-smooth, which implies the statement. For example, if $\cC$ is inert (which happens if and only if $G$ is a profinite group without nontrivial c-smooth elements) then $M_g=0$ for all $g\ne 1$, which means that $\cZ(\cB)_G$ is actually braided and $G$ acts by braided autoequivalences. Thus, performing $G$-equivariantization, we get back $\cZ(\cB)$ with $\cM\cZ(\cB) \supset \cC$. 

Another example when the answer is yes is Kriz's quantum Delannoy category $\cB$ (\S \ref{ss:kriz}), which contains the ordinary Delannoy category $\cC$. It is easy to deduce from our description of $\cZ(\cB)$ that $\cM\cZ(\cB) = \cC$.

\subsection{A generalization of the center} \label{ss:generalization}

Let $G$ be a finite group, let $H=G \times G$, and identify $G$ with the diagonal subgroup of $H$. The Drinfeld center of $\Rep{G}$ can be described in the following two ways: (a) it is the category of $G$-equivariant sheaves on $H/G$; (b) it is the category of $\Rep{H}$ linear endofunctors of $\Rep{G}$. We can generalize the study of the Drinfeld center by considering these categories for a general inclusion of groups $G \subset H$. We now make a few comments on such a generalization in the oligomorphic setting.

Let $G$ and $H$ be pro-oligomorphic groups with quasi-regular measures $\mu$ and $\nu$. Suppose that nilpotents have trace zero, and so we have pre-Tannakian categories $\cT=\uRep(G, \mu)$ and $\cS=\uRep(H, \nu)$. Furthermore, suppose that $G$ is a closed subgroup of $H$, and there is a corresponding restriction functor
\begin{displaymath}
\Phi \colon \cS \to \cT.
\end{displaymath}
The existence of $\Phi$ requires some compatibility between the measures; see \cite[\S 4.1]{delmap}. The functor $\Phi$ has a left adjoint at the level of ind-categories:
\begin{displaymath}
\Phi_! \colon \Ind{\cT} \to \Ind{\cS}.
\end{displaymath}
Let $A=\Phi(\Phi_!(\bone))$. This is a commutative algebra in $\Ind{\cT}$, which is intuitively the coordinate ring of $H/G$. Thus $A$-modules play the role of $G$-equivariant sheaves on $H/G$. One can show that the category of $A$-modules is equivalent to the category of exact co-continuous $\cS$-linear functors $\Ind{\cT} \to \Ind{\cT}$.

It should be possible to generalize the methods of \S \ref{s:qr} to analyze the category of $A$-modules. Here is the kind of result we have in mind. Let $X=H/G$ and let $X_{\sm}$ be the set of elements $x \in X$ that are stabilized by an open subgroup of $G$, i.e., $X_{\sm}$ is the set of smooth points for the $G$-action. Then $\cC(X_{\sm})$ is an algebra in $\Ind{\cT}$ with approximate units, as in Remark~\ref{rmk:yd}(b). In favorable circumstances, we expect that $\Mod_A^{\fin}$ is equivalent to $\Mod^{\fin}_{\cC(X_{\sm})}$. Recall that the $\fin$ superscript means we consider modules where the underlying object belongs to $\cT$ (and not $\Ind{\cT}$). This is the analogous statement to Theorem~\ref{thm:YD}: indeed, when $G$ is the diagonal in $H=G \times G$, we find $X_{\sm}=G_{\sm}$ is the group of c-smooth elements in $G$ and $\Mod_A^{\fin}=\cZ(\cT)$.

\begin{example} \label{ex:order}
Work over a field of characteristic~0. Let $\cS=\uRep(\fS_{-1})$, let $\cT$ be the Delannoy category, and let $\Phi \colon \cS \to \cT$ be the functor taking basic Frobenius algebra of $\cS$ to the Schwartz space $\cC(\bQ)$. Here $H=\fS$ is the infinite symmetric group and $G=\Aut(\bQ,<)$. We identify $H$ with the group of all permutations of $\bQ$, so that $G$ is a subgroup of $H$. We identify $X=H/G$ with the set of total orders on $\bQ$ that are abstractly isomorphic to the standard order, meaning they are dense and without endpoints.

The $G$ action on $X$ has many smooth points. Here is an example. Define an order $\prec$ on $\bQ$ by using the standard order on the intervals $(-\infty,0)$ and $(0,\infty)$, but putting $a \prec 0 \prec b$ when $a$ is positive and $b$ is negative. Then $\prec$ is preserved by the open subgroup $G(0)$ of $G$ that fixes~0, and is therefore a smooth point of $X$. More generally, one can choose $n$ points in $\bQ$, use the standard order (or its reverse) on each interval between consecutive points, but permute the intervals and points. We show below that all smooth points of $X$ arise by this construction. This gives an explicit description of the $G$-set $X_{\sm}$, which should translate to an explicit description of $\Mod_A^{\fin}$. It would be interesting to explore this example in more detail.

Let $\prec \in X_{\sm}$, and let $G(A)$ be an open subgroup of $G$ fixing $\prec$, where $A$ is a finite subset of $\bQ$. Thus if $a \prec b$ then $ga \prec gb$, for all $a,b \in \bQ$ and $g \in G(A)$. Let $I_0, \ldots, I_m$ be the open intervals making up $\bQ \setminus A$. Suppose we have points $x<y$ and $x'<y'$ in $I_r$. Then there exists $g \in G(A)$ such that $gx=x'$ and $gy=y'$. Thus $x \prec y$ if and only if $x' \prec y'$. This shows that the restriction of $\prec$ to $I_r$ is either the standard order or its reverse. Similarly, if we have $x,x' \in I_r$ and $y,y' \in I_s$ with $r \ne s$ then there is $g \in G(A)$ such that $gx=x'$ and $gy=y'$, and so $x \prec y$ if and only if $x' \prec y'$. This shows that $\prec$ is constant on $I_r \times I_s$ for $r \ne s$. The same reasoning shows that $\prec$ is constant on $I_r \times \{a\}$ for $a \in A$. Thus $\prec$ is as described in the previous paragraph.
\end{example}

\begin{remark}
The closure of a smooth $G$-orbit on $X$ need not be a finite union of smooth $G$-orbits. Indeed, we have already seen this for a conjugation action in Remark~\ref{rmk:cmooth-not-isolated}. We mention another example coming from the situation in Example~\ref{ex:order}. For $c \in \bR$, let $\prec_c \in X$ be the total order the restricts to the standard order on $(-\infty, c)$ and $(c, \infty)$, and satisfies $a<b$ when $a \in (c,\infty)$ and $b \in (-\infty,c)$; additionally, if $c$ is rational then we declare $a<c<b$ for such $a$ and $b$. The set $\{\prec_c\}_{c \in \bQ}$ is a $G$-orbit, and we have already seen that its points are smooth. One easily sees that $\prec_c$, for any $c \in \bR$, belongs to the closure of this orbit; note that this element is not smooth when $c$ is irrational. Thus the orbit closure contains points that are not smooth.
\end{remark}

\begin{remark}
One can attempt to generalize even further. For instance, one can study $G$-equivariant sheaves on $H/K$, for a closed pro-oligomorphic subgroup $K \subset H$. In this case, we require a category $\cR=\uRep(K, \theta)$ with a restriction functor $\Psi \colon \cS \to \cR$, and we take $A=\Phi(\Psi_!(\bone))$. Even more generally, suppose $X$ is a set equipped with a continuous (but not necessarily smooth) action of $G$ (e.g., $X=H/K$). Then perhaps there is some notion similar to measure that will allow one to define $G$-equivariant sheaves on $X$. It would be interesting to clarify this notion and explore further, e.g., are there sheaf operations associated to a continuous map $X \to Y$ of $G$-sets?
\end{remark}

\end{document}